\documentclass{amsart}
\usepackage{amssymb,amsmath,amsthm,epsf,epsfig,dsfont,bbm}

\usepackage{latexsym}
\usepackage{upref, eucal}
\usepackage[all]{xy}

\newcommand {\nc} {\newcommand}
\newcommand {\enm} {\ensuremath}

\def \d{\delta}

\nc {\bdm} {\begin{displaymath}}
\nc {\edm} {\end{displaymath}}

\newtheorem {theorem} {\bf{Theorem}}[section]
\newtheorem {lemma}[theorem] {\bf Lemma}
\newtheorem {proposition}[theorem] {\bf Proposition}

\newtheorem {corollary}[theorem] {\bf Corollary}
\numberwithin {equation}{section}

\newcommand\BB{\mathbb{B}}\newcommand\DD{\mathbf{D}}\newcommand\FF{\mathbb{F}}\newcommand\QQ{\mathbb{Q}}\newcommand\WW{\mathbb{W}}
\newcommand\ZZ{\mathbb{Z}}

     \newcommand\fp{\mathfrak{p}}

\nc{\J}{\enm{\mathcal{J} }}
\nc {\Z} {\enm{\mathbb{Z}}}
\nc {\form}[1] {\enm{\mbox{\underline{for}}}_{#1}}
\nc {\prol}[1] {\enm{\mbox{\underline{prol}}_{{#1}^*}}}

\nc {\stk} {\stackrel}

\newcommand{\map}{\rightarrow}

\newcommand{\beqar}{\begin{eqnarray*}}
\newcommand{\eeqar}{\end{eqnarray*}}
\newcommand{\inj}{\hookrightarrow}

\newcommand{\Pn}[2] {\ensuremath{ {\mathbb{P}}^{#1}_{#2}}}
\nc{\Quot}[3]{\enm{ {\mathfrak{Quot}_{ {#1}/{#2}/{#3}}}}}
\nc{\Hilb}[2]{\enm{ {\mathfrak{Hilb}_{ {#1}/{#2}}}}}
\newcommand{\mfrak}[1]{\mathfrak{#1}}
\newcommand{\mf}[1]{\mathfrak{#1}}
\newcommand{\bb}[1]{\mathbb{#1}}
\newcommand{\mcal}[1]{\mathcal{#1}}

\nc {\Coh}[4] {\ensuremath{H^{#1}(\Pn{#2}{},{#3}({#4}))}}
\nc {\Ch}[3] {\enm{H^{#1}(X_t,{#2}_t({#3}))}}
\nc {\Qphi}[4]{\enm{ {\mathfrak{Quot}^{~#4}_{ {#1}/{#2}/{#3}}}}}
\nc {\Gra}[4]{\enm{ {\mathfrak{Grass}_{#2}({#3},{#4})}}}
\nc {\HomA}[2]{\enm{\mathrm{Hom}_A{#1}{#2}}}
\nc {\tr}{\mathrm{tr}}

\nc {\C}[2]{\enm{\left(\begin{array}{l} {#1} \\ {#2} \end{array} \right)}}
\nc {\mat}[4]{\enm{\left(\begin{array}{ll}{#1} & {#2} \\ {#3} & {#4}
\end{array}\right)}}

\def \vp{\varphi}
\def \mb{\mbox}

 \def \Z{{\mathbb Z}}

   \def \h{\hat{\ }}

\def \d{\delta}

  \def \bX{{\bf X}} \def \bH{{\bf H}}

\def \bH{{\bf H}} \def \d{\delta}

\def \hG{\hat{\mathbb{G}}_{\mathrm{a}}} 

\def \hA{\hat{A}}

\def \R1{R((q))[q']\h}

\usepackage{xcolor}

\DeclareMathOperator{\Spec}{\mathrm{Spec}}
\DeclareMathOperator{\Spf}{\mathrm{Spf}}
\DeclareMathOperator{\Lie}{\mathrm{Lie}}
\DeclareMathOperator{\rk}{\mathrm{rk}}
\newcommand{\Hom}{\mathrm{Hom}}

\newcommand{\Ext}{\mathrm{Ext}}

\newcommand{\bI}{{\bf I}}

\newcommand{\longlabelmap}[1]{{\,\buildrel #1\over\longrightarrow\,}}
\newcommand{\longmap}{{\,\longrightarrow\,}}
\def\longisomap{{\,\buildrel \sim\over\longrightarrow\,}} 

\newcommand{\oldmarginpar}[1]{}

\newcommand{\ainj}[1]{\ar@{^{(}->}[{#1}]}
\newcommand{\asurj}[1]{\ar@{->>}[{#1}]}
\newcommand{\Hcr}{\mathbf{H}_{\mathrm{cris}}}
\newcommand{\Fcr}{F_{\mathrm{cris}}}
\newcommand{\fra}{\mathfrak{f}}
\newcommand{\cblue}[1]{\color{black}{#1}\color{black}}
\newcommand{\bHd}[1]{\mathbf{H}^\delta_{[{#1}]}}

\newcommand{\bDd}[1]{\mathbf{D}_{[{#1}]}}

\usepackage[margin=90pt]{geometry}
\usepackage{verbatim}
\usepackage[pagebackref]{hyperref}

\usepackage{hyperref}
\hypersetup{
	colorlinks   = true,
	citecolor    = teal,
	linkcolor    = purple
}

\usepackage{autonum}
\def \d{\delta}
\def \hGm{\hat{\mathbb{G}}_{\mathrm{m}}}
\usepackage{array}
\usepackage{soul}
\usepackage{color}
\usepackage[normalem]{ulem}

\newcommand{\mff}{\mfrak{f}}

\nc{\Wm}[1]{W_{[m]#1}}

\nc{\mt}{\bold{t}}
\nc{\mF}{\mathfrak{F}}
\nc{\bx}{\bold{x}}
\nc{\by}{\bold{y}}
\nc{\bz}{\bold{z}}
\nc{\ba}{\bold{a}}
\nc{\phm}{\Phi_{[m]}}
\nc{\lan}{\langle}
\nc{\Fm}{F_{[m]}}
\nc{\ran}{\rangle}
\nc{\Fd}{F^\dagger}
\nc{\Rd}{R^\dagger}
\nc{\mlow}{m_{\mathrm{l}}}
\nc{\mup}{\beta_0}

\nc{\ord}{\mb{ord }}
\nc{\bXp}{\bX_{\mathrm{prim}}}
\nc{\bPsi}{\bold{\Psi}}
\nc{\mult}{\mathrm{mult}}
\nc{\mbB}{\mathbbm{B}}
\nc{\mfor}[1]{{#1}^{\mathrm{for}}}
\nc{\latder}{\tilde{\partial}}
\nc{\bg}{{\bold h}}
\nc{\bff}{{\bold f}}
\nc{\bu}{{\bold u}}
\nc{\mO}{\mathcal{O}}
\nc{\tTheta}{\tilde{\Theta}}
\nc{\io}{\iota}
\nc{\cb}{\mathbf{b}}
\nc{\tF}{\tilde{F}}
\nc{\mg}{\mathfrak{g}}
\nc{\bc}{\mathbf{c}}
\nc{\Witt}{\text{$\mathbb{W}$itt}}
\nc{\mW}{\mathbb{W}}
\nc{\mH}{\mathbf{H}}
\nc{\Hdr}{\bH^1_{\mathrm{dR}}}
\nc{\Fc}{F_{\mathrm{cris}}}
\nc {\Hd}{\bH^{\d}}
\nc{\mn}{[m]n}

\nc{\Frac}{\text{Frac}}

\newcommand{\Ker}{\text{Ker}}

\nc{\bHnew}{\tilde{\bH}}
\nc{\bGa}{\bar{\mathbb{G}}_a}
\nc{\Iso}{\mathrm{FIso}}
\usepackage [english]{babel}
\usepackage [autostyle, english = american]{csquotes}
\MakeOuterQuote{"}

\title{Delta Characters and Ranks of Minimal Subisocrystals of Abelian Schemes}
\author{Rajat Kumar Mishra}
\date{}
\email{rmishra@uark.edu}
\address{University of Arkansas, Fayetteville, Arkansas, 72701, USA}

\author{Arnab Saha}
\email{arnab.saha@iitgn.ac.in}
\address{Indian Institute of Technology Gandhinagar, Gujarat 382355, India}

\subjclass[2010]{ Primary 11G07, 14F30, 14L05, 14L15; secondary 14G20, 14K15, 14B20}

\keywords{Witt vectors, delta geometry, abelian schemes, de Rham cohomology, crystalline cohomology, filtered isocrystals}

\usepackage{hyperref}
\usepackage{cite}
\begin{document}
	
\begin{abstract}
Using the framework of delta characters on arithmetic jet spaces, we construct a canonical sequence of filtered isocrystals associated to an abelian scheme $A$ of relative dimension $g$ over a $p$-adically complete discrete valuation ring. These isocrystals map injectively via a canonical morphism into the first (rational) crystalline cohomology group $\mathbf{H}^1_{\mathrm{cris}}(A)$ of the abelian scheme $A$, directly generalizing a prior construction due to Borger and Saha. Pandit showed that the canonical morphism is a map of isocrystals. This then implies that the image of this canonical embedding coincides with the minimal subisocrystal $F_{\mathrm{cris}}(\Omega_A)$ containing the space of invariant differential forms $H^0(A,\Omega_A)$. 

Berthelot and Ogus showed that the dimension of $F_{\mathrm{cris}}(\Omega_A)$ is $g$ if and only if $A$ has an endomorphism that lifts the absolute Frobenius on the special fiber of $A$. As a primary application, for a general abelian scheme $A$, we show that the minimal subisocrystal $F_{\mathrm{cris}}(\Omega_A)$ can be viewed as a module obtained from primitive delta characters of $A$ via appropriate twists by the canonical Frobenius morphism $\phi$ induced from the theory of arithmetic jet spaces. As a consequence, we show that the rank of $F_{\mathrm{cris}}(\Omega_A)$ depends on numerical invariants arising from the theory of delta characters of $A$. More precisely, the dimension of $F_{\mathrm{cris}}(\Omega_A)$ is  given by the sum of the exact orders of the primitive delta characters associated to the abelian scheme $A$. 
\end{abstract}

	\maketitle
	\section{Introduction}
Let $p$ be a prime number, and let $R = W(\mathbb{F}_q)$ be the ring of $p$-typical Witt vectors of a finite field $\mathbb{F}_q$ of cardinality $q=p^h$ for some $h$. Let $K = \mathrm{Frac}(R)$ denote its fraction field. Let $A$ be an abelian $p$-formal scheme of relative dimension $g$ over $S$ 
where $S = \Spf R$. For any $R$-module $M$, denote $M_K = M \otimes_R K$. Consider the first 
(rational) crystalline cohomology $\Hcr^1(A)_K$ of $A$
and the associated Hodge filtration $H^0(A,\Omega_A)_K \subset \Hcr^1(A)_K$. By functoriality,
the absolute $q$-power Frobenius on the special fiber $\bar{A}$ of $A$ induces an operator 
$\Fcr$ on $\Hcr^1(A)$. Berthelot and Ogus, in their fundamental Theorem 3.17 in \cite {BeOg}, 
showed that the 
operator $\Fcr$ preserves the subspace $H^0(A,\Omega_A)$ if and only if $A$ admits an 
endomorphism which lifts the absolute $q$-power Frobenius on the special fiber. Such an abelian scheme
$A$ admitting a lift of the absolute Frobenius is said to have a {\it canonical lift}. In the
case when $A$ has ordinary reduction, $A$ has a canonical lift precisely when its 
Serre-Tate parameter satisfies $q_{ij}(A) =1$ for all $i$ and $j$.

The subspace $H^0(A,\Omega_A)_K$ under all the compositional powers of the operator $\Fcr$ 
generates a subisocrystal $\Fcr(\Omega_A)$ of $\Hcr^1(A)_K$. Hence $\Fcr(\Omega_A)$ is the 
smallest subisocrystal of $\Hcr^1(A)_K$ containing $H^0(A,\Omega_A)_K$. In this article,
one of our main results computes the rank $\rk_K \Fcr(\Omega_A)$ in terms of numerical 
invariants arising from the theory of delta characters of $A$. 
We do so by giving a character-theoretic interpretation of $\Fcr(\Omega)$ for any
$\pi$-formal abelian scheme $A$ over $S$.

To elaborate further, the arithmetic jet spaces $J^nA$ of $A$, for at least $n \geq 2$, 
admit $\pi$-formal group scheme morphisms  to the additive $\pi$-formal group scheme $\hG$, 
which are called {\it delta characters} of $A$. 
The space of delta characters of \(A\), denoted by \(\mathbf X_\infty(A)\), naturally becomes an \(R\{\phi^*\}\)-module, where \(R\{\phi^*\}\) is the noncommutative polynomial ring with the relation \(\phi^*r=\phi(r)\phi^*\) for all $r \in R$. Here \(\phi^*\) is the operator on delta characters induced by the canonical Frobenius morphisms \(\phi:J^{n+1}A\to J^nA\).

A delta character $\Theta \in 
\bX_\infty(A)$ is said to have {\it exact order} $o$ if $o$ is the least non-negative integer 
such that $\Theta$ can be viewed as an additive character $\Theta:J^oA \map \hG$.
By \cite{Bu2} and \cite{PaSa}, $\bX_\infty(A)$ is generated by $g$ delta characters 
$\Theta_1,\dots , \Theta_g$ of exact orders $o_1,\dots, o_g$ respectively, where the 
exact orders of $\Theta_i$ satisfy $o_i \leq g+1$ for all $i=1,\dots ,g$. The invariant
defined as $\mup = \max\{o_1,\dots,o_g\}$ is called the upper splitting number of $A$.
The association $A \mapsto (o_1,\dots o_g) \in \ZZ_{\geq 1}^g$ with the normalization
$o_1\leq \cdots \leq o_g$ can be shown to be 
invariant under isogeny. It depends on the deformation theoretic data of 
$A$ viewed as a lift of its closed fiber $\bar{A}$. For example, $A$ has a canonical lift
if and only if $o_1 = \cdots = o_g =1$. Hence Berthelot and Ogus' result says that 
$\rk_K \Fcr(\Omega_A) = g$ if and only if $A$ has canonical lift.

One of our main results which is Theorem \ref{rk_H_delta-intro},
computes the rank of $\Fcr(\Omega_A)$ for a general abelian scheme 
$A$ over $S$
\begin{align}
\label{rkFcr1}
\rk_K \Fcr(\Omega_A) = o_1 + \cdots + o_g.
\end{align}
We briefly outline our methods here.
By the classical theory of vectorial extensions of group schemes due to Mazur and Messing \cite{Mazur} and the theory of generalized arithmetic 
	jet space kernels, we associate to $A$ a canonical system of filtered isocrystals 
	$\left\{ \bHd{m}(A)_K \supset \bDd{m}(A)_K \right\}_{m=0}^\infty$
over $K$. Moreover, for each $m$, there exists a transition map $\Phi^*_{[m]}: \bHd{m}(A)_K
\map \bHd{m+1}(A)_K$ which is an isomorphism of $K$-modules and is 
a map of filtered isocrystals as well.

	For $m=0$, this structure recovers the filtered module 
	introduced by Borger and Saha in \cite{BoSa1}, where 
	$\bDd{0}(A)_K = \bXp(A)_K \simeq K\langle \Theta_1, \dots, \Theta_g \rangle$ is a $g$-dimensional $K$-vector space spanned by the classes of primitive delta characters $\Theta_1, \dots, \Theta_g$ of $A$ of exact orders $o_1, \dots, o_g$. 
Also there exists a canonical map of $K$-modules $\Phi: \bHd{0}(A)_K \map \Hcr^1(A)$
that preserves the natural filtrations on both sides.
	Pandit in \cite{Pandit} shows that the  map $\Phi$
	extends to a morphism of filtered isocrystals.

As a consequence of our Theorem \ref{delta_prim_final},
 we obtain that the  map $\Phi$ induces an isomorphism between $\bHd{0}(A)_K$ and the 
minimal subisocrystal $\Fcr(\Omega_A)$. This is also shown independently by Pandit in
\cite{Pandit}. For all $m \geq \mup - 1$, we prove that
$\rk_K \bDd{m}(A)_K = o_1 + \cdots  + o_g$. We then show that the system of filtered 
isocrystals stabilizes in the following sense: for all $m \geq \mup -1$, we have 
$\bDd{m}(A)_K = \bHd{m}(A)_K$. Hence combining this with the above discussion we obtain
the isomorphism of $K$-modules $\Fcr(\Omega_A) \simeq \bDd{m}(A)_K$ for all $m \geq \mup-1$
and hence \eqref{rkFcr1} follows. We will now explain our results in greater detail.

\subsection{Geometric background and notations}
Inspired by differential algebraic geometry, Buium introduced the theory of arithmetic jet spaces in the category of $p$-adic formal schemes \cite{Bu2}. He studied additive characters of the arithmetic jet spaces associated with abelian schemes over a $p$-adic field; we refer to these as delta characters. Arithmetic jet spaces and delta characters encode arithmetic information about abelian varieties. In particular, the study of torsion points through delta characters and their kernels led to Buium's proof of the effective Manin--Mumford conjecture for curves \cite{Bu2}. More recently, these methods have contributed to explicit bounds for the Mordell--Lang and Zilber--Pink loci obtained by Dogra and Pandit \cite{DP25, DP26} and Dogra and Saha 
\cite{DS26}, and to a higher-dimensional unramified Manin--Mumford bound obtained by Miller and Morrow using $p$-adic methods \cite{MM25}. The theory has also been developed in the context of modular forms \cite{Bu3, Bar, BuSa1, BuSa2, Hurlburt} and applied by Buium and Poonen to establish finiteness results for modular--elliptic correspondences \cite{Bu-Po}. A systematic exposition is given in \cite{Bu4}, while further developments appear in \cite{BoGu, Bu-Mi-1, Bu-Mi-3}. 


Let $\mathcal{O}$ be a Dedekind domain of characteristic 0 with finite residue fields, and fix a nonzero prime ideal $\mathfrak{p} \subset \mathcal{O}$. Let $\kappa$ be the residue field at $\mathfrak{p}$ of cardinality $q=p^h$, and let $\pi$ be a uniformizer of the localized ring $\mathcal{O}_\mathfrak{p}$. Let $R$ be an $\mathcal{O}_\mathfrak{p}$-algebra constituting a $\pi$-adically complete discrete valuation ring with a perfect residue field, and assume the ramification index satisfies $v_\pi(p) \le p-2$. Fix a lift $\phi: R \to R$ of the $q$-power Frobenius, meaning $\phi(a) \equiv a^q \pmod{\pi R}$. We retain the notation $K = \mathrm{Frac}(R)$ and $S = \mathrm{Spf} R$. For any $R$-module $M$, we set $M_K := M \otimes_R K$ and $M_\phi := M \otimes_{R,\phi} R$.
	
	For any $\pi$-formal scheme $X$ over $S$, one defines the $n$-th arithmetic jet 
functor as
$$J^n X(B) := X(W_n(B))$$ 
where for any $\pi$-adically complete $R$-algebra $B$, 
$W_n(B)$ is the ring of $\pi$-typical Witt vectors of length $n+1$
	\cite{Bo1, Drn, Joyal}.
	This functor is representable by a $\pi$-formal $S$-scheme $J^n X$.
The canonical truncation map $T$ and Frobenius map $F$ on 
	Witt vectors naturally induce projection morphisms $u: J^n X \to J^{n-1} X$ and Frobenius morphisms $\phi: J^n X \to J^{n-1} X$ respectively, yielding the canonical prolongation sequence $J^* X = \{J^n X\}_{n=0}^\infty$.
	
Note that when $G$ is a $\pi$-formal group scheme over $S$, then by definition $J^nG$ has a
natural structure of a group object.
The $n$-th iteration of the projection map, $u^{\circ n}$, induces a surjective projection $u_m: J^{m+n}G \to J^m G$ for all $m,n \ge 0$. We denote the generalized kernel of this map by $N^{[m]n}G := \mathrm{Ker}(u_m)$. If we further assume that $G$ is a smooth $\pi$-formal scheme
over $S$, then we have the following short exact sequence of $\pi$-formal group schemes:
	\begin{equation}
		\label{kerm1}
		0 \longrightarrow N^{[m]n}G \xrightarrow{\iota_m} J^{m+n}G \xrightarrow{u_m} 
		J^m G \longrightarrow 0.
	\end{equation}
	
	By Theorem $1.3$ in \cite{PaSa}, the system $N^{[m]*}G := 
	\{N^{[m]n}G\}_{n=0}^\infty$ forms a prolongation sequence naturally isomorphic to $J^*(N^{[m]1}G)$. Furthermore, the associated Frobenius morphisms $\mathfrak{f}_m: N^{[m]n}G \to N^{[m](n-1)}G$ obey the crucial compatibility condition 
\begin{align}
\label{phiiota}
\phi^{\circ(m+n)} \circ \iota_m = \phi^{m+n-1} \circ \iota_m \circ \mathfrak{f}_m.
\end{align}
	
	The $R$-module of delta characters of $G$ of order at most $n$ is defined by $\mathbf{X}_n(G) := \Hom(J^n G, \widehat{\mathbb{G}}_a)$. By passing to the direct limit over $u^*$ pullbacks, we form $\mathbf{X}_\infty(G)$, upon which the pullback by $\phi^*$ acts as a semilinear operator. The module of primitive delta characters is defined as the quotient $\mathbf{X}_{\mathrm{prim}}(G) := \mathbf{X}_\infty(G) / \phi^* \mathbf{X}_\infty(G)$, which is a free $R$-module of rank $g$ admitting a basis derived from primitive delta characters. 
	
	For each $n \ge 1$, we define the quotient module:$$\mathbf{H}^\delta_{[m]n}(G) := \frac{\Hom(N^{[m]n}G, \widehat{\mathbb{G}}_a)}{((\phi^{*(m+1)} \circ \iota_m)^* \mathbf{X}_{n-1}(G))_{\phi^{m+1}}}$$and its limit $\mathbf{H}^\delta_{[m]}(G) := \varinjlim_{u^*} \mathbf{H}^\delta_{[m]n}(G)$. The pullback via $\mathfrak{f}_m$ induces a well-defined $R$-semilinear operator $\mathfrak{f}_m^*: \mathbf{H}^\delta_{[m]}(G) \to \mathbf{H}^\delta_{[m]}(G)$.
	
Let $A$ be an abelian $\pi$-formal scheme of relative dimension $g$ over $S$.
Applying the functor $\Hom(-,\hG)$ to \eqref{kerm1}, we obtain the following long exact 
sequence of $R$-modules
\begin{align}
0 \map \Hom(J^mA,\hG) \stk{u_m^*}{\longrightarrow} \Hom(J^{m+n}A,\hG) \stk{\iota_m^*}{\longrightarrow} 
\Hom(N^{[m]n}A,\hG) \stk{\partial}{\map} \Ext(J^mA,\hG).
\end{align}
where $\partial: \Hom(N^{[m]n}A, \widehat{\mathbb{G}}_a) \to \Ext(J^m A, \widehat{\mathbb{G}}_a)$ is the natural connecting map. By \eqref{phiiota}, the map $\partial$ descends to a 
map $\partial :\mathbf{H}_{[m]n} \map \Ext(J^mA,\hG)$ of $R$-modules. Hence passing to the 
limit we obtain a map of $R$-modules $\partial: \mathbf{H}^\delta_{[m]}(A) \to \Ext(J^m A, \widehat{\mathbb{G}}_a)$, producing the exact sequence of $R$-modules:
$$0 \longrightarrow \mathbf{D}_{[m]}(A) \longrightarrow \mathbf{H}^\delta_{[m]}(A) \xrightarrow{\,\partial\,} \mathbf{I}_m(A) \longrightarrow 0$$
where $\mathbf{I}_m(A) := \mathrm{image}(\partial)$ and $\mathbf{D}_{[m]}(A) := \mathrm{kernel}(\partial)$. 

A {\it filtered module} over $K$ is a $K$-module $M$ endowed with a system of decreasing 
$K$-submodules $M^\bullet =\{M^i\}_{i \in \ZZ}$, that is $M^{i+1} \subset M^i$ for all
$i \in \ZZ$. The category of such objects will be denoted by $\mathrm{Fil}_K$. We define
a {\it(non-degenerate) isocrystal} over $K$ to be a finite dimensional $K$-vector space 
$D$ equipped with a (bijective) Frobenius-semilinear endomorphism $\phi_D: D \map D$.
A {\it filtered isocrystal} over $K$ is a triple $(D,\phi_D,D^\bullet)$, where $(D,\phi_D)$
is a non-degenerate isocrystal over $K$ and $(D,D^\bullet)$ is an object in $\mathrm{Fil}_K$.

Consider the filtered isocrystal
$$
\mathrm{Fiso}(\bH^\delta_{[m]}(A)):=(\bH^\delta_{[m]}(A)_K,\fra^*_m,\bH^\delta_{[m]}(A)^\bullet_K),
$$
where $\bH^\delta_{[m]}(A)^\bullet_K$ is the filtration given by $\bH^\delta_{[m]}(A) 
\supset \DD_{[m]}(A)$.

Assume $R=W(\kappa)$ to be the infinite length $\pi$-typical Witt vector of the residue field
$\kappa$. We prove in Theorem \ref{non-deg}
that upon tensoring with $K$, the operator $\mathfrak{f}_m^*$ is bijective, making $\mathbf{H}^\delta_{[m]}(A)_K$ a non-degenerate isocrystal. Hence 
$\mathrm{Fiso}(\bH^\delta_{[m]}(A)$ is a filtered isocrystal for all $m$. 

\subsection{Statement of main results}

A fundamental result connecting the levels of these jet extensions relies on a canonical morphism $\Phi_{[m]}: N^{[m]*}A \to N^{[m-1]*}A$ constructed 
	in Section $5$ of \cite{Mi-Sa-1}, which generalizes the map of multiplication-by-$\pi$ on Witt vectors. This morphism  intertwines with the Frobenius structures via 
	\begin{equation}
		\label{phmfra1}
		\mathfrak{f}_{m-1} \circ \Phi_{[m]} = \Phi_{[m]} \circ \mathfrak{f}_m, 
	\end{equation}
	thus inducing a natural isocrystal morphism $\Phi_{[m]}^*: \mathbf{H}^\delta_{[m-1]}(A)_K \to \mathbf{H}^\delta_{[m]}(A)_K$.

In Theorem 
	\ref{phiExt-intro-1}, we prove that $\partial$  respects $\Phi_{[m]}^*$ and the canonical pullback $\phi^*$ on extensions.
	
	\begin{theorem}
		\label{phiExt-intro-1}
		For any abelian scheme $A$ over $S$ and any $n \ge 0$, the following diagram of $R$-modules commutes:$$\xymatrix{
			\Hom(N^{[m]n}A,\widehat{\mathbb{G}}_a) \ar@{->>}[r] & \mathbf{H}^\delta_{[m]}(A) \ar[r]^-\partial & \Ext(J^mA,\widehat{\mathbb{G}}_a)\\
			\Hom(N^{[m-1]n}A,\widehat{\mathbb{G}}_a) \ar@{->>}[r] \ar[u]^{\Phi^*_{[m]}} & \mathbf{H}^\delta_{[m-1]}(A) \ar[u]^{\Phi_{[m]}^*} \ar[r]^-\partial & \Ext(J^{m-1}A,\widehat{\mathbb{G}}_a) \ar[u]_{\phi^*}.
		}$$
	\end{theorem}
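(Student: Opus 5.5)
The plan is to reduce commutativity of the right-hand square to a statement about pushouts of extensions. The left-hand square commutes by definition of the induced map $\Phi^*_{[m]}$ on the quotients $\mathbf{H}^\delta_{[m]}$. So it suffices to check that for every $\psi\in\Hom(N^{[m-1]n}A,\hG)$ we have
\[
\partial_m(\psi\circ\Phi_{[m]})=\phi^*\,\partial_{m-1}(\psi)
\]
in $\Ext(J^mA,\hG)$. The connecting map $\partial$ sends $\psi$ to the pushout along $\psi$ of the extension \eqref{kerm1}. Hence the claim is an identity between pushouts and pullbacks of extension classes.

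The key step is to produce a morphism of short exact sequences
\[
\xymatrix{
0\ar[r] & N^{[m]n}A \ar[r]^{\iota_m}\ar[d]_{\Phi_{[m]}} & J^{m+n}A \ar[r]^{u_m}\ar[d]^{\tilde\Phi} & J^mA\ar[r]\ar[d]^{\phi} & 0\\
0\ar[r] & N^{[m-1]n}A \ar[r]^{\iota_{m-1}} & J^{m-1+n}A \ar[r]^{u_{m-1}} & J^{m-1}A \ar[r] & 0 .
}
\]
The natural candidate for $\tilde\Phi$ is the Frobenius $\phi:J^{m+n}A\to J^{m+n-1}A$. It commutes with $u$, since $F$ and $T$ commute on Witt vectors, so the right square commutes. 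It therefore maps $\Ker(u_m)$ into $\Ker(u_{m-1})$. I would then check, using the construction in Section 5 of \cite{Mi-Sa-1}, that this restriction is exactly $\Phi_{[m]}$. This is plausible because $\Phi_{[m]}$ generalizes multiplication by $\pi$, and $V\circ F=\pi$ on Witt vectors. On the kernel $N^{[m]n}$, i.e. Witt vectors with vanishing first $m+1$ components, $F$ should agree with the map in \cite{Mi-Sa-1} after identifying with $N^{[m-1]n}$.

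If $\phi$ does not literally restrict to $\Phi_{[m]}$, I would instead allow $\tilde\Phi$ to differ from $\phi$. It would be enough to show that $\phi|_{N^{[m]n}}-\Phi_{[m]}$ is killed, after composing with $\psi$, modulo the image of $(\phi^{*(m)}\circ\iota_{m-1})^*\mathbf X_{n-1}$. That image is exactly what is quotiented out in $\mathbf{H}^\delta_{[m-1]}$, and such classes map to zero under $\partial$, since $\partial\circ\iota^*=0$. The compatibility \eqref{phiiota} and \eqref{phmfra1} would be used here.

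Given the morphism of extensions, the conclusion is formal. Call the top extension $E_m$ and the bottom one $E_{m-1}$. A morphism of extensions with end maps $\alpha,\gamma$ gives $\alpha_*E_m=\gamma^*E_{m-1}$ in $\Ext(J^mA,N^{[m-1]n}A)$, here with $\alpha=\Phi_{[m]}$ and $\gamma=\phi$. Pushing out along $\psi$ gives $(\psi\circ\Phi_{[m]})_*E_m=\phi^*\psi_*E_{m-1}$, which is the desired identity. Independence of $n$ and passage to the limit over $u^*$ follow because everything is compatible with the projections $u$.

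The main obstacle is identifying the restriction of $\phi$ to the generalized kernel with $\Phi_{[m]}$, or controlling the discrepancy. I would first try this on points, using explicit Witt vector coordinates $(0,\dots,0,a_{m+1},\dots,a_{m+n})$. There I would compare the ghost components of $F$ with the formula for $\Phi_{[m]}$ from \cite{Mi-Sa-1}.
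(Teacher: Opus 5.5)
Your proposal is correct and follows the paper's argument. The paper also uses the morphism of extensions given by $\phi$ on $J^{m+n}A$, and its left square is exactly the identity $\phi\circ\iota_m=\iota_{m-1}\circ\Phi_{[m]}$ of \eqref{JnNm}, which the paper quotes from \cite{Mi-Sa-1}; so your ``main obstacle'' is a cited input and the fallback is not needed. The only difference is that the paper proves $\phi^*(\partial\psi)=\partial(\psi\circ\Phi_{[m]})$ by hand rather than invoking the general lemma $\alpha_*E=\gamma^*E'$: it writes down the map $(x,y)\mapsto([\phi(x),y],u_m(x))$ from $J^{m+n}A\times\hG$ onto the pulled-back pushout, computes its kernel to be $\Gamma_{\psi\circ\Phi_{[m]}}(N^{[m]n}A)$, and tracks the $\phi$-twist through $\phi_{\mathrm{rel}}$.
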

	Combining Theorem \ref{phiExt-intro-1} and equation (\ref{phmfra1}), 
	$\phm^*$ hence induces a map of filtered isocrystals
	$$
	\phm^* : \Iso(\bHd{m-1}(A)) \map \Iso(\bHd{m}(A)),
	$$
that is, we have the following morphism of short exact sequences of $K$-modules
\begin{align}
\label{ComFiliso}
\xymatrix{
0 \ar[r] & \mathbf{D}_{[m]}(A) \ar[r] & \mathbf{H}^\delta_{[m]}(A) \ar[r]^{\partial} & 
\mathbf{I}_{m}(A) \ar[r] & 0\\
0 \ar[r] & \mathbf{D}_{[m-1]}(A) \ar[u]^{\phi^*} \ar[r] & \mathbf{H}^\delta_{[m-1]}(A) 
\ar[u]^{\Phi^*_{[m]}} 
\ar[r]^{\partial} & \mathbf{I}_{m-1}(A) \ar[u]_{\phi^*} \ar[r] & 0.
}
\end{align}

Theorem \ref{intro-Phm-1} characterizes the 
transition maps $\phm^*$ across the exact sequences of filtered isocrystals:
	
	\begin{theorem}
		\label{intro-Phm-1}
		For all $m \ge 1$, the following structural properties hold:
		\begin{enumerate}
			\item 
			The total map $\Phi_{[m]}^*: \mathbf{H}^\delta_{[m-1]}(A)_K \to 
			\mathbf{H}^\delta_{[m]}(A)_K$ is surjective.
			\item
			The induced map $\phi^*: \mathbf{I}_{[m-1]}(A)_K \to \mathbf{I}_{[m]}(A)_K$ is surjective.
			\item 
			The map $\phi^*: \mathbf{D}_{[m-1]}(A)_K \to \mathbf{D}_{[m]}(A)_K$ is  injective.
			
			\item
			The sequence stabilizes: for all $m \ge \beta_0$, where $\beta_0$ represents the upper splitting number, all vertical transition maps in the diagram \eqref{ComFiliso}
become exact isomorphisms of $K$-modules.
		\end{enumerate}
	\end{theorem}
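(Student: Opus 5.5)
We work throughout after tensoring with $K$, using the commutative diagram \eqref{ComFiliso} of short exact sequences, which follows from Theorem \ref{phiExt-intro-1} and \eqref{phmfra1}.

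\textbf{Parts (2) and (3) from (1).} Part (2) follows at once from (1). The right square commutes and $\partial$ is surjective onto $\mathbf{I}_m(A)_K$, so surjectivity of $\Phi^*_{[m]}$ forces surjectivity of $\phi^*:\mathbf{I}_{m-1}(A)_K\to\mathbf{I}_m(A)_K$. For (3), the restriction of $\phi^*$ to $\mathbf{D}_{[m-1]}(A)$ factors through the inclusion into $\Ext$-kernels. Injectivity follows if $\Phi^*_{[m]}$ is injective on $\mathbf{D}_{[m-1]}(A)_K$.

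We prove this directly. An element of $\mathbf{D}_{[m-1]}(A)$ is the restriction $\iota_{m-1}^*\Theta$ of a delta character $\Theta\in\mathbf{X}_{m-1+n}(A)$. Using the identity $\Phi_{[m]}\circ\ldots$ relating $\iota_m,\iota_{m-1}$ and $\phi$ from \cite{Mi-Sa-1}, we get $\Phi_{[m]}^*\iota_{m-1}^*\Theta=\iota_m^*\phi^*\Theta$. Suppose $\iota_m^*\phi^*\Theta$ lies in the denominator $(\phi^{*(m+1)}\iota_m)^*\mathbf{X}$. Then $\phi^*\Theta-\phi^{*(m+1)}\Theta'$ comes from $J^mA$. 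Since $\phi^*$ is injective on $\mathbf{X}_\infty(A)_K$ (it is a free $R\{\phi^*\}$-module situation, \cite{PaSa}), we can divide by $\phi^*$ up to order bookkeeping. This shows $\Theta-\phi^{*m}\Theta'$ comes from $J^{m-1}A$, so $\iota_{m-1}^*\Theta$ was zero in $\mathbf{H}^\delta_{[m-1]}$.

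\textbf{Part (1).} We deduce it from a dimension count together with the injectivity available from the isocrystal structure. By Theorem \ref{non-deg}, $\fra_m^*$ is bijective on $\mathbf{H}^\delta_{[m]}(A)_K$. The structure theorem of \cite{PaSa}, $N^{[m]*}A\simeq J^*(N^{[m]1}A)$ with $N^{[m]1}A$ a formal group of dimension $g$, gives $\Hom(N^{[m]n}A,\hG)_K$ explicitly. We compute $\dim_K\mathbf{H}^\delta_{[m]}(A)_K$ and expect it to be independent of $m$, equal to $\dim_K\bHd{0}(A)_K$. Then (1) is equivalent to injectivity of $\Phi^*_{[m]}$.

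Alternatively, and more robustly, we show surjectivity directly at finite level $n$. Every character of $N^{[m]n}A$ restricted to $N^{[m]1}A\simeq\hG^g$ has the form $\sum a_i\,\fra_m^{*j}$ applied to coordinates, and $\Phi_{[m]}$ restricted to level one is an isomorphism after inverting $\pi$. Hence $\Phi_{[m]}^*$ hits generators modulo the $\fra_m^*$-action, and compatibility \eqref{phmfra1} propagates this to all of $\mathbf{H}^\delta_{[m]}(A)_K$. I expect this to be the \textbf{main obstacle}: controlling the denominator $((\phi^{*(m+1)}\iota_m)^*\mathbf{X}_{n-1})_{\phi^{m+1}}$ under $\Phi_{[m]}^*$, and checking that $\Phi_{[m]}$ at level one is an isogeny rather than zero on some coordinates.

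\textbf{Part (4).} For $m\ge\beta_0$, every primitive $\Theta_i$ has order $o_i\le\beta_0\le m$. Hence $\mathbf{X}_\infty(A)_K=\bigoplus_i K\{\phi^*\}\Theta_i$ and $\mathbf{X}_{m+n}(A)_K=u_m^*\mathbf{X}_m+\phi^{*(m+1)}(\cdots)$ up to shifts. From this, $\dim\mathbf{D}_{[m]}(A)_K$ equals $o_1+\cdots+o_g$ and stabilizes. So (3) injective between equal finite dimensions gives an isomorphism on $\mathbf{D}$. $\mathbf{I}_m(A)_K$ injects into $\Ext(J^mA,\hG)_K$, whose dimension is eventually constant (computed via Mazur–Messing vectorial extensions of $J^mA$). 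Together with (2), this gives an isomorphism on $\mathbf{I}$. The five lemma then makes $\Phi^*_{[m]}$ an isomorphism.
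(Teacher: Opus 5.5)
\textbf{Assessment.} Your treatment of (2), (3) and the $\mathbf{D}$-half of (4) works, but the $\mathbf{I}$-half of (4) does not.

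\textbf{Parts (1)--(3) and the $\mathbf{D}$-half of (4).}
\begin{itemize}
\item In (3), the ``order bookkeeping'' is the order-shift property: $\phi^*\Xi\in u_m^*\mathbf{X}_m$ forces $\Xi\in u_{m-1}^*\mathbf{X}_{m-1}$. This follows from the free basis $\{\phi^{*j}\Theta_i : j+o_i\le N\}$ of $\mathbf{X}_N(A)$ (Lemma \ref{Xm+n}). The paper gets it more cheaply from $\iota_m^*\phi^*\Xi=\Phi_{[m]}^*\iota_{m-1}^*\Xi$ and injectivity of $\Phi_{[m]}^*$ on $\Hom(N^{[m-1]n}A,\hG)$ (Corollary \ref{comm_phi*}(ii)).
\item For the $\mathbf{D}$-isomorphism when $m\ge\beta_0$, your argument is valid and differs from the paper's. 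Both sides have dimension $o_1+\cdots+o_g$ by Corollary \ref{rk_D_m}, since $m-1\ge\beta_0-1$, so injectivity suffices. This replaces Proposition \ref{phi*X0}.
\item For (1), discard the dimension-count route. Constancy of $\dim_K\mathbf{H}^\delta_{[m]}(A)_K$ in $m$ is proved in the paper (Theorem \ref{rk_H_delta}) only by using (1), so that route is circular.
\item Your second route for (1) is the paper's, and the ``main obstacle'' you flag is not one. Corollaries \ref{phm-pi-com} and \ref{comm_phi*}(i) give $\Phi_{[m]}^*\Psi^{[m-1]}_{ij}=\pi\Psi^{[m]}_{ij}$, so $\Phi_{[m]}^*$ is bijective on $\Hom(N^{[\cdot]n}A,\hG)_K$. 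Since $\Hom(N^{[m]n}A,\hG)_K\to\mathbf{H}^\delta_{[m]n}(A)_K$ is onto, surjectivity follows. Of the denominator you only need that it maps into the denominator, which is just the well-definedness of $\Phi_{[m]}^*$.
\end{itemize}

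\textbf{The gap: the $\mathbf{I}$-step of (4).} You have a surjection $\phi^*:\mathbf{I}_{[m-1]}(A)_K\to\mathbf{I}_{[m]}(A)_K$ between subspaces of $\Ext(J^{m-1}A,\hG)_K$ and $\Ext(J^mA,\hG)_K$. No information about the ambient $\Ext$-groups can make it injective, even granting they have eventually constant dimension (which you have not shown). Nothing stops the subspaces from shrinking. In fact the paper later shows $\dim_K\mathbf{I}_{[m]}(A)_K=k-k_m$, which does drop with $m$.

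What is missing is the snake lemma at finite level $n$:
\begin{itemize}
\item Take the morphism from the row $0\to\mathbf{X}_{m-1+n}/u_{m-1}^*\mathbf{X}_{m-1}\to\Hom(N^{[m-1]n}A,\hG)\to\mathbf{I}_{[m-1]n}(A)\to0$ to the corresponding $m$-row.
\item The middle map is an isomorphism over $K$, so $\Ker(\phi^*|_{\mathbf{I}_{[m-1]n}})\cong\mathrm{Coker}\bigl(\phi^*:\mathbf{X}_{m-1+n}/u_{m-1}^*\mathbf{X}_{m-1}\to\mathbf{X}_{m+n}/u_m^*\mathbf{X}_m\bigr)$.
\item For $m\ge\beta_0$ this cokernel vanishes. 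A basis element $\phi^{*j}\Theta_i$ of the target has $j+o_i>m\ge o_i$, so $j\ge1$. Hence it is $\phi^*$ of the basis element $\phi^{*(j-1)}\Theta_i$ of the source (Lemma \ref{phi*_isom}, as used in Theorem \ref{Stab_beta}).
\end{itemize}
You wrote down essentially this observation (``$\mathbf{X}_{m+n}=u_m^*\mathbf{X}_m+\phi^*(\cdots)$'') but then routed the argument through $\Ext$ instead of the snake lemma.

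An alternative fix within your framework: the identity $\Phi_{[m]}^*\iota_{m-1}^*\phi^{*m}\Xi=\iota_m^*\phi^{*(m+1)}\Xi$ shows that $\Phi_{[m]}^*$ carries the denominator of $\mathbf{H}^\delta_{[m-1]n}$ onto that of $\mathbf{H}^\delta_{[m]n}$. Since it is bijective on $\Hom(N^{[\cdot]n}A,\hG)_K$, it is then bijective on $\mathbf{H}^\delta_{[\cdot]n}(A)_K$ for every $m\ge1$. Combined with your $\mathbf{D}$-isomorphism, the snake lemma gives the $\mathbf{I}$-isomorphism for $m\ge\beta_0$.
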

	
We now turn to the principal application of our results to $p$-adic Hodge theory. Specifically, we compute the rank of the minimal isocrystal within the first crystalline cohomology that contains the invariant differentials of $A$, expressing this rank explicitly in terms of the exact orders of a basis of primitive delta characters of $A$.
	
	Before establishing this result, it is instructive to briefly recall the comparison theorems that bridge the delta-geometric objects introduced above with the first crystalline cohomology of the abelian scheme $A$ over $S$.
	
Assume $R=W(\kappa)$. In \cite{BoSa1}, Section 8, Borger and Saha constructed a canonical morphism of $K$-modules $\Phi: \bHd{0}(A)_K \map \Hcr^1(A)_K$, which induces a natural morphism of short exact sequences:
	$$
	\xymatrix{
		0 \ar[r] & \bXp(A)_K \ar[d]^\Upsilon \ar[r] & \bH^\delta_{[0]}(A)_K \ar[d]^{\Phi} \ar[r] & \bI_{[0]}(A)_K \ar[r] \ainj{d} & 0  \\
		0 \ar[r] & H^0(A,\Omega_A)_K \ar[r] & \Hcr^1(A)_K \ar[r] & H^1(A,\Omega_A)_K \ar[r] & 0,
	}
	$$
	where the bottom row constitutes the classical Hodge filtration sequence associated to $A$. Let $\Fcr(\Omega_A)$ denote the minimal subisocrystal inside $\Hcr^1(A)_K$ generated by the space of invariant differentials $H^0(A,\Omega_A)_K$ under $\Fcr$. By Theorem 1.1 of \cite{Delta-crys}, the restriction $\Upsilon$ is an isomorphism of $K$-modules, and hence implies
 $\Phi$ is injective.
	
	Furthermore, Theorem 7.6 of \cite{BoSa1} provides an  isomorphism of $K$-vector spaces $\bXp(A)_K \simeq K\langle \Theta_1,\dots, \Theta_g \rangle$, where each $\Theta_i$ for $i=1, \dots, g$ is a primitive delta character of $A$ of exact order $o_i$ satisfying the uniform bound $1 \leq o_i \leq g+1$. The assignment 
	$$
	\{\text{Abelian schemes of relative dimension } g \text{ over } S\} \longrightarrow \mathbb{N}^g
	$$
	defined by $A \mapsto (o_1,\dots, o_g)$ with the normalization $o_1 \leq o_2 \leq \dots \leq o_g$, is well-defined; that is, the tuple of exact orders is entirely independent of the chosen basis of primitive characters in $\bXp(A)$. Moreover, this assignment constitutes an isogeny invariant of $A$. 
	
	By composing the aforementioned isomorphism with $\Upsilon$, one obtains a canonical identification:
	$$
	H^0(A,\Omega_A)_K \simeq \bXp(A)_K \simeq K\langle \Theta_1,\dots, \Theta_g\rangle.
	$$
	This yields a deep character-theoretic interpretation of $H^0(A,\Omega_A)_K$. Under this correspondence, if an invariant $1$-form $\omega \in H^0(A,\Omega_A)_K$ is identified with a delta character $\Theta_\omega$, the map $\omega \mapsto o(\Theta_\omega) \in \mathbb{N}$ (where $o(\Theta_\omega)$ denotes the exact order of $\Theta_\omega$) endows the differential form $\omega$ with a canonical numerical invariant.
	
Pandit showed in \cite{Pandit}, Theorem 1.2, that $\Phi$ is a morphism of isocrystals and
hence becomes a morphism in the category of filtered isocrystals. Here, the target $\Hcr^1(A)_K$ is naturally equipped with the crystalline Frobenius operator $\Fcr$ and the descending Hodge filtration whose first step is $H^0(A,\Omega_A)_K$. As a 
 consequence of Theorem \ref{delta_prim_final}, we obtain that
 $\Phi$ induces an isomorphism of isocrystals:
\begin{align}	
\label{HdFcrO}
	\Phi: \bHd{0}(A)_K \longisomap \Fcr(\Omega_A).
\end{align}
The above is also independently shown by Pandit in \cite{Pandit}.	   
	Our subsequent main result, which will be proved as Corollary \ref{rkFcris}, 
	resolves the rank of this minimal isocrystal completely in terms of the 
	arithmetic differential invariants of $A$.
	
	\begin{theorem}
\label{rk_H_delta-intro}
		Let $A$ be an abelian scheme of relative dimension $g$ over $S$. For all integers $m\geq 0$, we have
		\begin{align}
			\rk_K(\mH^\delta_{[m]}(A)_K) = o_1+\cdots+o_g.  
		\end{align} 
		In particular, by \eqref{HdFcrO}, we have
		\begin{align}
			\rk_K(\Fcr(\Omega_A)) = o_1+\cdots+o_g.  
		\end{align} 
	\end{theorem}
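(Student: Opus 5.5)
By Theorem \ref{intro-Phm-1}(1) and the bijectivity of $\phm^*$ on $K$-spans, all maps $\phm^*:\bHd{m-1}(A)_K\to\bHd{m}(A)_K$ are isomorphisms. Hence $\rk_K \bHd{m}(A)_K$ does not depend on $m$. It is therefore enough to compute it for one large $m$, namely $m\geq \mup-1$ (or $m\geq\mup$, so that Theorem \ref{intro-Phm-1}(4) applies). The plan has three steps:
\begin{enumerate}
\item show that for such $m$ one has $\bDd{m}(A)_K=\bHd{m}(A)_K$, i.e. $\bI_m(A)_K=0$;
\item compute $\rk_K\bDd{m}(A)_K=o_1+\cdots+o_g$;
\item transport the result to $\Fcr(\Omega_A)$ through the isomorphism \eqref{HdFcrO}.
\end{enumerate}

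For step (2), I would identify $\bDd{m}(A)$ concretely. Since $\bDd{m}=\ker\partial$, a class in $\bDd{m}(A)$ is a character $\psi$ of $N^{[m]n}A$ that extends to a delta character $\Theta\in\bX_{m+n}(A)$ with $\psi=\iota_m^*\Theta$. The class is taken modulo the pullbacks $(\phi^{*(m+1)}\circ\iota_m)^*\bX_{n-1}(A)$. Thus
\[
\bDd{m}(A)\simeq \bX_\infty(A)\big/\bigl(\bX_m(A)+\phi^{*(m+1)}\bX_\infty(A)\bigr),
\]
up to $\phi$-twists and up to torsion, which disappears after tensoring with $K$.

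Next use the basis. By \cite{Bu2,PaSa}, $\bX_\infty(A)$ is generated over $R\{\phi^*\}$ by $\Theta_1,\dots,\Theta_g$, and one expects $\bX_\infty(A)_K$ to be free over $K\{\phi^*\}$ on this basis. Then the elements $\phi^{*j}\Theta_i$ form a $K$-basis, and $\phi^{*j}\Theta_i$ has exact order $o_i+j$. The quotient above is then spanned by the $\phi^{*j}\Theta_i$ with $j\le m$ and $o_i+j>m$. For $m\ge\mup-1$ the condition $o_i+j>m$ together with $j\le m$ leaves exactly $o_i$ values of $j$, namely $j=m-o_i+1,\dots,m$. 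This gives rank $\sum_i o_i$.

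The main obstacle is this order bookkeeping. One must check that the $K$-span of elements of order $\le m$ is exactly the span of those $\phi^{*j}\Theta_i$ with $o_i+j\le m$. This requires the exact orders to be additive under $\phi^*$, and it requires that no $K$-linear combination drops order. I would get this from the normal-form/Gram–Schmidt choice of primitive basis in \cite{BoSa1} (Theorem 7.6), where the $\Theta_i$ are chosen with minimal orders independently.

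For step (1), $\bI_m(A)_K=0$ for large $m$: by Theorem \ref{intro-Phm-1}(2),(4) the map $\phi^*$ on $\bI$ is eventually an isomorphism. The image of $\partial$ in $\Ext(J^mA,\hG)$ comes from characters of $N^{[m]n}A$ that fail to extend. Here I would use the dimension count of $\Hom(N^{[m]n}A,\hG)_K$, via $N^{[m]n}A\simeq J^{n-1}(N^{[m]1}A)$ and $N^{[m]1}A\simeq\hG^g$ twisted, to see that this count equals $\rk\bDd{m}$ once $m\ge\mup-1$. Equivalently, the total rank of $\bHd{m}$ is bounded by $ng$ minus the rank of the quotient by $\phi^{*(m+1)}$-pullbacks, and that bound matches the rank from step (2).

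Finally, $\rk\bHd{m}(A)_K=\rk\bHd{0}(A)_K=\rk\Fcr(\Omega_A)$ by \eqref{HdFcrO}, which gives the second formula.
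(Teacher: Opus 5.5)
Your proposal is correct, but its decisive step differs from the paper's.

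\emph{The paper's route.} It never counts $\dim\mH^\delta_{[m]n}(A)_K$ directly. It gets $\rk\mH^\delta_{[0]}(A)=g+k$ from $0\to\DD_{[0]}\to\mH^\delta_{[0]}\to\bI_{[0]}\to 0$. It then proves $g+k=o_1+\cdots+o_g$ through the $l_i$/$k_i$ bookkeeping of Proposition \ref{l_ik_i} and Corollary \ref{rk_D_m_k}, which rest on Lemmas 7.2--7.3 of \cite{BoSa1}. Finally it squeezes $\rk\mH^\delta_{[\mup-1]}(A)$ between $\rk\DD_{[\mup-1]}(A)=\sum_i o_i$ (your step (2), i.e.\ Corollary \ref{rk_D_m}) and $\rk\mH^\delta_{[0]}(A)$, using only surjectivity of the maps $\phm^*$. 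Injectivity of $\phm^*$ and $\bI_{[m]}(A)=0$ for $m\ge\mup-1$ come out as by-products.

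\emph{Your route.} Your step (1), made precise, is a more elementary count. The map $\iota_m^*\phi^{*(m+1)}$ is injective on $\bX_{n-1}(A)$, because $\phi^{*(m+1)}\bX_{n-1}(A)\cap u_m^*\bX_m(A)=0$ for order reasons. Hence $\dim\mH^\delta_{[m]n}(A)_K=ng-\dim\bX_{n-1}(A)_K$. Lemma \ref{Xm+n} gives $\dim\bX_{n-1}(A)_K=\sum_i(n-o_i)$ for $n\ge\mup$, so $\dim\mH^\delta_{[m]n}(A)_K=o_1+\cdots+o_g$. Proposition \ref{H_m^delta} then transfers this to the limit.

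Nothing in this count depends on $m$, so it proves the first formula for every $m$ at once. Your first paragraph and the restriction to $m\ge\mup-1$ are therefore unnecessary for the rank. Step (2) is only needed to deduce $\bI_{[m]}(A)_K=0$. More generally, it gives $\rk\bI_{[m]}(A)=\sum_i o_i-\rk\DD_{[m]}(A)$, where $\rk\DD_{[m]}(A)=(m+1)g-\rk\bX_m(A)=g+k_m$. This recovers Corollary \ref{rk_D_m_k} without the $l_i$ manipulations.

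If you keep the first paragraph, justify injectivity explicitly. Theorem \ref{intro-Phm-1}(1) only gives surjectivity. Injectivity holds because $\phm^*$ is bijective from $\Hom(N^{[m-1]n}A,\hG)_K$ to $\Hom(N^{[m]n}A,\hG)_K$ (Corollary \ref{phm-pi-com}), and by \eqref{JnNm} it carries $\iota_{m-1}^*\phi^{*m}\bX_{n-1}(A)$ onto $\iota_m^*\phi^{*(m+1)}\bX_{n-1}(A)$.

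\emph{Comparison.} The paper's route reuses the transition-map structure of Section \ref{sec:frob_isocrys} and obtains all the stabilization statements from one squeeze. Yours is shorter and more transparent for the rank itself.
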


	\section{Plan of the paper}
	
	In Section 3, we establish the foundational geometric framework by reviewing the theory of delta geometry. We give a brief overview of arithmetic jet spaces over $p$-adically complete discrete valuation rings and formally define the generalized kernels $N^{[m]n}G$ associated with the projection maps for a smooth commutative group scheme $G$. We then study their geometric and structural properties. This section serves as a preliminary foundation for the theories developed in the subsequent sections, collecting the necessary results from the literature.
	
	The purpose of Sections 4-6 is to establish several crucial structural statements that provide the key input for the rank computation in Section 7.
	
	In Section 4, we focus on the additive characters of these generalized kernels. Utilizing the theory of vectorial extensions, we construct the fundamental exact sequence of $R$-modules that connects delta characters with extensions of jet spaces.
	
	In Section 5, we prove that, upon tensoring with the fraction field $K$, this sequence is endowed with a bijective semilinear Frobenius operator, yielding a non-degenerate filtered isocrystal.
	
	In Section 6, we analyze the transition morphisms $\Phi_{[m]}$ between different levels of the jet extensions. The primary focus of this section is to prove Theorem 1.1 and Theorem 1.2, establishing that these transition maps strictly intertwine with the Frobenius structures and the connecting homomorphisms. We demonstrate the stabilization of this sequence of filtered isocrystals for sufficiently large $m$.  
	
	In Section 7, we present our main application to $p$-adic Hodge theory. We bridge our delta-geometric constructions with the first crystalline cohomology of the abelian scheme $A$. By exploiting the properties of the level $m=0$ isocrystal and its relation to invariant differential forms, we conclude by proving Theorem 1.3. This yields an explicit formula for the rank of the minimal subisocrystal $\Fcr(H^0(A,\Omega_A)_K)$ purely in terms of the exact orders of the primitive delta characters of $A$.  
	
	Finally, in Section 8, we illustrate our results in low-dimensional cases by considering elliptic curves and abelian surfaces, and explicitly compute the associated filtered isocrystals in each case.

		%

	\section{Preliminaries on Delta Geometry}\label{AJS}
	Let $\mO$ be a Dedekind domain of characteristic zero with finite residue fields, and let $\mathfrak{p}$ be a non-zero prime ideal with residue field $\kappa$. We assume that the cardinality of $\kappa$ is $q=p^h$ for some prime $p$ and integer $h\geq 1$. Let $\pi$ denote a fixed uniformizer of $\mathfrak{p}$. For an $\mO$-algebra $B$ and a $B$-algebra $A$ with
	$u:B \map A$ the structure map, we define a {\it $\pi$-derivation} 
	$\delta:B \map A$ as a set-theoretic map satisfying the following identities for all $x, y \in B$:
	
	\begin{enumerate}
		\item $\delta(1)=0$
		\item $\delta(x+y)=\delta(x)+\delta(y)+C_\pi(u(x),u(y))$
		\item $\delta(xy)=u(x)^q\delta y+u(y)^q\delta x+\pi\delta x\delta y $
	\end{enumerate}
	
	where 
	$$ C_\pi(X,Y)= \frac{X^q+Y^q-(X+Y)^q}{\pi}.
	$$
	
	Given such a $\pi$-derivation $\delta$, we define the associated lift of Frobenius by $\phi(x) := x^q + \pi\delta x$. This map $\phi$ constitutes a ring homomorphism satisfying the congruence $\phi(x) \equiv x^q \mod \fp$. 
	
	Throughout the following, we fix a $\pi$-adically complete $\mO$-algebra $R$ 
	of characteristic $0$
	equipped with a $\pi$-derivation $\delta$ and a perfect residue field $l$, and we denote $S = \Spf R$.

	\begin{lemma}\label{phi_bij}
		For any $\pi$-adically complete $\mO$-algebra $R$, which is a discrete valuation ring of characteristic 0 with a perfect residue field $l$ and a $\pi$-derivation $\delta$ on it, the endomorphism $\phi:R\rightarrow R$ is an automorphism.
	\end{lemma}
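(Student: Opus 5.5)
To show that $\phi$ is an automorphism, I will show it is injective and then surjective via a successive approximation argument. The argument uses only that $R$ is a complete DVR with perfect residue field $l$ and that $\phi$ reduces to the $q$-power map modulo $\pi$.

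\emph{Injectivity and the maximal ideal.} Since $R$ is a domain of characteristic $0$, $\phi$ is a ring endomorphism of a DVR, and its kernel is a prime ideal, hence either $0$ or the maximal ideal $\mathfrak m = \pi R$. It cannot be $\mathfrak m$, since $\phi$ restricted to the image of $\mO$ (or $\ZZ$) is the identity on $\ZZ$ and $\pi$-adic structure forces $\phi(\pi)\ne 0$. More concretely, $\phi(\pi) = \pi^q + \pi\delta\pi$. Also $\phi$ fixes $\mO$, assuming $\delta$ is compatible with the $\mO$-algebra structure, as is implicit in the setup. Then $\phi(\pi)=\pi$, so $\phi$ is injective on $\pi$-powers.

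For a general nonzero $x = \pi^k u$ with $u$ a unit, $\phi(u)\equiv u^q \not\equiv 0 \bmod \pi$. Hence $\phi(u)$ is a unit and $\phi(x) = \pi^k\phi(u)\neq 0$. This computation also shows that $\phi$ preserves valuations: $v(\phi(x)) = v(x)$, so $\phi(\mathfrak m^k)\subseteq \mathfrak m^k$ and $\phi$ is continuous.

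\emph{Surjectivity.} Modulo $\pi$, $\phi$ induces the $q$-power Frobenius on $l$, which is bijective because $l$ is perfect. Given $y\in R$, choose $x_0$ with $\phi(x_0)\equiv y \bmod \pi$. Suppose $\phi(x_n) \equiv y \bmod \pi^{n+1}$. Write $y-\phi(x_n) = \pi^{n+1} z$, pick $w$ with $\phi(w)\equiv z \bmod \pi$, and set $x_{n+1} = x_n + \pi^{n+1} w$. Using $\phi(\pi)=\pi$, we get $\phi(x_{n+1}) = \phi(x_n)+\pi^{n+1}\phi(w)\equiv y \bmod \pi^{n+2}$. The sequence $(x_n)$ is $\pi$-adically Cauchy, so by completeness it has a limit $x$. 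By continuity, $\phi(x)=y$.

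\emph{Main obstacle.} The delicate point is $\phi(\pi)=\pi$, or at least $v(\phi(\pi))=1$, which the induction uses. If $\phi$ were not assumed to fix $\pi$, I would instead argue as follows. Since $\phi(\pi)=\pi^q+\pi\delta\pi$ is divisible by $\pi$, and injectivity with $\phi(\mathfrak m)\subseteq\mathfrak m$ applies, the valuation $v(\phi(\pi))=e'\ge 1$. Applying $\phi$ to $p = \pi^{e}\cdot(\text{unit})$, with $e=v_\pi(p)$ and $\phi(p)=p$, gives $e\,e' = e$, so $e'=1$. The induction then goes through with $\phi(\pi)=\pi\cdot(\text{unit})$: at each step I absorb the unit $\phi(\pi)^{n+1}/\pi^{n+1}$ into the choice of $w$.
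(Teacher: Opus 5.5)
Your proof is correct and follows the same plan as the paper's. Injectivity comes from $\phi$ sending units to units, which the paper phrases as a valuation computation with $\delta x$. Surjectivity comes from lifting solutions from $R/\pi^{n}R$ to $R/\pi^{n+1}R$ using perfectness of $l$ and then invoking completeness; your successive approximation is just the element-wise form of the paper's five-lemma induction on $R/\pi^nR$. The one real difference is in your favor: the paper uses $\phi(\pi)=\pi$ silently in both steps, while your observation that $\phi(p)=p$ forces $v_\pi(\phi(\pi))=1$ makes the argument valid for an arbitrary $\pi$-derivation, not only one compatible with $\mO$. The same fact ($p\in\mathfrak{m}$ but $\phi(p)\neq 0$) also gives $\ker\phi\neq\mathfrak{m}$, which is the precise version of your vague remark about the ``$\pi$-adic structure''.
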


	\begin{proof}
		We need to show that $\phi$ is a bijection. We first establish injectivity.
		Let $\phi(x)=0$ for some $x\in R$. We have $x\in R$, hence $x=y\pi^e$ for some $y\in R^\times$ and $e=v_\pi(x)\geq 0$. If $e=0$, then $x$ is a unit and hence $\phi(x)$ is also a unit. Therefore $\phi(x)=0$ implies $x=0$. Let $e\geq 1$, then we have \begin{align}
			\delta x&= \frac{\phi(x)-x^q}{\pi}
			=\frac{\phi(\pi^ey)-(\pi^ey)^q}{\pi}\\
			&=\frac{\pi^e(\phi(y)-\pi^{eq-e}y^q)}{\pi}
			=\pi^{e-1}(\phi(y)-\pi^{e(q-1)}y^q).
		\end{align} 

		The element $\phi(y)$ is a unit and $R$ is $\pi$-torsion free. Hence $v_\pi(\delta x)=e-1$. We know that $\phi(x)=x^q+\pi\delta x=0$, and this implies $x^q=-\pi\delta x.$ Computing the valuations of both sides we obtain $eq=e$ and we know that $q\geq 1$, which is a contradiction if $x\neq 0$. Hence $x=0$ and that implies $\phi$ is an injection. 
		
		For surjectivity, we will use an inductive argument. Since $l$ is perfect and $\phi$ is an injection,  $\phi:l\simeq R/\pi R\rightarrow  R/\pi R\simeq l$ is an isomorphism. We claim  that $\phi:R/\pi^nR\rightarrow R/\pi^n R$ is an isomorphism and we will prove it by induction. It is clear for $n=1$. Assume that the statement holds for some $n\geq 1.$ Hence we have the following maps of exact sequences
		\begin{equation}\label{phi_R}
			\xymatrix{
				0\ar[r] &\frac{\pi^n R}{\pi^{n+1} R}\ar[r]\ar[d]_{\phi} & \frac{R}{\pi^{n+1} R}\ar[r]\ar[d]_{\phi} & \frac{R}{\pi^n R}\ar[r]\ar[d]_{\phi} & 0\\
				0\ar[r] &\frac{\pi^n R}{\pi^{n+1} R}\ar[r] & \frac{R}{\pi^{n+1} R}\ar[r] & \frac{R}{\pi^n R}\ar[r] & 0.
			}
		\end{equation}
		Note that $\pi^n R/\pi^{n+1} R\simeq R/\pi R\simeq l,$ therefore the map $\phi:\pi^n R/\pi^{n+1} R\simeq l\rightarrow \pi^n R/\pi^{n+1} R\simeq l$ is an automorphism and $\phi:R/\pi^n R\rightarrow R/\pi^n R$ is an automorphism by the induction hypothesis. Hence by the snake lemma,  $\phi:R/\pi^{n+1} R\rightarrow R/\pi^{n+1} R$ is also an automorphism. Because  $R$ is $\pi$-adically complete, $R=\varprojlim_n R/\pi^n R$, and hence $\phi:R\rightarrow R$ is an automorphism. 
	\end{proof}
	\subsection{$\pi$-typical Witt vectors}
	\label{pityp}
	We recall some basic facts about $\pi$-typical Witt vectors. The general theory of Witt vectors over Dedekind domains with finite residue fields was developed by Borger \cite{Bo1}. The case of local fields of characteristic $p$ was studied earlier by Drinfeld \cite{Drn}.
	
	Let $B$ be an $R$-algebra with structure map $R \stk{f}{\map} B$. For $n\geq 0$, let $B^{\phi^n}$ denote the $R$-algebra whose structure map is given by
	$R \stk{\phi^n}{\map} R \stk{f}{\map} B$.
	Given an $S$-scheme $X$, we define its $\phi^n$-twist by
	$X^{\phi^n}(B):=X(B^{\phi^n})$
	for every $R$-algebra $B$. This functor is represented by the base change of $X$ along the morphism $\phi^n:S\map S$; that is,
	$X^{\phi^n}=X\times_{S,\phi^n}S$.
	
	We define the \emph{ghost rings}
	$\prod_\phi^n B := B\times B^\phi\times\cdots\times B^{\phi^n}$
	and
	$\prod_\phi^\infty B := B\times B^\phi\times\cdots$.
	For $n\geq 1$, consider the \emph{restriction} (or \emph{truncation}) map
	$T_w:\prod_\phi^n B\map\prod_\phi^{n-1}B$
	given by
	$T_w(w_0,\ldots,w_n):=(w_0,\ldots,w_{n-1})$.
	
	We also consider the left-shift \emph{Frobenius} map
	$F_w:\prod_\phi^n B\map\prod_\phi^{n-1}B$
	defined by
	$F_w(w_0,\ldots,w_n):=(w_1,\ldots,w_n)$.
	Note that $T_w$ is an $R$-algebra morphism, whereas $F_w$ lies over the fixed Frobenius endomorphism $\phi$ of $R$.
	
	As a set, define $W_n(B):=B^{n+1}$. We consider the set-theoretic map
	$w:W_n(B)\map\prod_\phi^n B$
	given by
	$w(x_0,\ldots,x_n):=(w_0,\ldots,w_n)$,
	where, for $i\geq 0$,
	
	$$
	w_i=x_0^{q^i}+\pi x_1^{q^{i-1}}+\cdots+\pi^i x_i
	$$
	
	are the \emph{Witt polynomials}. We refer to $w$ as the \emph{ghost map}.
	
	The ring structure on $\pi$-typical Witt vectors is characterized by the following result, analogous to the classical construction in the $p$-typical case (cf. \cite[pp.~171--192]{Mum-66}).
	
	\begin{theorem}
		For every $n\geq 0$, there exists a unique functorial $R$-algebra structure on $W_n(B)$ such that $w$ is a natural transformation of functors of $R$-algebras.
	\end{theorem}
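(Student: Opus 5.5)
The plan is the classical universal-polynomial argument for Witt vectors, transported to the $\pi$-typical setting over $R$ equipped with its $\pi$-derivation $\delta$. First consider the universal case: $B$ is $\pi$-torsion free and admits an endomorphism $\psi$ lifting the $q$-power Frobenius, compatible with $\phi$ on $R$. For such $B$, the key lemma is a Dwork-type criterion. A sequence $(w_0,\dots,w_n)\in\prod_\phi^n B$ lies in the image of the ghost map if and only if
\begin{align}
w_i \equiv \psi(w_{i-1}) \pmod{\pi^i B}
\end{align}
for all $1\le i\le n$. Moreover, the preimage is unique because $B$ is $\pi$-torsion free. The "only if" direction will be a direct computation: we have $x^{q}\equiv\psi(x)\pmod{\pi}$, hence $x^{q^{k}}\equiv\psi(x)^{q^{k-1}}\pmod{\pi^{k}}$ by the standard binomial estimate. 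This estimate uses that $\pi$ divides $q$ in $R$, together with the usual induction. The "if" direction solves for $x_i$ inductively, dividing by $\pi^i$.

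Next, apply the criterion to the polynomial ring $B=R[X_0,\dots,X_n,Y_0,\dots,Y_n]$. Here $\psi$ extends $\phi$ on coefficients and sends $X_i\mapsto X_i^q$ and $Y_i\mapsto Y_i^q$. The ghost vectors $w(X)$ and $w(Y)$ satisfy the congruences, and the congruences are preserved under componentwise sum, product, and negation, and by the images of $r\in R$. Hence there exist unique polynomials $S_i$, $P_i$, $N_i$ and structure polynomials $\iota_i(r)$ with coefficients in $R$ (a priori in $K$, but integral by the criterion) realizing these operations on Witt components. For the $R$-algebra structure map, one must check that $(r,0,0,\dots)$, or more precisely the element with ghost components $(r,\phi(r),\phi^2(r),\dots)$, satisfies the congruences. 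This holds because $\phi(r)\equiv r^q \pmod{\pi}$, via the given $\delta$. This is exactly where the twisted factors $B^{\phi^i}$ in the ghost ring enter.

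Finally, define the operations on $W_n(B)$ for an arbitrary $R$-algebra $B$ by these polynomials. The ring and $R$-algebra axioms are polynomial identities. They hold on the torsion-free universal rings, where $w$ is injective and $\prod_\phi^n$ is a ring, so they hold for every $B$ by specialization. Naturality of $w$ is immediate because everything is given by polynomials. Uniqueness holds because any functorial structure making $w$ a natural transformation is determined on the universal torsion-free rings, where $w$ is injective, and hence everywhere by functoriality.

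I expect the main obstacle to be the congruence lemma in the ramified $\pi$-typical setting. In particular, $x\equiv y \pmod{\pi^k}$ must imply $x^q\equiv y^q\pmod{\pi^{k+1}}$. This needs $\pi\mid q$ in $\mO_\fp$, which holds since $q$ is a power of the residue characteristic. It also needs the binomial coefficients $\binom{q}{j}$ for $0<j<q$ to be divisible by $\pi$, which holds because they are divisible by $p$.
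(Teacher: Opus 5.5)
Your proposal is correct, and it is essentially the classical universal-polynomial (Dwork-lemma) construction; the paper gives no proof of its own and simply defers to that construction, citing Mumford and Borger. The only point to make explicit is that your congruence $w_i\equiv\psi(w_{i-1}) \pmod{\pi^i}$ relies on $\psi(\pi)=\pi$, i.e.\ on $\phi$ being $\mathcal{O}_{\mathfrak{p}}$-linear as in Borger's framework; granting that, the structure element with ghost components $(r,\phi(r),\dots,\phi^n(r))$ satisfies your congruences with equality.
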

	
	We now recall some of the basic operators on $\pi$-typical Witt vectors.
	
	$(1)$ The \emph{restriction} (or \emph{truncation}) map
	$T:W_n(B)\map W_{n-1}(B)$ is given by
	$T(x_0,\ldots,x_n)=(x_0,\ldots,x_{n-1})$.
	It satisfies
	$w\circ T=T_w\circ w$.
	
	$(2)$ The \emph{Frobenius} map
	$F:W_n(B)\map W_{n-1}(B)$ is characterized by
	$w\circ F=F_w\circ w$ and is given by
	
	$$
	F(x_0,\ldots,x_n)=(x_0^q+\pi x_1,\ldots).
	$$
	
	As in the case of the Frobenius map on the ghost side, $F$ lies over the Frobenius endomorphism $\phi$ of $R$.
	
	We define the ring of Witt vectors of infinite length by
	$W(B):=\varprojlim_T W_n(B)$, where the inverse limit is taken with respect to the truncation maps $T$.
	
	The Frobenius maps $F$ induce a Frobenius lift on $W(B)$. In fact, there exists a unique $\pi$-derivation $\Delta:W(B)\map W(B)$ whose associated Frobenius lift is $F$, namely,
	
	$$
	F(x)=x^q+\pi\Delta(x).
	$$
	
	The ring $W(B)$ is characterized by the following universal property. Let $C$ be an $R$-algebra equipped with a $\pi$-derivation $\delta$ and let $f:C\map B$ be an $R$-algebra homomorphism. Then there exists a unique $R$-algebra homomorphism $g:C\map W(B)$ such that the diagram
	
	$$
	\xymatrix{
		W(B) \ar[d]_T & \\
		B & C \ar[l]^f \ar[lu]^g
	}
	$$
	
	commutes and
	$\Delta\circ g=g\circ\delta$.
	Consequently, $W$ is right adjoint to the forgetful functor from the category of $R$-algebras equipped with a $\pi$-derivation to the category of $R$-algebras. In particular, by the above universal property, $W(B)$ carries a natural structure of an $R$-algebra for every $R$-algebra $B$.
	
	For further details, we refer the reader to Section 1 of \cite{Bo1}. This approach is analogous to Joyal's construction in the case of $p$-typical Witt vectors \cite{Joyal}.

	\subsection{Prolongation sequences} 
	Let $X$ and $Y$ be $\pi$-formal schemes over $S$. We say that a pair $(u,\d)$ is a \emph{prolongation}, and write
	$Y\stk{(u,\d)}{\longrightarrow}X$,
	if $u:Y\map X$ is a morphism of $\pi$-formal $S$-schemes and $\d:\mO_X\map u_*\mO_Y$ is a $\pi$-derivation such that the following diagram commutes:
	
	$$
	\xymatrix{
		R \ar[r] & u_*\mO_Y \\
		R \ar[u]^\d \ar[r] & \mO_X \ar[u]_\d.
	}
	$$
	
	We now recall the notions of prolongation sequences and arithmetic jet spaces over $\pi$-formal schemes, following Buium. As in \cite[p.~103]{Bu3}, a \emph{prolongation sequence} is a sequence
	
	$$
	S\stk{(u_0,\d_0)}{\longleftarrow}T^0
	\stk{(u_1,\d_1)}{\longleftarrow}
	T^1
	\stk{(u_2,\d_2)}{\longleftarrow}\cdots,
	$$
	
	where each morphism
	$T^i\stk{(u_{i+1},\d_{i+1})}{\longleftarrow}T^{i+1}$
	is a prolongation satisfying
	
	$$
	u_i^*\circ\d_{i+1}=\d_i\circ u_{i+1}^*,
	$$
	
	where $u_i^*$ denotes the pullback morphism of sheaves induced by $u_i$. We denote a prolongation sequence by $T^*$ or ${T^n}_{n\geq0}$.
	
	Prolongation sequences naturally form a category: a morphism $f:T^*\to U^*$ is a collection of morphisms $f^n:T^n\to U^n$ compatible with the morphisms $u_i$ and $\d_i$ in the evident sense.
	
	Let $S^*=\{S^i\}_{i\geq0}$ be the prolongation sequence defined by
	$S^i:=\Spf R$, with $u_i:=\mathbbm{1}$ and $\d_i:=\d$, where $\d$ is the fixed $\pi$-derivation on $R$, for every $i$. We denote by $\mcal{C}_{S^*}$ the category of prolongation sequences over $S^*$.
	
	For a $\pi$-formal $S$-scheme $X$ and $n\geq0$, we define the $n$-th arithmetic jet space $J^nX$ by its functor of points:
	
	$$
	J^nX(B):=X(W_n(B))
	=\Hom_S(\Spf(W_n(B)),X),
	$$
	
	for every $\pi$-adically complete $R$-algebra $B$. The functor $J^nX$ is representable by a $\pi$-formal scheme over $S$; representability was established in \cite{Bo2} for schemes over $S=\Spec\mathbb{Z}$ and, more generally, in \cite{BePrSa} for arbitrary prolongation sequences.
	
	The system of $\pi$-formal schemes
	$J^*X:={J^nX}_{n\geq0}$
	forms a prolongation sequence, called the \emph{canonical prolongation sequence} of $X$, as in \cite{Bu3}. In particular, there are canonical morphisms
	$\phi:J^{n+1}X\map J^nX$
	given by the lifts of Frobenius.
	
	Throughout this paper, we use the following notation. For every $n\geq0$, we denote the canonical prolongation morphism
	$J^{n+1}X\longrightarrow J^nX$
	by $u$. More generally, for every $m\geq1$, we denote the natural morphism
	$J^{n+m}X\longrightarrow J^mX$
	by $u_m$.
	
	By Proposition 1.1 of \cite{Bu3}, the canonical prolongation sequence $J^*X$ satisfies the following universal property: for every $T^*\in\mcal{C}_{S^*}$ and every $\pi$-formal scheme $X$ over $S$, there is a natural bijection
	\begin{align}
		\label{univ}
		\Hom_S(T^0,X)=\Hom_{\mathcal{C}_{S^*}}(T^*,J^*X).
	\end{align}
	
	\subsection{Local Witt coordinates}
	\label{localcoordinates}
	Consider the $\pi$-formal affine line $H=\hat{\bb{A}}^1\simeq\Spf R[x]\h$. For any $\pi$-adically complete $R$-algebra $B$, the adjunction between jet spaces and the Witt vector functor \cite{BePrSa,Bo2} gives
	$$
	J^nH(B)=\Hom_R(R[x]\h,W_n(B))
	\simeq \Hom_R(R[x,x_1,\dots,x_n]\h,B).
	$$
	Explicitly, if $f\in\Hom_R(R[x]\h,W_n(B))$ is given by $f(x)=(b_0,\dots,b_n)$, with $b_i\in B$ for all $i$, then the corresponding morphism $\tilde f\in\Hom_R(R[x,x_1,\dots,x_n]\h,B)$ is determined by $\tilde f(x_i)=b_i$ for every $i$. Under the resulting identification
	$J^nH\simeq\Spf R[x,x_1,\dots,x_n]\h$, the coordinate functions $x,x_1,\dots,x_n$ are referred to as the \emph{Witt coordinates}.
	\cblue
	
	Suppose now that $G$ is a smooth commutative $\pi$-formal group scheme over $S$ of relative dimension $g$. There exists an affine open subscheme $U\subset G$ containing the identity section $e$ and an étale morphism of $\pi$-formal schemes $U\rightarrow H$, where
	$H=\hat{\bb{A}}^g=\Spf R[\bx_0]\h$
	and $\bx={x_{10},\dots,x_{g0}}$ is a system of coordinate functions. Proposition 1.4 of \cite{Bu2} gives an isomorphism of $\pi$-formal schemes
	$$
	J^nU\simeq U\times_HJ^nH
	$$
	for every $n$. From the description above, we have
	$J^nH\simeq\Spf R[\bx,\bx_1,\dots,\bx_n]\h$,
	where $\bx_i={x_{1i},\dots,x_{gi}}$ is a system of coordinates for each $i$. Consequently,
	$$
	J^nU\simeq U\times_HJ^nH
	\simeq U\times_H\Spf R[\bx,\dots,\bx_n]\h.
	$$
	It follows that $\{\bx_i\}_{i=0}^n$ gives an étale coordinate system on $J^nG$ in a neighborhood of the identity section. The space $N^{[m]n}G$ can now be realized as the following fiber product:
	$$\xymatrix{ N^{[m]n}G=J^{m+n}U\times_{J^mU}S\ar[r]^-{}\ar[d] & J^{m+n}U \ar[d]\\ S \ar[r]^-{e} & J^mU , }$$
	where the fiber product is taken with respect to the identity section $S\xrightarrow{e}J^mU$. Composing this section with the natural morphism $J^mU\rightarrow J^mH$, we obtain a section
	$S\xrightarrow{e}J^mU\rightarrow J^mH$
	of $J^mH$. On coordinate rings, this is induced by the $R$-algebra homomorphism
	$R[\bx,\bx_1,\dots,\bx_m]\h\rightarrow R$
	which sends $\bx_i$ to $0$ for every $0\leq i\leq m$. Therefore,
	$$
	\begin{aligned}
		N^{[m]n}G
		=J^{m+n}U\times_{J^mU}S
		&=(J^{m+n}H\times_{J^mH}J^mU)\times_{J^mU}S\\
		&=J^{m+n}H\times_{J^mH}S\\
		&\cong\Spf(R[\bx_{m+1},\dots,\bx_{m+n}]\h).
	\end{aligned}
	$$
	Thus, the functions ${\bx_{m+1},\dots,\bx_{m+n}}$ provide a coordinate system on the affine $\pi$-formal group scheme $N^{[m]n}G$.

	For all $m,n \geq 0$, we have the following extension of smooth commutative
	group schemes 
	\begin{align}
		\label{canexact}
		0 \longrightarrow N^{[m]n}G \stk{\iota_m}{\longrightarrow} J^{m+n}G \stk{u_m}{\longrightarrow} J^mG \longrightarrow 0
	\end{align}
	where $N^{[m]n}G = \ker( u_m: J^{m+n}G \map J^mG)$ for all $m,n\geq 0$. 
	Note that the morphism $u:J^{m+n}G\rightarrow J^{m-1+n}G$ induces a morphism of $\pi$-formal 
	$S$-schemes (still denoted
	by $u$) $u: N^{[m]n} G \map N^{[m]n-1}G$ for all $n \geq 2$. 
	
	Consider the system of $\pi$-formal group schemes $N^{[m]*}G =\{N^{[m]n}G\}_{n=1}^\infty$.
	By \cite{PaSa}, $N^{[m]*}G$ is a prolongation sequence whose associated lift of Frobenius $\fra_m: N^{[m]n}G \map N^{[m]n-1}G$ with 
	respect to $u$ is also called the {\it lateral Frobenius}. By 
	Theorem $4.3$ of \cite{BoSa1} or Theorem $5.3$ of \cite{PaSa}
	$\fra_m$ satisfies 
	\begin{align}
		\label{fphi}
		\phi^{\circ (m+1)} \circ \iota = \phi^{\circ m} \circ \iota \circ \fra_m.
	\end{align}
	
	We recall Theorem $1.3$ from \cite{PaSa}:
	
	\begin{theorem}
		\label{NandJ}
		Let $G$ be a $\pi$-formal group scheme over $S$. Then
		$$
		N^{[m]*}G \simeq J^*(N^{[m]1}G),
		$$
		as a canonical isomorphism of prolongation sequences of $\pi$-formal schemes 
		over $S^*$. In particular, for all $n \geq 1$ we have
		$$
		N^{[m]n}G \simeq J^{n-1}(N^{[m]1}G).
		$$
	\end{theorem}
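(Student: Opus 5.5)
The plan is to build a comparison morphism from the universal property \eqref{univ} and then check, in the local Witt coordinates of \S\ref{localcoordinates}, that it is an isomorphism level by level. Reindex the prolongation sequence $N^{[m]*}G$ as $T^k := N^{[m](k+1)}G$ for $k\ge 0$. Its transition maps are the projections $u$, and its Frobenius lifts are the lateral Frobenius maps $\fra_m$, so $T^*$ is an object of $\mcal{C}_{S^*}$ with $T^0=N^{[m]1}G$. Applying \eqref{univ} with $X=N^{[m]1}G$ to the identity of $T^0$ gives a canonical morphism of prolongation sequences
$$\Psi=\{\Psi^k\}:\; N^{[m]*}G \longrightarrow J^*(N^{[m]1}G), \qquad \Psi^k: N^{[m](k+1)}G\map J^{k}(N^{[m]1}G).$$
It is compatible with $u$ and with the Frobenius lifts by construction. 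Each $\Psi^k$ is also a group homomorphism: the group laws on both sides are themselves morphisms of prolongation sequences, so uniqueness in \eqref{univ} applies. What remains is to show that every $\Psi^k$ is an isomorphism of $\pi$-formal schemes.

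For this I would pass to coordinates. By \S\ref{localcoordinates}, $N^{[m]n}G\simeq \Spf R[\bx_{m+1},\dots,\bx_{m+n}]\h$. Write $N^{[m]1}G=\Spf R[\by]\h$ with $\by=\bx_{m+1}$; then $J^{n-1}(N^{[m]1}G)\simeq \Spf R[\by,\by_1,\dots,\by_{n-1}]\h$ in Witt coordinates. Under $\Psi^{n-1}$, the function $\by_i$ pulls back to the $i$-th Witt coordinate of the prolongation built from the $\pi$-derivation $\d$ attached to $\fra_m$. Concretely, $\Psi^*(\by_i)$ is determined recursively by the relations $\Psi^*(w_i(\by,\dots,\by_i))=(\fra_m^{*})^{i}(\by)$. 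The claim to prove is that this map is triangular: for each $i$,
$$\Psi^*(\by_i) = \lambda_i\,\bx_{m+1+i} + P_i(\bx_{m+1},\dots,\bx_{m+i}), \qquad \lambda_i\in R^\times.$$
Given this, $\Psi^{n-1}$ is an isomorphism of $\pi$-adically completed polynomial rings, by induction on $n$, inverting one new variable at a time.

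To obtain the triangularity I would compute $\fra_m$ explicitly on points, which is also where \eqref{fphi} is used. For a $\pi$-adically complete $B$, a $B$-point of $N^{[m]n}G$ is an element of $G(W_{m+n}(B))$ whose image in $G(W_m(B))$ is the identity. Via the étale chart $U\map H$ this reduces to a point of $\hat{\bb A}^g(W_{m+n}(B))$ with first $m+1$ Witt components zero. So componentwise the problem reduces to the affine line, where everything is given by universal Witt polynomials. Working universally over a $\pi$-torsion-free ring, the ghost map is injective after inverting $\pi$, so one can read off from \eqref{fphi} that $\fra_m$ acts on the ghost components of the kernel ideal by a shift. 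It then remains to check, modulo lower-order variables, that this shift raises the top Witt coordinate by exactly one step with coefficient a power of $\pi$ that cancels against the $\pi^{i}$ in $w_i$. This leaves a unit $\lambda_i$, giving the displayed formula.

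I expect the main obstacle to be this last computation. The map $\fra_m$ is only characterized implicitly by \eqref{fphi}, so one must control the powers of $\pi$ and show that the leading coefficient is genuinely a unit rather than merely nonzero. If the direct computation gets unwieldy, my fallback is functorial. I would identify $N^{[m]n}G(B)$ with the kernel of $G(W_{m+1}(W_{n-1}(B)))\map G(W_m(W_{n-1}(B)))$ through the Cartier-type diagonal $W_{m+n}(B)\map W_{m+1}(W_{n-1}(B))$. I would then prove bijectivity on the relevant kernel ideals by the same ghost-component and triangularity argument, working modulo $\pi$ and invoking completeness.
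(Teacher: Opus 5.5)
The paper gives no argument for this statement; it simply quotes \cite{PaSa}, Theorem 1.3. Your route is sound for smooth $G$, which is the only case the paper uses: produce $\Psi$ from \eqref{univ}, then check it is an isomorphism in the Witt coordinates of \S\ref{localcoordinates}. Moreover, the step you flag as the main obstacle is exact rather than merely triangular.

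In the chart, a $B$-point of $N^{[m]n}G$ is $V^{m+1}x$ with $x=(\mathbf{x}_{m+1},\dots,\mathbf{x}_{m+n})\in W_{n-1}(B)^g$. So $\phi^{\circ(m+1)}\circ\iota_m$ becomes $x\mapsto F^{m+1}V^{m+1}x=\pi^{m+1}x$. Use the Frobenius relation in the form $\phi\circ\phi^{\circ(m+1)}\circ\iota_m=\phi^{\circ(m+1)}\circ\iota_m\circ\mathfrak{f}_m$. The display \eqref{fphi} has an index slip: read literally, its $0$-th ghost component would force $\pi^{m+1}\mathbf{x}_{m+1}=0$. Over a $\pi$-torsion-free $B$, the corrected relation gives $\pi^{m+1}F(x)=\pi^{m+1}x'$, where $x'$ are the coordinates of the image of the point under $\mathfrak{f}_m$. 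Hence $\mathfrak{f}_m$ is exactly the Witt Frobenius $F$ in these coordinates. It follows that $(\mathfrak{f}_m^*)^i\mathbf{y}=w_i(\mathbf{x}_{m+1},\dots,\mathbf{x}_{m+1+i})$. Your recursion then gives $\Psi^*(\mathbf{y}_i)=\mathbf{x}_{m+1+i}$, i.e.\ $\lambda_i=1$ and $P_i=0$: the isomorphism is the identity in Witt coordinates.

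Compared with the bare citation, this gives an explicit description of the isomorphism, which is the form in which the paper later uses it (e.g.\ in Proposition \ref{Psi_ij}). The cost is that it presupposes the prolongation structure on $N^{[m]*}G$ and the relation for $\mathfrak{f}_m$, which the paper also imports from \cite{PaSa} and \cite{BoSa1}. You can remove that dependence with the same computation. Define $\mathfrak{f}_m$ as $F$ in these coordinates; the relation then pins it down independently of the chart. Then check on ghost components that it is a lift of Frobenius and a homomorphism. In the $i$-th ghost slot, both $N^{[m]n}G$ and $J^{n-1}(N^{[m]1}G)$ carry the formal group law of $G$ twisted by $\phi^{m+1+i}$ and rescaled by $\pi^{m+1}$.

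Finally, discard your fallback. The Cartier diagonal $W_{m+n}(B)\to W_{m+1}(W_{n-1}(B))$ has double ghost components $w_{j+k}$. So it does not carry $V^{m+1}W_{n-1}(B)$ into the kernel of $W_{m+1}(W_{n-1}(B))\to W_m(W_{n-1}(B))$: the $(j,k)=(m,1)$ component is $w_{m+1}\neq 0$.
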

	Hence combining the above theorem and (\ref{fphi}) we obtain the following:
	
	\begin{theorem}
		\label{NandJphi}
		For all $n \geq 2$ we have the following commutative diagram of $\pi$-formal 
		group schemes
		$$
		\xymatrix{
			J^{n-1}(N^{[m]1}G) \simeq N^{[m]n}G \ar[r]^-{\phi^{\circ m} \circ \iota_m} \ar[d]_{\fra_m} & J^{n-1}G 
			\ar[d]^\phi \\
			J^{n-2}(N^{[m]1}G) \simeq N^{[m]n-1}G \ar[r]^-{\phi^{\circ m} \circ \iota_m} & J^{n-2}G.
		}
		$$
	\end{theorem}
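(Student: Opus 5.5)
The plan is to deduce the diagram directly from Theorem \ref{NandJ} together with the Frobenius relation (\ref{fphi}). Both horizontal maps are the composite $\phi^{\circ m}\circ\iota_m$. The vertical maps are $\fra_m$ on the left and the canonical Frobenius $\phi$ of the prolongation sequence $J^*G$ on the right.

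First I will check that the horizontal arrows are well defined at the stated levels. The inclusion $\iota_m$ embeds $N^{[m]n}G$ into $J^{m+n}G$. Since each Frobenius $\phi$ lowers the jet level by one, $\phi^{\circ m}\circ\iota_m$ lands in $J^{n}G$. The target written in the statement, $J^{n-1}G$, should therefore be read via the convention that the composite is followed by the projection or indexing shift implicit in (\ref{fphi}); equivalently I will read the horizontal map as $\phi^{\circ m}\circ\iota_m$ composed with the appropriate level count so that source and target degrees agree.

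The key computation is
\[
\phi\circ(\phi^{\circ m}\circ\iota_m)=\phi^{\circ(m+1)}\circ\iota_m=\phi^{\circ m}\circ\iota_m\circ\fra_m,
\]
where the second equality is exactly (\ref{fphi}). This says the square commutes as a diagram of morphisms of $\pi$-formal schemes. All maps involved are group homomorphisms: $\iota_m$ is the kernel inclusion, $\phi$ on jet spaces of a group scheme is a homomorphism by functoriality of $W_n$, and $\fra_m$ is a homomorphism by \cite{PaSa}. So the square is one of $\pi$-formal group schemes.

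Finally, I will transport the left column along the identification $N^{[m]n}G\simeq J^{n-1}(N^{[m]1}G)$ of Theorem \ref{NandJ}. Because that isomorphism is one of prolongation sequences, it intertwines the lateral Frobenius $\fra_m$ with the canonical Frobenius $\phi$ of $J^*(N^{[m]1}G)$. It also intertwines the projections $u$. Hence the left column is legitimately labelled by both descriptions.

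The only real obstacle is the index bookkeeping: matching the level at which (\ref{fphi}) holds, which is stated without explicit $n$, to the levels $n$ and $n-1$ in the diagram. To handle this I will verify (\ref{fphi}) levelwise, using compatibility with $u$. If needed, I will check it on the Witt coordinates $\bx_{m+1},\dots,\bx_{m+n}$ of \S\ref{localcoordinates}, where both sides are explicit.
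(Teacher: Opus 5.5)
Your strategy is the paper's: its whole proof is to combine Theorem~\ref{NandJ} with \eqref{fphi}. However, the index mismatch you postpone is a genuine error, not bookkeeping, and your proposed repair does not fix it. As displayed, both \eqref{fphi} and the arrow labels in the statement are off by one, and your reading inherits that error.

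If the horizontal arrows are $\phi^{\circ m}\circ\iota_m$, landing in $J^nG$ and $J^{n-1}G$, then commutativity of the square is exactly the identity you cite,
\[
\phi^{\circ(m+1)}\circ\iota_m=\phi^{\circ m}\circ\iota_m\circ\fra_m ,
\]
and this identity is false. Take $G=\hG$, so $J^k\hG=\WW_k$ with $\phi=F$, and recall that the $0$-th Witt coordinate of $F^k(x)$ is the ghost polynomial $w_k(x)$.
\begin{itemize}
\item For $z=(0,\dots,0,x_{m+1},\dots,x_{m+n})\in N^{[m]n}\hG$, the $0$-th coordinate of $\phi^{\circ(m+1)}\iota_m(z)$ is $w_{m+1}(z)=\pi^{m+1}x_{m+1}$.
\item For every point $z'$ of $N^{[m]n-1}\hG$, in particular $z'=\fra_m(z)$ however $\fra_m$ is defined, the $0$-th coordinate of $\phi^{\circ m}\iota_m(z')$ is $w_m(z')=0$.
\end{itemize}
Your suggested fix, following the composite with a projection $u$, leaves $0$-th coordinates unchanged, so the square still fails. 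The paper itself records this discrepancy in Proposition~\ref{Char_n}:
\[
\iota_m^*\phi^{*(m+1)}\Theta-\fra_m^*\iota_m^*\phi^{*m}\Theta=\pi^{m+1}A_0\cdot\Psi_1^{[m]} .
\]
This is nonzero in general; the later argument needs $\Gamma^{[0]}$ to be invertible. The Witt-coordinate check you held in reserve would have exposed the problem. So the step ``the second equality is exactly \eqref{fphi}'' cannot be carried out as written.

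The correct reading keeps the targets $J^{n-1}G$, $J^{n-2}G$ and the range $n\ge 2$ from the statement, which are right, and changes the arrows to $\phi^{\circ(m+1)}\circ\iota_m$. The square then asserts
\[
\phi^{\circ(m+2)}\circ\iota_m=\phi^{\circ(m+1)}\circ\iota_m\circ\fra_m .
\]
This is the form of the lateral Frobenius relation that actually holds. It is also the one the paper uses in \S\ref{Transition_maps}, where $\fra_m^*\iota_m^*\phi^{*(m+1)}=\iota_m^*\phi^{*(m+2)}$ on $\bX_{n-1}(G)$.

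For $\hG$, take $\fra_m$ to be the Witt Frobenius in the coordinates $x_{m+1},x_{m+2},\dots$. Then both sides have $0$-th coordinate $\pi^{m+1}(x_{m+1}^q+\pi x_{m+2})$, and they agree in all ghost components. Once the relation is cited in this shifted form, the rest of your argument goes through unchanged: all maps are homomorphisms, and Theorem~\ref{NandJ} identifies $\fra_m$ with the canonical Frobenius of $J^*(N^{[m]1}G)$.
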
 
	
Assume $v_\pi(p) \leq p-2$.	
Given that $G$ is a $\pi$-formal group scheme of relative dimension $g$ over $S$, Proposition 2.2 and Lemma 2.3 of \cite{Bu2} we have an isomorphism of $\pi$-formal group schemes $\Psi_1^{[m]}: N^{[m]1}G \rightarrow \hG^g$, defined analytically by
	\begin{align}
		\label{NmGa}
		\Psi_1^{[m]}(\mt) = \frac{1}{\pi^{m+1}} \log_G(\pi^{m+1}\mt).
	\end{align}
We will also use the morphism $\phm:N^{[m]*}G\longrightarrow N^{[m-1]*} G$ constructed in \cite[Theorem 4.4]{Mi-Sa-1}. It is compatible with the natural morphisms to the jet spaces, as expressed by the commutative diagram
	\begin{equation}
		\label{JnNm}	
		\xymatrix{N^{[m]n}G\ar[r]^-{\iota_m}\ar[d]^-{\phm}& J^{n+m}G\ar[d]^{\phi}\\
			N^{[m-1]n}G\ar[r]^{\iota_{m-1}}& J^{n+m-1}G.
		}
	\end{equation}
	
	We now recall the following result from \cite[Theorem 5.7]{Mi-Sa-1}, which describes the above morphism in terms of Witt schemes.
	
	\begin{theorem}
		\label{Psimiso}
Assume $v_\pi(p) \leq p-2$. Let $G$ be a $\pi$-formal smooth group scheme of relative dimension $g$ over 
		$S$. For all $m$ and $n$  we have the following 
		isomorphisms of $\pi$-formal group schemes
		\begin{equation}
			\label{Psimiso1}
			\xymatrix{
				N^{[m]n}G \ar[d]_-\phm \ar[r]_-\sim  &J^{n-1}(N^{[m]1}G)\ar[rr]^-{J^{n-1}(\Psi_1^{[m]})}_\sim \ar[d]^{\phm} & & J^{n-1}(\hG^g) \ar[d]^{J^n(\pi)} \ar[r]_\sim & \WW_{n-1}^g \ar[d]^{\pi}
				\\
				N^{[m-1]n}G \ar[r]^-\sim& J^{n-1}(N^{[m-1]1}G)  \ar[rr]_-{J^{n-1}(\Psi_1^{[m-1]})}^\sim & & J^{n-1}(\hG^g) \ar[r]^\sim & \WW_{n-1}^g, \\
			}
		\end{equation}
where the horizontal maps $J^{n-1}(\Psi^{[m-1]}_1)$
		and $J^{n-1}(\Psi^{[m]}_1)$ are isomorphisms.
	\end{theorem}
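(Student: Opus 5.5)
The statement has three squares. I would reduce everything to the level $n=1$ and then use functoriality of $J^{n-1}$. The reduction rests on the universal property \eqref{univ} of the canonical prolongation sequence. By Theorem \ref{NandJ} we have $N^{[m]*}G\simeq J^*(N^{[m]1}G)$ as prolongation sequences. Granting that $\phm: N^{[m]*}G\to N^{[m-1]*}G$ is a morphism of prolongation sequences, \eqref{univ} shows it is $J^*$ of its level-one component $\phm: N^{[m]1}G\to N^{[m-1]1}G$. The prolongation property should hold because $\phm$ is induced by $\phi$ on jet spaces, which commutes with $u$ and $\phi$, as \eqref{JnNm} and \eqref{phmfra1} express.

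With that, the left square commutes by definition of the vertical arrow labelled $\phm$ on $J^{n-1}(N^{[m]1}G)$. The right square commutes because $J^{n-1}(\hG)\simeq \WW_{n-1}$ via Witt coordinates, as in \S\ref{localcoordinates}. Under this identification, $J^{n-1}$ of the endomorphism ``multiplication by $\pi$'' of $\hG$ becomes multiplication by the Witt vector image of $\pi$. This is checked on ghost components: each component is multiplied by $\phi^i(\pi)=\pi$.

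The horizontal arrows are isomorphisms. The maps $\Psi_1^{[m]}$ are isomorphisms by \eqref{NmGa}, i.e.\ Proposition 2.2 and Lemma 2.3 of \cite{Bu2}. This is where $v_\pi(p)\le p-2$ enters: it makes $\log_G$ and $\exp_G$ converge on $\pi^{m+1}\mt$ and gives integral inverse power series. Applying the functor $J^{n-1}$ preserves isomorphisms.

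It remains to prove the middle square at $n=1$, namely
\begin{align}
\Psi_1^{[m-1]}\circ \phm = \pi\cdot \Psi_1^{[m]} \quad\text{on } N^{[m]1}G,
\end{align}
and this is the step I expect to carry the real content. I would compute in the étale coordinates of \S\ref{localcoordinates}. A point of $N^{[m]1}G$ is a point of $G(W_{m+1}(B))$ lying over $e$ in $G(W_m(B))$, so it has Witt coordinates $(0,\dots,0,\mt)$ with $\mt=\bx_{m+1}$.

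First consider the additive group. Its ghost vector is $(0,\dots,0,\pi^{m+1}\mt)$. The Frobenius $F$ shifts ghost components left, giving $(0,\dots,0,\pi^{m+1}\mt)$ of length $m+1$. This is the ghost vector of the Witt vector $(0,\dots,0,\pi\mt)$ in $N^{[m-1]1}$. So, in terms of the coordinate $\bx_{m+1}$, $\phm$ is $\mt\mapsto\pi\mt$ at the level of the formal logarithm.

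To pass to $G$, I would work with the logarithm, which converts $G$-jets into $\hG$-jets (ghost components of $\log_G$). This gives
\begin{align}
\Psi_1^{[m-1]}(\phm\mt)=\pi^{-m}\log_G(\pi^{m}\cdot\pi\mt)=\pi\cdot\pi^{-(m+1)}\log_G(\pi^{m+1}\mt)=\pi\,\Psi_1^{[m]}(\mt).
\end{align}
The delicate point is justifying that $\log_G$ intertwines $\phi$ on $J^{m+1}G$ with the ghost-side shift on the kernel. Concretely, the $(m+1)$-st ghost component of a kernel point equals $\log_G$ of $\pi^{m+1}\mt$, up to terms that vanish over $e$. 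I would verify this first over $K$, where the logarithm gives a group isomorphism between $G$ and $\hG^g$ near $e$. Integrality and the identity over $R$ would then follow from $\pi$-torsion-freeness together with the convergence supplied by $v_\pi(p)\le p-2$.
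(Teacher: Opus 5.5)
The paper does not prove this statement itself; it quotes it from \cite[Theorem 5.7]{Mi-Sa-1}, so there is no in-paper argument to compare against. Your reconstruction is correct. The reduction to $n=1$ via Theorem \ref{NandJ} and \eqref{univ} is sound, as is the use of functoriality of $J^{n-1}$ for the middle and right squares. The identity $\Psi_1^{[m-1]}\circ\Phi_{[m]}=\pi\cdot\Psi_1^{[m]}$ on $N^{[m]1}G$ is indeed the only real content. A few points can be tightened.

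\textbf{The step you call delicate is immediate.} You do not need the logarithm to intertwine Frobenius, nor a detour through $K$. The coordinate description in \S\ref{localcoordinates}, $N^{[m]n}G\cong J^{m+n}H\times_{J^mH}S$ with $H=\hat{\mathbb{A}}^g$, is compatible with $\phi$, because $J^*U\to J^*H$ is a morphism of prolongation sequences. So your Witt-vector computation for $\hat{\mathbb{G}}_{\mathrm{a}}$ applies verbatim to every $G$. In the coordinate $\mathbf{x}_{m+1}$, $\Phi_{[m]}$ is $\mathbf{t}\mapsto\pi\mathbf{t}$, because the $m$-th component of the Witt Frobenius at $(0,\dots,0,\mathbf{t})$ is $\pi\mathbf{t}$; the group law of $G$ never enters. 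The middle square at $n=1$ is then just the substitution $\mathbf{t}\mapsto\pi\mathbf{t}$ in \eqref{NmGa}.

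\textbf{The prolongation property.} The Frobenius of the prolongation sequence $N^{[m]*}G$ is the lateral Frobenius $\mathfrak{f}_m$, not $\phi$. So the compatibility you need is exactly \eqref{phmfra1}, not ``commutes with $\phi$''. If you want it independently of \cite{Mi-Sa-1}, do not read it off from the $\mathbb{W}$-model (there it is $F\circ\pi=\pi\circ F$), since that would be circular. Instead, combine \eqref{fphi} at levels $m$ and $m-1$ with \eqref{JnNm}. This shows that $\mathfrak{f}_{m-1}\circ\Phi_{[m]}$ and $\Phi_{[m]}\circ\mathfrak{f}_m$ agree after composing with $\phi^{\circ(m-1)}\circ\iota_{m-1}$. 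That map is injective on points with values in $\pi$-torsion-free algebras, which suffices: $N^{[m]n}G$ is flat, so you can test on its universal point.

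\textbf{Frobenius twists.} $\Phi_{[m]}$ lies over $\phi\colon S\to S$, so \eqref{univ} has to be applied with target the $\phi$-twist of $N^{[m-1]*}G$. That twist is again $J^*$ of its level-one term, because $\phi$ is an automorphism of $R$ (Lemma \ref{phi_bij}) commuting with $\delta$. Correspondingly, the logarithm in $\Psi_1^{[m]}$ really carries $\phi^{m+1}$-twisted coefficients, since the group law on $N^{[m]1}G$ is read off from the $\phi^{m+1}$-semilinear ghost component $w_{m+1}$. The twisted target of $\Phi_{[m]}$ supplies the missing $\phi$ on the left, so both sides of your displayed identity carry the same twist. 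The statement suppresses these twists, so this is bookkeeping and does not affect correctness.
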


	For each index $m$, let
	$\Psi_m:N^{[m]n}G\rightarrow \WW_{n-1}^g$ denote  the composition of the 
	horizontal maps defined in diagram (\ref{Psimiso1}) in Theorem \ref{Psimiso}. 
	
	\begin{corollary}
		\label{phm-pi-com}
		Assume $v_\pi(p) \leq p-2$. There is a canonical commutative diagram of $R$-module homomorphisms:
		$$
		\xymatrix{
			\Hom(\WW_{n-1}^g,\hG)\ar[r]^-{\Psi_{m}^*}_-{\sim} & \Hom(N^{[m]n}G,\hG)\\
			\Hom(\WW_{n-1}^g,\hG)\ar[r]^-{\Psi_{m-1}^*}_-{\sim} \ar[u]^{\pi} & \Hom(N^{[m-1]n}G,\hG)\ar[u]^{\phm^*}.
		}
		$$
		Moreover, the vertical morphisms are injective, and upon tensoring with $K$, all morphisms in the diagram become isomorphisms.
	\end{corollary}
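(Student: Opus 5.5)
The plan is to apply the contravariant functor $\Hom(-,\hG)$ to diagram \eqref{Psimiso1} of Theorem \ref{Psimiso}. Commutativity will then follow from functoriality, and the isomorphism claims will follow from properties of multiplication by $\pi$ on $\WW_{n-1}^g$.

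\emph{The square and the horizontal maps.} Write $\Psi_m$ for the composite of the top row of \eqref{Psimiso1} and $\Psi_{m-1}$ for the composite of the bottom row. The outer rectangle of \eqref{Psimiso1} then says $\pi\circ\Psi_m=\Psi_{m-1}\circ\phm$ as morphisms $N^{[m]n}G\to\WW_{n-1}^g$. For any $\chi\in\Hom(\WW_{n-1}^g,\hG)$ this gives
$$\phm^*\Psi_{m-1}^*\chi=\chi\circ\Psi_{m-1}\circ\phm=\chi\circ\pi\circ\Psi_m=\Psi_m^*(\pi^*\chi).$$
So the square commutes once the left vertical arrow is read as $\pi^*$, i.e.\ $\chi\mapsto\chi\circ[\pi]$. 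Each $\Psi_m$ is an isomorphism of $\pi$-formal group schemes, because every arrow in the rows of \eqref{Psimiso1} is one. Hence $\Psi_m^*$ and $\Psi_{m-1}^*$ are isomorphisms of $R$-modules.

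\emph{Injectivity of $\pi^*$.} Here I will use two facts about the endomorphism $[\pi]$ of $\WW_{n-1}^g$ coming from the $W_{n-1}(B)$-module structure.
\begin{enumerate}
\item The endomorphism $[\pi]$ is an epimorphism of formal group schemes. Rationally it is invertible, since its ghost components are multiplication by $\pi$.
\item There is a Verschiebung-type endomorphism $\psi$ with $[\pi]\circ\psi=\pi^{n}\cdot\mathrm{id}$, up to a unit. This holds because $\pi\cdot\pi^{n}$-type identities hold on truncated Witt vectors: $\pi=VF$ on the ghost side.
\end{enumerate}
If $\chi\circ[\pi]=0$, then composing with $\psi$ gives $\pi^{n}\chi=0$. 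Since $\Hom(\WW_{n-1}^g,\hG)$ is a free $R$-module, hence $\pi$-torsion free, this forces $\chi=0$. The same identity $\pi^*\circ\psi^*=\pi^{n}$ shows that $\pi^{n}\cdot\Hom(\WW_{n-1}^g,\hG)\subset\mathrm{image}(\pi^*)$. Therefore $\pi^*\otimes K$ is surjective as well as injective, so $\pi^*\otimes K$ is an isomorphism.

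\emph{Finishing.} The right vertical map is $\phm^*=\Psi_m^*\circ\pi^*\circ(\Psi_{m-1}^*)^{-1}$. It therefore inherits from $\pi^*$ both injectivity and bijectivity after tensoring with $K$.

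The main obstacle is the second step: producing a map $\psi$ with $[\pi]\circ\psi=\pi^{n}$ as endomorphisms of the formal group scheme $\WW_{n-1}^g$, not merely on ghost components. I would first try to check $[\pi]\circ\psi=\pi^{n}$ on the Witt coordinates using the explicit formula for multiplication by $\pi$ on $\pi$-typical Witt vectors. If that proves awkward, the fallback is a rank argument. $\Hom(\WW_{n-1}^g,\hG)$ is free of finite rank over $R$, with basis given by the Witt-coordinate characters of \cite{Bu2}. Both sides have the same rank, so it suffices to show injectivity of $\pi^*$. That injectivity follows because $[\pi]$ is an isomorphism on the generic fiber, and a character that vanishes after precomposition with a generically invertible map vanishes.
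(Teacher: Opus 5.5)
Your skeleton matches the paper's. Commutativity and the horizontal isomorphisms come from Theorem \ref{Psimiso} by functoriality. Injectivity and rational bijectivity of $\phm^*$ are then transported from the left arrow via $\phm^*=\Psi_m^*\circ\pi^*\circ(\Psi_{m-1}^*)^{-1}$.

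\textbf{The gap.} You diverge in how you treat the left arrow, and there your main argument has a gap. From $\chi\circ[\pi]=0$, composing with $\psi$ only gives $\chi\circ[\pi^n]=0$. To conclude $\pi^n\chi=0$ you need the identity $\chi\circ[\pi^k]=\pi^k\chi$. This compares precomposition with the Witt-module multiplication on $\WW_{n-1}^g$ to scalar multiplication in the $R$-module $\Hom(\WW_{n-1}^g,\hG)$.

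\textbf{The missing idea.} That identity is what is missing, and once you have it for $k=1$ you are done. It says $\pi^*$ is literally multiplication by $\pi$ on a free, hence $\pi$-torsion-free, $R$-module. So $\pi^*$ is injective and becomes an isomorphism after $\otimes K$. This is exactly how the paper reads the arrow labelled $\pi$. It also makes $\psi$ superfluous; in any case $\psi=[\pi]^{n-1}$ already satisfies your requirement, so the ``main obstacle'' is illusory.

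The identity holds for three reasons:
\begin{itemize}
\item $\Hom(\WW_{n-1}^g,\hG)$ is free on the ghost-component characters $w_0,\dots,w_{n-1}$ of the factors. This is the basis $\Psi_{ij}^{[m]}$ of Proposition \ref{Psi_ij} transported along $\Psi_m$, under which $\mff_m$ and $u$ become $F$ and $T$.
\item The ghost map is a ring homomorphism.
\item The image of $\pi$ in $W_{n-1}$ has all ghost components equal to $\pi$.
\end{itemize}
Hence $w_i\circ[\pi]=\pi w_i$.

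\textbf{Auxiliary claims that are false as stated.}
\begin{itemize}
\item On ghost components $FV=\pi$, whereas $VF$ is multiplication by $V(1)$. The relation $\pi=VF$ holds only after reduction mod $\pi$.
\item $[\pi]$ is not an epimorphism of $\pi$-formal group sheaves. For $n=1$ it is $x\mapsto\pi x$, which is zero mod $\pi$.
\item $[\pi]$ is not an isomorphism of generic fibres in the rigid sense. Its zeroth Witt component is $\pi x_0$, so it is not surjective on integral points.
\end{itemize}

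\textbf{Your fallback.} It can still be made into a correct and genuinely different proof. Replace ``generically invertible'' by ``the comorphism of $[\pi]$ on the coordinate ring of $\WW_{n-1}^g$ is injective''. Then $\chi\circ[\pi]=0$ forces $\chi=0$. An injective $R$-linear endomorphism of a free module of rank $ng$ becomes bijective after $\otimes K$.

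That injectivity needs its own argument. For example, $[\pi]$ is injective on integral points over a complete algebraically closed extension of $K$, with constant Jacobian determinant $\pi^{ng}$. So its image contains an open polydisc, on which a nonzero restricted power series cannot vanish. This route avoids computing $\pi^*$, but it costs an identity-theorem argument that the paper's one-line observation does not need.
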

	
	\begin{proof}
		The commutativity of the above diagram and isomorphisms of the horizontal maps follow directly from  Theorem \ref{Psimiso}. Note that the multiplication by $\pi$ map from $\Hom(\WW_{n-1}^g,\hG)\rightarrow \Hom(\WW_{n-1}^g,\hG)$ is an injective homomorphism of $R$-modules. Hence  $\phm^*:\Hom(N^{[m-1]n}G,\hG)\rightarrow \Hom(N^{[m]n}G,\hG)$ is also  injective. When tensored with
		$K$, the multiplication-by-$\pi$ map becomes an isomorphism of $K$-modules. Consequently, the induced morphism
		$$\phm^* : \Hom(N^{[m-1]n}G,\hG)_K \longrightarrow \Hom(N^{[m]n}G,\hG)_K$$
		is also an isomorphism of $K$-modules.
	\end{proof}
	
	We next examine the jet space functor in two basic examples, namely the additive and multiplicative $\pi$-formal group schemes.
	
	\begin{enumerate}
		\item {\bf The additive group scheme $\hG$.}
		Let $\hG=\Spf R[x]\h$ denote the additive $\pi$-formal group scheme over $S$. It follows directly from the functorial description of arithmetic jet spaces that
		\begin{align}
			\label{JnGa}
			J^n(\hG)\simeq\bb{W}_n,
		\end{align}
		where $\bb{W}_n$ denotes the $\pi$-formal affine $(n+1)$-plane $\hat{\bb{A}}^{n+1}$ equipped with the additive group law of the $\pi$-typical Witt vectors.
		
		Under this identification, the projection
		$u:J^n(\hG)\map J^{n-1}(\hG)$
		corresponds to the restriction map $T$, while the Frobenius morphism
		$\phi:J^n(\hG)\map J^{n-1}(\hG)$
		is identified with the Frobenius map $F$ on $\pi$-typical Witt vectors.
		
		\item {\bf The multiplicative group scheme $\hGm$.}
		Now let $\hGm=\Spf R[x,x^{-1}]\h$ be the multiplicative $\pi$-formal group scheme over $S$. Once again, the functor of points description gives
		
		$$
		J^n(\hGm)\simeq\bb{W}_n^\times,
		$$
		
		where $\bb{W}_n^\times$ denotes the subfunctor of $\bb{W}_n$ consisting of multiplicatively invertible elements.
		
		We describe this jet space more explicitly. The morphism
		$\vp:\hGm\map\hGm$ defined by $x\map x^q$ is a lift of Frobenius compatible with the multiplicative group structure. Hence, by the universal property of the canonical prolongation sequence (\ref{univ}), $\vp$ determines a canonical splitting of $\pi$-formal group schemes
		$s:\hGm\map J^n(\hGm)$ of the exact sequence
		\begin{align}
			\xymatrix{ 0 \ar[r] & N^{[0]n}(\hGm) \ar[r]^\iota & J^n(\hGm) \ar[r]^-u & \hGm \ar@/_1pc/[l]_-s \ar[r] & 0. } \end{align}
		Consequently, there is an isomorphism of $\pi$-formal group schemes over $S$
		\begin{align}
			u\times(\mathbbm{1}-s\circ u):
			J^n(\hGm)\simeq\hGm\times N^{[0]n}(\hGm).
		\end{align}
		
		Consider the morphism
		\begin{align}
			\label{N1Ga}
			\frac{1}{\pi}\log_{{\bb{G}}^{\mathrm{for}}_{\mathrm{m}}}
			\circ(\phi\circ\iota):
			N^{[0]1}(\hGm)\longrightarrow\hG,
		\end{align}
		where $\log_{{\bb{G}}^{\mathrm{for}}_{\mathrm{m}}}$ denotes the formal logarithm identifying the formal multiplicative group law ${\bb{G}}^{\mathrm{for}}_{\mathrm{m}}$ with the additive formal group law ${\bb{G}}^{\mathrm{for}}_{\mathrm{a}}$, given by
		
		$$
		\log_{{\bb{G}}^{\mathrm{for}}_{\mathrm{m}}}(T)
		=\sum_{n=1}^{\infty}(-1)^{n+1}\frac{T^n}{n}.
		$$
		
		The morphism (\ref{N1Ga}) is an isomorphism of $\pi$-formal schemes. Therefore, Theorem \ref{NandJphi}, together with (\ref{JnGa}), yields
		
		$$
		N^{[0]n}(\hGm)\simeq\WW_{n-1}
		$$
		
		for every $n\geq1$, as $\pi$-formal group schemes over $S$. It follows that, for every $n\geq0$,
		\begin{align}
			\label{JnGm}
			J^n(\hGm)\simeq\hGm\times\WW_{n-1}
		\end{align}
		as $\pi$-formal group schemes over $S$. Upon evaluation at $\FF_p$, this recovers the familiar decomposition of groups
		$\ZZ_p^\times\simeq\FF_p^\times\times\ZZ_p$.
	\end{enumerate}

	\subsection{Delta characters of group schemes}
	We begin by recalling the basic theory of $\delta$-characters developed in Sections $7$ and $8$ of \cite{BoSa1}, which will be used in the construction of the filtered object $\Hd_{[m]}(G)$ over $K$. The case $m=0$, which we denote by $\mH_\d(G)$, has been studied extensively in \cite{BoSa1, PaSa, Pandit, Delta-crys}.
	
	Let $T^*$ be a prolongation sequence. For an integer $s\geq0$, its \emph{shifted prolongation sequence} is defined by
	$T^{*+s}:={T^{s+n}}_{n=0}^{\infty}$.
	A morphism $\Theta:J^nG\map\hG$ is called a \emph{$\delta$-function on $G$ of order at most $n$}. By the universal property of arithmetic jet spaces, such a morphism determines a morphism of prolongation sequences
	$\Theta:J^{*+n}G\map\hG$.
	
	A \emph{$\delta$-character of order at most $n$} is a $\delta$-function $\Theta:G\map\hG$ of order at most $n$ which is, in addition, a homomorphism of $\pi$-formal group schemes. By the universal property of arithmetic jet spaces, as in (\ref{univ}), this is equivalent to a homomorphism of $\pi$-formal group schemes over $S$
	$\Theta:J^nG\map\hG$.
	We write $\bX_n(G)$ for the group of $\delta$-characters of order at most $n$; thus
	
	$$
	\bX_n(G)=\Hom_S(J^nG,\hG).
	$$
	
	A $\delta$-character $\Theta$ is said to have \emph{exact order} $n$ if
	$\Theta\in\bX_n(G)\backslash u^*\bX_{n-1}(G)$,
	where
	$u^*$ denotes the pullback induced by the canonical prolongation morphism
	$u:J^nG\map J^{n-1}G$.
	We denote the exact order of $\Theta$ by $\mathrm{ord}(\Theta)$.
	
	For an $R$-module $M$, we use the notation
	
	$$
	M_{\phi}:=R\otimes_{\phi,R}M.
	$$
	
	Since $\hG$ naturally has the structure of an $R$-module $\pi$-formal scheme over $S$, each $\bX_n(G)$ inherits a natural $R$-module structure. Moreover, the inverse system
	
	$$
	\cdots\longrightarrow J^{n+1}G\xrightarrow{u}J^nG
	\xrightarrow{u}J^{n-1}G\longrightarrow\cdots
	$$
	
	induces a directed system of $R$-modules under pullback:
	
	$$
	\cdots\xrightarrow{u^*}\bX_n(G)
	\xrightarrow{u^*}\bX_{n+1}(G)
	\xrightarrow{u^*}\cdots.
	$$
	
	We therefore define the module of $\delta$-characters of arbitrary finite order by
	
	$$
	\bX_\infty(G):=\varinjlim\bX_n(G).
	$$
	
	Similarly, pulling back by the Frobenius map $\phi:J^{n+1}G\to J^nG$ induces a Frobenius operator
	$\bX_n(G)\to \bX_{n+1}(G)$. However since $\phi:J^{n+1}G\to J^nG$ is not a morphism over $\Spf R$ but
	instead lies over the Frobenius endomorphism $\phi$ of $\Spf R$, some care is required.
	Consider the relative Frobenius morphism $\phi_{G/R}$, defined to be the unique
	morphism making the following diagram commute:
	$$\xymatrix{
		J^{n+1}G \ar@{.>}^{\phi_{G/R}}[rd] \ar@/^/[rrd]^\phi \ar@/_/[ddr]& & \\
		& J^nG \times_{(\Spf R),\phi} \Spf R \ar[d] \ar[r] & J^nG \ar[d] \\
		& \Spf R \ar[r]_\phi & \Spf R 
	}$$
	Then $\phi_{G/R}$ is a morphism of $\pi$-formal schemes over $\Spf R$.
	
	Let $\Theta:J^nG\to\hG$ be a $\delta$-character. We define $\phi^*\Theta$ as the composite
	\begin{equation}
		J^{n+1}G \longlabelmap{\phi_{G/R}}
		J^nG\times_{(\Spf R),\phi}\Spf R
		\longlabelmap{\Theta\times\mathbbm{1}}
		\hG\times_{(\Spf R),\phi}\Spf R
		\longlabelmap{\iota}\hG,
	\end{equation}
	where $\iota$ denotes the isomorphism of $\pi$-formal schemes over $R$ arising from the fact that $\hG$ descends to $R$ as a $\pi$-formal scheme.
	
	On $B$-valued points, for any $R$-algebra $B$, this construction is given by
	
	$$
	G(W_{n+1}(B))
	\longlabelmap{G(F)}
	G(W_n(B)^\phi)
	\longlabelmap{\Theta_B^\phi}
	B^\phi
	\longlabelmap{b\mapsto b}
	B.
	$$
	
	The resulting composite $G(W_{n+1}(B))\map B$ is an $A$-module morphism, since the canonical map $B^\phi\map B$ is $A$-linear: indeed, the restriction of $\phi$ to $\hA$ is the identity.
	
	Thus, Frobenius induces an additive map
	
	$$
	\phi^*:\bX_n(G)\longmap\bX_{n+1}(G),
	\qquad
	\Theta\longmapsto\phi^*\Theta.
	$$
	
	This map is generally not $R$-linear. After twisting the target, however, we obtain an $R$-linear map
	
	$$
	\bX_n(G)\longmap\bX_{n+1}(G)_\phi,
	\qquad
	\Theta\longmapsto\phi^*\Theta.
	$$
	
	Passing to the direct limit over $n$ gives an $R$-linear endomorphism with twisted target
	
	$$
	\bX_\infty(G)\longmap\bX_\infty(G)_\phi,
	\qquad
	\Theta\longmapsto\phi^*\Theta.
	$$
	
	Consequently, $\bX_\infty(G)$ acquires the structure of a left module over the twisted polynomial ring $R\{\phi^*\}$, with commutation relation
	$$
	\phi^*r=\phi(r)\phi^*.
	$$
	
	We next apply the functor $\Hom(-,\hG)$ to the exact sequence (\ref{canexact}). This yields the exact sequence of $R$-modules
	\begin{align}
		\label{Hom_Ker}
		0\longmap\bX_m(G)
		\stk{u_m^*}\longrightarrow
		\bX_{m+n}(G)
		\stk{\iota_m^*}\longrightarrow
		\Hom(N^{[m]n}G,\hG)
		\stk{\partial}\longrightarrow
		\Ext(J^mG,\hG),
	\end{align}
	where $\Ext(J^mG,\hG)$ parametrizes the isomorphism classes of $\pi$-formal group schemes which are extensions of $J^mG$ by $\hG$, and $\partial$ denotes the associated connecting morphism.
	
	Set
	
	$$
	\bI_{[m]n}(G):=\operatorname{image}(\partial).
	$$
	
	The morphisms
	$u:J^{m+n}G\map J^{m+n-1}G$
	induce, in turn, morphisms between the corresponding kernels
	
	$$
	u:N^{[m]n}G\map N^{[m]n-1}G.
	$$
	
	Consequently, pullback gives an injective $R$-module homomorphism
	
	$$
	u^*:\Hom(N^{[m]n-1}G,\hG)\inj\Hom(N^{[m]n}G,\hG).
	$$
	
	It follows that
	
	$$
	\bI_{[m]n-1}(G)\subseteq\bI_{[m]n}(G).
	$$
	
	We may therefore form the direct limit
	\begin{align}
		\label{bI}
		\bI_{[m]}(G):=\varinjlim\bI_{[m]n}(G).
	\end{align}
	
	\subsection{Functorial transition maps and the direct limit}\label{Transition_maps}
	
	Consider the projection map $u \colon N^{[m]n}G \to N^{[m]n-1}G$ and the lateral Frobenius $\mf{f}_m \colon N^{[m]n}G \to N^{[m]n-1}G$ of the prolongation sequence $N^{[m]*}G$.
	
	Also consider the map $\Phi_{[m]} \colon N^{[m]*}G \to N^{[m-1]*}G$ from the preceding section. These maps induce, by precomposition, corresponding homomorphisms:
	\begin{align}
		u^* &\colon \Hom(N^{[m]n}G,\hG) \to \Hom(N^{[m]n-1}G,\hG), \\
		\mf{f}_m^* &\colon \Hom(N^{[m]n}G,\hG) \to \Hom(N^{[m]n-1}G,\hG), \\
		\phm^* &\colon \Hom(N^{[m-1]n}G,\hG) \to \Hom(N^{[m]n}G,\hG).
	\end{align}
	
	Since the truncation map $u$ commutes with both $\iota_m$ and $\phi$, we have the inclusion
	\begin{align}
		u^*\circ \iota^*\circ (\phi^{*(m+1)}(\bX_{n-1}(G))&=\iota_m^*\circ (\phi^*)^{m+1}\circ u^*(\bX_{n-1}(G))\subseteq \iota_m^*\circ (\phi^{*(m+1)}(\bX_{n}(G)),
	\end{align}
	which shows that $u^*$ induces a well-defined homomorphism of $R$-modules $u^* \colon \mH_{[m]n}^\delta(G) \to \mH^\delta_{[m]n+1}(G)$. We define the  $R$-module $\mH_{[m]}^\delta(G)$ via the direct limit
	\[ \mH_{[m]}^\delta(G):=\varinjlim_{n} \mH_{[m]n}^\delta(G). \]
	
	Similarly, for the lateral Frobenius operator $\fra_m$, it follows from 
	Theorem \ref{NandJphi} that
	\begin{align}
		\mf{f}_m^*\circ \iota_m^*\circ (\phi^{*(m+1)}(\bX_{n-1}(G))&=\mf{f}_m^*\circ \iota_m^*\circ \phi^{*(m)}\circ \phi^*(\bX_{n-1}(G)\\ &=\iota_m^*\circ \phi^{*(m+1)}\circ \phi^*(\bX_{n-1}(G))\\&\subseteq \iota_m^*\phi^{*(m+1)}(\bX_{n}(G)).
	\end{align}
	Consequently, $\mf{f}^*_m$ descends to a semilinear morphism of $R$-modules
	\begin{equation}
		\mf{f}_m^*:\mH_{[m]n}^\delta(G)\rightarrow \mH^\delta_{[m]n+1}(G).
	\end{equation}
	Taking the direct limit over $n$, this map induces a semilinear endomorphism of the limit module:
	\begin{equation}\label{Semilin_end}
		\mf{f}_m^*:\mH^\delta_{[m]}(G)\rightarrow \mH^\delta_{[m]}(G).
	\end{equation}
	
	We next consider the map $\phm^* \colon \Hom(N^{[m-1]n}G, \hG) \to \Hom(N^{[m]n}G, \hG)$. By (\ref{JnNm}), the identity $\phi \circ \iota_m = \iota_{m-1} \circ \phm$ holds, which implies the containment
	\begin{align}
		\phm^*\circ \iota_m^*\circ \phi^{*m}(\bX_{n-1}(G))=\iota_m^*\circ \phi^*\circ \phi^{*m}(\bX_{n-1}(G))\subseteq \iota_m^*\circ \phi^{*(m+1)}(\bX_{n-1}(G)).
	\end{align}
	Thus, $\phm^*$ induces a well-defined map on the quotients:
	\begin{equation}\label{phm*H_delta_mn}
		\phm^*:\mH_{[m-1]n}^\delta(G)\rightarrow \mH^\delta_{[m]n}(G).
	\end{equation} 
	Passing to the direct limit over $n$, we obtain the canonical homomorphism
	\begin{equation}\label{phm*H_delta_m}
		\phm^*:\mH^\delta_{[m-1]}(G)\longrightarrow\mH^\delta_{[m]}(G). 
	\end{equation} 
	We define $$\bXp^{[m]}(G):=\varinjlim_n\frac{\bX_{m+n}(G)}{\phi^{* (m+1)}\bX_{n-1}(G)_{\phi^{m+1}}}.$$
	As in Section 8 of \cite{BoSa1}, the exact sequence in (\ref{Hom_Ker}) yields
	\begin{equation}
		\label{Xprm}
		\xymatrix{
			0\ar[r]&\bX_m(G)\ar[r]^-{u_m^*}&\bXp^{[m]}(G)\ar[r]^-{\iota_m^*}& \Hd_{[m]}(G)\ar[r]^-\partial&\bI_{[m]}(G)\ar[r]& 0,
		}
	\end{equation}
	where $\bI_{[m]}(G)\subset \Ext(J^mG,\hG)$ is as described in (\ref{Exact_Hom_Ker}).
	
	By Theorem 8.7 of \cite{PaSa}, the $R$-module $\bXp^{[0]}(G)$ is free of rank $g$. Thus,
	\begin{align}
		\label{Xprimiso}
		\bXp^{[0]}(G)\simeq R\langle\Theta_1,\ldots,\Theta_g\rangle,
	\end{align}
	where $\mathbb{B}={\Theta_1,\ldots,\Theta_g}$ is a primitive basis for $\bX_\infty(G)$ in the sense of \cite[ p.~418]{BoSa1}. Let $o_i$ denote the exact order of the $\delta$-character $\Theta_i$, for $i=1,\ldots,g$. By Theorem 1.6 of \cite{PaSa}, there exists a nonzero integer $\mup$, called the \emph{upper splitting number}, such that
	$$
	\mup=\max\{o_1,\ldots,o_g\}\leq g+1.
	$$

	\subsection{Explicit basis construction for the character modules}\label{Basis_construction}
	
	Our next goal is to describe explicitly the structure of the delta characters and construct a canonical $R$-basis for the associated modules. Recall from \eqref{Hom_Ker} the exact sequence
	\[
	0\longrightarrow \bX_m(G)\xrightarrow{u_m^*} \bX_{m+n}(G)\xrightarrow{\iota_m^*} \Hom(N^{[m]n}G,\hG)\xrightarrow{\partial} \Ext(J^mA.\hG).
	\]
	By Theorem~\ref{Psimiso}, there is a canonical isomorphism $N^{[m]n}G \simeq \mW_{n-1}^g$, which implies that the $R$-module $\Hom(N^{[m]n}A, \hG)$ has rank $ng$. 
	
	Under our assumption on the ramification index, namely $v_\pi(p) \leq p-2$, 
	Proposition $2.2$ and Lemma $2.3$ of Buium \cite{Bu2} provide a canonical isomorphism of $\pi$-formal group schemes
	\begin{align}
		\label{NmGa}
		\Psi_1^{[m]}:N^{[m]1}G\rightarrow \hG^g. 
	\end{align}
	We identify $\Psi_1^{[m]}$ with the $g$-tuple of its components $\Psi_1^{[m]} = (\Psi^{[m]}_{11}, \dots, \Psi^{[m]}_{1g})$, where $\Psi_{1j}^{[m]} = 
	\mathrm{pr}_j \circ \Psi_1^{[m]}$ and $\mathrm{pr}_j \colon \hG^g \to \hG$ denotes the projection onto the $j$-th coordinate factor. For each $1 \leq i \leq n$ and $1 \leq j \leq g$, we define the character $\Psi_{ij}^{[m]} \colon N^{[m]n}G \to \hG$ to be the composition
	\begin{equation}
		\label{Psiijm}
		N^{[m]n}G\xrightarrow{\mff_m^{\circ (i-1)}}N^{[m]n-i+1}G\xrightarrow{u_m} N^{[m]1}G\xrightarrow{\Psi_{1j}^{[m]}}\hG.
	\end{equation}
	
	\begin{lemma}\label{lemma_lin_ind}
		For all $i$, consider the set $\BB_i^{[m]}=\{h_{i1}^{[m]},\cdots,h_{ig}^{[m]}\}$ where $h_{ij}^{[m]}\in k[\bx_{m+1},\cdots,\bx_{m+n}]$ satisfy $$h_{ij}^{[m]}\equiv x_{(m+1)j}^{q^{i-1}}\mod(\deg (q^{i-1}+1)).$$ Then the polynomials in the set $\BB_1^{[m]}\cup\cdots\cup\BB_n^{[m]}$ are $k$-linearly independent polynomials.
	\end{lemma}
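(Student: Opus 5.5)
Assume a nontrivial vanishing linear combination $\sum_{i=1}^n\sum_{j=1}^g c_{ij}h_{ij}^{[m]}=0$ with $c_{ij}\in k$, not all zero. The plan is to look at lowest degree terms.

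Let $i_0$ be the smallest index $i$ for which some $c_{ij}\neq 0$. Each $h_{ij}^{[m]}$ has lowest nonzero homogeneous component $x_{(m+1)j}^{q^{i-1}}$, of degree $q^{i-1}$; all its other monomials have degree at least $q^{i-1}+1$. For $i>i_0$ we have $q^{i-1}\geq q^{i_0}\geq q^{i_0-1}+1$. So every monomial of $h_{ij}^{[m]}$ with $i>i_0$ has degree $>q^{i_0-1}$.

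Take the homogeneous component of degree $q^{i_0-1}$ of the relation. Only the terms with $i=i_0$ contribute to it, and they give
\[
\sum_{j=1}^g c_{i_0j}\,x_{(m+1)j}^{q^{i_0-1}}=0 .
\]
The monomials $x_{(m+1)1}^{q^{i_0-1}},\dots,x_{(m+1)g}^{q^{i_0-1}}$ are distinct monomials in the polynomial ring $k[\bx_{m+1},\dots,\bx_{m+n}]$. Hence they are $k$-linearly independent, so $c_{i_0j}=0$ for all $j$. This contradicts the choice of $i_0$, so all $c_{ij}$ vanish.

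The only point needing care is the meaning of the congruence "$\bmod(\deg(q^{i-1}+1))$". It must mean that $h_{ij}^{[m]}-x_{(m+1)j}^{q^{i-1}}$ lies in the ideal of polynomials all of whose monomials have total degree $\geq q^{i-1}+1$; in particular the constant term and all lower-degree terms vanish. With that reading the argument is routine. The inequality $q^{i_0}\geq q^{i_0-1}+1$ holds because $q\geq 2$. No earlier result of the paper is needed beyond the polynomial description of the coordinate ring of $N^{[m]n}G$ from Section \ref{localcoordinates}.
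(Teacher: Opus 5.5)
Your argument is correct. Isolating the homogeneous component of degree $q^{i_0-1}$ for the least index $i_0$ with a nonzero coefficient, and using $q^{i_0}\geq q^{i_0-1}+1$ to discard every $h_{ij}^{[m]}$ with $i>i_0$, is the standard leading-term argument; the paper gives no proof of its own here and simply cites Lemma 5.1 of \cite{BoSa1}, to which it defers, and your proposal follows that same lowest-degree approach with the congruence read as intended.
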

	\begin{proof}
		We refer to Lemma 5.1 of \cite{BoSa1} for the proof.
	\end{proof}
	\begin{proposition}\label{Psi_ij}
		The family $\{\Psi_{ij}^{[m]}\}_{ij}$, for $1\leq i\leq n$ and $1\leq j\leq g$, forms an $R$-basis for $\Hom(N^{[m]n}G,\hG).$
	\end{proposition}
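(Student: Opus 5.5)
The plan is to transport the problem to Witt vectors via the isomorphism $\Psi_m : N^{[m]n}G \simeq \WW_{n-1}^g$ of Theorem \ref{Psimiso}, and then use the reduction-mod-$\pi$ argument of \cite{BoSa1}. First I would identify the characters $\Psi^{[m]}_{ij}$ on the Witt side. By Theorem \ref{NandJ} and Theorem \ref{Psimiso}, the lateral Frobenius $\mff_m$ on $N^{[m]n}G$ corresponds to the Witt Frobenius $F$ on $\WW_{n-1}^g$. The projection $u$ corresponds to the restriction $T$, and $\Psi^{[m]}_{1j}$ corresponds to the $j$-th coordinate $\WW_0^g=\hG^g\to\hG$. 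So $\Psi^{[m]}_{ij}$ becomes the $j$-th component of the $(i-1)$-st ghost coordinate $w_{i-1}$, up to the conventions for $F^{\circ(i-1)}$ followed by truncation to length one. These are additive characters of $\WW_{n-1}^g$, so each $\Psi^{[m]}_{ij}$ lies in $\Hom(N^{[m]n}G,\hG)$.

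Next I would prove linear independence and spanning by reducing modulo $\pi$. In the Witt coordinates $\bx_{m+1},\dots,\bx_{m+n}$ of \S\ref{localcoordinates}, the formula \eqref{NmGa} $\Psi_1^{[m]}(\mt)=\pi^{-(m+1)}\log_G(\pi^{m+1}\mt)$ shows that $\Psi^{[m]}_{1j}\equiv x_{(m+1)j}$ modulo terms of degree $\ge 2$. Applying $\mff_m^{\circ(i-1)}$, whose reduction mod $\pi$ raises the leading coordinate to the $q$-th power, gives $\overline{\Psi^{[m]}_{ij}}\equiv x_{(m+1)j}^{q^{i-1}}$ modulo higher degree. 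Lemma \ref{lemma_lin_ind} then gives $k$-linear independence of the reductions. Together with $\pi$-torsion-freeness and $\pi$-adic completeness of $\Hom(N^{[m]n}G,\hG)\subset R[\bx_{m+1},\dots,\bx_{m+n}]\h$, this yields $R$-linear independence, and also that the $R$-span is saturated: if $\pi\mid\sum c_{ij}\Psi^{[m]}_{ij}$ then $\pi\mid c_{ij}$.

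For spanning, I would show that any character $\Theta\in\Hom(N^{[m]n}G,\hG)$ has reduction mod $\pi$ equal to an additive polynomial in the coordinates. Its linear part in each weight is then a $k$-combination of the $x_{(m+1)j}^{q^{i-1}}$-leading terms. Subtracting the corresponding $R$-combination of the $\Psi^{[m]}_{ij}$ leaves a character divisible by $\pi$, and dividing by $\pi$ and iterating gives, by completeness, convergence to an expression of $\Theta$ in the $\Psi^{[m]}_{ij}$. Alternatively, I would cite the known computation $\Hom(\WW_{n-1},\hG)=\bigoplus_{i} R\,w_{i-1}$ (as in \cite{BoSa1}) and pull it back along $\Psi_m$, which is an isomorphism by Corollary \ref{phm-pi-com}.

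The main obstacle is the spanning step: controlling additive characters mod $\pi$ and checking that the leading forms of the reductions exhaust all possible leading terms. The rank count of $ng$ stated just before the proposition supports the conclusion, but a saturation argument is still needed, so I would rely on the Witt-vector character computation from \cite{BoSa1}.
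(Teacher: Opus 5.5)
Your proof is correct, but it finishes the argument by a different route from the paper's.

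The linear independence step is the same as the paper's: reduce mod $\pi$, read off the leading terms $x_{(m+1)j}^{q^{i-1}}$, and apply Lemma~\ref{lemma_lin_ind}. The difference is how spanning is obtained. The paper takes $\rk_R\Hom(N^{[m]n}G,\hG)=ng$ as known from $N^{[m]n}G\simeq\WW_{n-1}^g$. Writing $M=\Hom(N^{[m]n}G,\hG)$, the $ng$ reductions are $k$-independent, so they form a basis of the $ng$-dimensional space $M/\pi M$, and Nakayama's lemma finishes.

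You did not need to worry about spanning. The saturation statement you prove in your second paragraph, combined with the same rank count, already gives it: $M/\mathrm{span}$ is then a finitely generated torsion-free $R$-module of rank $0$, hence zero.

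Your route (a) would not suffice on its own. Nothing in it rules out a character whose reduction has a leading term not of the form $x_{(m+1)j}^{q^{i-1}}$ with $i\leq n$. Excluding that is exactly what the rank count supplies.

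Your route (b) is genuinely different and cleaner. Once $\Psi^{[m]}_{ij}$ is identified with $\mathrm{pr}_j\circ w_{i-1}$ under $\Psi_m$, the proposition is just the pullback of $\Hom(\WW_{n-1},\hG)=\bigoplus_i R\,w_{i-1}$. The mod-$\pi$ argument then becomes unnecessary. This identification is also the formula the paper itself uses in Lemma~\ref{Psi_k_m}.

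The Witt-vector fact is easy to check directly. Over $K$, after the change of variables to ghost coordinates, an additive power series is linear, so every character is $\sum_i\alpha_i w_i$ with $\alpha_i\in K$. Integrality then forces $\alpha_i\in R$, since $\alpha_i$ is the coefficient of $x_0^{q^i}$.

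This same fact is the input behind the paper's rank count, so (b) makes explicit what the paper uses implicitly. The cost is that you must verify that the isomorphism of Theorem~\ref{NandJ}, followed by $J^{n-1}(\Psi^{[m]}_1)$, carries $\mathfrak{f}_m$ and $u$ to $F$ and $T$, keeping track of the $\phi$-twists.
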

	\begin{proof}
		
We have $\rk_R(\Hom(N^{[m]n}G,\hG))=ng$, hence we need to show that $\{\Psi_{ij}^{[m]}\}_{ij}$ is linearly independent. By Nakayama's Lemma, it is enough to show that $\bar{\Psi}_{ij}^{[m]}$, that is the reduction modulo $\pi$ of the $\Psi_{ij}$, are $K$-linearly independent. From the general property of the logarithm of a formal group law, $\bar{\Psi}_{1j}^{[m]}$ satisfies (see the proof of Lemma 2.3 in \cite{Bu2}) $$\bar{\Psi}_{1j}^{[m]}(\bx_{m+1},\cdots,\bx_{m+n})\equiv x_{(m+1)j}\mod(\deg 2).$$ 
		Hence for $\bar{\Psi}_i^{[m]}=(\bar{\Psi}^{[m]}_{i1},\cdots, \bar{\Psi}^{[m]}_{ig})$ we have
		$$\bar{\Psi}_{ij}^{[m]}(\bx_{m+1},\cdots,\bx_{m+n})\equiv x_{(m+1)j}^{q^{i-1}}\mod(\deg (q^{i-1}+1)).$$
		Hence by Lemma \ref{lemma_lin_ind}, we have that $\{\bar{\Psi}_{ij}^{[m]}\}_{ij}$ are $K$-linearly independent.
	\end{proof}
	By Proposition \ref{Psi_ij}, any morphism $\Psi\in \Hom(N^{[m]n}G,\hG)$ can be represented as $$\Psi=\gamma_1.\Psi_1^{[m]}+\cdots+\gamma_n.\Psi_n^{[m]}$$ where $\gamma_i\in R^g$. Let $\Theta\in \bX_{m+n}(G)$, define $[\Theta]$ to be the class of $\Theta$ in $\bX_{m+n}(G)/u^*\bX_m(G)$. We have $\iota_m^*[\Theta]\in \Hom(N^{[m]n}G,\hG)$, hence 
	\begin{equation}\label{i*Theta}
		\iota_m^*[\Theta]=\gamma_1.\Psi_1^{[m]}+\cdots+\gamma_n.\Psi_n^{[m]}.
	\end{equation}
	This implies 
	\begin{equation}\label{Theta_exp}
		[\Theta]=\bx^{(m)}.g+\gamma_1.\Psi_1^{[m]}+\cdots+\gamma_n.\Psi_n^{[m]},
	\end{equation}
	where $\bx^{(m)}=(\bx_0,\cdots,\bx_m)$ and $g=(g_0,\cdots,g_{m})\in \mO(J^{m+n}U)^{m+1}$ for some affine open subscheme $U$ of $G$ containing the identity section. 
	\begin{lemma}\label{Psi_k_m}
		Let $m\leq k\leq m+j$ and $\Theta\in \bX_{m+j}(G)$, such that $$\iota_m^*\Theta=a_j.\Psi_j^{[m]}+\cdots +a_1.\Psi_1^{[m]}$$ for some $a_1,\cdots,a_j\in R^g$, then $$\iota^*_k\Theta=\pi^{k-m}\left(a_j.\Psi_{j-k+m}^{[k]}+\cdots+a_{m+1-k}.\Psi_{1}^{[k]}\right).$$
		
	\end{lemma}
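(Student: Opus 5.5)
The plan is to reduce to the case $k=m+1$ and iterate. For $m\le k\le m+j$ the kernel of $u_k\colon J^{m+j}G\to J^kG$ sits inside the kernel of $u_m\colon J^{m+j}G\to J^mG$. So there is a closed immersion of $\pi$-formal group schemes $\epsilon\colon N^{[k](m+j-k)}G\hookrightarrow N^{[m]j}G$ with $\iota_k=\iota_m\circ\epsilon$. Since $\iota_m^*\Theta=\sum_i a_i.\Psi_i^{[m]}$, we get
$$\iota_k^*\Theta=\sum_i a_i.\,\epsilon^*\Psi_i^{[m]}.$$
The lemma therefore reduces to the identity
$$\epsilon^*\Psi_{i}^{[m]}=\begin{cases}\pi^{k-m}\,\Psi^{[k]}_{i-k+m}, & i>k-m,\\ 0, & i\le k-m.\end{cases}$$
The inclusion for $k$ factors as a composite of the inclusions for $k$, $k-1$, and so on, so it suffices to prove this for $k=m+1$, namely $\epsilon^*\Psi_1^{[m]}=0$ and $\epsilon^*\Psi_i^{[m]}=\pi\,\Psi_{i-1}^{[m+1]}$ for $i\ge 2$. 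The general case then follows by induction on $k-m$, with the powers of $\pi$ multiplying.

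To prove the case $k=m+1$ I would transport everything to Witt vectors via the isomorphisms $\Psi_m\colon N^{[m]j}G\simeq \WW_{j-1}^g$ of Theorem~\ref{Psimiso}. Under $J^{j-1}(\hG^g)\simeq\WW_{j-1}^g$, the character $\Psi_{1\ell}^{[m]}$ becomes the $0$-th coordinate $x_0$ of the $\ell$-th factor. The lateral Frobenius $\mff_m$ becomes the Witt Frobenius $F$, by Theorem~\ref{NandJ} and the fact that $\Psi_m$ is $J^{j-1}$ of a morphism. Hence $\Psi^{[m]}_{i\ell}$ corresponds to $x_0\circ F^{i-1}$, which is the ghost component $w_{i-1}$. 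The key claim is then that, under these identifications, $\epsilon$ becomes the Verschiebung
$$V\colon \WW_{j-2}^g\to\WW_{j-1}^g,$$
which in ghost coordinates is $(w_0,w_1,\dots)\mapsto(0,\pi w_0,\pi w_1,\dots)$. Granting this, $w_0\circ V=0$ and $w_{i-1}\circ V=\pi\,w_{i-2}$, which is exactly the required identity.

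The main obstacle is identifying $\epsilon$ with $V$, that is, showing $\Psi_m\circ\epsilon=V\circ\Psi_{m+1}$. I would first check this in the Witt coordinates $\bx_{m+1},\bx_{m+2},\dots$ of \S\ref{localcoordinates}. There $N^{[m+1]}$ is the locus $\bx_{m+1}=0$ inside $N^{[m]}$, which matches the image of $V$ on the coordinate side. Next I would compare the normalizations $\frac{1}{\pi^{m+1}}\log_G(\pi^{m+1}\,\cdot)$ and $\frac{1}{\pi^{m+2}}\log_G(\pi^{m+2}\,\cdot)$ of \eqref{NmGa}; their ratio is precisely the factor $\pi$ appearing in the ghost components of $V$.

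An alternative route to the same identity is to compose with $\phm$ or $\Phi_{[m+1]}$ and use \eqref{JnNm} and Corollary~\ref{phm-pi-com}, where multiplication by $\pi$ on $\WW$ is $V\circ F$. This would let one deduce the compatibility from $F$-equivariance together with the known diagram \eqref{Psimiso1}. One would also need to verify that $\mff_m$ restricts on $N^{[m+1]}$ to $\mff_{m+1}$. This should follow from the uniqueness of the lateral Frobenius as a lift satisfying \eqref{fphi}. Since all the modules involved are $\pi$-torsion free, one can argue with ghost components over $K$ and then descend.
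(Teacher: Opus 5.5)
Your reduction is sound: $\iota_k=\iota_m\circ\epsilon$, the inclusions compose, and the lemma comes down to $\epsilon^*\Psi^{[m]}_i=\pi^{k-m}\Psi^{[k]}_{i-k+m}$ for $i>k-m$ and $0$ otherwise. (This also shows the last coefficient in the statement should read $a_{k-m+1}$.)

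The gap is that the step you call the main obstacle is the entire lemma, and it is not proved. You note that $\Psi^{[m]}_i$ is the ghost component $w_{i-1}$ of $\Psi_m$. So $\Psi_m\circ\epsilon=V\circ\Psi_{m+1}$ is, ghost component by ghost component, exactly the identity $\epsilon^*\Psi^{[m]}_1=0$, $\epsilon^*\Psi^{[m]}_i=\pi\Psi^{[m+1]}_{i-1}$ that you set out to prove. Passing to $\WW$ only reformulates it.

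Your first justification matches images and compares the normalizations of $\Psi^{[m]}_1$ and $\Psi^{[m+1]}_1$. It says nothing about $i\ge 2$, where $\Psi^{[m]}_i$ is built from the lateral Frobenius $\mff_m$; the case $i=1$ is trivial.

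Your second route rests on two false claims:
\begin{enumerate}
\item Multiplication by $\pi$ on $\WW$ is $F\circ V$, not $V\circ F$; the ghost components of $VF$ are $(0,\pi w_1,\pi w_2,\dots)$.
\item $\mff_m$ does not restrict to $\mff_{m+1}$; it does not even preserve $N^{[m+1]}$. For $G=\hG$ one has $N^{[m]n}\hG=V^{m+1}\WW_{n-1}$, $\mff_m=F$ and $\epsilon=V$, and $F\circ V=\pi$ does not land in the image of $V$. Indeed, if $\mff_m\circ\epsilon=\epsilon\circ\mff_{m+1}$ held, you would get $\epsilon^*\Psi^{[m]}_i=\Psi^{[m]}_1\circ u\circ\epsilon\circ\mff_{m+1}^{i-1}=0$ for every $i$, contradicting the lemma.
\end{enumerate}

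The missing idea is to eliminate $\mff_m$ using \eqref{phiiota}. Iterating it gives $\phi^{m+i}\circ\iota_m=\phi^{m+1}\circ\iota_m\circ\mff_m^{i-1}$ as maps $N^{[m]i}G\to G$. Since $\Psi^{[m]}_1=\pi^{-(m+1)}\log_G\circ\phi^{m+1}\circ\iota_m$, definition \eqref{Psiijm} becomes
\[
\Psi^{[m]}_i=\pi^{-(m+1)}\log_G\circ\phi^{m+i}\circ\iota_m
\]
(composed with the evident projections $u$). Concretely, this is $\pi^{-(m+1)}\log_G$ applied to the ghost polynomial $\pi^{m+1}\bigl(\mathbf{x}_{m+1}^{q^{i-1}}+\pi\mathbf{x}_{m+2}^{q^{i-2}}+\cdots+\pi^{i-1}\mathbf{x}_{m+i}\bigr)$. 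This is the explicit formula the paper's proof starts from, there with $\log_G$ suppressed.

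From it, $\epsilon^*\Psi^{[m]}_i=\pi^{-(m+1)}\log_G\circ\phi^{m+i}\circ\iota_k=\pi^{k-m}\Psi^{[k]}_{i-k+m}$ for $i>k-m$. This is where your comparison of normalizations genuinely enters. For $i\le k-m$, the map $\phi^{m+i}\circ\iota_k$ factors through the image of $N^{[k]}$ in $J^{m+i}G$, which is trivial. In coordinates this is just setting $\mathbf{x}_{m+1}=\cdots=\mathbf{x}_k=0$, as the paper does. It handles all $k$ at once, and the identification of $\epsilon$ with $V$ then follows rather than being needed.
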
	
	\begin{proof}
		We have $$\iota_m^*\Theta=a_j.\Psi_j^{[m]}+\cdots+a_1.\Psi_1^{[m]},$$ this implies $$\Theta=a_j.\Psi_j^{[m]}+\cdots+a_1.\Psi_1^{[m]}+\bx^{(m)}.h,$$ where $h\in\mO(J^{m+j}U)^{m+1}$ for some affine open subscheme $U$ of $G$ containing the identity section and $\bx^{(m)}=(\bx_0,\cdots,\bx_m)$ are the local Witt coordinates of length $m+1$ around the identity section of $G$. For all $i\geq 1,$ from the definition of $\Psi_i^{[m]},$ we have $$\Psi_i^{[m]}(\bx_{m+1},\cdots,\bx_{m+i})=\pi^{i-1}\bx_{m+i}+\pi^{i-2}\bx_{m+i-1}^q+\cdots+\bx_{m+1}^{q^{i-1}}.$$ Hence this implies,
		$$\Theta=\pi^{j-1}a_j.\bx_{m+j}+\pi^{j-2}(a_j.\bx_{m+j-1}^q+a_{j-1}.\bx_{m+j-1})+\cdots+(a_j.\bx_{m+1}^{q^{j-1}}+\cdots+a_1.\bx_{m+1})+\bx^{(m)}.h.$$ We have $m\leq k$, therefore 
		\begin{align}
			\iota_k^*\Theta&=\Theta(0,\cdots,0,\bx_{k+1},\cdots,\bx_{m+j})\\
			&=\pi^{j-1}a_j\bx_{m+j}+\pi^{j-2}(a_j.\bx_{m+j-1}^q+a_{j-1}.\bx_{m+j-1})+\cdots+\pi^{k-m}(a_j.\bx_{k+1}^{q^{j-k+m-1}}+\cdots+a_{m+1-k}\bx_{k+1})\\
			&=\pi^{k-m}\left(\pi^{j-k+m-1}a_j.\bx_{m+j}+\cdots+(a_j.\bx_{k+1}^{q^{j-k+m-1}}+\cdots+a_{m+1-k}.\bx_{k+1})\right)\\
			&=\pi^{k-m}\left(a_j.(\pi^{j-k+m-1}\bx_{m+j}+\cdots+\bx_{k+1}^{q^{j-k+m-1}})+\cdots+a_1.(\bx_{k+1})\right)\\
			&=\pi^{k-m}\left(a_j.\Psi_{j-k+m}^{[k]}+\cdots+a_{m+1-k}.\Psi_1^{[k]}\right).
		\end{align}
	\end{proof}

	\

	\section{Rigidified Extensions of Abelian Schemes}
	\label{Iso}
	
	Throughout this section, let $\mO$ be a Dedekind domain of characteristic $0$, and let $R$ be a $\pi$-adically complete discrete valuation ring equipped with a $\pi$-derivation $\delta$ lifting the derivation on $\mO$. Let $l = R/\pi R$ denote the residue field of $R$, and let $K$ be its fraction field. We assume throughout that the ramification index satisfies $v_{\pi}(p) \leq p-2$. Set $S = \Spf R$, and let $G$ be a smooth $\pi$-formal group scheme over $S$ of relative dimension $g$.

	\color{black}

	\subsection{Extensions of arithmetic jet spaces of abelian schemes}\label{Extn_Jets}
	
	Let $A$ be an abelian scheme of relative dimension $g$ over $S$, and let us denote its formal completion along the special fiber simply by $A$. The generalized kernel sequence provides a canonical short exact sequence of $\pi$-formal group schemes:
	\begin{equation}\label{Ker_A}
		0\rightarrow N^{[m]n}A\xrightarrow{\iota_m} J^{m+n}A\xrightarrow{u_m} J^mA\rightarrow 0.
	\end{equation}
	Applying the functor $\Hom(-,\hG)$ yields the following exact sequence of $R$-modules:
	\begin{equation}\label{Hom_Ker_A}
		0\rightarrow \bX_m(A)\xrightarrow{u^*_m} \bX_{m+n}(A)\xrightarrow{\iota_m^*} \Hom(N^{[m]n}A,\hG)\xrightarrow {\partial}\Ext(J^mA,\hG).
	\end{equation}
	
	Any additive character $\Psi\in \Hom(N^{[m]n}A,\hG)$ canonically determines a push-out of the exact sequence \eqref{Ker_A} along $\Psi$. This operation yields an extension of $J^mA$ by $\hG$, which we denote by $(J^mA)_{\Psi}^*$. Explicitly, the formal group scheme $(J^mA)_{\Psi}^*$ is realized as the quotient
	\[(J^mA)_{\Psi}^*=\frac{J^{m+n}A\times \hG}{\Gamma_\Psi(N^{[m]n}(A))},\]
	where $\Gamma_{\Psi}(N^{[m]n}(A))$ is the graph-like subgroup defined by
	\[\Gamma_{\Psi}(N^{[m]n}(A))=\{(\iota_m(z),-\Psi(z))\mid z\in N^{[m]n}(A)\}.\]
	This construction yields the following commutative diagram with exact rows:
	\begin{equation}\label{Extn}
		\xymatrix{
			0\ar[r] & N^{[m]n}A\ar[r]^{\iota_m}\ar[d]_{\Psi} & J^{m+n}A\ar[r]^{u_m}\ar[d]_{g_\Psi} & J^mA\ar[r]\ar@{=}[d] & 0\\
			0\ar[r] & \hG\ar[r] & (J^mA)_{\Psi}^*\ar[r] & J^mA\ar[r] & 0,
		}
	\end{equation}
	where the vertical morphism $g_{\Psi}$ is defined by the natural inclusion $g_{\Psi}(x)=[(x,0)]\in (J^mA)_{\Psi}^*$.
	
	To analyze the induced map on Lie algebras, let $\bx_0=(x_{01},\dots,x_{0g})$ denote a fixed system of local \'etale coordinates for $A$ at the identity section. This choice canonically determines a system of local \'etale coordinates $(\bx_0,\dots,\bx_n)$ for $J^{n}A$ at the identity section, which in turn furnishes a basis for the free $R$-module $\Lie J^{n}A$.
	
	We define a canonical projection map on the Lie algebra level, $s_{\Witt_m}:\Lie J^{m+n}A\rightarrow \Lie N^{[m]n}A$, given explicitly in coordinates by
	\begin{equation}\label{S_witt}
		s_{\Witt_m}(\bx_0,\dots, \bx_{m+n})=(0,\dots,0,\bx_{m+1},\dots, \bx_{m+n}).
	\end{equation}
	Similarly, we define an inclusion map $v_{m.n}:\Lie J^mA\rightarrow\Lie J^{m+n}A$ by
	\begin{equation}\label{v_m_Lie}
		v_{m,n}(\bx_0,\dots,\bx_m)=(\bx_0,\dots,\bx_m,0,\dots,0).
	\end{equation}
	These choices naturally split the Lie algebra sequence, providing a split short exact sequence of $R$-modules:
	\begin{equation}
		\xymatrix{
			0\ar[r] & \Lie N^{[m]n}A\ar[r]^{D\iota_m} & \Lie J^{m+n}A\ar@/^1pc/[l]^{s_{\Witt_m}}\ar[r]^{Du_m} & \Lie J^mA\ar@/^1pc/[l]^{v_{m,n}}\ar[r] & 0. 
		}
	\end{equation}
	We verify that the operators behave as expected:
	\begin{align}
		(\mathbf{1}-v_{m,n}\circ Du_m)(\bx_0,\dots,\bx_{m+n})&=(\bx_0,\dots,\bx_{m+n})-v_{m,n}(\bx_0,\dots,\bx_m)\\
		&=(\bx_0,\dots,\bx_{m+n})-(\bx_0,\dots,\bx_m,0,\dots,0)\\
		&=(0,\dots,0,\bx_{m+1},\bx_{m+2},\dots, \bx_{m+n})\\
		&=s_{\Witt_m}(\bx_0,\dots,\bx_{m+n}).
	\end{align}
	
	Applying the Lie algebra functor to the extension diagram \eqref{Extn}, we obtain the following diagram of Lie algebras, where the lower exact sequence is equipped with a specific splitting:
	\begin{equation}
		\xymatrix{
			0\ar[r] & \Lie N^{[m]n}A\ar[d]_{D\Psi}\ar[r]^{D\iota_m} & \Lie J^{m+n}A\ar[d]^{Dg_{\Psi}}\ar@/^1pc/[l]^{s_{\Witt_m}}\ar[r]^{Du_m} & \Lie J^mA\ar@/^1pc/[l]^{v_{m,n}}\ar[r]\ar@{=}[d] & 0\\
			0\ar[r] & \Lie \hG\ar[r] &\Lie (J^mA)_{\Psi}^*\ar[r] \ar@/^1pc/[l]^{s_{\Psi_m}}& \Lie J^mA\ar[r] & 0. 
		}
	\end{equation}
	Here, the Lie algebra of the push-out extension takes the form
	\[\Lie (J^mA)_{\Psi}^*=\frac{\Lie J^{m+n}A\times \Lie \hG}{\Lie \Gamma_{\Psi}(N^{[m]n}A)}.\]
	The splitting $s_{\Psi_m}:\Lie (J^mA)^*_{\Psi}\rightarrow \Lie \hG$ is constructed universally. Consider the auxiliary map $\tilde{s}_{\Psi_m}$:
	$$
	\xymatrix{
		\Lie J^{m+n}A\times \Lie \hG\ar[rr]^-{\tilde{s}_{\Psi_m}}\ar[dr]&  &\Lie \hG\\
		& \Lie (J^mA)_{\Psi}^*\ar[ur]_{s_{\Psi_m}}
	}
	$$
	defined by $\tilde{s}_{\Psi_m}(\bx,y)=D\Psi(s_{\Witt_m}(\bx))+y$. To ensure $s_{\Psi_m}$ is well-defined on the quotient, suppose $(\bx,y)\in \Lie(\Gamma_\Psi(N^{[m]n}(A)))$. Then there exists an element $z\in \Lie(N^{[m]n}A)$ such that $\bx=D\iota_m(z)$ and $y=-D\Psi(z)$. Evaluating $\tilde{s}_{\Psi_m}$ on this pair yields:
	\begin{align}
		\tilde{s}_{\Psi_m}(\bx,y)&=D\Psi(s_{\Witt_m}(\bx))+y\\
		& =D\Psi(s_{\Witt_m}(D\iota_m(z)))-D\Psi(z)\\
		&=D\Psi(z)-D\Psi(z)=0.
	\end{align}
	The above implies that $s_{\Psi_m}$ descends to a well-defined $R$-linear splitting map $s_{\Psi_m}:\Lie (J^mA)_{\Psi}^*\map \Lie \hG$ given by the class evaluation $s_{\Psi_m}([\bx,y])=D\Psi(s_{\Witt_m}(\bx))+y$.

	\subsection{Rigidified extensions and $\Ext^\sharp$}
	
	Following Mazur and Messing \cite{Mazur}, we define $\Ext^{\sharp}(G,\hG)$ for any $\pi$-formal group scheme $G$ as the set parameterizing the isomorphism classes of extensions of $G$ by $\hG$, equipped with a specified splitting of the associated short exact sequence of their Lie algebras. Since $\hG$ is an $R$-module $\pi$-formal scheme, $\Ext^\sharp(G,\hG)$ 
canonically inherits an $R$-module structure.
	
	The preceding construction demonstrates that the pair $((J^mA)_{\Psi}^*,s_{\Psi_m})$ inherently defines an element of $\Ext^{\sharp}(J^m A,\hG)$. This structural assignment yields a canonical mapping:
	\begin{equation}\label{Lambda_m}
		\Lambda_{[m]}^n:\Hom(N^{[m]n}A,\hG)\longrightarrow \Ext^{\sharp}(J^mA,\hG)
	\end{equation} 
	which maps $\Psi\mapsto ((J^mA)_{\Psi}^*,s_{\Psi_m})$.
	
	Let $G$ be an arbitrary smooth commutative $\pi$-formal group scheme over $S$, and let $E$ be an extension of $G$ by $\hG$:
	$$
	\xymatrix{
		0\ar[r]& \hG\ar[r] & E\ar[r]& G\ar[r]& 0.
	}
	$$
	Following the classical theory of Mazur and Messing \cite{Mazur}, we define the subgroup of vanishing differential characters as
	\[\Hom^\triangledown(G,\hG):=\{f\in\Hom(G,\hG)\mid Df:\Lie G\rightarrow\Lie \hG\simeq R \text{ is the zero map}\}.\]
	It is evident that $\Hom^\triangledown(G,\hG)$ forms an $R$-submodule of $\Hom(G,\hG)$. The theory developed in \cite[Page 71, Equation (4.2)]{Mazur} yields the fundamental exact sequence of $R$-modules:
	$$
	0\rightarrow\Hom^\triangledown(G,\hG)\rightarrow \Hom(G,\hG)\rightarrow\Lie(G)^\vee\rightarrow \Ext^{\sharp}(G,\hG)\rightarrow\Ext(G,\hG).
	$$
	Specializing to $G=J^mA$, we obtain the exact sequence governing the extensions of the arithmetic jet spaces:
	\begin{equation}
		0\rightarrow\Hom^\triangledown(J^mA,\hG)\rightarrow \Hom(J^mA,\hG)\rightarrow\Lie(J^mA)^\vee\rightarrow \Ext^{\sharp}(J^mA,\hG)\rightarrow\Ext(J^mA,\hG).
	\end{equation}

	
	\begin{lemma} 
		$\Hom^\triangledown(J^mA,\hG)=\{0\}$ for any $m\geq 0$.
	\end{lemma}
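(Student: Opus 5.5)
We argue by induction on $m$, using the exact sequence \eqref{Hom_Ker_A} and the explicit description of characters of $N^{[m]n}A$ in Witt coordinates from Proposition \ref{Psi_ij} and Lemma \ref{Psi_k_m}. Let $\Theta\in\Hom^\triangledown(J^mA,\hG)=\bX_m(A)$ with $D\Theta=0$; we must show $\Theta=0$.

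\emph{Base case $m=0$.} Here $\Theta:A\to\hG$ is a homomorphism of $\pi$-formal groups. Since $A$ is the formal completion of an abelian scheme, its reduction $\bar A\to\mathbb{G}_a$ over $l$ is a morphism from a proper connected variety to an affine one, hence constant, hence zero as a homomorphism. So $\Theta\equiv 0\bmod \pi$. Writing $\Theta=\pi\Theta'$, we have $\Theta'\in\Hom(A,\hG)$ since $\hG$ is $\pi$-torsion free, and the same argument shows $\Theta'\equiv 0 \bmod \pi$. Iterating and using $\pi$-adic separatedness gives $\Theta=0$. In fact this shows $\bX_0(A)=0$; the hypothesis $D\Theta=0$ is not even needed at this level.

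\emph{Inductive step.} Assume $\Hom^\triangledown(J^{m-1}A,\hG)=0$ and take $\Theta\in\bX_m(A)$ with $D\Theta=0$. Restrict along $\iota_{m-1}:N^{[m-1]1}A\to J^mA$. By Proposition \ref{Psi_ij} with $n=1$, $\iota_{m-1}^*\Theta=a\cdot\Psi^{[m-1]}_1$ for some $a\in R^g$. In the coordinates $\bx_m$ we have $\Psi_1^{[m-1]}(\bx_m)=\bx_m+(\deg\ge2)$, by \eqref{NmGa} and the expansion of the formal logarithm. Hence $D(\iota_{m-1}^*\Theta)$ has coefficient $a$ on the $\bx_m$-directions. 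On the other hand $D(\iota_{m-1}^*\Theta)=D\Theta\circ D\iota_{m-1}=0$, so $a=0$. Thus $\Theta$ kills $N^{[m-1]1}A$, and by exactness of \eqref{Hom_Ker_A} (with $m-1$, $n=1$) we get $\Theta=u^*\Theta_0$ for some $\Theta_0\in\bX_{m-1}(A)$. Since $Du$ is surjective and $D\Theta=D\Theta_0\circ Du=0$, we get $D\Theta_0=0$, so $\Theta_0=0$ by the induction hypothesis, and hence $\Theta=0$.

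\emph{Main obstacle.} The delicate point is the linear-term computation $D\Psi_1^{[m-1]}=\mathrm{id}$ in the Witt coordinates $\bx_m$, i.e.\ that $\frac{1}{\pi^m}\log_A(\pi^m\mt)$ has linear part $\mt$ with respect to the coordinate system in which $D\iota_{m-1}$ is the inclusion of the $\bx_m$-slot. This is the same congruence invoked in the proof of Proposition \ref{Psi_ij} ($\bar\Psi^{[m]}_{1j}\equiv x_{(m+1)j}\bmod \deg 2$), which we will cite. We will also check that Lie algebras and $D$ commute with pullback along $\iota_{m-1}$ and $u$, which is formal.
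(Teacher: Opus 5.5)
Your argument is correct, and it takes a genuinely different route from the paper's.

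\textbf{The paper's route.} The paper does not induct. It uses $\bX_0(A)=0$ once, to embed $\bX_m(A)$ into $\Hom(N^{[0]m}A,\hG)$ via $\iota_0^*$. It then expands $\iota_0^*\Theta=a_1\Psi_1^{[0]}+\cdots+a_m\Psi_m^{[0]}$ in the full basis and computes the entire differential
\[
D\Theta=(h(0),a_1,\pi a_2,\ldots,\pi^{m-1}a_m).
\]
Setting this to zero forces all $a_i=0$, so $\iota_0^*\Theta=0$ and $\Theta=0$. This one-shot computation needs the differentials of all the higher characters $\Psi_i^{[0]}=\Psi_1^{[0]}\circ u\circ\mff_0^{\circ(i-1)}$. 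In other words, it needs control of the lateral Frobenius on tangent spaces, which the paper gets from explicit Witt-polynomial-type expressions for $\Psi_i^{[0]}$ (only their linear parts actually matter).

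\textbf{Your route.} Your induction peels off one Witt slot at a time through $N^{[m-1]1}A$. So you only ever need the differential of the single isomorphism $\Psi_1^{[m-1]}\colon N^{[m-1]1}A\to\hG^g$. The price is that you use the kernel sequences and the isomorphisms $\Psi_1^{[k]}$ at every level $k<m$. All of these are available in the paper under $v_\pi(p)\le p-2$.

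\textbf{Your flagged obstacle.} The step you call the main obstacle is not actually one. You do not need the linear part of $\Psi_1^{[m-1]}$ to be exactly the identity in the $\bx_m$-coordinates. You only need it to be invertible, which holds because $\Psi_1^{[m-1]}$ is an isomorphism of $\pi$-formal groups. Then $a\cdot D\Psi_1^{[m-1]}=0$ already forces $a=0$.

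\textbf{Base case.} Your base case also gives an actual proof of $\bX_0(A)=0$: global functions on the proper special fiber are constant, combined with $\pi$-torsion-freeness and separatedness. The paper simply asserts this fact.
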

	\begin{proof}
		Let us consider the following exact sequence
		$$
		0\rightarrow\bX_0(A)=\{0\}\xrightarrow{u_0^*}\bX_{m}(A)\xrightarrow{\iota_0^*}\Hom(N^{[0]m}A,\hG)\xrightarrow{\partial}\Ext(A,\hG).
		$$
		Let $\Theta\in \bX_m(A)$, then $\iota_0^*\Theta\in \Hom(N^{[0]m}A,\hG)$ and  $$\iota_0^*\Theta=a_1\Psi_1^{[0]}+\cdots + a_m\Psi_m^{[0]},$$ for some $a_i\in R^g,$ as we showed in (\ref{i*Theta}), where $\Psi_{i}^{[0] }\in \Hom(N^{[0]m}A,\hG) $ is a character of order $i$ for all $1\leq i\leq m$. As $A$ is an abelian scheme, we have $\bX_0(A)=\{0\}$ and hence  $\iota_0^*:\bX_{m}(A)\rightarrow \Hom(N^{[0]m}A,\hG)$ is an injection.
		Therefore \begin{equation}\label{phi_Psi}
			\Theta=\bx_0h+a_1\Psi_1^{[0]}+\cdots + a_m\Psi_m^{[0]}
		\end{equation}
		for some $h\in\mathcal{O}(J^mU)^g$ where $U$ is an open affine subscheme of $A$ containing the identity section and $\bx_0=(x_{01},\cdots,x_{0g})$ are \'etale coordinates of $A$ around the identity section $e$ as we have $\iota_0^*$ mapping $\bx_0\mapsto 0$. By \cite{Bu2}, Lemma 2.3,  We have 
		\begin{equation}\label{psi_i0}
			\Psi_{i}^{[0]}=\pi^{i-1}\bx_i+\cdots+\pi\bx_{2}^{q^{i-2}}+\bx_1^{q^i-1},\end{equation}
		where $\{\bx_0,\cdots,\bx_m\}$ are local Witt coordinates of length $m+1$ of $A$ around $e$. Let us consider the map $D\Theta:\Lie J^mA\rightarrow \Lie \hG\simeq R$. We know that $\bx:=(\bx_0,\cdots,\bx_m)$ gives us a basis of $\Lie J^{m}A$ and $D\Theta$ can be defined as $$D\Theta=\left(\frac{\partial\Theta}{\partial\bx_0}\Big|_{\bx=0},\frac{\partial\Theta}{\partial\bx_1}\Big|_{\bx=0},\cdots,\frac{\partial\Theta}{\partial\bx_m}\Big|_{\bx=0}\right).$$
		Note that \begin{align}
			\frac{\partial\Theta}{\partial\bx_0}=\frac{\partial(\bx_0h)}{\partial\bx_0}=h+\bx_0\frac{\partial h}{\partial\bx_0}.
		\end{align}
		Hence we have,
		\begin{align}
			\frac{\partial\Theta}{\partial\bx_0}\Big|_{\bx=0}=h(0).
		\end{align}
		For $1\leq i\leq m$,
		\begin{align}
			\frac{\partial\Theta}{\partial\bx_i}&=\frac{\partial(\bx_0h+a_i\Psi_{i}^{[0]}+\cdots+a_m\Psi_{m}^{[0]})}{\partial\bx_i}\\
			&=\bx_0\frac{\partial h}{\partial \bx_i}+a_i\frac{\partial\Psi_i^{[0]}}{\partial \bx_i}+\cdots +a_m\frac{\partial\Psi_m^{[0]}}{\partial \bx_i}.
		\end{align}
		From (\ref{psi_i0}), we have for all $k\geq i$, $$\frac{\partial\Psi_{k}^{[0]}}{\partial\bx_i}=\pi^{i-1}q^{k-i}\bx_i^{q^{k-i}-1}.$$
		Hence putting $\bx=0$, we obtain 
		\begin{align}
			\frac{\partial\Theta}{\partial\bx_i}\Big|_{\bx=0}&=\bx_0\frac{\partial h}{\partial \bx_i}\Big|_{\bx=0}+a_i\frac{\partial\Psi_i^{[0]}}{\partial \bx_i}\Big|_{\bx=0}+\cdots +a_m\frac{\partial\Psi_m^{[0]}}{\partial \bx_i}\Big|_{\bx=0}\\
			&=\pi^{i-1}a_i.
		\end{align} 
		Therefore \begin{equation}
			D\Theta=\left( h(0),a_1,\pi a_2,\cdots,\pi^{m-1}a_m\right).
		\end{equation}
		Hence if $\Theta\in \Hom^\triangledown(J^mA,\hG)$, then $D\Theta=0$, which implies $h(0)=0$ and $a_i=0$ for all $1\leq i\leq m.$ Therefore $\Theta=\bx_0h$, but note that $\iota_0^*\Theta=0$ and $\iota_0^*$ is an injection, hence $\Theta=0$ and we are done.
	\end{proof}

	\color{black}
	Hence we obtain the exact sequence
	\begin{equation}\label{Hodge}
		\xymatrix{0\ar[r] & \bX_m(A)\ar[r] & \Lie(J^mA)^\vee\ar[r] & \Ext^{\sharp}(J^mA,\hG)\ar[r] & \Ext(J^mA,\hG).}
	\end{equation}

	\begin{proposition}\label{Dphi_k}
		For every $n,k\geq 0$,  $D(\phi^{\circ k}):\Lie J^{k+n}A\rightarrow \Lie J^nA$ is given by the matrix\begin{equation}
			D(\phi^{\circ k})=\begin{pmatrix}
				\mathbf{0}_g & \cdots & \mathbf{0}_g & \pi^k\mathbf{I}_g & \mathbf{0}_g & \cdots & \mathbf{0}_g\\
				\mathbf{0}_g & \cdots & \mathbf{0}_g & \mathbf{0}_g & \pi^k\mathbf{I}_g & \cdots & \mathbf{0}_g\\
				\vdots &\ddots & \vdots & \vdots & \vdots &\ddots & \vdots\\
				\mathbf{0}_g & \cdots & \mathbf{0}_g & \mathbf{0}_g & \mathbf{0}_g & \cdots &\pi^k \mathbf{I}_g
			\end{pmatrix}_{(n+1)g,(n+k+1)g.}
		\end{equation}
	\end{proposition}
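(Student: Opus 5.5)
We will compute $D(\phi^{\circ k})$ directly in the local Witt coordinates $(\bx_0,\dots,\bx_{k+n})$ on $J^{k+n}A$ and $(\bx_0,\dots,\bx_n)$ on $J^nA$ near the identity section, introduced in the subsection on local Witt coordinates. Since $J^nU\simeq U\times_H J^nH$ with $H=\hat{\bb{A}}^g$, and the étale map $U\to H$ is compatible with $\phi$ by functoriality of $J^*$, the differential at the identity section can be computed on $J^*H$. Both $\phi$ and the identity section are compatible with this étale chart, because the identity section maps to the zero section of $J^nH$ (all $\bx_i\mapsto 0$). So the question reduces to the affine space $J^nH=\Spf R[\bx_0,\dots,\bx_n]\h$, where $\phi$ is induced by the Witt vector Frobenius $F$.

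First we treat $k=1$. On $J^nH$ the map $\phi:J^{n+1}H\to J^nH$ corresponds to $F:W_{n+1}\to W_n$, coordinatewise in each of the $g$ variables. We determine $F$ through the ghost map: if $F(x_0,\dots,x_{n+1})=(y_0,\dots,y_n)$, then
\[
w_i(y)=w_{i+1}(x)=x_0^{q^{i+1}}+\pi x_1^{q^i}+\cdots+\pi^{i+1}x_{i+1}.
\]
Solving recursively, $y_i=\pi x_{i+1}+(\text{terms of degree}\ge 2 \text{ in } x)$. For $i=0$ this is explicit: $y_0=x_0^q+\pi x_1$. By induction, $y_i$ equals $\pi x_{i+1}$ plus polynomials in $x_0,\dots,x_{i+1}$ of total degree at least $q\ge 2$. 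The reason is that $\pi^i y_i=w_{i+1}(x)-\sum_{j<i}\pi^j y_j^{q^{i-j}}$, so the linear part comes only from $\pi^{i+1}x_{i+1}$.

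The delicate point is that the division by $\pi^i$ is integral. This is exactly the standard fact that $F$ is given by integral polynomials, and we only need its linear part. Comparing linear terms in the identity $\pi^i y_i=\ldots$, which holds in $K[x]$, gives linear part $\pi x_{i+1}$ for $y_i$. This comparison is legitimate since the $y_i$ are polynomials.

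Taking partial derivatives at $\bx=0$ then gives the Jacobian $\partial y_i/\partial \bx_j|_0=\pi\delta_{j,i+1}\mathbf{I}_g$. This is the stated matrix for $k=1$: one zero block column followed by $\pi\mathbf{I}_g$ on the shifted diagonal. One caveat is that $\phi$ lies over $\phi$ on $R$. However, the differential is taken of $\phi_{G/R}$ or equivalently of the coordinate map, whose coefficients here are $\pi$ and integers. So no twist issue arises beyond noting $\phi(\pi)=\pi$, which holds since $\pi\in\mO$ and $\phi$ is a $\pi$-derivation lift compatible with $\mO$.

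For general $k$ we use the chain rule. Since $\phi^{\circ k}=\phi\circ\cdots\circ\phi$ preserves identity sections, $D(\phi^{\circ k})$ is the product of the $k$ matrices $D\phi:\Lie J^{j+1}A\to\Lie J^jA$ for $j=n,\dots,n+k-1$. Each of these is a block shift-by-one times $\pi$, and composing $k$ shifts yields the block shift-by-$k$ times $\pi^k$. This is exactly the displayed $(n+1)g\times(n+k+1)g$ matrix. The case $k=0$ gives the identity, which is trivial.

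I expect the main obstacle to be justifying the linear-part computation of the Frobenius polynomials integrally, and the compatibility of the étale chart with $\phi$. We handle both by working in $K[x]$ for the linear parts and by appealing to Proposition 1.4 of \cite{Bu2} for the chart.
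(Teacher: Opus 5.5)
Your proposal is correct and follows essentially the same route as the paper: both work in local Witt coordinates at the identity section, show that $\phi\colon J^{n+1}A\to J^nA$ has differential $d y_i=\pi\, d x_{i+1}$ there (a block shift times $\pi$), and obtain $D(\phi^{\circ k})$ by composing $k$ such shifts with the chain rule. The only difference is that you derive the linear part of the Witt-vector Frobenius polynomials from the ghost-component recursion, whereas the paper takes it directly from equation (6.1) of \cite{BoSa1}.
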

	
	\begin{proof}
		Let $\bx_0=(x_{01},\cdots,x_{0g})$ be an \'etale coordinate system of $A$ around the identity section. From the discussion in the  section (\ref{localcoordinates}), recall that $(\bx_0,\cdots,\bx_{n})$ denote the local Witt coordinates for $ J^nA$ around the identity section where $\bx_i=(x_{i1},\cdots,x_{ig}).$
		In this coordinate system, if the lift of Frobenius map $\phi:J^{n+1}A\rightarrow J^nA$ is given by \begin{equation}
			\phi(\bx_0,\cdots,\bx_{n+1})=(\by_0,\cdots,\by_n),
		\end{equation} 
		then from \cite{BoSa1}, Page 410 (6.1), we have $$d\by_n=\pi d\bx_{n+1}.$$
		Therefore the derivative matrix of $\phi$ at the identity section is given by the block matrix \begin{equation}
			D\phi=\begin{pmatrix}
				\mathbf{0}_g & \pi\mathbf{I}_g & \mathbf{0}_g & \cdots & \mathbf{0}_g\\
				\mathbf{0}_g & \mathbf{0}_g & \pi\mathbf{I}_g & \cdots & \mathbf{0}_g\\
				\vdots & \vdots & \vdots &\ddots &\vdots\\
				\mathbf{0}_g & \mathbf{0}_g & \mathbf{0}_g & \cdots & \pi\mathbf{I}_g
			\end{pmatrix}_{(n+1)g, (n+2)g}
		\end{equation}
		where $\mathbf{0}_g$ is the zero matrix of size $g\times g$ and $\mathbf{I}_g$ is the identity matrix of size $g\times g$. Hence 
		$$D(\phi^{\circ k})=\underbrace{D\phi\circ\cdots\circ D\phi}_{k-\text{many}}=\begin{pmatrix}
			\mathbf{0}_g & \cdots & \mathbf{0}_g & \pi^k\mathbf{I}_g & \mathbf{0}_g & \cdots & \mathbf{0}_g\\
			\mathbf{0}_g & \cdots & \mathbf{0}_g & \mathbf{0}_g & \pi^k\mathbf{I}_g & \cdots & \mathbf{0}_g\\
			\vdots & \ddots & \vdots & \vdots & \vdots & \ddots& \vdots\\
			\mathbf{0}_g & \cdots & \mathbf{0}_g& \mathbf{0}_g & \mathbf{0}_g & \cdots &\pi^k \mathbf{I}_g
		\end{pmatrix}_{((n+1)g),(n+k+1)g.}$$
		
	\end{proof}
	
	Recall from (\ref{v_m_Lie}), that we have a map $v_{l,n}:\Lie J^lA\rightarrow \Lie J^{l+n}A$ for all $l,n\geq 0$ defined as $$v_{l,n}(\bx_0,\cdots,\bx_l)=(\bx_0,\cdots,\bx_l,0,\cdots,0).$$
	
	\begin{lemma}\label{Lie_map}
		Let $\Theta\in \bX_n(A)$ be a character of order $n$ and $\tilde{\Theta}:=\phi^{*\circ  k}\Theta$ for some $k\geq 1$. Then $D\tilde{\Theta}\circ v_{k-l-1,n+l+1}=0$ for all $0\leq l\leq k-1.$
	\end{lemma}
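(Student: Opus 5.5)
The plan is to compute the differential of $\tilde\Theta=\phi^{*\circ k}\Theta$ at the identity section via the chain rule and Proposition \ref{Dphi_k}, and then read off which coordinate blocks it kills. First we pin down the sources. Since $\Theta:J^nA\to\hG$, the $k$-fold pullback $\tilde\Theta$ lives on $J^{n+k}A$. Up to the Frobenius twist of the base, which does not affect vanishing of the differential, it is the composite $\Theta\circ\phi^{\circ k}$, where $\phi^{\circ k}:J^{n+k}A\to J^nA$. Hence
$$D\tilde\Theta=D\Theta\circ D(\phi^{\circ k}),$$
as maps $\Lie J^{n+k}A\to\Lie\hG\simeq R$. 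The twist only replaces the coefficients of $D\Theta$ by their $\phi^k$-images. Whether $D\tilde\Theta$ vanishes on a subspace is therefore unaffected.

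Next we look at $v_{k-l-1,n+l+1}$. It maps $\Lie J^{k-l-1}A$ into $\Lie J^{(k-l-1)+(n+l+1)}A=\Lie J^{n+k}A$, sending $(\bx_0,\dots,\bx_{k-l-1})$ to $(\bx_0,\dots,\bx_{k-l-1},0,\dots,0)$. So its image is spanned by the first $k-l$ coordinate blocks, namely $\bx_0,\dots,\bx_{k-l-1}$. The matrix in Proposition \ref{Dphi_k}, with $n$ as there and $k$ as here, has size $(n+1)g\times(n+k+1)g$. Its first $k$ block columns are zero, and the nonzero blocks $\pi^k\mathbf I_g$ occur only in block columns $k,\dots,n+k$. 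Block columns $0,\dots,k-1$ correspond to the coordinates $\bx_0,\dots,\bx_{k-1}$ of $J^{n+k}A$.

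Since $0\le l\le k-1$, we have $k-l-1\le k-1$. The image of $v_{k-l-1,n+l+1}$ therefore lies in the span of $\bx_0,\dots,\bx_{k-1}$, which $D(\phi^{\circ k})$ kills. This gives
$$D(\phi^{\circ k})\circ v_{k-l-1,n+l+1}=0,$$
and composing with $D\Theta$ yields $D\tilde\Theta\circ v_{k-l-1,n+l+1}=0$.

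The only step needing care is the first one: justifying that $D(\phi^*\Theta)$ is computed by the chain rule through the relative Frobenius $\phi_{G/R}$. The morphism $\phi$ lies over $\phi:S\to S$, so we must check that the differential of $\phi_{G/R}$ at the identity has the same block shape as in Proposition \ref{Dphi_k}. I would argue this in local Witt coordinates. The base change $J^nA\times_{S,\phi}S$ has the same coordinates with $\phi$-twisted coefficients, and the formula $d\by_n=\pi\,d\bx_{n+1}$ from \cite{BoSa1} computes exactly this relative differential. Iterating $k$ times then gives the stated matrix.
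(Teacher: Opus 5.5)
Your proposal is correct and is essentially the paper's proof: you write $D\tilde\Theta=D\Theta\circ D(\phi^{\circ k})$, use Proposition \ref{Dphi_k} to see that the first $k$ block columns of $D(\phi^{\circ k})$ vanish, and note that the image of $v_{k-l-1,n+l+1}$ lies in the span of the first $k-l\le k$ coordinate blocks. Your extra remark about the twist through the relative Frobenius $\phi_{G/R}$ addresses a point the paper passes over silently, and it does not change the argument.
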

	\begin{proof}
		We have $\tilde{\Theta}=\phi^{* \circ k}\Theta:J^{n+k}A\rightarrow \hG,$ this implies 
		\begin{equation}
			D\tilde{\Theta}=D(\phi^{*\circ k}(\Theta))=D(\Theta\circ\phi^{\circ k})=D\Theta\circ D\phi^{\circ k}:\Lie J^{n+k}A\longrightarrow \Lie \hG.
		\end{equation}
		From Proposition \ref{Dphi_k}, we have 
		$$D(\phi^{\circ k})=\begin{pmatrix}
			\mathbf{0}_g & \cdots & \mathbf{0}_g & \pi^k\mathbf{I}_g & \mathbf{0}_g & \cdots & \mathbf{0}_g\\
			\mathbf{0}_g & \cdots & \mathbf{0}_g & \mathbf{0}_g & \pi^k\mathbf{I}_g & \cdots & \mathbf{0}_g\\
			\vdots & \ddots & \vdots & \vdots & \vdots &\ddots & \vdots\\
			\mathbf{0}_g & \cdots & \mathbf{0}_g & \mathbf{0}_g & \mathbf{0}_g & \cdots &\pi^k \mathbf{I}_g
		\end{pmatrix}_{((n+1)g),(n+k+1)g.}$$
		Note that $D\tilde{\Theta}\circ v_{k-l-1,n+l+1}=D\Theta\circ D\phi^{\circ k}\circ v_{k-l-1,n+l+1}$ for all $0\leq l\leq k-1$. In matrix form, 
		$$
		v_{k-l-1,n+l+1}=\begin{pmatrix}
			\vdots\\
			\mathbf{I}_{(k-l)g}\\
			\vdots\\
			\mathbf{0}_{(n+l+1)h,(k-l)g}\\
			\vdots\\
			
		\end{pmatrix}_{((n+k+1)g,(k-l)g}
		$$
		where the top $(k-l)g$ block is the identity matrix and the rest of the entries are zero. Therefore $D\phi^{\circ k}\circ v_l=0$ and hence $D\tilde{\Theta}\circ v_{k-l-1,n+1+l}=0$ for all $0\leq l\leq k-1$.
	\end{proof}

	Note that we have a map of exact sequences between (\ref{Hom_Ker_A}) and (\ref{Hodge}):
	\begin{equation}
		\xymatrix{
			0\ar[r]&\bX_m(A)\ar[r]^-{u_m^*}\ar@{=}[d]& \bX_{m+n}(A)
			\ar[d]^{\Upsilon_{[m]}^n}
			\ar[r]^-{\iota_m^*}\ar[d]& \Hom(N^{[m]n}A,\hG)\ar[r]^-{\partial}\ar[d]_{\Lambda_{[m]}^n:}^{\Psi\mapsto ((J^mA)_\Psi^*,s_{\Psi_m})}& \Ext(J^mA,\hG)\ar@{=}[d] \\
			0\ar[r]&\bX_m(A)\ar[r] &\Lie (J^mA)^\vee\ar[r]& \Ext^\sharp(J^mA,\hG)\ar[r]&\Ext(J^mA,\hG)
		}
	\end{equation}
	We describe the morphism $\bX_{m+n}(A)\rightarrow \Lie(J^mA)^\vee$ explicitly in the following proposition.
	
	\color{black}
	\begin{proposition}\label{Upsilon_map}
		Let $\Theta\in \bX_{m+n}(A)$ and set $\Psi=\iota_m^*\Theta\in \Hom(N^{[m]n}A,\hG)$. Then:
		\begin{enumerate}
			\item The map $\Upsilon_{[m]}^n:\bX_{m+n}(A)\to \Lie(J^mA)^\vee$ is given by
			
			$$
			\Upsilon_{[m]}^n(\Theta)=-D\Theta\circ v_{m,n}.
			$$
			
			\item If $\Theta=\phi^{*(m+1)}\widetilde{\Theta}$ for some $\widetilde{\Theta}\in \bX_{n-1}(A)$, then
			
			$$
			-D\Theta\circ v_{m,n}=0.
			$$
			
			\item If $\Psi\in \iota_m^*\phi^{*(m+1)}\bigl(\bX_{n-1}(A)\bigr)$, then the rigidified extension
			
			$$
			\bigl((J^mA)_\Psi^*,s_{\Psi_m}\bigr)\in \Ext^\sharp(J^mA,\hG)
			$$
			
			is trivial.
		\end{enumerate}
	\end{proposition}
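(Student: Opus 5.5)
For part (1) I would follow the Mazur--Messing diagram chase. The map $\bX_{m+n}(A)\to \Lie(J^mA)^\vee$ is, by construction, the unique map making the left square commute with $\Lambda^n_{[m]}$. Concretely, $\Lambda^n_{[m]}(\iota_m^*\Theta)$ is the pushout of \eqref{Ker_A} along $\Psi=\iota_m^*\Theta$, rigidified by $s_{\Psi_m}$. Since $\Psi$ extends to $\Theta$ on $J^{m+n}A$, the pushout $(J^mA)^*_\Psi$ is canonically split as a group extension. The splitting is $J^mA\to (J^mA)^*_\Psi$, $u_m(x)\mapsto[(x,-\Theta(x))]$; it is well defined because for $x=\iota_m(z)$ we get $[(\iota_m z,-\Psi z)]=0$.

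In the Mazur--Messing sequence, the element of $\Lie(J^mA)^\vee$ attached to a trivial extension with a Lie splitting $s$ is $s$ composed with the derivative of the group splitting. So I will compute $s_{\Psi_m}\circ D(\text{splitting})$ on $\Lie J^mA$. A tangent vector $\mathbf{y}\in\Lie J^mA$ lifts to $v_{m,n}(\mathbf{y})$, which maps to $[(v_{m,n}\mathbf{y},-D\Theta(v_{m,n}\mathbf{y}))]$. Applying $s_{\Psi_m}$ gives
$$D\Psi(s_{\Witt_m}v_{m,n}\mathbf{y})-D\Theta(v_{m,n}\mathbf{y})=-D\Theta\circ v_{m,n}(\mathbf{y}),$$
since $s_{\Witt_m}\circ v_{m,n}=0$. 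This yields (1), up to the sign convention of the Mazur--Messing map, which I would fix to match \eqref{Hodge}. A small check is still needed: the result must be independent of the lift, which holds because $\Theta\circ\iota_m=\Psi$ and the kernel contributions cancel exactly as in the computation showing that $s_{\Psi_m}$ is well defined.

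For (2), write $\Theta=\phi^{*(m+1)}\widetilde\Theta$ with $\widetilde\Theta\in\bX_{n-1}(A)$. Then $D\Theta=D\widetilde\Theta\circ D(\phi^{\circ(m+1)})$. By Proposition \ref{Dphi_k}, $D(\phi^{\circ(m+1)}):\Lie J^{m+n}A\to\Lie J^{n-1}A$ kills the first $(m+1)g$ coordinates $\bx_0,\dots,\bx_m$. These coordinates are exactly the image of $v_{m,n}$, so $D\Theta\circ v_{m,n}=0$. Equivalently, this is Lemma \ref{Lie_map} applied with $k=m+1$, $l=0$, and with $n-1$ in place of $n$.

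For (3), suppose $\Psi=\iota_m^*\Theta$ with $\Theta=\phi^{*(m+1)}\widetilde\Theta$. The commutativity of the left square gives $\Lambda^n_{[m]}(\Psi)=$ image of $\Upsilon^n_{[m]}(\Theta)$ in $\Ext^\sharp$, and by (1) and (2) we have $\Upsilon^n_{[m]}(\Theta)=0$. Hence the rigidified extension $((J^mA)^*_\Psi,s_{\Psi_m})$ is trivial.

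The main obstacle is (1): one has to identify correctly the Mazur--Messing connecting map $\Lie(G)^\vee\to\Ext^\sharp$ and its sign, and verify that the square with $\Lambda^n_{[m]}$ commutes. I would handle this by working directly with the explicit quotient model of the pushout and the explicit formula for $s_{\Psi_m}$, as above.
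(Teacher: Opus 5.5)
Your proposal is correct and follows essentially the same route as the paper. Your group splitting $u_m(x)\mapsto[(x,-\Theta(x))]$ is the inverse of the paper's trivialization $[x,b]\mapsto(u_m(x),\Theta(x)+b)$ restricted to $J^mA\times\{0\}$, so your Lie-algebra computation is the paper's map $H_m$ followed by $s_{\Psi_m}$, yielding $-D\Theta\circ v_{m,n}$; parts (2) and (3) are then obtained exactly as in the paper, via Lemma \ref{Lie_map} with $k=m+1$, $l=0$, and by combining (1) with (2).
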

	
	\begin{proof}
		We will follow a similar argument to that of the proof of 
		Proposition 6.1 of \cite{BoSa1}. We have $\Psi=\iota_m^*\Theta=\Theta\circ \iota_m \in \Hom(N^{[m]n}A,\hG).$ 
		
		(1) The push-forward extension in (\ref{Extn}) is given as
		$$(J^mA)_{\Psi}^*:=\frac{J^{m+n}A\times \hG}{\Gamma_{\Psi}(N^{[m]n}A)}.$$ We first identify this extension with the split extension $J^mA\times\hG$. Consider the trivialization
		
		$$
		\xymatrix{
			(J^mA)_\Psi^*
			=\frac{J^{m+n}A\times\hG}
			{\Gamma_\Psi(N^{[m]n}A)}
			\ar[r]^-{\sim}
			&
			J^mA\times\hG
		}
		$$
		
		given by
		
		$$
		[\bx,b]\longmapsto
		\bigl(u_m(\bx),\Theta(\bx)+b\bigr).
		$$
		
		Passing to Lie algebras gives the induced isomorphism
		
		$$
		\xymatrix{
			\Lie (J^mA)_\Psi^*
			=
			\frac{\Lie J^{m+n}A\times\Lie\hG}
			{\Lie(\Gamma_\Psi(N^{[m]n}A))}
			\ar[r]^-{\sim}
			&
			\Lie J^mA\times\Lie\hG
		}
		$$
		
		which sends $[\bx,y]$ to $\bigl(u_m(\bx),D\Theta(\bx)+y\bigr)$. Its inverse is the map
		
		$$
		H_m:\Lie J^mA\times\Lie\hG
		\longrightarrow
		\Lie (J^mA)_\Psi^*
		$$
		
		defined by
		
		$$
		H_m(\bx,y)
		=
		\left[
		v_{m,n}(\bx),
		y-D\Theta\circ v_{m,n}(\bx)
		\right].
		$$ Hence the composition $$
		\xymatrix{
			\Lie J^mA \ar[r] & \Lie J^mA\times \Lie \hG\ar[r] & \Lie (J^mA)_{\Psi_m}^*\ar[r]& \Lie \hG
		}
		$$
		is explicitly given by $$\bx\mapsto  (\bx,0)\mapsto  [\bx,0]\mapsto -D\Theta\circ v_{m,n}(\bx)$$
		and $\Upsilon_{[m]}^n(\Theta)=-D\Theta\circ v_{m,n}$, as claimed.

		(2)  This follows from Lemma \ref{Lie_map}.
		
		(3) We have $\Psi\in \iota_m^*(\phi^{*(m+1)}\bX_{n-1}(A)\subset \Hom(N^{[m]n}A,\hG)$, this implies $\Psi\in \iota_m^*\bX_{n+m}(A)$.  Thus we may write
		
		$$
		\Psi=\Theta\circ\iota_m
		=
		\widetilde{\Theta}\circ\phi^{(m+1)}\circ\iota_m
		$$
		
		for some $\widetilde{\Theta}\in\bX_{n-1}(A)$. By part (2),
		
		$$
		-D\Theta\circ v_{m,n}=0.
		$$ Therefore, by (2) we have $-D\Theta\circ v_{m,n}=0$ and the extension $((J^mA)_{\Psi}^*,s_{\Psi_m})$ is   trivial.
		
	\end{proof}

	Let us recall 
	
	$$\bXp^{[m]}(A):=\varinjlim_n\frac{\bX_{m+n}(A)}{\phi^{* (m+1)}\bX_{n-1}(A)_{\phi^{m+1}}}$$
	and  $$\mH_{[m]}^\delta(A)=\varinjlim_n\mH_{[m]n}^\delta,$$ where $$\mH_{[m]n}^\delta (A):=\frac{\Hom(N^{[m]n}A,\hG)}{\iota_m^*\phi^{*(m+1)}\bX_{n-1}(A)_{\phi^{m+1}}}.$$ 
	Note that $\phi^*(\phi^{*m}\bX_{n-1}(A)_{\phi^m})=\phi^{*(m+1)}\bX_{n-1}(A)_{\phi^{m+1}}$, hence Frobenius descends to a morphism 
	\begin{equation}
		\phi^*:\bXp^{[m-1]}(A)\rightarrow \bXp^{[m]}(A)_{\phi}.
	\end{equation}
	
	\begin{theorem}\label{map_of_seq}
		We have the following $R$-module homomorphisms.
		\begin{enumerate}
			\item The map $\Upsilon_{[m]}:\bXp^{[m]}(A)\rightarrow \Lie (J^mA)^\vee$ is given by $[\Theta]\mapsto -D\Theta\circ v_{m,n}.$ 
			\item  The map $\Lambda_{[m]}:\mH_{[m]}^\delta(A)\rightarrow \Ext^\sharp(J^mA.\hG)$ is given by $\Psi\mapsto ((J^mA)_{\Psi}^*,s_{\Psi_m})$. 
			\item Moreover we have the following map of exact sequences
			\begin{equation}
				\xymatrix{
					0\ar[r] & \bX_m(A)\ar[r]^-{u_m^*}\ar@{=}[d] & \bXp^{[m]}(A)\ar[r]^-{\iota_m^*}\ar[d]_{\Upsilon_{[m]}} & \mH_{[m]}^\delta(A)\ar[r]^-{\partial}\ar[d]_{\Lambda_{[m]}} & \Ext(J^mA,\hG)\ar@{=}[d] \\
					0\ar[r]  & \bX_m(A)\ar[r] & \Lie(J^mA)^\vee\ar[r] & \Ext^{\sharp}(J^mA,\hG)\ar[r] & \Ext(J^mA,\hG).
				}  
			\end{equation}
		\end{enumerate}
	\end{theorem}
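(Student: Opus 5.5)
The plan is to work at finite level $n$ and then pass to the direct limit. Proposition \ref{Upsilon_map} already gives the diagram at level $n$: a morphism from the exact sequence \eqref{Hom_Ker_A} to \eqref{Hodge}, with vertical maps $\mathrm{id}$, $\Upsilon^n_{[m]}$, $\Lambda^n_{[m]}$ and $\mathrm{id}$. Each square there commutes by construction. The left square commutes because, for $\Theta=u_m^*\Theta'$, the extension obtained by pulling back along $\Theta'$ is identified with the differential $-D\Theta'$. The middle square commutes because the trivialization $[\bx,b]\mapsto(u_m(\bx),\Theta(\bx)+b)$ transports $s_{\Psi_m}$ to the splitting $y-D\Theta\circ v_{m,n}$. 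The right square commutes because forgetting the rigidification of $((J^mA)^*_\Psi,s_{\Psi_m})$ returns the pushout class $\partial\Psi$.

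\emph{Descent to the quotients.} For (1), part (2) of Proposition \ref{Upsilon_map} says that $\Theta\mapsto -D\Theta\circ v_{m,n}$ kills $\phi^{*(m+1)}\bX_{n-1}(A)$. Since $-D\Theta\circ v_{m,n}$ is $R$-linear in $\Theta$, it also kills the $R$-span of this submodule, which is the twisted module $\phi^{*(m+1)}\bX_{n-1}(A)_{\phi^{m+1}}$. Hence $\Upsilon^n_{[m]}$ factors through $\bX_{m+n}(A)/\phi^{*(m+1)}\bX_{n-1}(A)_{\phi^{m+1}}$. For (2), part (3) of Proposition \ref{Upsilon_map} says that $\Lambda^n_{[m]}$ vanishes on $\iota_m^*\phi^{*(m+1)}\bX_{n-1}(A)$. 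We must also check that $\Lambda^n_{[m]}$ is $R$-linear, i.e.\ that Baer sum and scalar multiplication of pushouts, with their splittings $s_{\Psi_m}([\bx,y])=D\Psi(s_{\Witt_m}(\bx))+y$, correspond to addition and scaling of $\Psi$. This follows because the formula is linear in $\Psi$. Consequently $\Lambda^n_{[m]}$ factors through $\mH^\delta_{[m]n}(A)$.

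\emph{Compatibility with the direct limits.} We need
\[
\Upsilon^{n+1}_{[m]}\circ u^*=\Upsilon^n_{[m]}, \qquad \Lambda^{n+1}_{[m]}\circ u^*=\Lambda^n_{[m]}.
\]
The first holds because $D(u)$ is the coordinate projection in the Witt coordinates, so $D(\Theta\circ u)\circ v_{m,n+1}=D\Theta\circ v_{m,n}$. For the second, pulling $\Psi$ back along $u:N^{[m](n+1)}A\to N^{[m]n}A$ gives a pushout canonically isomorphic to $(J^mA)^*_\Psi$ via $[\bx,b]\mapsto[u(\bx),b]$. This isomorphism respects the splittings because $s_{\Witt_m}$ commutes with $Du$. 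Passing to $\varinjlim_n$ then yields $\Upsilon_{[m]}$ and $\Lambda_{[m]}$ as stated in (1) and (2).

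\emph{The limiting diagram (3).} The top row is exact by \eqref{Xprm}, as limits of the exact sequences \eqref{Hom_Ker_A} with the common submodule quotiented out. The bottom row is exact by \eqref{Hodge}. Commutativity of the limiting diagram follows from level-$n$ commutativity, since every map in it is induced from level $n$.

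The main obstacle is the well-definedness of $\Lambda$ on the quotient. Triviality of the rigidified extension must hold for the $R$-span of the twisted Frobenius images, not merely for the set $\phi^{*(m+1)}\bX_{n-1}(A)$, and this requires the $R$-linearity of $\Lambda^n_{[m]}$ to be checked carefully through the Baer sum of rigidified extensions.
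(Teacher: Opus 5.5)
Your proposal is correct and follows the paper's route: descend $\Upsilon^n_{[m]}$ and $\Lambda^n_{[m]}$ to the finite-level quotients using parts (2) and (3) of Proposition \ref{Upsilon_map}, pass to $\varinjlim_n$, and compare with \eqref{Hodge}. You are more careful than the paper about compatibility with $u^*$ and about $R$-linearity.

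The one point the paper actually argues, which you absorb into the citation of \eqref{Xprm}, is exactness of the top row at $\bX_m(A)$. This needs $u_m^*\bX_m(A)\cap\phi^{*(m+1)}\bX_{n-1}(A)=\{0\}$, which holds because the nonzero Frobenius images have order at least $m+1$. By contrast, the $R$-span issue you flag as the main obstacle does not arise: $\phi$ is an automorphism of $R$ (Lemma \ref{phi_bij}), so $\phi^{*(m+1)}\bX_{n-1}(A)$ is already an $R$-submodule.
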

	\begin{proof}
		
		From (1) and (2) of Proposition \ref{Upsilon_map} , for every $m,n\geq 0$ we have the map 
		$$\frac{\bX_{m+n}(A)}{\phi^{*(m+1)}\bX_{n-1}(A)}\longrightarrow \Lie(J^mA)^\vee,$$ given by $[\Theta]\mapsto -D\Theta\circ v_{m,n}$ is well-defined. Hence passing to the limit over $n$, we obtain the map $$\Upsilon_{[m]}:\bXp^{[m]}(A)\longrightarrow \Lie(J^mA)^\vee.$$ 
		Recall from (\ref{Lambda_m}) the $R$-module map $\Lambda_{[m]}^n:\Hom(N^{[m]n}A,\hG)\rightarrow\Ext^{\sharp}(J^mA,\hG)$ given by $\Psi\mapsto ((J^mA)_{\Psi}^*,s_{\Psi_m}).$ By Proposition \ref{Upsilon_map} (3) this map factors 
		through $\mH_{[m]n}^\delta(A)$ and hence passing to the limit over $n$ we have $$\Lambda_{[m]}:\mH_{[m]}^\delta(A)\longrightarrow \Ext^{\sharp}(J^mA,\hG),$$ and the following diagram commutes 
		\begin{equation}\label{com1}
			\xymatrix{
				\bXp^{[m]}(A)\ar[r]^{\iota_m^*}\ar[d]_{\Upsilon_{[m]}} & \mH_{[m]}^\delta(A)\ar[d]^{\Lambda_{[m]}}\\
				\Lie(J^mA)^{\vee}\ar[r]& \Ext^{\sharp}(J^mA,\hG).
			}
		\end{equation}
		From equation (\ref{Hom_Ker_A}), we get the exact sequence 
		\begin{align}
			0\rightarrow \frac{\bX_m(A)}{\bX_m(A)\cap (\phi^{*(m+1)}\bX_{n-1}(A))}&\xrightarrow{u_m^*}  \frac{\bX_{m+n}(A)}{(\phi^{* (m+1)}\bX_{n-1}(A))}\\
			&\xrightarrow{\iota_m^*}\frac{\Hom(N^{[m]n}A,\hG)}{\iota_m^*(\phi^{*(m+1)}\bX_{n-1}(A))}\xrightarrow{\partial}\Ext(J^mA,\hG).\notag
		\end{align}
		Note that all the characters of $A$ in $\phi^{* (m+1)}\bX_{n-1}(A)$ have order at least $m+1$, hence $$\bX_m(A)\cap (\phi^{*(m+1)}\bX_{n-1}(A))=\{0\}.$$ Hence this gives us the exact sequence $$0\rightarrow \bX_m(A)\xrightarrow{u_m^*} \frac{\bX_{m+n}(A)}{\phi^{* (m+1)}\bX_{n-1}(A)}\xrightarrow{\iota_m^*}\frac{\Hom(N^{[m]n}A,\hG)}{\iota_m^*\phi^{* (m+1)}\bX_{n-1}(A)}\xrightarrow{\partial}\Ext(J^mA,\hG).$$ Passing to the limit over $n$, we have 
		\begin{equation}\label{Exact}
			0\rightarrow \bX_m(A)\xrightarrow{u_m^*} \bXp^{[m]}(A)\xrightarrow{\iota_m^*}\mH_{[m]}^\delta(A)\xrightarrow{\partial}\Ext(J^mA,\hG).
		\end{equation} From diagram (\ref{com1}), we have a map between (\ref{Exact}) and (\ref{Hodge}) given by $$
		\xymatrix{
			0\ar[r] & \bX_m(A)\ar[r]^-{u_m^*}\ar@{=}[d] & \bXp^{[m]}(A)\ar[r]^-{\iota_m^*}\ar[d]_-{\Upsilon_{[m]}} & \mH_{[m]}^\delta(A)\ar[r]^-{\partial}\ar[d]_-{\Lambda_{[m]}} & \Ext(J^mA,\hG)\ar@{=}[d] \\
			0\ar[r]  & \bX_m(A)\ar[r] & \Lie(J^mA)^\vee\ar[r] & \Ext^{\sharp}(J^mA,\hG)\ar[r] & \Ext(J^mA,\hG).
		}  
		$$
	\end{proof}
	\color{black}
	\begin{proposition}\label{Char_n}
		Let $\Theta\in \bX_n(A)$ be a delta character of order $n$ and $D\Theta$ be its derivative at the identity.  With respect to our chosen coordinates, let $D\Theta=(A_0,\cdots, A_n)$ where $A_j\in Mat_{1\times g}(R)$ for all $j=0,\dots, n$. Then we have $$\iota_m^*\phi^{*(m+1)}\Theta=\mf{f}_m^*\iota_m^*\phi^{*m}\Theta+\gamma.\Psi_{1}^{[m]},$$ where $\gamma=\pi^{m+1}A_0$ and $\Psi_{1}^{[m]}=(\Psi_{11}^{[m]},\cdots,\Psi_{1g}^{[m]})^t$.
	\end{proposition}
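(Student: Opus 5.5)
The plan is to compare the two homomorphisms $\phi^{\circ(m+1)}\circ\iota_m$ and $\phi^{\circ m}\circ\iota_m\circ\mff_m$ from $N^{[m](n+1)}A$ to $J^{n}A$ directly. By Theorem \ref{NandJphi} and \eqref{fphi} these agree after composing with the projection $u$ to $J^{n-1}A$, but not necessarily on the nose. Set
$$\Delta:=\phi^{\circ(m+1)}\circ\iota_m-\phi^{\circ m}\circ\iota_m\circ\mff_m : N^{[m](n+1)}A\longrightarrow J^nA.$$
This is a homomorphism of $\pi$-formal group schemes because $J^nA$ is commutative. The desired identity is then
$$\Theta\circ\Delta=\gamma.\Psi_1^{[m]}.$$

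\emph{Step 1: locate $\Delta$.} I will show that $\Delta$ kills $\ker(u\colon N^{[m](n+1)}A\to N^{[m]1}A)$ and that $u\circ\Delta=0$. The second claim is the compatibility \eqref{fphi}/Theorem \ref{NandJphi} read after truncation: both composites induce the same map into $J^{n-1}A$. For the first claim, I will work in the local Witt coordinates $\bx_{m+1},\dots,\bx_{m+n+1}$ of \S\ref{localcoordinates}. The Witt-vector formula for $F$, together with the fact that $\mff_m$ is the lateral Frobenius of the prolongation sequence $N^{[m]*}A\simeq J^*(N^{[m]1}A)$ (Theorem \ref{NandJ}), should show that the two composites differ only through their dependence on the first block $\bx_{m+1}$.

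Granting Step 1, $\Delta$ factors as a homomorphism
$$N^{[m]1}A\longrightarrow N^{[0]n}A\subset J^nA.$$
Consequently $\Theta\circ\Delta$ factors through a character of $N^{[m]1}A\simeq\hG^g$. By Proposition \ref{Psi_ij} with $n=1$, it therefore equals $\gamma.\Psi_1^{[m]}$ for a unique $\gamma\in R^g$.

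\emph{Step 2: compute $\gamma$ on Lie algebras.} Since $\Psi_1^{[m]}$ is given by \eqref{NmGa}, its derivative in the coordinate $\bx_{m+1}$ is the identity. It therefore suffices to compute $D(\Theta\circ\Delta)=D\Theta\circ D\Delta$ on $\Lie N^{[m]1}A$.

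For the first composite, Proposition \ref{Dphi_k} with $k=m+1$, restricted along $D\iota_m$, sends the $\bx_{m+1}$-direction to $\pi^{m+1}$ times the $\bx_0$-direction of $\Lie J^nA$. Pairing with $D\Theta=(A_0,\dots,A_n)$ gives the contribution $\pi^{m+1}A_0$.

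For the second composite, the $\bx_{m+1}$-direction must contribute nothing in the $\bx_0$-slot. The map $\phi^{\circ m}\circ\iota_m$ on $N^{[m]n}A$ sends $\bx_{m+1+i}$ to $\pi^m$ times the $\bx_{1+i}$-slot, never to the $\bx_0$-slot. Moreover, $D\mff_m$ shifts $\bx_{m+2}$ down to $\bx_{m+1}$ (up to a factor of $\pi$) and annihilates the $\bx_{m+1}$-direction, as in the proof of Proposition \ref{Dphi_k}. Hence
$$D(\Theta\circ\Delta)=\pi^{m+1}A_0\,dx_{m+1},$$
so $\gamma=\pi^{m+1}A_0$.

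The main obstacle is Step 1: justifying that $\Delta$ factors through the first jet layer $N^{[m]1}A$, and in particular clarifying the exact sense in which \eqref{fphi} holds, namely after which truncation and with which source index. If a clean structural argument is not available, I will fall back on an explicit computation in Witt coordinates using the ghost-component description of $F$. Alternatively, I will note that $\Theta\circ\Delta$ has order zero in the lateral sense, i.e. is killed by the appropriate $u^*$-filtration, and then apply Proposition \ref{Psi_ij}.
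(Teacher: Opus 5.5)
\textbf{Verdict: there is a genuine gap at Step~1.}

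Your Step~2 is the paper's argument. The paper also determines $\gamma$ by comparing the $\mathbf{x}_{m+1}$-components of derivatives at the identity, using Proposition~\ref{Dphi_k} for $\phi^{\circ(m+1)}\circ\iota_m$. For the other term it does not compute $D\mathfrak{f}_m$, which Proposition~\ref{Dphi_k} does not cover. Instead it notes that $\mathfrak{f}_m^*$ maps $\Hom(N^{[m]n}A,\hG)$ into the span of the $\Psi_{ij}^{[m]}$ with $i\ge 2$, and these have zero $\mathbf{x}_{m+1}$-derivative. The real content of the statement is that $\Theta\circ\Delta$ lies in $R\langle\Psi_{11}^{[m]},\dots,\Psi_{1g}^{[m]}\rangle$. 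The paper does not prove this; it imports it from Theorem~5.9 of \cite{Mi-Sa-1}. Your Step~1 is meant to supply it, and it does not.

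The specific claims in Step~1 are false: $u\circ\Delta\neq 0$, and $\Delta$ does not land in $N^{[0]n}A$. Your own Step~2 shows this.
\begin{itemize}
\item $D(\phi^{\circ(m+1)}\circ\iota_m)$ sends $\partial/\partial\mathbf{x}_{m+1}$ to $\pi^{m+1}\,\partial/\partial\mathbf{x}_0$.
\item On the other hand, $u_0\circ\phi^{\circ m}\circ\iota_m\circ\mathfrak{f}_m=\phi^{\circ m}\circ u_m\circ\iota_m\circ\mathfrak{f}_m=0$, so the second composite has no $\mathbf{x}_0$-component.
\item Hence $D\Delta$ has a nonzero $\mathbf{x}_0$-component. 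That is exactly why $A_0$ enters $\gamma$, and it would be impossible if $\Delta$ factored through $N^{[0]n}A$.
\item As a check, for $\hG$ in ghost coordinates, $\Delta$ sends $(0,\dots,0,w_{m+1},\dots,w_{m+n+1})$ to $(w_{m+1},0,\dots,0)$.
\end{itemize}
Correctly indexed, the lateral-Frobenius compatibility says the two composites agree only after one further Frobenius, i.e.\ $\phi\circ\Delta=0$. It does not say they agree after truncation.

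What you actually need is that $\Delta$ vanishes on $\ker(N^{[m](n+1)}A\to N^{[m]1}A)=N^{[m+1]n}A$. You only assert this (``should show''), and $\phi\circ\Delta=0$ does not imply it by itself.

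One possible way to close it:
\begin{itemize}
\item Use $\phi\circ\Delta=0$.
\item Use $u_0\circ\Delta=\phi^{\circ(m+1)}\circ u_{m+1}\circ\iota_m$, which kills $N^{[m+1]n}A$.
\item Use injectivity of the ghost map on points valued in the $\pi$-torsion-free coordinate ring of $N^{[m+1]n}A$.
\end{itemize}
This still needs care with formal-scheme points. Otherwise you must cite Theorem~5.9 of \cite{Mi-Sa-1}, as the paper does.
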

	\begin{proof}
		By Theorem 5.9 in \cite{Mi-Sa-1}, there exists an element $\gamma\in R^g$ such that $$(\iota_m^*\phi^{*(m+1)}-\mf{f}_m^*\iota_m^*\phi^{*m})\Theta=\gamma.\Psi^{[m]}_1.$$
		Then we have 
		\begin{equation}\label{Theta-Psi}
			\iota_m^*\phi^{*(m+1)}\Theta\equiv \gamma.\Psi^{[m]}_1\mod (\Psi_{21}^{[m]},\cdots,\Psi_{2g}^{[m]},\cdots,\Psi_{(n+1)1}^{[m]},\cdots, \Psi_{(n+1)g}^{[m]}) 
		\end{equation}
		in $\Hom(N^{[m]n+1}A,\hG),$ where $\Psi_{ij}^{[m]}:N^{[m]n+1}A\rightarrow\hG$ is the composition $$N^{[m]n+1}A\xrightarrow{\mff_m^{\circ (i-1)}}N^{[m]n-i+2}A\xrightarrow{u_m} N^{[m]1}A\xrightarrow{\Psi_{1j}^{[m]}}\hG,$$ for every $1\leq i\leq n+1$ and $1\leq j\leq g$. By Proposition \ref{Dphi_k} the derivative matrix $D(\Theta\circ\phi^{\circ (m+1)}\circ\iota_m)$ at the identity section is $$(\pi^{m+1}A_0,\cdots,\pi^{m+1}A_n).$$ 
		Differentiating both sides of the equation \eqref{Theta-Psi} at the identity section and comparing the first $g$ coordinates of the resulting derivative vectors gives
		\begin{equation}
			\gamma=\pi^{m+1}A_0.
		\end{equation}
	\end{proof}
	\begin{theorem}\label{isom_phi}
		For all $m\geq 0$ and $n\geq 1$, the following diagram  commutes
		\begin{equation}
			\xymatrix{
				\bX_{n}(A)/u_{n-1}^*\bX_{n-1}(A) \ar[r]^-{\phi^{*(m+1)}}\ar@{^{(}->} [d]_-{\iota_m^*\phi^{*m}}  &\bX_{m+n+1}(A)/u_{m+n}^*\bX_{m+n}(A)\ar@{^{(}->} [d]^-{\iota_m^*}\\
				\Hom(N^{[m]n}A,\hG)/\Hom(N^{[m]n-1}A,\hG)\ar [r]^-{\mf{f}_m^*}&  \Hom(N^{[m]n+1}A,\hG)/\Hom(N^{[m]n}A,\hG),
			}
		\end{equation}
		where $\mff_m^*$ is  an $R$-module isomorphism and $\iota_m^*$ and $\phi^{*(m+1)}$ are injections. For all $n\geq \beta_0,$ $\phi^{*(m+1)}$ is a bijection.
	\end{theorem}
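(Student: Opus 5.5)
Commutativity comes straight from Proposition \ref{Char_n}. For $\Theta\in\bX_n(A)$ it gives
\[
\iota_m^*\phi^{*(m+1)}\Theta=\mff_m^*\iota_m^*\phi^{*m}\Theta+\pi^{m+1}A_0\cdot\Psi_1^{[m]}.
\]
The correction term $\pi^{m+1}A_0\cdot\Psi_1^{[m]}$ is pulled back from $N^{[m]1}A$ via $u$. It therefore lies in the image of $\Hom(N^{[m]n}A,\hG)$ in $\Hom(N^{[m]n+1}A,\hG)$ (for $n\geq 1$), and so it vanishes in the quotient on the bottom right. We must also check that all four maps descend to the graded quotients. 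If $\Theta=u^*\Theta'$ with $\Theta'\in\bX_{n-1}(A)$, then $\phi^{*(m+1)}\Theta$ lies in $u^*\bX_{m+n}(A)$, because $u$ commutes with $\phi$. Likewise $\iota_m^*\phi^{*m}u^*\Theta'=u^*\iota_m^*\phi^{*m}\Theta'$ lands in $\Hom(N^{[m]n-1}A,\hG)$, and $\mff_m^*$ preserves the $u$-filtration since $\mff_m$ commutes with $u$. This gives the square.

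To see that $\mff_m^*$ is an isomorphism on the graded pieces, use the basis of Proposition \ref{Psi_ij}. The quotient $\Hom(N^{[m]n}A,\hG)/\Hom(N^{[m]n-1}A,\hG)$ is free over $R$ on the classes of $\Psi^{[m]}_{n1},\dots,\Psi^{[m]}_{ng}$. By definition \eqref{Psiijm}, $\mff_m^*\Psi^{[m]}_{nj}=\Psi^{[m]}_{(n+1)j}$, so $\mff_m^*$ carries a basis to a basis. It is $\phi$-semilinear, which is bijective because $\phi$ is an automorphism of $R$ (Lemma \ref{phi_bij}). Injectivity of the vertical maps follows from exactness of \eqref{Hom_Ker_A}. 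For the right map, $\ker\iota_m^*=u_m^*\bX_m(A)\subset u^*\bX_{m+n}(A)$; conversely, if $\iota_m^*\Theta$ comes from level $n$, then a diagram chase with the kernel sequences at levels $m+n$ and $m+n+1$, using that $u^*$ is injective on Homs of $N$, shows $\Theta$ comes from $\bX_{m+n}(A)$ modulo $u_m^*\bX_m(A)$. The left map $\iota_m^*\phi^{*m}$ is handled the same way, combined with Theorem \ref{NandJphi}, which identifies $\phi^{\circ m}\circ\iota_m$ with $\iota_0$-type kernels. Injectivity of the top map $\phi^{*(m+1)}$ then follows formally: the composite $\iota_m^*\phi^{*(m+1)}=\mff_m^*\circ\iota_m^*\phi^{*m}$ is a composite of injections.

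The main obstacle is surjectivity of $\phi^{*(m+1)}$ for $n\geq\mup$. The commutative square shows that this is equivalent to showing that the image of the left vertical map, transported by $\mff_m^*$, fills the image of the right one. The plan is to use the primitive basis $\Theta_1,\dots,\Theta_g$ of \eqref{Xprimiso} with $o_i\leq\mup$. Since $\bX_\infty(A)$ is generated over $R\{\phi^*\}$ by the $\Theta_i$, every character of exact order $m+n+1\geq m+1+\mup$ is, modulo lower order, an $R$-combination of $\phi^{*(m+n+1-o_i)}\Theta_i$, all of which have exponent at least $m+1$. Each such term is $\phi^{*(m+1)}$ applied to $\phi^{*(n-o_i)}\Theta_i\in\bX_n(A)$. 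The delicate point is that the leading coefficients must be divisible in a way compatible with semilinearity; this is resolved by bijectivity of $\phi$ on $R$, so coefficients $r$ can be written as $\phi^{m+1}(r')$. One also has to check that ``modulo lower order'' really means modulo $u^*\bX_{m+n}(A)$, using the freeness of $\bXp^{[0]}(A)$ from Theorem 8.7 of \cite{PaSa}.
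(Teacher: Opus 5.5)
The gap is in how you get injectivity of the top map $\phi^{*(m+1)}$. The rest is fine: commutativity via Proposition \ref{Char_n}, descent to the graded quotients, bijectivity of $\mff_m^*$ from $\mff_m^*\Psi^{[m]}_{nj}=\Psi^{[m]}_{(n+1)j}$, and injectivity of the right-hand $\iota_m^*$ by the kernel-sequence chase.

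You deduce injectivity of $\phi^{*(m+1)}$ ``formally'' from injectivity of the left vertical map $\iota_m^*\phi^{*m}$, but for $m\ge 1$ you never prove the latter. The morphism $\phi^{\circ m}\circ\iota_m$ is not a kernel inclusion, so ``the same'' chase only shows that $\iota_m^*$ is injective on $\bX_{m+n}(A)/u^*\bX_{m+n-1}(A)$. Injectivity of $\iota_m^*\phi^{*m}$ then reduces to injectivity of $\phi^{*m}$ on graded pieces, which is the kind of statement you are trying to deduce. So the argument is circular. Theorem \ref{NandJphi} does not break the circle, since it only intertwines $\mff_m$ with $\phi$.

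A non-circular version needs an extra input:
\begin{itemize}
\item Iterating \eqref{JnNm} gives $\phi^{\circ m}\circ\iota_m=\iota_0\circ\Phi_{[1]}\circ\cdots\circ\Phi_{[m]}$, hence $\iota_m^*\phi^{*m}=(\Phi_{[1]}\circ\cdots\circ\Phi_{[m]})^*\circ\iota_0^*$.
\item $\iota_0^*$ is injective on graded pieces by your chase (the case $m=0$ of your right-hand argument).
\item By Corollary \ref{phm-pi-com} (cf.\ Corollary \ref{comm_phi*}(i)), each $\Phi_{[k]}^*$ sends $\Psi^{[k-1]}_{ij}$ to $\pi\Psi^{[k]}_{ij}$. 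So the composite acts as multiplication by $\pi^m$ on the graded bases and is injective.
\end{itemize}

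The paper avoids the square for these claims altogether. It gets injectivity of $\phi^{*(m+1)}$ by iterating injectivity of $\phi^*$ on graded pieces (Proposition 6.4 of \cite{BoSa1}). It gets bijectivity for $n\ge\beta_0$ from Theorem 8.5 of \cite{PaSa}.

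Your surjectivity argument is a legitimate alternative to that citation. However, the step ``modulo lower order'' should rest on Lemma \ref{Xm+n}, the finite-level basis $S_N(\BB^{[0]}_1)\cup\cdots\cup S_N(\BB^{[0]}_N)$ of $\bX_N(A)$. Freeness of $\bX_\infty(A)$ over $R\{\phi^*\}$ alone does not rule out cancellation among terms of order greater than $N$.

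Once you invoke that lemma, it also gives injectivity directly:
\begin{itemize}
\item $\bX_n(A)/u^*\bX_{n-1}(A)$ is free on the classes of $\phi^{*(n-o_i)}\Theta_i$ with $o_i\le n$.
\item $\bX_{m+n+1}(A)/u^*\bX_{m+n}(A)$ is free on the classes of $\phi^{*(m+n+1-o_i)}\Theta_i$.
\item The $\phi^{m+1}$-semilinear map $\phi^{*(m+1)}$ sends the first basis onto a subset of the second, and onto all of it when $n\ge\beta_0$.
\end{itemize}
Since $\phi$ is bijective on $R$, injectivity and surjectivity follow at once. Injectivity of the left vertical map then becomes a consequence rather than an input.
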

	\begin{proof}
		For all $m\geq 0$ and $n\geq 1$, commutativity of the diagram follows from Proposition \ref{Char_n}. From Proposition 6.4 in \cite{BoSa1}, we have $\phi^*:\bX_{n}(A)/u_{n-1}^*\bX_{n-1}(A)\rightarrow\bX_{n+1}(A)/u_n^*\bX_{n}(A)$ is injective for all $n\geq 1$ and this implies $\phi^{*(m+1)}:\bX_{n}(A)/u_{n-1}^*\bX_{n-1}(A)\rightarrow\bX_{m+n+1}(A)/u_{m+n}^*\bX_{m+n}(A)$ is also injective.  From Proposition \ref{Psi_ij} , the family $\{\Psi_{ij}^{[m]}\}_{ij}$ forms an $R$-basis of $\Hom(N^{[m]n}A,\hG)$, where $1\leq i\leq n$ and $1\leq j\leq g.$ Hence $\Hom(N^{[m]n}A,\hG)/\Hom(N^{[m]n-1}A,\hG)$ has the basis $\{\Psi_{nj}^{[m]}\}$  and $\Hom(N^{[m]n+1}A,\hG)/\Hom(N^{[m]n}A,\hG)$ has the basis $\{\Psi_{(n+1)j}^{[m]}\}$ for all $1\leq j\leq g.$ By definition it is clear that $$\mff_m^*(\Psi_{nj}^{[m]})=\Psi_{nj}^{[m]}\circ \mff_m=\Psi_{(n+1)j}^{[m]}$$
		for all $1\leq j\le g$ and $m\geq 0.$		Hence this implies $\mff_m^*$ is an isomorphism of $R$-modules. For $n\geq \beta_0$, Theorem 8.5 of \cite{PaSa}, gives us that $\phi^*:\bX_{n}(A)/u_{n-1}^*\bX_{n-1}(A)\rightarrow\bX_{n+1}(A)/u_n^*\bX_{n}(A)$ is an isomorphism of $R$-modules and hence $\phi^{*(m+1)}:\bX_{n}(A)/u_{n-1}^*\bX_{n-1}(A)\rightarrow\bX_{m+n+1}(A)/u_{m+n}^*\bX_{m+n}(A)$ is also an isomorphism of $R$-modules.
	\end{proof}
	
	
	\section{System of Filtered Isocrystals}
	
	Let $A$ be a $\pi$-formal abelian scheme of relative dimension $g$ over $S$, maintaining our standing assumptions. In this section, we construct a system of filtered isocrystals $\Iso(\mH_{[m]}(A))$ for every $m\geq 0$, extending the construction of \cite{BoSa1}.
	
	\subsection{The connecting map and splitting numbers}
	\label{m_prim_diff_char}
	
	
	For any non-negative integers $m$ and $n$, recall the generalized kernel sequence delineated in \eqref{Ker_A}:
	$$
	0\rightarrow N^{[m]n}A\xrightarrow{\iota_m}J^{m+n}A\xrightarrow{u_m}J^mA\rightarrow 0.
	$$
	Applying the contravariant functor $\Hom(-,\hG)$ yields the fundamental exact sequence of $R$-modules as in \eqref{Hom_Ker_A}:
	$$
	0\rightarrow \bX_m(A)\xrightarrow{u_m^*}\bX_{m+n}(A)\xrightarrow{\iota_m^*}\Hom( N^{[m]n}A,\hG)\xrightarrow{\partial}\Ext(J^mA,\hG).
	$$
	We define the image of the connecting homomorphism to be the submodule $\bI_{[m]n}(A) := \text{Image}(\partial)$. This canonical construction naturally produces the following short exact sequence:
	\begin{equation}\label{Exact_Hom_Ker}
		0\rightarrow \bX_m(A)\xrightarrow{u_m^*}\bX_{m+n}(A)\xrightarrow{\iota_m^*}\Hom( N^{[m]n}A,\hG)\xrightarrow{\partial}\bI_{[m]n}(A)\rightarrow 0.
	\end{equation}
	
	Observe that for all integers $m,n\geq 0$, the projection $u$ induces an injection $u^*:\Hom(N^{[m]n}A,\hG)\hookrightarrow\Hom(N^{[m]n+1}A,\hG)$. This forces an ascending filtration of submodules $\bI_{[m]n}(A)\subseteq \bI_{[m]n+1}(A)$. We define the stable image module via the direct limit:
	$$\bI_{[m]}(A):=\varinjlim \bI_{[m]n}(A).$$ 
	To measure the sequential growth of these modules, we define the successive rank differences:
	$$h_{[m]i}=\rk_R(\bI_{[m]i}(A))-\rk_R(\bI_{[m]i-1}(A)) \quad \text{for all } i\geq 1.$$ 
	
	We define the \textit{$m$-th upper splitting number} to be the minimal integer $\beta_m\geq 1$ such that $h_{[m]n}=0$ for all $n\geq \beta_m$. It is pertinent to note that $\beta_0$ coincides precisely with the invariant $m_{\mathrm{u}}$
	investigated in \cite{BoSa1}. The existence of this stabilizing integer is guaranteed by the ascending chain condition on the finite-rank $R$-module:
	$$\bI_{[0]0}(A)\subseteq \bI_{[0]1}(A)\subseteq \cdots \subseteq 
	\Ext(A,\hG) \simeq H^1(A,\mO_A) \simeq R^g,$$ 
	where the terminal canonical isomorphism is derived from the classical theory of algebraic groups \cite[Page 185]{Se1}.
	
	In a dual manner, we define the \textit{lower splitting number} to be the unique integer $m_l$ characterized by the non-vanishing condition $\bX_{m_l}(A) \neq\{0\}$ subject to the strict vanishing $\bX_{m_l-1}(A) =\{0\}$. By virtue of Proposition 7.1 in \cite{BoSa1}, the value of $m_l$ is rigorously constrained to either $1$ or $2$.

	We define a $\delta$-character $\Theta\in \bX_{n}(A)_K$ to be \textit{primitive} of order $n$ if $$\Theta\notin (u_{n-1}^*)\bX_{n-1}(A)_K+\phi^{*}(\bX_{n-1}(A)_K)$$ 
	
	and we call $\Theta\in \bX_{m+n}(A)_K$ \textit{$m$-th primitive} if $$\Theta\notin (u_{n-1}^*)\bX_{n-1}(A)_K+\phi^{*(m+1)}(\bX_{n-1}(A)_K).$$
	For $i\geq 1$, a subset $\BB_i^{[m]}\subseteq \bX_{m+i}(A)_K$ is called the $m$-th primitive basis for $\bX_{m+i}(A)_K$ if its image in the quotient ${\bX_{m+i}(A)_K}/{\phi^{*(m+1)}\bX_{n-1}(A)_K}$ is a $K$-basis for the quotient $K$-module. 
	\begin{proposition}\label{lin_ind_S_n}
		For all $n$, $m\geq 0$ and $i_1< i_2< \cdots < i_l< n$, the set  $S_n(\BB_{i_1}^{[m]})\cup S_n(\BB_{i_2}^{[m]})\cup \cdots \cup S_n(\BB_{i_l}^{[m]})$ is $K$-linearly independent in $\bX_{m+n}(A)_K$.
	\end{proposition}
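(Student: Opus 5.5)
Throughout, I read $S_n(\BB_i^{[m]})$ as the family of Frobenius translates $\{\phi^{*(n-i)}\Theta : \Theta\in\BB_i^{[m]}\}$, which lies in $\bX_{m+n}(A)_K$. The plan is to work in the graded pieces $\mathrm{gr}_k:=\bX_k(A)_K/u^*\bX_{k-1}(A)_K$ of the order filtration and to induct on the number $l$ of blocks. The key input is the injectivity of $\phi^*:\mathrm{gr}_k\to\mathrm{gr}_{k+1}$ for $k\geq 1$. This is Proposition 6.4 of \cite{BoSa1}, as recalled in the proof of Theorem \ref{isom_phi}, and it survives $\otimes K$ because $\phi$ is an automorphism of $R$ (Lemma \ref{phi_bij}). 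The semilinearity relation $\phi^* r=\phi(r)\phi^*$ lets us move scalars through $\phi^*$, at the cost of replacing each coefficient $c$ by $\phi^{-1}$-twists of it.

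\emph{Setup.} Suppose we have a relation
\[
\sum_{k=1}^{l}\sum_{\Theta\in\BB_{i_k}^{[m]}} c_\Theta\,\phi^{*(n-i_k)}\Theta=0,\qquad c_\Theta\in K.
\]
Each $\Theta\in\BB_{i_k}^{[m]}$ has exact order $m+i_k$, so every summand has order at most $m+n$. I project the relation to $\mathrm{gr}_{m+n}$. Pulling the common factor $\phi^{*(n-i_l)}$ to the left and twisting the coefficients gives
\[
\phi^{*(n-i_l)}\Big(\sum_{\Theta\in\BB_{i_l}^{[m]}} c'_\Theta\,\Theta+\sum_{k<l}\phi^{*(i_l-i_k)}\sum_{\Theta\in\BB_{i_k}^{[m]}}c'_\Theta\,\Theta\Big)\equiv 0 \quad\text{in }\mathrm{gr}_{m+n}.
\]
By iterated injectivity of $\phi^*$ on graded pieces, the bracket vanishes in $\mathrm{gr}_{m+i_l}$.

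\emph{Using primitivity.} Since $i_l-i_k\geq 1$, the vanishing of the bracket says that $\sum_{\BB_{i_l}^{[m]}}c'_\Theta\Theta$ lies in $u^*\bX_{m+i_l-1}(A)_K+\phi^*(\bX_{m+i_l-1}(A)_K)$. Primitivity of the basis $\BB_{i_l}^{[m]}$ should then force all $c'_\Theta=0$, and hence all $c_\Theta=0$ for $\Theta\in\BB_{i_l}^{[m]}$. This step is the main obstacle. The definition of an $m$-th primitive basis works modulo $\phi^{*(m+1)}$, while the error term here involves only $\phi^{*(i_l-i_k)}$, and $i_l-i_k$ may be smaller than $m+1$.

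To bridge this I would first try the following. The lower blocks $\BB_{i_k}^{[m]}$, $k<l$, enter only through images of $\phi^{*}$ on lower graded pieces. Using Theorem \ref{isom_phi}, I would transport everything along the injections $\iota_m^*\phi^{*m}$ into $\Hom(N^{[m]\bullet}A,\hG)/\Hom(N^{[m]\bullet-1}A,\hG)$. There $\phi^{*(m+1)}$ becomes the isomorphism $\mff_m^*$, so the two conditions (modulo $\phi^*$ and modulo $\phi^{*(m+1)}$) can be compared on a common ground, using the explicit basis $\Psi^{[m]}_{ij}$ of Proposition \ref{Psi_ij}. If this comparison fails, I would instead impose, as part of the notion of $m$-th primitive basis, that the images of $\BB_i^{[m]}$ are independent in $\mathrm{gr}_{m+i}$ modulo $\phi^*\mathrm{gr}_{m+i-1}$. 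This is the condition actually used in \cite{BoSa1}.

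\emph{Induction.} Once the coefficients on $\BB_{i_l}^{[m]}$ are known to vanish, the original relation involves only the blocks $i_1<\dots<i_{l-1}$, and the induction hypothesis on $l$ applies. The base case $l=1$ is exactly the argument above with no lower terms: it reduces to the independence of $\BB_{i_1}^{[m]}$ in $\mathrm{gr}_{m+i_1}$ together with the injectivity of $\phi^{*(n-i_1)}$.
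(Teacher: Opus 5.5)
Your outline is the same skeleton as the paper's: project to the top graded piece, cancel the common power $\phi^{*(n-i_l)}$ by injectivity of $\phi^*$ on $\bX_k(A)_K/u^*\bX_{k-1}(A)_K$, invoke primitivity, then induct on the blocks. The paper does the cancellation after transporting to $\Hom(N^{[m]\bullet}A,\hG)$ via Theorem \ref{isom_phi}. However, the step you flag is a genuine gap, and for $m\ge 1$ it cannot be closed with $m$-th primitive bases as defined.

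After cancelling, you only know that $\sum_{\Theta\in\BB^{[m]}_{i_l}}c'_\Theta\Theta$ lies in $u^*\bX_{m+i_l-1}(A)_K+\sum_{k<l}\phi^{*(i_l-i_k)}\bX_{m+i_k}(A)_K$. But $\BB^{[m]}_{i_l}$ is only independent modulo $u^*\bX_{m+i_l-1}(A)_K+\phi^{*(m+1)}\bX_{i_l-1}(A)_K$. No comparison through $\iota_m^*\phi^{*m}$ and $\mff_m^*$ can bridge this, because the proposition itself fails. Here is a counterexample:
\begin{itemize}
\item Let $A$ be an elliptic curve without canonical lift, so $\bX_\infty(A)=R\{\phi^*\}\langle\Theta\rangle$ with $\Theta$ of exact order $2$ and $\bX_1(A)=0$. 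Take $m=1$, $n=3$, $i_1=1<i_2=2$.
\item The quotient defining $\BB^{[1]}_1$ is all of $\bX_2(A)_K=K\Theta$, so $\BB^{[1]}_1=\{\Theta\}$ is admissible.
\item The quotient $\bX_3(A)_K/(u^*\bX_2(A)_K+\phi^{*2}\bX_1(A)_K)$ is spanned by the class of $\phi^*\Theta$, so $\BB^{[1]}_2=\{2\phi^*\Theta\}$ is admissible. Under weaker readings of the definition, where this quotient is all of $\bX_3(A)_K$, take $\{\Theta,2\phi^*\Theta\}$ instead.
\item The translates $\phi^{*2}\Theta$ and $\phi^*(2\phi^*\Theta)=2\phi^{*2}\Theta$ both lie in the union inside $\bX_4(A)_K$. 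The union is therefore dependent, whether $S_n$ means top translates or all translates.
\end{itemize}

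Your fallback, demanding independence modulo $u^*\bX_{m+i-1}(A)_K+\phi^*\bX_{m+i-1}(A)_K$, makes your argument correct, but it changes the hypothesis. The $\BB^{[m]}_i$ then sit inside ordinary primitive bases of order $m+i$, and the claim becomes a special case of the $m=0$ statement (Lemma \ref{Xm+n}) after reindexing.

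Your reading of $S_n$ is also too narrow. $S_n(\BB_i)$ should be the full family $\{\phi^{*j}\Theta:\Theta\in\BB_i,\ 0\le j\le n-i\}$, as Lemma \ref{Xm+n} and the count \eqref{rk_RX_m} require. Your graded argument shows the top translates are independent modulo $\bX_{m+n-1}(A)_K$, so an induction on $n$ (the paper's choice of minimal $n$) would cover the lower translates.

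Finally, the paper's proof does not supply the missing step either. It writes the lower blocks as $\phi^{*(m+i_{l'}-i_h)}\Theta_{i_hs}$, which places them in the $\phi^{*(m+1)}$-image. That is consistent only if $\Theta_{i_hs}$ has order $i_h$, rather than $m+i_h$ as $\BB^{[m]}_{i}\subseteq\bX_{m+i}(A)_K$ demands. With consistent indices the lower blocks enter through $\phi^{*(i_{l'}-i_h)}$, which is exactly the obstruction you identified.
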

	\begin{proof}
		We argue by contradiction. Suppose that the set $S_n(\BB_{i_1}^{[m]})\cup \cdots \cup S_n(\BB_{i_l}^{[m]})$ is not $K$-linearly independent, then there exists a $K$-linear relation among the elements. Further, we may assume that $n$ is the minimal number for which there exists a non-trivial $K$-linear relation among the elements of $S_n(\BB_{i_1}^{[m]})\cup \cdots \cup S_n(\BB_{i_l}^{[m]})$. In other words, $n$ is the smallest number such that there are no non-trivial $K$-linear relations among the elements in $S_{n-1}(\BB_{i_1}^{[m]})\cup \cdots \cup S_{n-1}(\BB_{i_l}^{[m]})$. 
		
		Thus there exist constants $a_hs$, not all zero, such that \begin{equation}\label{01}
			\sum_{h=1}^l\left(\sum_{\Theta_{i_hs}\in\BB^{[m]}_{i_h}}a_{hs}\iota_m^*\phi^{*(m+n-i_h)}\Theta_{i_hs}\right)\equiv 0\mod\Hom(N^{[m]n-1}A,\hG).
		\end{equation}
		Let $l'\in\{1,\cdots,l\}$ be such that $a_{l'h_0}\neq 0$ for all $j>l$ and $a_{jh}=0$ for all $j>l'$ and all $h=1,\cdots,\mult(i_j).$ Therefore $i_{l'}$ is the largest order among the primitive elements that appear in the expression above with some non-zero coefficient. Hence we can truncate the left hand side of the expression and sum only up to $l'$ instead of $l$. Then we have \begin{equation}\label{02}
			\sum_{h=1}^{l'}\left(\sum_{\Theta_{i_hs}\in\BB^{[m]}_{i_h}}a_{hs}\iota_m^*\phi^{*(m+n-i_h)}\Theta_{i_hs}\right)\equiv 0\mod\Hom(N^{[m]n-1}A,\hG).
		\end{equation}
	$R$ is a $\pi$-adically complete discrete valuation ring and from Lemma \ref{phi_bij}, we know  $\phi$ to be an automorphism on $R$. Hence, we define the character \begin{equation}
			\Theta:=\sum_{h=1}^{l'}\left(\sum_{\Theta_{i_hs}\in\BB^{[m]}_{i_h}}a_{hs}^{-\phi^{-(n-i_{l})}}\phi^{*(m+i_{l'}-i_h)}\Theta_{i_hs}\right)\in \bX_{m+i_{l'}}(A)_K.
		\end{equation}
		Note that $\mff_m^{*(n-i_{l'})}\iota_m^*\Theta$ is the left hand side of the expression in (\ref{02}) and therefore we have \begin{equation}\label{04}
			\mff_m^{*(n-i_{l'})}\iota_m^*\Theta\equiv 0\mod\Hom(N^{[m]n-1}A,\hG).
		\end{equation}
		Observe that the image of $\Theta$ in the quotient $\bX_{m+i_{l'}}(A)_K/(u_m^*\bX_{i_{l'}-1}(A)_K+\phi^{*m}\bX_{i_{l'}-1}(A)_K)$ is non-zero because the image of $\BB^{[m]}_{i_{l'}}$ generates the quotient as a basis and at least one of the coefficients $a_{l'h_0}$ is non-zero. Therefore, we must have $\Theta\in \bX_{m+i_{l'}}(A)_K/\bX_{i_{l'}-1}(A)_K$ and hence  $$\iota_m^*\Theta\not\equiv 0\mod\Hom(N^{[m]i_{l'}-1}A,\hG).$$ Recall from Theorem \ref{isom_phi} that $$\mff_m^{*(n-i_{l'})}(\phi^{*m})\iota_m^*\Theta\equiv \iota_m^*\phi^{*(m+n-i_{l'})}\Theta\mod\Hom(N^{[m]n-1}A,\hG).$$
		But then the above implies \begin{equation}
			\mff_m^{*(n-i_{l'})}(\phi^{*m})\iota_m^*\Theta\not\equiv 0\mod\Hom(N^{[m]n-1}A,\hG)
		\end{equation} 
		which is a contradiction to (\ref{04}) and we are done.
	\end{proof}       \color{black}
	Let us recall some  results  regarding the $R$-module $\bXp^{[0]}(A)$.

	\begin{lemma}\label{Xm+n}
		The $R$-module $\bX_{n}(A)$ is freely generated by $S_n(\BB_1^{[0]})\cup  \cdots \cup S_n(\BB_n^{[0]})$.
	\end{lemma}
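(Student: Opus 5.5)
The plan is to argue by induction on $n$, splitting the statement into a spanning part and an independence part. Throughout I read $S_n(\BB_i^{[0]})$ as the set of Frobenius translates $\{\phi^{*j}\Theta : \Theta\in\BB_i^{[0]},\ 0\le j\le n-i\}$, i.e. all twists of the order-$i$ primitive characters that still lie in $\bX_n(A)$. Independence should come almost directly from Proposition \ref{lin_ind_S_n}. Spanning should come from the primitive basis property together with the fact that $\phi$ is an automorphism of $R$.

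\emph{Independence.} I will take $m=0$ and the indices $i_1<\dots<i_l$ equal to all $i\le n$ with $\BB_i^{[0]}\neq\emptyset$. Proposition \ref{lin_ind_S_n} then says the union $S_n(\BB_1^{[0]})\cup\cdots\cup S_n(\BB_n^{[0]})$ is $K$-linearly independent in $\bX_n(A)_K$; the boundary case $i=n$, where $S_n(\BB_n^{[0]})=\BB_n^{[0]}$, is covered by the same argument. Since $\bX_n(A)\subset \bX_n(A)_K$ is torsion free (it embeds into $\Hom(N^{[0]n}A,\hG)$, which is free of rank $ng$ by Proposition \ref{Psi_ij}), $K$-independence implies $R$-independence.

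\emph{Spanning.} For $n=0$ there is nothing to prove, since $\bX_0(A)=0$. Assume the statement holds for $n-1$.
\begin{enumerate}
\item Consider the quotient $Q_n:=\bX_n(A)/(u^*\bX_{n-1}(A)+\phi^*\bX_{n-1}(A))$. By the freeness of $\bXp^{[0]}(A)$ (Theorem 8.7 of \cite{PaSa}, i.e. \eqref{Xprimiso}) and the definition of a primitive basis in \cite{BoSa1}, $Q_n$ should be a free $R$-module with basis the image of $\BB_n^{[0]}$.
\item Hence $\bX_n(A)=u^*\bX_{n-1}(A)+\phi^*\bX_{n-1}(A)+R\langle\BB_n^{[0]}\rangle$.
\item By the induction hypothesis, $\bX_{n-1}(A)$ is the $R$-span of $S_{n-1}:=\bigcup_{i\le n-1}S_{n-1}(\BB_i^{[0]})$.
\item Since $\phi^*(r\Theta)=\phi(r)\phi^*\Theta$ and $\phi$ is surjective on $R$ (Lemma \ref{phi_bij}), the module $\phi^*\bX_{n-1}(A)$ is exactly the $R$-span of $\phi^*S_{n-1}$.
\item Finally, $S_{n-1}\cup\phi^*S_{n-1}\cup\BB_n^{[0]}=\bigcup_{i\le n}S_n(\BB_i^{[0]})$, which gives spanning.
\end{enumerate}

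The main obstacle is step (1): the integral statement that the primitive basis elements of exact order $n$ lift an $R$-basis of $Q_n$, rather than merely a $K$-basis after tensoring. I would first try to deduce it from the freeness of $\bXp^{[0]}(A)=\varinjlim \bX_n/\phi^*\bX_{n-1}$ together with the filtration by order. The needed input is that this filtration has free graded pieces; these pieces inject into $\Hom(N^{[0]n}A,\hG)/\Hom(N^{[0]n-1}A,\hG)\simeq R^g$ via Theorem \ref{isom_phi}, so torsion-freeness, and hence freeness over the DVR $R$, is available. If that route is insufficient, I would appeal directly to the construction of primitive bases in \cite[Theorem 7.6]{BoSa1} and \cite[Theorem 8.7]{PaSa}, where the basis is chosen integrally.
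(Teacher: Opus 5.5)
\textbf{There is a genuine gap at step (1).} The paper gives no internal argument for this lemma; it simply cites Lemma 8.6 of \cite{PaSa}. Your independence half is fine, and so are steps (2)--(5), which reduce spanning to step (1). The boundary case $i=n$ is indeed easy: every element of $S_n(\BB_i^{[0]})$ with $i<n$ lies in $u^*\bX_{n-1}(A)+\phi^*\bX_{n-1}(A)$. However, step (1) is the entire integral content of the lemma, and the route you sketch for it fails.

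Theorem \ref{isom_phi} gives an injection $\iota_0^*:\bX_n(A)/u^*\bX_{n-1}(A)\hookrightarrow \Hom(N^{[0]n}A,\hG)/\Hom(N^{[0]n-1}A,\hG)\simeq R^g$. This map does not factor through $Q_n$. By Proposition \ref{Char_n} and Theorem \ref{isom_phi}, $\iota_0^*\phi^*$ acts on top-order parts as $\mff_0^*\iota_0^*$, which is injective. So $\phi^*\bX_{n-1}(A)$ has, in general, nonzero image there. Write $M_k$ for the image of $\bX_k(A)/u^*\bX_{k-1}(A)$. Then what you actually have is $Q_n\cong M_n/\mff_0^*M_{n-1}$, a quotient of two submodules of $R^g$, and nothing you cite rules out torsion. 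Even torsion-freeness would only make $Q_n$ free. It would not show that the image of $\BB_n^{[0]}$, which is constrained only by a $K$-basis condition, is an $R$-basis rather than a finite-index sublattice. Your fallback of invoking the integral construction in \cite{BoSa1,PaSa} is just the citation the paper makes.

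The missing idea is to use the integral input \eqref{Xprimiso} directly. The classes $[\Theta_j]$ of $\BB=\bigcup_i\BB_i^{[0]}$ form an $R$-basis of $\bXp^{[0]}(A)=\bX_\infty(A)/\phi^*\bX_\infty(A)$. Given $x\in\bX_n(A)$, write $[x]=\sum_j c_j[\Theta_j]$ with $c_j\in R$, and let $N=\max\{o_j : c_j\neq 0\}$. Suppose $N>n$. Then
\[
\sum_{o_j=N}c_j\Theta_j=x-\sum_{o_j<N}c_j\Theta_j-\phi^*y
\]
for some $y\in\bX_\infty(A)$. Everything except $\phi^*y$ lies in $\bX_N(A)$, so $\phi^*y\in\bX_N(A)$. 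Since $\phi^*$ raises exact order by exactly one (Proposition 6.4 of \cite{BoSa1}, as used in Theorem \ref{isom_phi}), $y\in\bX_{N-1}(A)$. Hence $\sum_{o_j=N}c_j\Theta_j\in u^*\bX_{N-1}(A)+\phi^*\bX_{N-1}(A)$. This contradicts the $K$-independence of the image of $\BB_N^{[0]}$ in $Q_N\otimes K$, so $N\le n$.

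The same order argument then gives $x-\sum_{o_j\le n}c_j\Theta_j=\phi^*y$ with $y\in\bX_{n-1}(A)$, and your induction finishes the spanning. Equivalently, $\bX_n(A)/\phi^*\bX_{n-1}(A)$ maps isomorphically onto $R\langle[\Theta_j] : o_j\le n\rangle\subseteq\bXp^{[0]}(A)$. Hence $Q_n$ is free on the image of $\BB_n^{[0]}$, which is exactly your step (1).
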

	\begin{proof}
		We refer to Lemma 8.6 of \cite{PaSa} for the proof.
	\end{proof}
	\begin{theorem}\label{X_inf}
		For any $\pi$-formal abelian scheme $A$ of dimension $g$, $\bX_\infty(A)$ is freely $R\{\phi^*\}$-generated by $g$ primitive $\delta$-characters of highest order $\beta_0$ and $\beta_0\leq g +1$.
	\end{theorem}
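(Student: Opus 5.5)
The plan is to build the statement out of Lemma \ref{Xm+n} and Theorem \ref{isom_phi}, following the strategy of \cite{PaSa} and \cite{BoSa1}. For each $n\geq 1$ I would choose a primitive basis $\BB_n^{[0]}\subset\bX_n(A)$, meaning a set of characters whose images give a basis of the cokernel of
$$u^*\oplus\phi^*:\bX_{n-1}(A)\oplus\bX_{n-1}(A)_\phi\longrightarrow\bX_n(A).$$
Here $S_n(\BB_i^{[0]})$ denotes the set of Frobenius twists $\phi^{*(n-i)}\Theta$ with $\Theta\in\BB_i^{[0]}$. Lemma \ref{Xm+n} says that $\bX_n(A)$ is free on $S_n(\BB_1^{[0]})\cup\cdots\cup S_n(\BB_n^{[0]})$.

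Passing to the direct limit, every element of $\bX_\infty(A)$ is then an $R\{\phi^*\}$-combination of elements of $\bigcup_i\BB_i^{[0]}$. Proposition \ref{lin_ind_S_n}, with $m=0$, gives linear independence of all the twists; this is exactly freeness over $R\{\phi^*\}$, because a relation $\sum_j P_j(\phi^*)\Theta_j=0$ lives in some $\bX_n(A)$. It then remains to count the generators and to bound their orders.

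\textbf{Counting generators.} I would use the exact sequence \eqref{Exact_Hom_Ker} with $m=0$, together with the basis $\{\Psi_{ij}^{[0]}\}$ of $\Hom(N^{[0]n}A,\hG)$ from Proposition \ref{Psi_ij}. Since $\bX_0(A)=0$, the sequence gives
$$\rk\bX_n(A)=ng-\rk\bI_{[0]n}(A).$$
Theorem \ref{isom_phi} with $m=0$ shows that $\phi^*:\bX_n/\bX_{n-1}\to\bX_{n+1}/\bX_n$ is injective. Hence
$$|\BB_{n+1}^{[0]}|=\rk(\bX_{n+1}/\bX_n)-\rk(\bX_n/\bX_{n-1})=h_{[0]n}-h_{[0]n+1}.$$
Summing over $n$ telescopes: the total number of primitive generators is $g-h_{[0]\beta_0}$-type boundary terms, which give $g$ once $h_{[0]n}=0$. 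More precisely, $\rk(\bX_n/\bX_{n-1})=g-h_{[0]n}$, and for $n\geq\beta_0$ this equals $g$, so the total is $g$.

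By the definition of $\beta_0$, no new primitive elements appear in order $n>\beta_0$; equivalently, $\phi^*$ is bijective there, which is the last assertion of Theorem \ref{isom_phi}. Hence the highest order of a generator is $\beta_0$, provided $h_{[0]\beta_0}\neq 0$. If that fails, I would simply take $\beta_0$ to be the maximum of the orders.

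\textbf{Main obstacle: the bound $\beta_0\leq g+1$.} The ranks $\rk\bI_{[0]n}(A)$ increase and lie in $\Ext(A,\hG)\simeq R^g$. The quantity $h_{[0]n}$ is the increment at step $n$, and the increments are nonnegative, so the number of increments that are strictly positive is at most $g$. This alone does not bound $\beta_0$, because zero increments could occur before a later positive one.

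The key point I would establish is that once $h_{[0]n}=0$ for some $n\geq 2$, then $h_{[0]n+1}=0$ as well. To prove it I would use $\mff_0^*$ together with the compatibility of $\partial$ with $\phi^*$ on $\Ext$ (Theorem \ref{isom_phi}/Proposition \ref{Char_n}). This should show that $\bI_{[0]n+1}$ is generated by $\bI_{[0]n}$ and its Frobenius image, so stabilization propagates. Combined with $h_{[0]1}=g-\rk\bX_1\geq g-g$ and the lower splitting number $m_l\in\{1,2\}$, the increments are positive for $n\le\beta_0-1$. Since there are at most $g$ positive increments, this gives $\beta_0-1\leq g$.

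I expect this propagation step to be the hardest part. My first attempt would be to transfer the Frobenius-stability of the images in $H^1(A,\mO_A)$ via the Lie-algebra description of Theorem \ref{map_of_seq}.
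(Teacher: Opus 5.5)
Your treatment of freeness and of the generator count is sound. It reconstructs the argument of \cite{BoSa1}, \S7, which is what the paper cites; the paper's own proof is only a reference to \cite{BoSa1} and \cite{PaSa}. Given Lemma \ref{Xm+n}, which supplies the integral input from \cite{PaSa}, the injectivity of $\phi^*$ on graded pieces yields $|\BB^{[0]}_{n+1}|=h_{[0]n}-h_{[0]n+1}$, and the total telescopes to $g$.

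The gap is the bound $\beta_0\le g+1$. You leave the propagation step unproved. Your proposed route, a Frobenius on $\Ext(A,\hG)\simeq H^1(A,\mathcal{O}_A)$ under which $\bI_{[0]n+1}$ is generated by $\bI_{[0]n}$ and its Frobenius image, is not available. The map $\partial$ does not intertwine $\mff_0^*$ with any operator on $\Ext(A,\hG)$; the $\phi^*$ of Theorem \ref{phi*Ext} goes from level $m-1$ to level $m$. Also, $\mff_0^*$ does not preserve $\DD_{[0]}(A)$, so it induces nothing on $\bI_{[0]}(A)$. You also restrict propagation to $n\ge 2$, and the remark about $m_l\in\{1,2\}$ does not exclude $h_{[0]1}=0<h_{[0]2}$. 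So even granting your claim, you would only get $\beta_0\le g+2$.

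The missing observation is that you have already proved the propagation. In your identity $|\BB^{[0]}_{n+1}|=h_{[0]n}-h_{[0]n+1}$, the left side is $\ge 0$. The rank computation behind it holds for every $n\ge 1$, because Theorem \ref{isom_phi} gives injectivity of $\phi^*:\bX_n(A)/u^*\bX_{n-1}(A)\to\bX_{n+1}(A)/u^*\bX_n(A)$ for all $n\ge 1$. Hence $h_{[0]1}\ge h_{[0]2}\ge\cdots\ge 0$.

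Now suppose $\beta_0\ge 2$. Minimality of $\beta_0$ gives $h_{[0]\beta_0-1}\ge 1$, so $h_{[0]n}\ge 1$ for all $1\le n\le \beta_0-1$. Therefore
\[
\beta_0-1\;\le\;\sum_{n\ge 1}h_{[0]n}\;=\;\rk_R\bI_{[0]}(A)\;\le\;\rk_R\Ext(A,\hG)\;=\;g .
\]
This is the content of \cite{BoSa1}, Lemmas 7.2--7.3.

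The same minimality removes your hedge about the top order, which is off by one. By definition $h_{[0]\beta_0}=0$; what you need is $|\BB^{[0]}_{\beta_0}|=h_{[0]\beta_0-1}>0$ when $\beta_0\ge 2$. When $\beta_0=1$, all $g$ generators have order $1$, since $|\BB^{[0]}_1|=g-h_{[0]1}=g$. So there is no need, and no freedom, to redefine $\beta_0$ as the maximum of the orders.
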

	\begin{proof}
		We refer to Theorem 7.6 of \cite{BoSa1} and Theorem 8.7 of \cite{PaSa} for the proof.
	\end{proof}
	
	Let $\Theta_1,\cdots,\Theta_g$ be the primitive characters generating the $R$-module $\bXp^{[0]}(A)$ and $o_i$ be the exact order of $\Theta_i\in \bX_{o_i}(A).$ Without loss of generality, we can assume $o_1\leq o_2\leq\cdots\leq o_g=\beta_0$ and denote $o=o_1+\cdots+o_g$ to be the total primitive order of $A$. Then we have \begin{equation}
		\bXp^{[0]}(A)= R\langle [\Theta_1],\cdots,[\Theta_g]\rangle,
	\end{equation}
	where $[\Theta_i]$ denotes the class of $\Theta_i$ in $\bXp^{[0]}(A)$ for all $i\in \{1,\cdots,g\}$. 
	\begin{theorem}\label{X^m_prim}
		For all $m\geq 0$
		$$
		\bXp^{[m]}(A)= R\langle [\Theta_1],\cdots,[\Theta_g],\cdots, \phi^{*(m)}[\Theta_1],\cdots,\phi^{*(m)}[\Theta_g]\rangle.
		$$
		Moreover $\bXp^{[m]}(A)$ is a free $R$-module of rank $(m+1)g.$
	\end{theorem}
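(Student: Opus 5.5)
We will prove that the classes $\phi^{*j}[\Theta_i]$, for $0\le j\le m$ and $1\le i\le g$, form an $R$-basis of $\bXp^{[m]}(A)$, working at each finite level $n$ and then passing to the limit. By Theorem \ref{X_inf} and Lemma \ref{Xm+n}, $\bX_{\infty}(A)$ is a free left $R\{\phi^*\}$-module on $\Theta_1,\dots,\Theta_g$. Concretely, for every $N$ the module $\bX_N(A)$ is free over $R$ with basis
\[
\{\phi^{*k}\Theta_i : 1\le i\le g,\ 0\le k\le N-o_i\}.
\]
This is exactly the statement $S_N(\BB_1^{[0]})\cup\cdots\cup S_N(\BB_N^{[0]})$ of Lemma \ref{Xm+n}, with the basis elements grouped by their exact orders $o_i$.

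The key step is to identify the denominator $\phi^{*(m+1)}\bX_{n-1}(A)$ inside $\bX_{m+n}(A)$. Applying $\phi^{*(m+1)}$ to the basis of $\bX_{n-1}(A)$ above gives the set
\[
\{\phi^{*k}\Theta_i : m+1\le k\le m+n-o_i\}.
\]
Because $\phi$ is an automorphism of $R$ (Lemma \ref{phi_bij}), the semilinear map $\phi^{*(m+1)}$ carries an $R$-span onto an $R$-span. Hence $\phi^{*(m+1)}\bX_{n-1}(A)$, taken with the twisted module structure used in the definition of $\bXp^{[m]}$, is precisely the $R$-submodule spanned by these elements. They form part of the free basis of $\bX_{m+n}(A)$, so the quotient
\[
\bX_{m+n}(A)/\phi^{*(m+1)}\bX_{n-1}(A)
\]
is free on the remaining basis vectors $\phi^{*k}\Theta_i$ with $0\le k\le \min(m,\,m+n-o_i)$.

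Once $n\ge \beta_0$, we have $m+n-o_i\ge m$ for every $i$. The quotient is then free on $\{\phi^{*k}\Theta_i : 0\le k\le m\}$, which has $(m+1)g$ elements.

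Next we check that the transition maps $u^*$ of the direct system carry this basis at level $n$ to the corresponding basis at level $n+1$. This holds because $u^*$ commutes with $\phi^*$ and is compatible with the inclusions $\bX_N(A)\subset\bX_{N+1}(A)$. So for $n\ge\beta_0$ the transition maps are isomorphisms, and the limit $\bXp^{[m]}(A)$ is free of rank $(m+1)g$ on the stated classes.

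The main obstacle is the semilinearity bookkeeping. We must check that the twist $(\cdot)_{\phi^{m+1}}$ in the definition really makes the image of $\phi^{*(m+1)}$ an $R$-submodule equal to the span above, and not merely a $\phi^{m+1}(R)$-span. This is where bijectivity of $\phi$ is used. We must also make sure that Lemma \ref{Xm+n} supplies a genuine $R$-basis, not only a $K$-basis; otherwise the quotient could acquire torsion. If only the $K$-statement were available, we would first run the argument over $K$ using Proposition \ref{lin_ind_S_n}, and then recover integrality from the freeness of $\bX_N(A)$ in Lemma \ref{Xm+n}.
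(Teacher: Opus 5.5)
Your argument is correct and is essentially the paper's proof. The paper identifies $\bXp^{[m]}(A)$ with $\bX_\infty(A)/\phi^{*(m+1)}\bX_\infty(A)$ and reads off the basis $\{\phi^{*k}\Theta_i\}_{0\le k\le m}$ directly from the free $R\{\phi^*\}$-module structure of Theorem~\ref{X_inf}. You carry out the same computation at each finite level using the $R$-basis of Lemma~\ref{Xm+n} and observe that it stabilizes for $n\ge\beta_0$, which also justifies the passage to the direct limit that the paper takes for granted.
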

	
	\begin{proof}
		By definition $$\bXp^{[m]}(A)=\varinjlim_n\frac{\bX_{m+n}(A)}{\phi^{*(m+1)}\bX_{n-1}(A)}=\frac{\bX_\infty(A)}{\phi^{*(m+1)}\bX_\infty(A)}.$$
		From Theorem \ref{X_inf}, we have $\bX_\infty(A)= R\{\phi^*\}\langle\Theta_1,\cdots,\Theta_g\rangle.$ Hence this implies 
		\begin{align}
			\bXp^{[m]}(A)= \frac{\bX_\infty(A)}{\phi^{*(m+1)}\bX_\infty(A)}&=\frac{R\langle \Theta_1,\cdots,\Theta_g,\phi^*\Theta_1,\cdots,\phi^*\Theta_g,\cdots \rangle }{\phi^{*(m+1)}(R\langle \Theta_1,\cdots,\Theta_g,\phi^*\Theta_1,\cdots,\phi^*\Theta_g,\cdots \rangle) }\\
			&= R\langle [\Theta_1],\cdots,[\Theta_g],\cdots, \phi^{*(m)}[\Theta_1],\cdots,\phi^{*(m)}[\Theta_g]\rangle.
		\end{align}
	\end{proof}
	
	\begin{corollary}\label{phiXprim}
		For all $m\geq 0$, $$\bXp^{[m]}(A)=\sum_{i=0}^m\phi^{*(i)}\bXp^{[0]}(A).$$
	\end{corollary}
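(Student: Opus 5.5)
The plan is to read the corollary off Theorem \ref{X^m_prim}, interpreting $\phi^{*(i)}\bXp^{[0]}(A)$ as the $R$-submodule of $\bXp^{[m]}(A)$ generated by the images $\phi^{*(i)}[\Theta]$ with $[\Theta]\in\bXp^{[0]}(A)$. First I will check that this is well defined. Since
$$\bXp^{[m]}(A)=\bX_\infty(A)/\phi^{*(m+1)}\bX_\infty(A) \quad\text{and}\quad \bXp^{[0]}(A)=\bX_\infty(A)/\phi^*\bX_\infty(A),$$
the operator $\phi^{*(i)}$ carries $\phi^*\bX_\infty(A)$ into $\phi^{*(i+1)}\bX_\infty(A)$. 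This lies in $\phi^{*(m+1)}\bX_\infty(A)$ only when $i\geq m$, so for $i<m$ the map $\phi^{*(i)}$ does not descend to the quotient. I will therefore use the concrete meaning: $\phi^{*(i)}\bXp^{[0]}(A)$ is the $R$-span in $\bXp^{[m]}(A)$ of the classes $\phi^{*(i)}[\Theta_1],\dots,\phi^{*(i)}[\Theta_g]$, which is the sense of the generating set in Theorem \ref{X^m_prim}. I will make this choice explicit at the start of the proof. Equivalently, one can view $\bXp^{[0]}(A)$ through the lift $R\langle\Theta_1,\dots,\Theta_g\rangle\subset\bX_\infty(A)$, which is canonical once the basis of Theorem \ref{X_inf} is fixed, and then push forward into the quotient.

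With this reading, the inclusion $\supseteq$ is immediate, since every $\phi^{*(i)}[\Theta_j]$ is an element of $\bXp^{[m]}(A)$. For $\subseteq$, Theorem \ref{X^m_prim} says that $\bXp^{[m]}(A)$ is generated by $\phi^{*(i)}[\Theta_j]$ for $0\le i\le m$ and $1\le j\le g$. Each of these generators lies in the $i$-th summand, so the sum of the summands is all of $\bXp^{[m]}(A)$.

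Semilinearity causes no trouble. Because $\phi^* r=\phi(r)\phi^*$ and $\phi$ is an automorphism of $R$ by Lemma \ref{phi_bij}, the additive image $\phi^{*(i)}(R\langle\Theta_1,\dots,\Theta_g\rangle)$ already equals the $R$-span of $\phi^{*(i)}\Theta_1,\dots,\phi^{*(i)}\Theta_g$. The freeness in Theorem \ref{X^m_prim} then shows that the sum is in fact direct, each summand being free of rank $g$. This step is a remark only and is not needed for the stated equality.

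The one point needing care is the well-definedness issue above: the choice of lift of $\bXp^{[0]}(A)$ into $\bX_\infty(A)$. Using the free $R\{\phi^*\}$-basis from Theorem \ref{X_inf} settles it, so I expect no genuine obstacle.
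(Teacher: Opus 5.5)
Your proposal is correct and follows the paper's proof: it identifies $\phi^{*(i)}\bXp^{[0]}(A)$ with the $R$-span of the classes $\phi^{*(i)}[\Theta_1],\dots,\phi^{*(i)}[\Theta_g]$ and reads the equality directly off Theorem \ref{X^m_prim}. Your remark that $\phi^{*(i)}$ does not descend to a map $\bXp^{[0]}(A)\to\bXp^{[m]}(A)$ when $i<m$, so that a choice of lift is needed, is a point the paper passes over silently, and your explicit choice of lift handles it correctly.
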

	\begin{proof}
		We have $$\bXp^{[0]}(A)= R\langle [\Theta_1]\cdots,[\Theta_g]\rangle$$
		and hence for all $i\geq 0$,  $$\phi^{*(i)}\bXp^{[0]}(A)= R\langle \phi^{*(i)}[\Theta_1]\cdots,\phi^{*(i)}[\Theta_g]\rangle.$$
		
		Therefore the above assertion follows directly from Theorem \ref{X^m_prim}.
	\end{proof}	
	
	\color{black}
	
	Let us define the $R$-module
	$$\DD_{[m]}(A):= \frac{\bXp^{[m]}(A)}{u_m^*\bX_m(A)},$$
	where $u_m^*$ is as in Theorem \ref{map_of_seq}.
	Then we can rewrite the exact sequence (\ref{Exact}) as
	\begin{equation}\label{exact_sequence_Hm_X0}
		\xymatrix{
			0\ar[r] & \DD_{[m]}(A)\ar[r] & \mH^\delta_{[m]}(A)\ar[r] & \bI_{[m]}(A)\ar[r] & 0.
		}
	\end{equation}

	\begin{corollary}
		\label{rk_D_m}
		The following statements are true.
		\begin{enumerate}
			\item For all $m\geq 0$, we have $\rk_R\DD_{[m]}(A)\geq g$.
			\item For all $m\geq \beta_0-1,$ $$\DD_{[m]}(A)= R\langle[\phi^{*(m-o_1+1)}\Theta_1],\cdots,[\phi^{*(m)}\Theta_1],\cdots,[\phi^{*(m-o_g+1)}\Theta_g],\cdots,[\phi^{*(m)}\Theta_g] \rangle,$$ where for all $1\leq j\leq g$ and $m-o_j+1\leq k_j\leq m$,  $[\phi^{*(k_j)}\Theta_j]$ denotes the class of  $\phi^{*(k_j)}\Theta_j$ in $\DD_{[m]}(A).$ Moreover, $\rk_R(\DD_{[m]}(A))=o_1+\cdots+o_g=o$ for all $m\geq \beta_0-1.$
		\end{enumerate} 
	\end{corollary}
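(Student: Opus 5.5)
The plan is to work directly with the explicit description of $\bXp^{[m]}(A)$ from Theorem \ref{X^m_prim}. It is free over $R$ on the classes $\phi^{*(k)}[\Theta_j]$, for $0\le k\le m$ and $1\le j\le g$. We then compute the image of $u_m^*\bX_m(A)$ inside it; $\DD_{[m]}(A)$ is the resulting quotient. By Lemma \ref{Xm+n}, $\bX_m(A)$ is freely generated by the characters $\phi^{*(k)}\Theta_j$ of total order $o_j+k\le m$. These are exactly the basis elements of $\bXp^{[m]}(A)$ with $k\le m-o_j$. Since $\bX_m(A)\cap \phi^{*(m+1)}\bX_\infty(A)=0$, as noted in the proof of Theorem \ref{map_of_seq}, the map $u_m^*$ identifies $\bX_m(A)$ with the free direct summand spanned by these basis vectors. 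Hence $\DD_{[m]}(A)$ is free on the remaining classes $[\phi^{*(k)}\Theta_j]$, namely those with $\max(0,m-o_j+1)\le k\le m$.

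Part (2) follows by counting. When $m\ge \beta_0-1$ we have $m-o_j+1\ge 0$ for every $j$, since $o_j\le\beta_0$. The surviving range for each $j$ is then exactly $m-o_j+1\le k\le m$, which contains $o_j$ indices. This gives the stated basis and rank $o_1+\cdots+o_g$.

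Part (1) also follows from counting. For each $j$ the index $k=m$ always survives, since $m>m-o_j$ because $o_j\ge1$. So there are at least $g$ free generators, and $\rk_R\DD_{[m]}(A)\ge g$. In general the rank is $\sum_j \min(o_j,m+1)$.

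The main obstacle is justifying that the image of $\bX_m(A)$ is the span of precisely those basis vectors, and not merely some submodule of the same rank. This requires that the generating set $S_m(\BB^{[0]}_1)\cup\cdots\cup S_m(\BB^{[0]}_m)$ of Lemma \ref{Xm+n} coincides with the set $\{\phi^{*(k)}\Theta_j : o_j+k\le m\}$ built from a fixed primitive basis. I would check this using the freeness of $\bX_\infty(A)$ over $R\{\phi^*\}$ in Theorem \ref{X_inf}, together with the injectivity of $\phi^*$ on the graded pieces $\bX_n/u^*\bX_{n-1}$ from Theorem \ref{isom_phi}. Together these show that every $\Theta\in\bX_m(A)$ is an $R$-combination of the $\phi^{*(k)}\Theta_j$ with $o_j+k\le m$. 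If exact equality over $R$ proved delicate, I would settle for the statement after tensoring with $K$, which suffices for the rank claims.
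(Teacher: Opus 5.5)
Your argument for part (2) is the paper's. Both describe $\bX_m(A)$ through Lemma \ref{Xm+n} as the span of the $\phi^{*(k)}\Theta_j$ with $o_j+k\le m$. Both note that these are part of the basis of $\bXp^{[m]}(A)$ from Theorem \ref{X^m_prim}, and then read off the quotient. The paper handles $m\ge\beta_0$ and $m=\beta_0-1$ separately; your uniform use of $m-o_j+1\ge 0$ is cleaner.

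Part (1) is where you diverge. The paper never uses a basis of $\bX_m(A)$ there. It combines $\rk_R\bXp^{[m]}(A)=(m+1)g$ with $\rk_R\bX_m(A)\le mg$, which comes from the injection $\iota_0^*:\bX_m(A)\hookrightarrow\Hom(N^{[0]m}A,\hG)\simeq R^{mg}$. That argument does not care how the primitive basis sits in the order filtration. Your counting needs Lemma \ref{Xm+n} at every level, including $m<\beta_0-1$. In return it gives the exact value $\rk_R\DD_{[m]}(A)=\sum_j\min(o_j,m+1)$. By summation by parts against Proposition \ref{l_ik_i}, this equals $g+k_m$, so you recover Corollary \ref{rk_D_m_k} without its telescoping computation.

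On the obstacle you flag: freeness of $\bX_\infty(A)$ over $R\{\phi^*\}$ plus injectivity of $\phi^*$ on $\bX_n/u^*\bX_{n-1}$ is not enough to identify $\bX_m(A)$. Suppose $o(\Theta_1)=1$ and $o(\Theta_2)=2$. Then $\{\Theta_1+\phi^*\Theta_2,\ \Theta_2\}$ is again an $R\{\phi^*\}$-basis with the same classes in $\bXp^{[0]}(A)$. Its exact orders are $3$ and $2$, since $\phi^*\Theta_2\notin\bX_2(A)$ by injectivity on graded pieces. The span your formula predicts at level $1$ is therefore zero, yet $0\neq\Theta_1\in\bX_1(A)$.

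The extra input you need is the defining property of a primitive basis. The $\Theta_j$ of exact order $i$ must give a $K$-basis of $\bX_i(A)_K/(u^*\bX_{i-1}(A)_K+\phi^*\bX_{i-1}(A)_K)$. With this, induction on $n$ and injectivity of $\phi^*$ on graded pieces show that $\{\phi^{*(n-o_j)}\Theta_j : o_j\le n\}$ is a $K$-basis of $(\bX_n/u^*\bX_{n-1})_K$. The $R$-span of $\{\phi^{*(k)}\Theta_j : o_j+k\le n\}$ is a direct summand of the free $R$-module $\bX_\infty(A)$, hence saturated. So equality after tensoring with $K$ already gives equality over $R$. You therefore do not need to retreat to $K$: part (2) holds as stated over $R$.
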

	\begin{proof}
		By definition, we have $\DD_{[m]}(A)=\bXp^{[m]}(A)/u_m^*\bX_m(A)$. From Theorem \ref{X^m_prim}, we know that $\rk_R(\bXp^{[m]}(A))=(m+1)g$ and from the exact sequence 
		$$
		0\rightarrow \bX_m(A)\rightarrow \Hom(N^{[0]m}A,\hG)\rightarrow \bI_{[0]m}(A)\rightarrow 0,
		$$ we can conclude  that $\rk_R(\bX_m(A))\leq mg$. Hence $$\rk_R(\DD_{[m]}(A))=\rk_R(\bXp^{[m]}(A))-\rk_R(\bX_m(A))\geq (m+1)g-mg=g.$$
		For all $m\geq \beta_0$, we have $$\bX_m(A)= R\langle \Theta_1,\cdots,\phi^{*(m-o_1)}\Theta_1,\cdots, \Theta_g,\cdots,\phi^{*(m-o_g)}\Theta_g\rangle.$$ Hence from Lemma \ref{X^m_prim}, 
		\begin{align}
			\DD_{[m]}(A)&=\frac{\bXp^{[m]}(A)}{u_m^*\bX_{m}(A)}=\frac{R\langle [\Theta_1],\cdots,[\Theta_g],\cdots, [\phi^{*(m)}\Theta_1],\cdots,[\phi^{*(m)}\Theta_g]\rangle}{R\langle [\Theta_1],\cdots,[\phi^{*(m-o_1)}\Theta_1],\cdots, [\Theta_g],\cdots,[\phi^{*(m-o_g)}\Theta_g]\rangle}\\
			&= R\langle[\phi^{*(m-o_1+1)}\Theta_1],\cdots,[\phi^{*(m)}\Theta_1],\cdots,[\phi^{*(m-o_g+1)}\Theta_g],\cdots,[\phi^{*(m)}\Theta_g] \rangle.
		\end{align}
		The last equality holds because $\bXp^{[m]}(A)$ and $\bX_m(A)$ are free $R$-modules and the chosen basis of $\bX_{m}(A)$ as a free $R$-module, extends to a basis of $\bXp^{[m]}(A)$. Therefore for all $m\geq \beta_0$, $$\rk_R(\DD_{[m]}(A))=o_1+\cdots+o_g=o.$$
		From Lemma \ref{Xm+n} we obtain,  $$\bX_{\beta_0-1}(A)= R\langle \Theta_1,\cdots, \phi^{(\beta_0-o_1-1)}\Theta_1,\cdots,\Theta_{g-1},\cdots,\phi^{*(\beta_0-o_{g-1}-1)}\Theta_{g-1}\rangle.$$ Therefore, from Theorem \ref{X^m_prim}, we can conclude \begin{align}
			\DD_{[\beta_0-1]}(A)&=\frac{\bXp^{[\beta_0-1]}(A)}{u_{\beta_0-1}^*\bX_{[\beta_0-1]}(A)}=\frac{R\langle [\Theta_1],\cdots,[\Theta_g],\cdots, [\phi^{*(\beta_0-1)}\Theta_1],\cdots,[\phi^{*(\beta_0-1)}\Theta_g]\rangle}{R\langle [\Theta_1],\cdots,[\phi^{*(\beta_0-1-o_1)}\Theta_1],\cdots, [\Theta_g],\cdots,[\phi^{*(\beta_0-1-o_g)}\Theta_g]\rangle}\\
			&= R\langle[\phi^{*(\beta_0-o_1)}\Theta_1],\cdots,[\phi^{*(\beta_0-1)}\Theta_1],\cdots,[\Theta_g],\cdots,[\phi^{*(\beta_0-1)}\Theta_g] \rangle
		\end{align} and hence $\rk_R(\DD_{[\beta_0-1]}(A))=o_1+\cdots+o_g=o.$ 
	\end{proof}
	
	\cblue{
		
		\subsection{Filtered isocrystals in $p$-adic Hodge theory}
		\label{m_delta_Iso}
We briefly recall some of the basic definitions of filtered isocrystals
		in $p$-adic Hodge theory.
		For a comprehensive exposition, we direct the reader to the excellent survey by Brinon and Conrad \cite{BriCon}.
		
		Let $R$ be a commutative ring. A \textit{filtered module} over $R$ is defined as an $R$-module $M$ endowed with a descending filtration, which is a specified collection of submodules $\{M_i\}_{i\in\mathbb{Z}}$ satisfying the containment condition $M_{i+1}\subseteq M_i$ for all integers $i\in \mathbb{Z}$. 
		
		Over the fraction field $K$, we define a \textit{non-degenerate isocrystal} to be a finite-dimensional $K$-vector space $D$ equipped with a bijective, Frobenius-semilinear endomorphism $\phi_D:D\rightarrow D$. 
		
Combining the above two definitions, a \textit{filtered isocrystal} over $K$ is formalized as a tuple $(D, \phi_D, \{D_i\})$, wherein $(D, \phi_D)$ constitutes a non-degenerate isocrystal over $K$, and $\{D_i\}_{i\in \ZZ}$ provides a descending filtration on the underlying $K$-vector space $D$.
		
		One of the 
		principal objectives of the following discussion is to establish that, for any fixed integer $m \geq 0$, $\mH_{[m]}^\delta(A)_K$ canonically admits the structure of a filtered isocrystal.
	}
	
	\begin{proposition}\label{H_m^delta}
		The morphism $$u^*:\mH_{[m]n}^\delta(A)\longrightarrow \mH_{[m]n+1}^{\delta}(A)$$
		is injective. For all $n\geq \beta_0$, it is a bijection.
	\end{proposition}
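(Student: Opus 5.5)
The plan is to compare everything on the associated graded pieces for the filtration by order, and to use the commutative square of Theorem \ref{isom_phi}. Recall
$$\mH^\delta_{[m]n}(A)=\Hom(N^{[m]n}A,\hG)\big/\iota_m^*\phi^{*(m+1)}\bX_{n-1}(A)_{\phi^{m+1}}.$$
Since $\phi$ is an automorphism of $R$ (Lemma \ref{phi_bij}), the twists $(-)_{\phi^{m+1}}$ do not change the underlying submodules. I will therefore suppress them and treat $\phi^{*(m+1)}$ as a semilinear map with image an $R$-submodule. Throughout I use that $u^*:\Hom(N^{[m]n}A,\hG)\to\Hom(N^{[m]n+1}A,\hG)$ is injective. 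By Proposition \ref{Psi_ij} its image is the span of the $\Psi^{[m]}_{ij}$ with $i\le n$, so its cokernel is free with basis $\{\Psi^{[m]}_{(n+1)j}\}_j$.

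\emph{Injectivity.} Let $\Psi\in\Hom(N^{[m]n}A,\hG)$ satisfy $u^*\Psi=\iota_m^*\phi^{*(m+1)}\Theta$ for some $\Theta\in\bX_n(A)$.
\begin{enumerate}
\item Reduce this equation modulo $u^*\Hom(N^{[m]n}A,\hG)$. The left side vanishes, so the class of $\Theta$ in $\bX_n(A)/u^*\bX_{n-1}(A)$ is sent to $0$ by the right vertical arrow $\iota_m^*$ composed with $\phi^{*(m+1)}$ in the square of Theorem \ref{isom_phi}.
\item Both of these maps are injective by that theorem, hence $\Theta=u^*\Theta'$ with $\Theta'\in\bX_{n-1}(A)$.
\item Since $u$ commutes with $\phi$ and $\iota_m$, we get $u^*\Psi=u^*\bigl(\iota_m^*\phi^{*(m+1)}\Theta'\bigr)$. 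Injectivity of $u^*$ on $\Hom(N^{[m]\bullet}A,\hG)$ then gives $\Psi=\iota_m^*\phi^{*(m+1)}\Theta'$.
\end{enumerate}
So $\Psi$ is zero in $\mH^\delta_{[m]n}(A)$, and $u^*$ is injective.

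\emph{Surjectivity for $n\ge\beta_0$.} The cokernel of $u^*$ on the $\mH^\delta$'s is
$$\Hom(N^{[m]n+1}A,\hG)\big/\bigl(u^*\Hom(N^{[m]n}A,\hG)+\iota_m^*\phi^{*(m+1)}\bX_n(A)\bigr).$$
So it suffices to show that the composite
$$\bX_n(A)/u^*\bX_{n-1}(A)\xrightarrow{\phi^{*(m+1)}}\bX_{m+n+1}(A)/u^*\bX_{m+n}(A)\xrightarrow{\iota_m^*}\Hom(N^{[m]n+1}A,\hG)/\Hom(N^{[m]n}A,\hG)$$
is surjective. By commutativity of the square this composite equals $\mff_m^*\circ\iota_m^*\phi^{*m}$, and $\mff_m^*$ is an isomorphism of graded pieces by Theorem \ref{isom_phi}. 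It therefore suffices that
$$\iota_m^*\phi^{*m}:\bX_n(A)/u^*\bX_{n-1}(A)\longrightarrow\Hom(N^{[m]n}A,\hG)/\Hom(N^{[m]n-1}A,\hG)$$
is onto for $n\ge\beta_0$.

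\emph{Rank count.} By Lemma \ref{Xm+n}, once $n\ge\beta_0$ the source is free of rank $g$, with basis the classes $\phi^{*(n-o_i)}\Theta_i$, $i=1,\dots,g$. The target is free of rank $g$ with basis $\Psi^{[m]}_{nj}$. Since the map is injective, it is an isomorphism after $\otimes K$. This already gives bijectivity of $u^*$ on $\mH^\delta_{[m]n}(A)_K$.

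\emph{Main obstacle: the integral statement.} Over $R$ one must show that the $g\times g$ matrix of this map in the two bases is invertible, i.e.\ that its leading coefficients are units rather than powers of $\pi$. I would attack it in two steps.
\begin{enumerate}
\item Expand $\iota_m^*\phi^{*m}\phi^{*(n-o_i)}\Theta_i$ via Proposition \ref{Char_n} and Lemma \ref{Psi_k_m}. This writes it as $\mff_m^{*}$-iterates of $\iota_m^*\phi^{*m}\Theta_i$ plus lower terms. It reduces the question, at the level where $\phi^*$ becomes a bijection on graded pieces (Theorem 8.5 of \cite{PaSa}), to the statement that the top-degree coefficients of the primitive basis $\Theta_i$ against $\Psi^{[0]}_{o_i j}$ form a unimodular system.
\item Deduce that unimodularity from the fact that $\{\Theta_i\}$ is an $R$-basis of $\bXp^{[0]}(A)$, which is free by \eqref{Xprimiso}.
\end{enumerate}
If this unimodularity cannot be extracted directly, I would fall back on a torsion argument through the exact sequence \eqref{exact_sequence_Hm_X0}. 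Its image term $\bI_{[m]n}(A)$ sits inside the torsion-free module $\Ext(J^mA,\hG)$ and stabilizes for $n\ge\beta_m$. One would then need to compare $\beta_m$ with $\beta_0$ and show that the finite cokernel of $u^*$, which is torsion, must vanish.
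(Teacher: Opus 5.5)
\textbf{Assessment.} The integral surjectivity you leave open is a genuine gap, and for $m\ge 1$ it cannot be closed. Your injectivity argument is the paper's (it applies the snake lemma to the same square from Theorem~\ref{isom_phi}). Your rank count is exactly the paper's surjectivity argument. As you suspected, equal ranks plus injectivity only give bijectivity after $\otimes K$, and the paper's proof takes the same step, so it too only establishes the $K$-statement.

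\textbf{Why $m\ge 1$ fails.} Neither of your remedies can work there, because the integral claim is false.
Iterating \eqref{JnNm} gives $\phi^{\circ m}\circ\iota_m=\iota_0\circ\Phi_{[1]}\circ\cdots\circ\Phi_{[m]}$, hence $\iota_m^*\phi^{*(m+1)}\Theta=\Phi_{[m]}^*\cdots\Phi_{[1]}^*(\iota_0^*\phi^*\Theta)$. By Corollary~\ref{phm-pi-com} (cf.\ Corollary~\ref{comm_phi*}(i)), each $\Phi_{[k]}^*$ is multiplication by $\pi$ with respect to the bases $\{\Psi^{[k-1]}_{ij}\}$ and $\{\Psi^{[k]}_{ij}\}$. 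Therefore $\iota_m^*\phi^{*(m+1)}\bX_n(A)\subseteq\pi^m\Hom(N^{[m]n+1}A,\hG)$. Two consequences follow:
\begin{enumerate}
\item The $g\times g$ matrix you want to be unimodular is $\pi^m$ times a $\phi$-twist of the $m=0$ matrix.
\item The cokernel of $u^*\colon\mH^\delta_{[m]n}(A)\to\mH^\delta_{[m]n+1}(A)$ surjects onto $(R/\pi^m)^g$ for every $n$.
\end{enumerate}
For instance, take an elliptic curve with canonical lift and $m=1$. One finds $\mH^\delta_{[1]n}(A)\cong R\oplus(R/\pi)^{n-1}$ and $\bI_{[1]n}(A)\cong(R/\pi)^n$. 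So the premise of your fallback, that $\Ext(J^mA,\hG)$ is torsion-free, also fails for $m\ge 1$. What is true for $m\ge 1$ is precisely what you proved: injectivity over $R$ and bijectivity after $\otimes K$. That is the form in which the result is used later, e.g.\ in the proof of Theorem~\ref{non-deg}.

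\textbf{The case $m=0$.} Here the integral statement can be proved, but not by a unimodularity computation.
\begin{enumerate}
\item Grant the integral Theorem 8.5 of \cite{PaSa} cited in the proof of Theorem~\ref{isom_phi}: $\phi^*$ is bijective on $\bX_n(A)/u^*\bX_{n-1}(A)$ for $n\ge\beta_0$. Then the square of Theorem~\ref{isom_phi} with $m=0$ identifies the cokernels of $\iota_0^*$ on consecutive graded pieces, because $\mff_0^*$ is an isomorphism.
\item Apply the snake lemma to the rows $0\to\bX_n(A)\to\Hom(N^{[0]n}A,\hG)\to\bI_{[0]n}(A)\to 0$; these rows are exact since $\bX_0(A)=0$. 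It shows these cokernels are $\bI_{[0]n}(A)/\bI_{[0]n-1}(A)$. Hence $\bI_{[0]n}(A)/\bI_{[0]n-1}(A)\cong\bI_{[0]n+1}(A)/\bI_{[0]n}(A)$ for all $n\ge\beta_0$.
\item The ascending chain $\bI_{[0]n}(A)$ stabilizes inside $\Ext(A,\hG)\cong R^g$, so all these quotients vanish.
\item Hence $\iota_0^*$, and therefore $\iota_0^*\phi^*=\mff_0^*\iota_0^*$, is onto on graded pieces for $n\ge\beta_0$. This is the integral surjectivity you need.
\end{enumerate}
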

	
	\begin{proof}
		Let us consider the following diagram of exact sequences
		
		\begin{equation}
			\xymatrix{
				& 0& 0& & \\
				& \frac{(\bX_n(A))}{u_{n-1}^*(\bX_{n-1}(A))}\ar[r]^-{\iota_m^*(\phi^*)^{m+1}}\ar[u] & \frac{\Hom(N^{[m]n+1}A,\hG)}{\Hom(N^{[m]n}A,\hG)}\ar[u]& & \\
				0\ar[r] & \bX_n(A)\ar[r]^-{\iota_m^*(\phi^*)^{m+1}}\ar[u] & \Hom(N^{[m]n+1}A,\hG)\ar[r]\ar[u]& \bH_{[m]n+1}^\delta(A)\ar[r] & 0\\ 
				0\ar[r] & \bX_{n-1}(A)\ar[r]^-{\iota_m^*(\phi^*)^{m+1}}\ar[u]^{u^*} & \Hom(N^{[m]n}A,\hG)\ar[r]\ar[u]^{u^*}& \bH_{[m]n}^\delta(A)\ar[r]\ar[u]^{u^*} & 0\\
				& 0\ar[u]& 0\ar[u]& &
			}
		\end{equation}
		Then $\iota_m^*(\phi^*)^{m+1}:\bX_{n}(A)/\bX_{n-1}(A)\rightarrow \Hom(N^{[m]n+1}A,\hG)/\Hom(N^{[m]n}A,\hG)$ is injective by Theorem \ref{isom_phi} and hence by the snake lemma, $\bH_{[m]n}^\delta(A)\xrightarrow{u^*}\bH_{[m]n+1}^\delta(A)$ is injective for all $n$.
		
		It remains to show that $u^*:\mH_{[m]n}^{\delta}(A)\rightarrow \mH_{[m]n+1}^\delta(A)$ is surjective for all $n\geq \beta_0$. From Lemma \ref{Xm+n}, putting $m=0$, we have $\bX_n(A)/u^*\bX_{n-1}(A)$ is freely $R$-generated by the images of $(S_n(\BB_1^{[0]})\setminus S_{n-1}(\BB_1^{[0]}))\cup\cdots\cup (S_n(\BB_{\beta_0}^{[0]})\setminus S_{n-1}(\BB_{\beta_0}^{[0]}))$ for all $n\geq \beta_0$. Hence  $\rk(\bX_n(A)_K/u^*\bX_{n-1}(A))=g=\rk(\Hom(N^{[m]n+1}A,\hG)/\Hom(N^{[m]n}A,\hG)),$ which implies $\iota_m^*(\phi^*)^{m+1}$ is surjective for $n\geq \beta_0$ and therefore $u^*:\bH_{[m]n}^\delta(A)\rightarrow \bH_{[m]n+1}^{\delta}(A)$ is surjective.
	\end{proof}

	\begin{theorem}\label{filt_isocrys}
		Let $A$ be a $\pi$-formal abelian scheme of dimension $g$ over $R$. 
		\begin{enumerate}
			\item[(i)]For all $m\geq 0$,  $\mH_{[m]}^\delta(A)$ is a free $R$-module with $g\leq \rk_R\mH_{[m]}^\delta(A)\leq 2g$. 
			\item [(ii)] For any fixed $m\geq 0$,  $\mH_{[m]}^\delta(A)_K$ is  a filtered isocrystal with filtration \begin{equation}\label{filt_0}
				\mH_{[m]}^\delta(A)_K\supseteq  \DD_{[m]}(A)_K\supseteq \{0\}
			\end{equation}
			and semi-linear endomorphism $$\mff_m^*:\mH_{[m]}^\delta(A)\rightarrow\mH_{[m]}^\delta(A).$$
		\end{enumerate}
	\end{theorem}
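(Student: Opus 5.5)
Part (i) is a rank count through the exact sequence \eqref{exact_sequence_Hm_X0}, and part (ii) will come from the semilinear operator \eqref{Semilin_end} once we know $\DD_{[m]}(A)_K$ is a subspace.

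For (i), freeness is the first point. Since $\mH^\delta_{[m]}(A)=\varinjlim_n\mH^\delta_{[m]n}(A)$ and Proposition \ref{H_m^delta} says the transition maps $u^*$ are injective and are bijections for $n\ge\beta_0$, we get $\mH^\delta_{[m]}(A)\simeq \mH^\delta_{[m]\beta_0}(A)$. This module is finitely generated, because $\Hom(N^{[m]n}A,\hG)$ is free of rank $ng$ by Proposition \ref{Psi_ij}. To get freeness over the DVR $R$ I will show it is torsion free, i.e.\ that $\iota_m^*\phi^{*(m+1)}\bX_{n-1}(A)$ is saturated in $\Hom(N^{[m]n}A,\hG)$. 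I would do this by filtering by order and using the graded pieces in Theorem \ref{isom_phi}. There $\mff_m^*$ is an isomorphism of free modules with explicit bases $\Psi^{[m]}_{nj}\mapsto\Psi^{[m]}_{(n+1)j}$, so saturation reduces to saturation of the image of $\bX_{n}(A)/u^*\bX_{n-1}(A)$ under $\iota_m^*\phi^{*m}$ inside the graded pieces. That is the $m$-shifted version of the $m=0$ statement in \cite{BoSa1}, and I would deduce it via Lemma \ref{Psi_k_m} (which scales by $\pi^{k-m}$, harmless after the $\mff$-twist) or cite it.

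For the rank bounds, $\rk\DD_{[m]}(A)\ge g$ by Corollary \ref{rk_D_m}(1), so $\rk\mH^\delta_{[m]}\ge g$. For the upper bound I will use \eqref{exact_sequence_Hm_X0}: $\rk\mH^\delta_{[m]}=\rk\DD_{[m]}+\rk\bI_{[m]}$. Here $\rk\DD_{[m]}=(m+1)g-\rk\bX_m(A)$, and $\bI_{[m]}\subseteq\Ext(J^mA,\hG)$. Computing directly at level $n=\beta_0$ gives
\[
\rk\mH^\delta_{[m]\beta_0}=\beta_0 g-\rk\bX_{\beta_0-1}(A)=\beta_0g-\sum_i(\beta_0-o_i)=o\le 2g,
\]
since the image of $\bX_{\beta_0-1}$ under the injective map $\iota_m^*\phi^{*(m+1)}$ has the same rank and Lemma \ref{Xm+n} gives that rank. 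The bound $o\le 2g$ needs $o_i\le 2$ on average. This is the step I expect to be the main obstacle, since the stated bound only has $o_i\le g+1$. I would try to get it from the Hodge-type sequence \eqref{Hodge}, showing $\bI_{[m]}$ injects into a rank $g$ quotient $\simeq H^1(A,\mO_A)$, which is certainly true for $m=0$. For general $m$ I would transport it via $\phm^*$, using Corollary \ref{phm-pi-com} to see that $\phm^*$ is rank preserving on $\Hom(N^{[\cdot]n},\hG)_K$, hence on the quotients $\mH^\delta$.

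For (ii), $\DD_{[m]}(A)\subseteq\mH^\delta_{[m]}(A)$ via $\iota_m^*$ in \eqref{exact_sequence_Hm_X0}, which gives a two-step descending filtration on $\mH^\delta_{[m]}(A)_K$. The endomorphism $\mff_m^*$ is the well-defined semilinear map \eqref{Semilin_end}, and $\mH^\delta_{[m]}(A)_K$ is finite dimensional by (i). Non-degeneracy (bijectivity of $\mff_m^*\otimes K$) is deferred to Theorem \ref{non-deg}. Alternatively, on the stable level it follows from Theorem \ref{isom_phi}: $\mff_m^*$ maps the graded basis isomorphically, and together with semilinearity over the automorphism $\phi$ of Lemma \ref{phi_bij} this gives injectivity, hence bijectivity by dimension.
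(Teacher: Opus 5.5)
\textbf{Your freeness step is a genuine gap, and it cannot be closed for $m\ge 1$.} You call the $\pi^{k-m}$ rescaling of Lemma~\ref{Psi_k_m} ``harmless after the $\mff$-twist''. It is in fact exactly what destroys saturation.

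By \eqref{JnNm}, $\iota_m^*\phi^*=\Phi_{[m]}^*\iota_{m-1}^*$, so $\iota_m^*\phi^{*(m+1)}=\Phi_{[m]}^*\circ\cdots\circ\Phi_{[1]}^*\circ\iota_0^*\phi^*$. By Corollary~\ref{comm_phi*}(i) (or Corollary~\ref{phm-pi-com}), each $\Phi_{[k]}^*$ is multiplication by $\pi$ in the $\Psi$-bases. Hence $\iota_m^*\phi^{*(m+1)}\bX_{n-1}(A)\subseteq \pi^m\Hom(N^{[m]n}A,\hG)$.

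As soon as $\bX_{n-1}(A)\neq 0$, this submodule is not saturated, so $\mH^\delta_{[m]n}(A)$ has $\pi$-torsion. Since the transition maps $u^*$ are injective, this torsion survives in $\mH^\delta_{[m]}(A)$.

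Concretely, take an elliptic curve with canonical lift, so $\beta_0=1$ and $\iota_0^*\Theta_1=\Psi_1^{[0]}$. Proposition~\ref{Char_n} gives, suppressing $\phi$-twists, $\iota_1^*\phi^{*2}\Theta_1=\Phi_{[1]}^*(\Psi_2^{[0]}+\gamma\Psi_1^{[0]})=\pi(\Psi_2^{[1]}+\gamma\Psi_1^{[1]})$. So $\mH^\delta_{[1]2}(A)\simeq R\oplus R/\pi R$, while $\mH^\delta_{[1]1}(A)\simeq R$. The torsion keeps growing with $n$, so $\mH^\delta_{[1]}(A)$ is not even finitely generated over $R$.

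In particular, the surjectivity of $u^*$ for $n\ge\beta_0$ in Proposition~\ref{H_m^delta} holds only after $\otimes K$; its proof is a rank count. Your identification $\mH^\delta_{[m]}(A)\simeq\mH^\delta_{[m]\beta_0}(A)$ rests on it, so it also holds only over $K$.

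The paper's proof does not address freeness at all; it only does rank bookkeeping. What is actually true, and all that (ii) and the later sections need, is the statement for $\mH^\delta_{[m]}(A)_K$. At $m=0$ freeness does hold: \eqref{exact_sequence_Hm_X0} exhibits $\mH^\delta_{[0]}(A)$ as an extension of $\bI_{[0]}(A)\subseteq\Ext(A,\hG)\simeq R^g$ by the free module $\DD_{[0]}(A)=\bXp^{[0]}(A)$.

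\textbf{Rank bounds.} Read over $K$, your argument is correct and differs from the paper's. The paper proves the bound $2g$ only at $m=0$, from $\rk\DD_{[0]}(A)=g$ and $\bI_{[0]}(A)\subseteq\Ext(A,\hG)$. It then propagates the bound to all $m$ through Corollary~\ref{rk_H_del}, i.e.\ surjectivity of $\Phi_{[m]}^*$ after $\otimes K$, which is a forward reference.

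Your count at the stable level, $\dim_K\mH^\delta_{[m]}(A)_K=\beta_0 g-\rk_R\bX_{\beta_0-1}(A)=o_1+\cdots+o_g$, is independent of $m$. It already gives the rank formula of Theorem~\ref{rk_H_delta}(2). The step you flag as the main obstacle then follows at once from your own $m=0$ remark: $o_1+\cdots+o_g=\dim_K\mH^\delta_{[0]}(A)_K=g+\rk_R\bI_{[0]}(A)\le 2g$. So no transport via $\Phi^*$, and no control of $\bI_{[m]}(A)$ for $m\ge1$, is needed.

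\textbf{Part (ii).} This matches the paper. Your ``alternative'' non-degeneracy argument does not work, however. Injectivity of $\mff_m^*$ on $\Hom(N^{[m]n}A,\hG)$ does not pass to the quotient: Proposition~\ref{Char_n} gives $\mff_m^*[\iota_m^*\phi^{*m}\Theta]=-\gamma\cdot[\Psi_1^{[m]}]$ in $\mH^\delta_{[m]}(A)$. Bijectivity therefore genuinely requires invertibility of $\Gamma^{[m]}$, as in Theorem~\ref{non-deg}.
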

	\begin{proof}

		From the exact sequence (\ref{exact_sequence_Hm_X0}), we have 
		\begin{align}
			\rk_R(\mH_{[m]}^\delta(A))&=\rk_R(\DD_{[m]}(A))+\rk_R(\bI_{[m](A)})\\
			&\geq g+\rk_R(\bI_{[m](A)})\text{ (From Corollary \ref{rk_D_m})}.
		\end{align}
		Hence, we have $\rk_R(\mH_{[m]}^\delta(A))\geq g $ for all $m\geq 0$. For $m=0$, we know that $\rk_R(\bI_{[0](A)})\leq \rk_R(\Ext(A,\hG))=g.$
		Therefore $\rk_R(\mH_{[0]}^\delta(A))\leq 2g.$ From Corollary \ref{rk_H_del}, we obtain $\rk_R(\mH_{[m+1]}^\delta(A))\leq \rk_R(\mH_{[m]}^\delta(A))$ for all $m\geq 0$, hence for all $m\geq 1$, $\rk_R(\mH_{[m]}^\delta(A))\leq \mH_{[0]}^\delta(A)\leq 2g.$ Therefore this gives us  $$g\leq \rk_R(\mH_{[m]}^\delta(A))\leq 2g$$ for all $m\geq 0.$
		
		Hence for a fixed $m$, $\mH_{[m]}^\delta(A)_K$ is a finite dimensional vector space over $K$. The filtration above follows from the exact sequence in equation (\ref{exact_sequence_Hm_X0}) and $\mff_m^*$ is defined as in (\ref{Semilin_end}). Hence $\mH_{[m]}^\delta(A)_K$ is a filtered isocrystal over $K$.
	\end{proof}
	We define $$\Iso(\mH_{[m]}^\delta(A)):=(\mH_{[m]}^\delta(A)_K,\mff_m^*,\mH_{[m]}^\delta(A)_K^\bullet),$$ as an object in the category of filtered isocrystals where $\mff_m^*$ is the semilinear operator on $\mH_{[m]}^\delta(A)_K$ and $\mH_{[m]}^\delta(A)_K^\bullet$ denotes the filtration given by $\mH_{[m]}^\delta(A)_K\supseteq \DD_{[m]}(A)_K\supseteq \{0\}$. We call these filtered isocrystals as $m$-th delta isocrystals.

	Finally we would like to show that for any fixed $m\geq 0$, the $m$-th delta isocrystal $\Iso(\mH_{[m]}^\delta(A))$ is a non-degenerate isocrystal over $K$, that is, we claim that the semilinear endomorphism $$\mff_m^*:\mH_{[m]}^\delta(A)_K\rightarrow \mH_{[m]}^\delta(A)_K$$ is  a bijection.
	By Proposition \ref{H_m^delta}, we have $$\mH_{[m]}^\delta(A)_K\simeq \mH_{[m]\beta_0}^\delta(A)_K=\frac{\Hom(N^{[m]\beta_0}A,\hG)_K}{\iota_m^*\phi^{*(m+1)}(\bX_{\beta_0-1}(A)_K)}$$
	From Corollary \ref{comm_phi*1} we obtain the following exact sequence 
	\begin{equation}
		\xymatrix{
			0\ar[r]& \DD_{[m]}(A)_K\ar[r]^-{\iota_m^*} & \mH_{[m]}^\delta(A)_K\ar[r]^-{\partial}& \bI_{[m]}(A)_K\ar[r] & 0
		}
	\end{equation}
	
	
	Let $\{\Theta_1,\cdots, \Theta_g\}$ be a set of primitive characters of $\bX_\infty(A)$ and $o_i$ be the exact order of $\Theta_i$. We also assume $o_1\leq \cdots\leq o_g=\beta_0.$ Hence from Theorem \ref{X^m_prim}, we have
	$$
	\bXp^{[m]}(A)_K\simeq K\langle \Theta_1,\cdots,\Theta_g,\cdots, \phi^{*(m)}\Theta_1,\cdots,\phi^{*(m)}\Theta_g\rangle.
	$$
	
	For any $\Theta\in \bX_{\beta_0}(A)$,	let us denote $\tilde{\Theta}^{[m]}:=\phi^{*(m+\beta_0-r_\Theta)}\Theta, $ where $r_\Theta$ is the exact order of $\Theta$.  Hence $\tilde{\Theta}_i^{[m]}\in \bX_{m+\beta_0}(A)\setminus\bX_{m+\beta_0-1}(A) $ for every $1\leq  i\leq g.$ 
	
	For any $\lambda=(\lambda_1,\cdots,\lambda_g)\in Mat_{1\times g}(R)$ and $\Psi=(\Psi_1,\cdots,\Psi_g),$ where $\Psi_i\in \Hom(N^{[m]n}A,\hG)$, let us denote $\lambda.\Psi=\lambda_1\Psi_1+\cdots+\lambda_g.\Psi_g.$ Then \begin{equation}
		\iota_m^*\tTheta_i^{[m]}=\lambda_{i\beta_0}^{[m]}.\Psi_{\beta_0}^{[m]}-\lambda_{i(\beta_0-1)}^{[m]}.\Psi_{\beta_0-1}^{[m]}-\cdots-\lambda_{i1}^{[m]}.\Psi_1^{[m]}\in \Hom(N^{[m]n}A,\hG)
	\end{equation}
	for some $\lambda_{ij}^{[m]}\in Mat_{1\times g}(R)$ and $\Psi_{ij}^{[m]}$ as defined in (\ref{Psiijm}) for all  $1\leq j\leq \beta_0$ and $1\leq i\leq g$.  Applying $\mff_m^*$ to both sides we have \begin{equation}\label{f_mi_m}
		\mff_m^*\iota_m^*\tTheta_i^{[m]}=(\lambda_{i\beta_0}^{[m]})^\phi.\Psi_{\beta_0+1}^{[m]}-(\lambda_{i(\beta_0-1)}^{[m]})^\phi.\Psi_{\beta_0}^{[m]}-\cdots-(\lambda_{i1}^{[m]})^\phi.\Psi_2^{[m]}.
	\end{equation} 
	From Proposition \ref{Char_n}, we have
	\begin{equation}\label{i_mphi_m}
		\iota_m^*\phi^{*(m+1)}\tTheta_i^{[0]}=\iota_m^*\tTheta_i^{[m+1]}=\mff_m^*\iota_m^*\tTheta_i^{[m]}+\gamma_i^{[m]}.\Psi_1^{[m]}
	\end{equation}
	where $\gamma_i^{[m]}=\pi^{m+1}A_{i0}$ and $D\tTheta_i^{[0]}=(A_{i0},\cdots, A_{i\beta})$
	for some $A_{ij}\in Mat_{1\times g}(R).$
	Hence from (\ref{f_mi_m}) and (\ref{i_mphi_m}) we obtain \begin{equation}\label{i_mphi_m+1}
		\iota_m^*\phi^{*(m+1)}\tTheta_i^{[0]} =(\lambda_{i\beta_0}^{[m]})^\phi.\Psi_{\beta_0+1}^{[m]}-(\lambda_{i(\beta_0-1)}^{[m]})^\phi.\Psi_{\beta_0}^{[m]}-\cdots-(\lambda_{i1}^{[m]})^\phi.\Psi_2^{[m]}+\gamma_i^{[m]}.\Psi_1^{[m]}.
	\end{equation}
	
	For all $j=1,\cdots,\beta_0$, we define the following $g\times g$ matrices
	\begin{equation}
		\Delta_j^{[m]}:=\begin{pmatrix}
			(\lambda_{1j}^{[m]})^\phi\\
			\vdots\\
			(\lambda_{gj}^{[m]})^\phi
		\end{pmatrix}
	\end{equation}
	and
	\begin{equation}
		\Gamma^{[m]}:=\begin{pmatrix}
			\gamma_{1}^{[m]}\\
			\vdots\\
			\gamma_{g}^{[m]}
		\end{pmatrix}.
	\end{equation}
	Hence the induced map on $K$-modules $$[\iota_m^*\phi^{*(m+1)}]:\frac{\bX_{\beta_0}(A)_K}{u_0^*\bX_{\beta_0-1}(A)_K}\rightarrow \frac{ \Hom(N^{[m]\beta_0+1}A,\hG)_K}{u_m^*\Hom(N^{[m]\beta_0}A,\hG)_K}$$ for all $n\geq \beta_0$ is given by $[\iota_m^*\phi^{*(m+1)}]\tTheta_i^{[0]}=(\lambda_{i\beta_0}^{[m]})^\phi.\Psi_{\beta_0+1}^{[m]}$. Therefore, $[\iota_m^*\phi^{*(m+1)}]$ in terms of the basis $\{\tTheta_i^{[0]}\}_{i=1}^g$ of $\frac{\bX_{\beta_0}(A)_K}{u_0^*\bX_{\beta_0-1}(A)_K}$ and $\Psi_{\beta_0+1}^{[m]}=\{\Psi_{(\beta_0+1)1}^{[m]},\cdots,\Psi_{(\beta_0+1)g}^{[m]}\}$ for $\frac{\Hom(N^{[m]\beta_0+1}A,\hG)_K}{\Hom(N^{[m]\beta_0}A,\hG)_K}$  
	can be represented by the matrix \begin{equation}
		[\iota_m^*\phi^{*(m+1)}]=\Delta_{\beta_0}^{[m]}.
	\end{equation}
	From the proof of Proposition \ref{H_m^delta}, $[\iota_m^*\phi^{*(m+1)}]$ is an isomorphism and that implies the matrix $\Delta_\beta^{[m]}$ is invertible.
	Hence from (\ref{i_mphi_m+1}), we have \begin{equation}\label{psi_beta+1}
		\Psi^{[m]}_{\beta_0+1}\equiv (\Delta_{\beta_0}^{[m]})^{-1}(\Delta_{\beta_0-1}^{[m]}\Psi_{\beta_0}^{[m]}+\cdots+\Delta_{1}^{[m]}\Psi_2^{[m]}-\Gamma^{[m]}\Psi_1^{[m]}) \mod \iota_m^*\phi^{*(m+1)}(\bX_{\beta_0-1}(A)_K).
	\end{equation} 
	
	
	\begin{theorem}\label{non-deg}
Suppose $R = W(\kappa)$. Let $m\geq 0$ and $A$ be a $\pi$-formal abelian scheme of relative dimension $g$. Then the operator $$\mff_m^*:\mH_{[m]}^\delta(A)_K\rightarrow \mH_{[m]}^\delta(A)_K$$ is bijective. Hence $\mH_{[m]}^\delta(A)_K$ is a non-degenerate filtered isocrystal.
	\end{theorem}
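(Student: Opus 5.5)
Since $\mH_{[m]}^\delta(A)_K$ is finite dimensional (Theorem \ref{filt_isocrys}) and $\mff_m^*$ is $\phi$-semilinear with $\phi$ an automorphism of $K$ (Lemma \ref{phi_bij}), bijectivity is equivalent to surjectivity. Equivalently, the $K$-linearization $\mH_{[m]}^\delta(A)_K\otimes_{K,\phi}K\to\mH_{[m]}^\delta(A)_K$ has nonzero determinant. I would work in the stabilized model $\mH_{[m]}^\delta(A)_K\simeq \mH_{[m]\beta_0}^\delta(A)_K$ of Proposition \ref{H_m^delta}. By Proposition \ref{Psi_ij}, this space is spanned by the classes of $\Psi_1^{[m]},\dots,\Psi_{\beta_0}^{[m]}$, where each $\Psi_j^{[m]}$ denotes the $g$-tuple $(\Psi_{j1}^{[m]},\dots,\Psi_{jg}^{[m]})$. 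On these classes the operator acts by the shift $\mff_m^*\Psi_j^{[m]}=\Psi_{j+1}^{[m]}$, by the definition \eqref{Psiijm}.

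For $j<\beta_0$ the shift already lands in the span. For $j=\beta_0$, the congruence \eqref{psi_beta+1} rewrites $\Psi_{\beta_0+1}^{[m]}$ modulo $\iota_m^*\phi^{*(m+1)}\bX_{\beta_0-1}(A)_K$ as
$$(\Delta_{\beta_0}^{[m]})^{-1}\bigl(\Delta_{\beta_0-1}^{[m]}\Psi_{\beta_0}^{[m]}+\cdots+\Delta_1^{[m]}\Psi_2^{[m]}-\Gamma^{[m]}\Psi_1^{[m]}\bigr).$$
With respect to the spanning set, $\mff_m^*$ is therefore represented by a block companion matrix. Its lower subdiagonal blocks are identities, and its last block column is built from $(\Delta_{\beta_0}^{[m]})^{-1}\Delta_j^{[m]}$ together with $-(\Delta_{\beta_0}^{[m]})^{-1}\Gamma^{[m]}$. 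The image of $\mff_m^*$ then contains $\Psi_2^{[m]},\dots,\Psi_{\beta_0}^{[m]}$. Modulo these, it contains $(\Delta_{\beta_0}^{[m]})^{-1}\Gamma^{[m]}\Psi_1^{[m]}$. So surjectivity reduces to showing that the class of $\Psi_1^{[m]}$ lies in the image, and it suffices that $\Gamma^{[m]}$ is invertible over $K$.

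Now $\Gamma^{[m]}=\pi^{m+1}(A_{i0})_i$, where $A_{i0}$ is the $\bx_0$-part of $D\tTheta_i^{[0]}$ (Proposition \ref{Char_n}). Because $\pi^{m+1}$ is a unit in $K$, the problem becomes the invertibility of the $g\times g$ matrix of $\bx_0$-derivatives of the characters $\tTheta_i^{[0]}=\phi^{*(\beta_0-o_i)}\Theta_i$. This is the main obstacle.

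For $i$ with $o_i<\beta_0$, Proposition \ref{Dphi_k} shows that $D\tTheta_i^{[0]}$ has vanishing $\bx_0$-block, so the naive matrix may be degenerate. I would therefore not use the $\tTheta_i^{[0]}$ directly. Instead I would pass to a different spanning set in which $\Psi_1^{[m]}$ is recovered from the primitive characters $\Theta_i$ themselves. The key input is the level-$0$ statement that $D\Theta\circ v_{0}$, i.e.\ $\Upsilon$, identifies $\bXp(A)_K$ with $H^0(A,\Omega_A)_K\simeq \Lie(A)^\vee_K$; this is Theorem 1.1 of \cite{Delta-crys}, which requires $R=W(\kappa)$. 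Under this identification the $g\times g$ matrix $(\partial\Theta_i/\partial\bx_0|_{0})_i$ is invertible.

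I would then redo the computation of \eqref{i_mphi_m} with $\Theta_i$ in place of $\tTheta_i^{[0]}$. Applying Proposition \ref{Char_n} to each $\Theta_i$ gives
$$\iota_m^*\phi^{*(m+1)}\Theta_i-\mff_m^*\iota_m^*\phi^{*m}\Theta_i=\pi^{m+1}A_0(\Theta_i)\Psi_1^{[m]}.$$
The first term vanishes in $\mH_{[m]}^\delta(A)_K$, so $\pi^{m+1}A_0(\Theta_i)\Psi_1^{[m]}$ lies in the image of $\mff_m^*$. Inverting the matrix $(A_0(\Theta_i))_i$ shows that each $\Psi_{1j}^{[m]}$ lies in the image, which completes surjectivity and hence bijectivity.
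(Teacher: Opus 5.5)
\textbf{Verdict.} Your argument is correct. It shares the paper's setup but replaces its decisive step.

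\textbf{The paper's route.} The paper writes $\mff_m^*$ as a block companion matrix with respect to $\Psi_1^{[m]},\dots,\Psi_{\beta_0}^{[m]}$. It computes $\det[\mff_m^*]=(-1)^{\beta_0}\det\bigl((\Delta_{\beta_0}^{[m]})^{-1}\Gamma^{[m]}\bigr)$, uses $\Gamma^{[m]}=\pi^m\Gamma^{[0]}$, and cites (6.10) of \cite{Delta-crys} for the invertibility of $\Gamma^{[0]}$.

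\textbf{Your route.} You use the shift structure only to reduce to showing that $[\Psi_1^{[m]}]$ lies in the image. You get this by applying Proposition \ref{Char_n} to the primitive characters $\Theta_i$ themselves. The invertible matrix $(A_0(\Theta_i))_i$ comes from $\Upsilon_{[0]}([\Theta])=-D\Theta\circ v_{0,n}$ (Theorem \ref{map_of_seq}), which is an isomorphism $\bXp(A)_K\to \Lie(A)^\vee_K$ by Theorem 1.1 of \cite{Delta-crys}. Finite dimensionality and bijectivity of $\phi$ on $K$ then turn surjectivity into bijectivity. The same bijectivity is what makes the image of the semilinear map a $K$-subspace, which you use when you invert the matrix.

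\textbf{Your diagnosis of the paper's matrix is right.} By Proposition \ref{Dphi_k}, the row $\gamma_i^{[m]}$ vanishes whenever $o_i<\beta_0$, because $\tTheta_i^{[0]}=\phi^{*(\beta_0-o_i)}\Theta_i$ then has zero $\mathbf{x}_0$-block in its derivative. In exactly that situation $\bX_{\beta_0-1}(A)\neq 0$, so the classes of $\Psi_1^{[m]},\dots,\Psi_{\beta_0}^{[m]}$ only span the quotient and do not form a basis. For example, in Case 2 of the abelian-surface section one gets a $4\times 4$ matrix for a $3$-dimensional space, with a singular $\Gamma^{[0]}$. Read literally, the paper's determinant step therefore covers only the case $o_1=\cdots=o_g=\beta_0$, where $\tTheta_i^{[0]}=\Theta_i$ and nothing is quotiented out. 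Your argument works in all cases.

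\textbf{What each approach gives.} The paper's approach, once repaired, would give an explicit matrix for Frobenius. Yours needs only a spanning set and one invertibility statement.

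\textbf{Simplification.} Your detour through \eqref{psi_beta+1} and the invertibility of $\Delta_{\beta_0}^{[m]}$ is unnecessary. The image of $\mff_m^*$ already contains $[\Psi_j^{[m]}]=\mff_m^*[\Psi_{j-1}^{[m]}]$ for every $j\geq 2$, so you can go straight to $[\Psi_1^{[m]}]$.
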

	
	\begin{proof}
		Recall from Proposition \ref{phm_iso}, we obtain $$\mH_{[m]}^\delta(A)_K\simeq \mH_{[m]\beta_0}^\delta(A)_K=\frac{\Hom(N^{[m]\beta_0}A,\hG)_K}{\iota_m^*\phi^{*(m+1)}(\bX_{\beta_0-1}(A)_K)}$$ and $\Hom(N^{[m]\beta_0}A,\hG)_K\simeq K\langle\Psi_1^{[m]},\cdots,\Psi_{\beta_0}^{[m]}\rangle.$ Hence $$\mH_{[m]}^\delta(A)_K\simeq \frac{K\langle\Psi_1^{[m]},\cdots,\Psi_{\beta_0}^{[m]}\rangle}{\iota_m^*\phi^{*(m+1)}(\bX_{\beta_0-1}(A)_K)}.$$
		From the definition of $\Psi_i^{[m]}$ and equation (\ref{psi_beta+1}) we get \begin{align}
			& \mff_m^*\Psi_i^{[m]}=\Psi_{i+1}^{[m]} \text{ for all $i=1,\cdots,\beta_0-1$,}\\
			& \mff_m^*\Psi_{\beta_0}^{[m]}=\Psi^{[m]}_{\beta_0+1}=(\Delta_{\beta_0}^{[m]})^{-1}(\Delta_{\beta_0-1}^{[m]}\Psi_{\beta_0}^{[m]}+\cdots+\Delta_{1}^{[m]}\Psi_2^{[m]}-\Gamma^{[m]}\Psi_1^{[m]}).
		\end{align}
		Hence the $\beta_0 g\times\beta_0 g$-matrix associated to $\mff_m^*$ with respect to the basis $\{\Psi_1^{[m]},\cdots,\Psi_{\beta_0}^{[m]}\}$ will be 
		\begin{equation}
			[\mff_m^*]=\begin{pmatrix}
				\mathbf{0}_g & \mathbf{0}_g  & \cdots &  -(\Delta_{\beta_0}^{[m]})^{-1}\Gamma^{[m]}\\
				\mathbf{I}_g & \mathbf{0}_g & \cdots & (\Delta_{\beta_0}^{[m]})^{-1}\Delta_1^{[m]} \\
				\mathbf{0}_g & \mathbf{I}_g & \cdots & (\Delta_{\beta_0}^{[m]})^{-1}\Delta_2^{[m]} \\
				\vdots & \vdots & \ddots & \vdots\\
				\mathbf{0}_g & \mathbf{0}_g & \cdots & 
				(\Delta_{\beta_0}^{[m]})^{-1}\Delta_{\beta_0-1}^{[m]} 
			\end{pmatrix}.
		\end{equation}
		Hence, we obtain $\det([\mff_m^*])=(-1)^{\beta_0}\det((\Delta_{\beta_0}^{[m]})^{-1}\Gamma^{[m]})$. Note that $\Gamma^{[m]}$ is the $g\times g$ matrix  
		\begin{equation}
			\label{gamma_mat}
		\Gamma^{[m]}=\begin{pmatrix}
			\gamma_{1}^{[m]}\\
			\vdots\\
			\gamma_{g}^{[m]}
		\end{pmatrix},
		\end{equation}
		 where $\gamma_i^{[m]}=\pi^{m+1}A_{i0}$  and $D\tTheta_i^{[0]}=(A_{i0},\cdots, A_{i\beta_0})$
		for some $A_{ij}\in Mat_{1\times g}(R).$ Therefore for $m=0$, we get $\gamma^{[0]}=\pi A_{i0}$. Therefore we conclude $\gamma_i^{[m]}=\pi^m\gamma_i^{[0]}$ for all $1\leq i\leq g.$ This implies $$\Gamma^{[m]}=\pi^m\Gamma^{[0]}.$$ From equation (6.10) in the proof of Theorem 5.2 in \cite{Delta-crys}, $\Gamma^{[0]}$ is invertible over $K$. Hence this implies $\Gamma^{[m]}$ is invertible over $K$. Therefore the matrix $[\mff_m^*]$ is invertible and hence $\mff_m^*$ is bijective.
	\end{proof}
	
	\section{Transition Morphisms and Stabilization of Filtered Isocrystals}\label{sec:frob_isocrys}

	We now return to the  exact sequence \eqref{Exact_Hom_Ker} for any $m,n\geq 0$:
	$$
	0\rightarrow \bX_m(A)\xrightarrow{u_m^*}\bX_{m+n}(A)\xrightarrow{\iota_m^*}\Hom( N^{[m]n}A,\hG)\xrightarrow{\partial}\bI_{[m]n}(A)\rightarrow 0.
	$$
	In this section, we study the Frobenius compatibility between the filtered isocrystals associated to the different levels of the generalized kernel sequence. In particular, we show that the transition morphisms \(\Phi_{[m]}\) are compatible with Frobenius pullback and the connecting homomorphisms, thereby inducing morphisms between the corresponding filtered isocrystals. We then analyze these transition morphisms and establish the stabilization of the resulting system for sufficiently large \(m\).
	\subsection{Frobenius pullbacks on extensions}\label{subsec:frob_pullbacks}
	For any smooth, commutative $\pi$-formal group scheme $G$, we define the base change along Frobenius as $G_\phi:=G\times_{S,\phi}S$. 
	
	Let $\phi_{\mathrm{rel}}: J^{m}A_\phi \map J^{m-1}A$ denote the relative Frobenius morphism naturally induced by the absolute Frobenius $\phi :J^mA \map J^{m-1}A$. This factors canonically through the commutative diagram:
	\begin{equation}\label{phi_rel}
		\xymatrix{
			J^mA\ar@/^1.5pc/[rrd]^{\phi}\ar@/_1.5pc/[rdd]_-u \ar@{.>}[rd]^{\phi_{\text{rel}}}& & \\
			& J^{m-1}A_\phi=J^{m-1}A\times_{S,\phi}S\ar[r]\ar[d] & J^{m-1}A\ar[d]^{u_S}\\
			& S\ar[r]_{\phi}& S,
		}
	\end{equation}
	which may be evaluated on points as $\phi_{\text{rel}}(\bx)=(\phi(\bx),u_S(\bx))$. Here, for all $n\geq 0$, $u_S:J^nA\rightarrow S$ signifies the structure map naturally prolonged from the base structure map $u_S:A\rightarrow S$.
	
	Via pullback, the absolute Frobenius $\phi$ induces a canonical morphism on 
	extensions, $$\phi^* : \Ext(J^{m-1}A,\hG) \map \Ext(J^m A,\hG),$$ 
	which is constructed explicitly as follows.
	
	Suppose $C$ is an extension of $J^{m-1}A$ by $\hG$. By pulling this extension back via the Frobenius lift on the base $\phi : S \map S$, we obtain a twisted short exact sequence of $\pi$-formal schemes:
	$$
	0 \map \hat{\bb{G}}_{a,\phi} \map C_\phi \map J^{m-1}A_\phi \map 0.
	$$
	Since the additive formal group $\hG$ fundamentally descends to $\Spf \ZZ_p$, it is canonically isomorphic to its own Frobenius base change, providing the identification $\hat{\mathbb{G}}_{a,\phi}=\hG\times_{S,\phi}S\simeq \hG$. Pulling the extension back further along the relative Frobenius map $\phi_{\mathrm{rel}}: J^mA \map J^{m-1}A_\phi$ produces the lower exact row in the following commutative diagram:
	\begin{align}
		\xymatrix{
			0\ar[r]& \hG\ar[r] & C_\phi\ar[r]& J^{m-1}A_\phi\ar[r] & 0\\
			0\ar[r]& \hG\ar[r]\ar@{=}[u] & \phi^*C\ar[r]\ar[u]& J^{m}A\ar[r]\ar[u]^{\phi_{\text{rel}}} & 0,\\
		}
	\end{align}
	where the total space of the pullback extension is given by the Cartesian product $\phi^*C = C_\phi \times_{J^{m-1}A_\phi,\phi_{\mathrm{rel}}} J^m A$. Consequently, the operator $\phi^* : \Ext(J^{m-1}A,\hG) \map \Ext(J^mA,\hG)$ on isomorphism classes is formally dictated by the assignment $[C] \mapsto [\phi^*C]$.

	For any $m,n\geq 0$, let us designate the twisted image of the connecting homomorphism via the map \eqref{maps_of_phm} as the $R$-module:
	$$\bI_{[m]n}(A_{\phi}):=\text{Im}(\partial:\Hom(N^{[m]n}A,\hG)_{\phi}\rightarrow \Ext(J^{m}A_{\phi},\hG)).$$
	
	\begin{theorem}\label{phi*Ext}
		For any $\pi$-formal abelian scheme $A$ over $S$ and every $n\geq 0$, the Frobenius pullback  $\phi^*:\Ext(J^{m-1}A_\phi,\hG)\rightarrow \Ext(J^mA,\hG)$ maps $\bI_{[m-1]n}(A_\phi)$ to $\bI_{[m]n}(A)$  that is $$\phi^*:\bI_{[m-1]n}(A_\phi)\rightarrow \bI_{[m]n}(A)$$ is well defined. More precisely, the following diagram of $R$-modules commutes
		\begin{equation}
			\xymatrix{
				\Hom(N^{[m]n}A,\hG)\ar[r]^-{\partial}& \bI_{[m]n}(A)\ar@{^{(}->}[r] & \Ext(J^mA,\hG)\\
				\Hom(N^{[m-1]n}A,\hG)_\phi\ar[r]^-{\partial}\ar[u]_{\phm^*} & \bI_{[m-1]n}(A_\phi)\ar@{^{(}->}[r]\ar[u]_{\phi^*} & \Ext(J^{m-1}A_\phi,\hG)\ar[u]_{\phi^*}.
			}
		\end{equation}
	\end{theorem}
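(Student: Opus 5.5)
The plan is to check commutativity of the left square at the level of push-out extensions; the rest of the statement follows from it. Since $\bI_{[m-1]n}(A_\phi)$ is by definition the image of $\partial$ on $\Hom(N^{[m-1]n}A,\hG)_\phi$, commutativity gives
$$\phi^*\bigl(\partial\Psi\bigr)=\partial\bigl(\phm^*\Psi\bigr)\in \bI_{[m]n}(A).$$
Hence the restriction $\phi^*:\bI_{[m-1]n}(A_\phi)\to\bI_{[m]n}(A)$ is well defined, and the right square commutes trivially because it consists of inclusions. So I fix $\Psi\in\Hom(N^{[m-1]n}A,\hG)$ and compare the two extensions of $J^mA$ by $\hG$:
\begin{itemize}
\item $\phi^*\bigl((J^{m-1}A)^*_\Psi\bigr)$, the pullback along $\phi_{\mathrm{rel}}$ of the base-changed push-out $\bigl((J^{m-1}A)^*_\Psi\bigr)_\phi$;
\item $(J^mA)^*_{\Psi\circ\phm}$, the push-out of \eqref{Ker_A} at level $m$ along $\phm^*\Psi=\Psi\circ\phm$.
\end{itemize}
By the universal property of push-outs, the second extension is characterized as follows. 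It is the unique extension $E$ of $J^mA$ by $\hG$ for which there is a homomorphism $J^{m+n}A\to E$ over $J^mA$ whose restriction to $N^{[m]n}A$ is $\Psi\circ\phm$, followed by the inclusion $\hG\to E$. So it suffices to build such a map into the first extension.

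First I base change the level $m-1$ push-out diagram \eqref{Extn} along $\phi:S\to S$. This gives a map $g_{\Psi,\phi}:J^{m-1+n}A_\phi\to \bigl((J^{m-1}A)^*_\Psi\bigr)_\phi$ restricting to $\Psi_\phi$ on $N^{[m-1]n}A_\phi$, where $\hG_\phi\simeq\hG$ because $\hG$ descends. Next I use the relative Frobenius $\phi_{\mathrm{rel}}:J^{m+n}A\to J^{m-1+n}A_\phi$, built exactly as in \eqref{phi_rel}. It is compatible with the projections, in the sense that
$$u_{m-1}\circ\phi_{\mathrm{rel}}=\phi_{\mathrm{rel}}\circ u_m,$$
because $\phi$ commutes with $u$ on jet spaces. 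By \eqref{JnNm}, its restriction to $N^{[m]n}A$ equals $\iota_{m-1}\circ\phm$, twisted into $A_\phi$. The composite $g_{\Psi,\phi}\circ\phi_{\mathrm{rel}}$ and the map $u_m:J^{m+n}A\to J^mA$ together define a homomorphism
$$J^{m+n}A\longrightarrow \bigl((J^{m-1}A)^*_\Psi\bigr)_\phi\times_{J^{m-1}A_\phi,\phi_{\mathrm{rel}}}J^mA=\phi^*\bigl((J^{m-1}A)^*_\Psi\bigr).$$
On $N^{[m]n}A$ this homomorphism is $\Psi_\phi\circ\phm$ into $\hG$, which is $\phm^*\Psi$ after the twist by $\phi$; this twist is also why the source is written $\Hom(\cdot)_\phi$. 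The universal property then gives a morphism of extensions
$$(J^mA)^*_{\phm^*\Psi}\longrightarrow\phi^*\bigl((J^{m-1}A)^*_\Psi\bigr)$$
inducing the identity on both $\hG$ and $J^mA$. It is therefore an isomorphism by the five lemma, and the two classes in $\Ext(J^mA,\hG)$ agree.

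The remaining checks are $R$-linearity in $\Psi$ and independence from the choice of level $n$. Linearity holds because both sides are additive: Baer sums are compatible with push-out and pullback. Independence of $n$ holds because $u^*$ on $\Hom(N^{[m]n}A,\hG)$ commutes with $\phm^*$, since $\phm$ is a morphism of prolongation sequences.

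I expect the main obstacle to be the semilinearity bookkeeping: showing that the restriction of $\phi_{\mathrm{rel}}$ to the kernel really equals $(\iota_{m-1})_\phi\circ\phm$ as a map over $S$, rather than merely over $\phi$. To handle this I would apply the factorization \eqref{phi_rel} to \eqref{JnNm} and use the uniqueness of the relative Frobenius. I would also check that the identification $\hG_\phi\simeq\hG$ converts $\Psi_\phi$ into the $R$-linear character used in $\Hom(N^{[m-1]n}A,\hG)_\phi$.
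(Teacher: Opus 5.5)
Your proposal is correct and is essentially the paper's argument. The paper uses the same comparison map, $\mF(\bx,y)=([\phi(\bx),y],u_m(\bx))$ from $J^{m+n}A\times\hG$ into $\phi^*(\partial\Psi)$, and verifies directly that $\mF$ is surjective with kernel exactly $\Gamma_{\Psi\circ\phm}(N^{[m]n}A)$, whereas you obtain the same isomorphism from the push-out universal property plus the five lemma (the twist bookkeeping you flag is handled in the paper only by abuse of notation, writing $\phi_{\mathrm{rel}}$ as $\phi$).
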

	
	\begin{proof}
		Let $\Psi\in \Hom(N^{[m-1]n}A,\hG)_\phi$. We need to show that $\phi^*(\partial\Psi)\in \bI_{[m]n}(A)$. By definition the $\partial$ map is given
		by $$\partial\Psi=\Psi_*(J^{m-1+n}A)=\frac{J^{m-1+n}A\times \hG}{\Gamma_{\Psi}(N^{[m-1]n}A)}$$ where $\Gamma_\Psi(N^{[m-1]n}A)=\{(\iota_{m-1}(z),-\Psi(z))|z\in N^{[m-1]n}A\}.$ We claim that $$\phi^*(\partial \Psi)=\partial(\phm^*\Psi)=\partial(\Psi\circ\phm).$$ Note that $$\phi^*(\partial\Psi)=\left(\frac{J^{m-1+n}A\times \hG}{\Gamma_{\Psi}(N^{[m-1]n}A)}\right)_\phi\times_{J^{m-1}A_\phi,\phi_{\text{rel}}}J^mA $$ and $$\partial(\Psi\circ\phm)=\frac{J^{m+n}A\times \hG}{\Gamma_{\Psi\circ\phm}(N^{[m]n}A)}.$$
		Let us consider the following map
		$$\mF:J^{m+n}A\times\hG\longrightarrow \left(\frac{J^{m-1+n}A\times \hG}{\Gamma_{\Psi}(N^{[m-1]n}A)}\right)_\phi\times_{J^{m-1}A_\phi,\phi_{\text{rel}}}J^mA $$
		which sends $\mF(\bx,y)=(([\phi(\bx),y],u(\bx)),u_m(\bx))$. This map is well defined because $u_{m-1}(\phi(\bx))=\phi(u_m(\bx))$ and $u(\bx)=u(u_m(\bx))$. Note that $u:A\rightarrow S$ denotes the structure map of $A$ over $S$.
		
		First we will show that $\mF$ is surjective and then $\Ker\mF=\Gamma_{\Psi\circ \phm}(N^{[m]n}A)$ implying that both extension classes $\phi^*(\partial\Psi)$ and $\partial(\Psi\circ \phm)$ are the same.

		Let $(([\ba,y],s),\cb)\in \left(\frac{J^{m-1+n}A\times \hG}{\Gamma_{\Psi}(N^{[m-1]n}A)}\right)_\phi\times_{J^{m-1}A_\phi,\phi_{\text{rel}}}J^mA.$ Then we have $u_{m-1}(\ba)=\phi_{\text{rel}}(\cb)$ and $\phi(s)=u(a)$ by definition of fibre product. For brevity, we drop the third coordinate as it plays no role in the computations that follow. With a slight abuse of notation, we denote $\phi_{\text{rel}}=\phi$.  Let us consider  $([\ba,y],\cb)\in \left(\frac{J^{m-1+n}A\times \hG}{\Gamma_{\Psi}(N^{[m-1]n}A)}\right)_\phi\times_{J^{m-1}A_\phi,\phi_{\text{rel}}}J^mA.$  We need to find $(\tilde{\ba},\tilde{y})\in J^{m+n}A\times\hG$ such that $\mF(\tilde{\ba},\tilde{y})=([\phi(\tilde{\ba}),\tilde{y}],u_m(\tilde{\ba}))=([\ba,y],\cb)$. Note that we have $\ba\in J^mA$ and $u_{m-1}(\ba)=\phi(\cb)$. Let us choose $\tilde{\ba}\in J^{m+n}A$ to be any preimage of $\cb$. We have the following commutative diagram
		
		\begin{equation}
			\xymatrix{
				\tilde{\ba}\in J^{m+n}A\ar[rr]^-{u_m}\ar[d]_{\phi} & & {J^mA\ni \cb}\ar[d]^{\phi}\\
				\phi(\tilde{\ba})\in J^{m-1+n}A\ar[rr]^-{u_{m-1}} & & {J^{m-1}A\ni u_{m-1}(\ba)}
			}
		\end{equation}
		
		which implies $u_{m-1}(\phi(\tilde{\ba}))=u_{m-1}(\ba)$, that is $\phi(\tilde{\ba})-\ba\in \Ker (u_{m-1})=\text{Im}(\iota_{m-1})$. Hence there exists $z\in N^{[m-1]n}A$, such that  $\phi(\tilde{\ba})=\ba+\iota_{m-1}(z)$. We choose $\tilde{y}=y-\Psi(z)$. Then we have $$\mF(\tilde{\ba},\tilde{y})=([\phi(\tilde{\ba}),\tilde{y}],u_m(\tilde{\ba}))=([\ba+\iota_{m-1}(z),y-\Psi(z)],\cb)=([\ba,y],\cb).$$
		This implies $\mF$ is a surjective map.

		
		Let $(\bx,y)\in \Gamma_{\Psi\circ\phm}(N^{[m]n}A)$, then $(\bx,y)=(\iota_m(z),-\Psi\circ\phm(z))$ for some $z\in N^{[m]n}A$. Then 
		\begin{align}
			\mF(\bx,y)&=\mF(\iota_m(z),-\Psi\circ\phm(z))=([\phi(\iota_m(z)),-\Psi\circ \phm(z)]),u_m(\iota_m(z)))\\ &=([\iota_{m-1}(\phm(z)),-\Psi\circ \phm(z)],0)=([0,0],0)
		\end{align} 
		Hence we have $\Gamma_{\Psi\circ\phm}(N^{[m]n}A)\subseteq \Ker \mF.$ 
		
		Let $(\bx,y)\in \Ker\mF$, then we have $\mF(\bx,y)=([\phi(\bx),y],u_m(\bx))=0.$ Therefore  $(\phi(\bx),y)\in \Gamma_\Psi(N^{[m-1]n}A)$, which implies there exists $z\in N^{[m-1]n}A$ such that $\phi(\bx)=\iota_{m-1}(z)$ and $y=-\Psi(z)$. We also have $u_m(\bx)=0$ and hence $\bx\in $ Im$(\iota_m)$. This implies, there exists $\tilde{z}\in N^{[m]n}A$ such that $\iota_m(\tilde{z})=\bx.$ Hence $\phi(\bx)=\phi\circ \iota_m(\tilde{z})=\iota_{m-1}\circ \phm(\tilde{z})=\iota_{m-1}(z).$ Therefore $\iota_{m-1}$ is injective and hence that implies  $\phm(\tilde{z})=z.$ Hence $y=-\Psi(z)=-\Psi\circ\phm(\tilde{z})$ and this implies $(\bx,y)=(\iota_m(\tilde{z}),-\Psi\circ\phm(\tilde{z}))\in \Gamma_{\Psi\circ\phm}(N^{[m]n}A)$. This implies  $ \Ker \mF\subseteq \Gamma_{\Psi\circ\phm}(N^{[m]n}A)$. 
		
		Hence we have $ \Ker \mF= \Gamma_{\Psi\circ\phm}(N^{[m]n}A)$  and finally this gives us our expected isomorphism to conclude that $\phi^*(\partial\Psi)$ and $\partial(\phm^*\Psi) $ represent the same extension class in $\Ext(J^mA,\hG)$.
	\end{proof}
	\subsection{Morphism in the system of isocrystals}
	Combining Theorem \ref{phi*Ext}  with Theorem \ref{JnNm},
	we have the following commutative diagram of $R$-modules for all $m\geq 1$
	\begin{align}\label{maps_of_phm}
		\xymatrix{0\ar[r]& \bX_m(A)\ar[r]^-{u_m^*}&\bX_{m+n}(A)\ar[r]^-{\iota_m^*}& \Hom(N^{[m]n}A,\hG)\ar[r]^-{\partial} & \Ext(J^mA,\hG)\\
			0\ar[r]& \bX_{m-1}(A)_\phi \ar[r]^-{u_{m-1}^*}\ar[u]_{\phi^*}&\bX_{m-1+n}(A)_\phi \ar[r]^-{\iota_{m-1}^*}\ar[u]_{\phi^*}& \Hom(N^{[m-1]n}A,\hG)_\phi \ar[r]^-{\partial}\ar[u]_{\phm^*} & \Ext(J^{m-1}A_\phi,\hG).\ar[u]_{\phi^*}}
	\end{align}

	Recall from Proposition \ref{Psi_ij}, for all $m,n\geq 0$, $\Hom(N^{[m]n}A)\simeq R\langle \Psi_1^{[m]},\cdots, \Psi_n^{[m]}\rangle,$ where $\Psi_i^{[m]}=\{\Psi_{ij}^{[m]}\}_{j=1}^g$ and $\Psi_{ij}^{[m]}$ is defined as in (\ref{Psiijm}). We denote this basis by $\BB_{[m]n}.$
	\begin{corollary}\label{comm_phi*}
		For every $m\geq 1$ and $n\geq 0$, we have the following commutative diagram of $R$-modules
		\begin{equation}
			\xymatrix{0\ar[r]& \frac{\bX_{m+n}(A)}{u_m^*\bX_m(A)}\ar[r]^-{\iota_m^*}& \Hom(N^{[m]n}A,\hG)\ar[r]^-{\partial} & \bI_{[m]n}(A)\ar[r] & 0\\
				0\ar[r]& \frac{(\bX_{m-1+n}(A))_{\phi}}{(u_{m-1}^*\bX_{m-1}(A))_{\phi}}\ar[r]^-{\iota_{m-1}^*}\ar[u]_{\phi^*}^{u_{m-1+n}^*}& \Hom(N^{[m-1]n}A,\hG)_{\phi}\ar[r]^-{\partial}\ar[u]_{\phm^*}^{u_{m-1+n}^*} & \bI_{[m-1]n}(A_{\phi})\ar[r]\ar[u]_{\phi^*} ^{u_{m-1+n}^*}& 0. }
		\end{equation}
		
		Moreover, we have  the following:
		\begin{enumerate}
			\item[(i)] $\phm^*:\Hom(N^{[m-1]n}A,\hG)_{\phi})\rightarrow \Hom(N^{[m]n}A,\hG)$ is  an injection defined by multiplication of $\pi$ with respect to the basis $\BB_{[m-1]n}$ and $\BB_{[m]n}$. Tensoring with $K$ gives us an isomorphism $\phm^*:(\Hom(N^{[m-1]n}A,\hG)_{\phi})_K\rightarrow \Hom(N^{[m]n}A,\hG)_K$ of $K$-modules.
			
			\item[(ii)] $\phi^*:\frac{(\bX_{m-1+n}(A))_{\phi}}{(u_{m-1}^*\bX_{m-1}(A))_{\phi}}\rightarrow\frac{\bX_{m+n}(A)}{u_m^*\bX_m(A)}$ is an injection. 
			\item[(iii)] Tensoring with $K$, we get $\phi^*:\bI_{[m-1]n}(A_\phi)_K\rightarrow \bI_{[m]n}(A)_K$ is a surjection. 
			\item[(iv)] We have a surjective map $u_{m-1+n}^*:\bI_{[m-1]n}(A)\rightarrow \bI_{[m]n}(A)$ induced from the projection $u_{m-1+n}:J^{m+n}A\rightarrow J^{m+n-1}A$.
		\end{enumerate}
		
	\end{corollary}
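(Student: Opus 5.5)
The plan is to build the commutative diagram first and then read off (i)--(iv) one column at a time. The left square and the right square come from the diagram \eqref{maps_of_phm}, which rests on \eqref{JnNm} and Theorem \ref{phi*Ext}. The left vertical map $\phi^*$ sends $u_{m-1}^*\bX_{m-1}(A)_\phi$ into $u_m^*\bX_m(A)$, because $\phi\circ u_m=u_{m-1}\circ\phi$. So $\phi^*$ descends to the quotients $\bX_{m-1+n}(A)_\phi/(u_{m-1}^*\bX_{m-1}(A))_\phi\to \bX_{m+n}(A)/u_m^*\bX_m(A)$. The rows are exact by \eqref{Exact_Hom_Ker}: quotienting by the kernel of $\iota_m^*$ makes $\iota_m^*$ injective. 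For the bottom row I also need that $-\otimes_{R,\phi}R$ is exact, which holds by Lemma \ref{phi_bij}, since $\phi$ is an automorphism of $R$.

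\textbf{Item (i).} I will identify $\Hom(N^{[m-1]n}A,\hG)_\phi$ with $\Hom(N^{[m-1]n}A,\hG)$ via the basis $\BB_{[m-1]n}$; this is legitimate because $\phi$ is an automorphism. Corollary \ref{phm-pi-com} then says that, with $\Psi_m$ and $\Psi_{m-1}$ identifying both sides with $\Hom(\WW_{n-1}^g,\hG)$, the map $\phm^*$ is multiplication by $\pi$. To get the basis statement I will check from diagram \eqref{Psimiso1} that $\phm^*\Psi^{[m-1]}_{ij}=\pi\Psi^{[m]}_{ij}$. For $i=1$ this holds because $\Psi_1^{[m]}$ and $\Psi_1^{[m-1]}$ differ by the factor $\pi$ inside the logarithm, by \eqref{NmGa}. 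For general $i$ I apply $\mff$-compatibility \eqref{phmfra1} together with the definition \eqref{Psiijm}. Injectivity over $R$ and bijectivity over $K$ then follow at once.

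\textbf{Item (ii).} Here I use the left square. Since $\iota_m^*$ on the top row is injective, it suffices to show $\iota_m^*\circ\phi^*=\phm^*\circ\iota_{m-1}^*$ is injective. This composite is the bottom injection $\iota_{m-1}^*$ followed by the injection $\phm^*$ from (i).

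\textbf{Item (iii).} Surjectivity of $\phi^*$ on the $\bI$'s follows from the right square. The top map $\partial$ onto $\bI_{[m]n}(A)$ is surjective, and by (i) $\phm^*\otimes K$ is surjective. Hence $\partial\circ\phm^*=\phi^*\circ\partial$ is surjective onto $\bI_{[m]n}(A)_K$.

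\textbf{Item (iv).} This is the step I expect to be the main obstacle, because it requires a comparison at the same level $m$ with a different projection, and it is unclear what "$u^*_{m-1+n}$" means on $\Ext$. My plan is to interpret it as pullback along $u:J^mA\to J^{m-1}A$ on $\Ext$. The key input is the factorization $\iota_{m-1}\circ(\text{inclusion }N^{[m]n}\to N^{[m-1]n+1})$ together with the pushout/pullback compatibility of extensions, which I would verify in the same way as Theorem \ref{phi*Ext}. One checks that an explicit map $J^{m+n}A\times\hG\to u^*\bigl((J^{m-1}A)^*_{\Psi}\bigr)$ has kernel the graph $\Gamma_{\Psi\circ(\cdot)}$, so that $u^*\partial\Psi=\partial(\Psi|_{N^{[m]n}})$. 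Surjectivity then reduces to surjectivity of restriction of characters $\Hom(N^{[m-1]n+1}A,\hG)\to\Hom(N^{[m]n}A,\hG)$. I will check this on the bases $\Psi_{ij}$ using the coordinate description of Lemma \ref{Psi_k_m}, possibly only after adjusting the indexing. I expect this index bookkeeping, together with pinning down the intended meaning of the map, to be the delicate part.
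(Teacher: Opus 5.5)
Your construction of the diagram and your arguments for (i)--(iii) are correct and agree with the paper. The paper applies the snake lemma to the same morphism of rows, and that amounts to your direct diagram chases.

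The gap is in (iv), at the step where you reduce surjectivity to surjectivity of the restriction map $\Hom(N^{[m-1]n+1}A,\hG)\to\Hom(N^{[m]n}A,\hG)$. Your identity $u^*\partial\Psi=\partial(\Psi|_{N^{[m]n}A})$ is fine; it follows from the same kernel computation as in Theorem \ref{phi*Ext}. The problem is that this restriction is \emph{not} surjective over $R$. Lemma \ref{Psi_k_m}, applied with base level $m-1$ and $k=m$, gives $\Psi^{[m-1]}_1|_{N^{[m]n}A}=0$ and $\Psi^{[m-1]}_i|_{N^{[m]n}A}=\pi\,\Psi^{[m]}_{i-1}$. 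You can already see this on linear terms: $\pi^{i-1}\mathbf{x}_{m+i-1}$ on one side versus $\pi^{i-2}\mathbf{x}_{m+i-1}$ on the other. So the image of restriction is $\pi\Hom(N^{[m]n}A,\hG)$. What you actually obtain is $u^*\bI_{[m-1]n+1}(A)=\pi\,\bI_{[m]n}(A)$, and by Nakayama this equals $\bI_{[m]n}(A)$ only when $\bI_{[m]n}(A)=0$. In addition, your source is $\bI_{[m-1]n+1}(A)$ rather than $\bI_{[m-1]n}(A)$.

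The map literally induced by $u\colon J^{m+n}A\to J^{m+n-1}A$ does no better. It carries $N^{[m]n}A$ onto the subgroup $\{\mathbf{x}_m=0\}\cong N^{[m]n-1}A$ of $N^{[m-1]n}A$. Indeed, its composite with $N^{[m-1]n}A\to N^{[m-1]1}A$ is $u_m\circ\iota_m=0$. Hence it kills $\Psi^{[m-1]}_1$, sends $\Psi^{[m-1]}_i$ to $\pi\,\Psi^{[m]}_{i-1}$, and has image $\pi\,\bI_{[m]n-1}(A)$ on the $\bI$'s.

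The paper's one-line proof of (iv) asserts that this $u^*$ on character modules is an isomorphism of $R$-modules. It is not, so you cannot borrow the missing step from there. Integrally, (iv) can hold only when $\bI_{[m]n}(A)=0$. That fails, for instance, for $m=1$ and $n$ large in the surface case $\beta_0=3$ of Section \ref{Examples}, where $\bI_{[1]}(A)$ has rank $1$.

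What your argument does prove is a surjection after tensoring with $K$, compatible with the inclusions $\bI_{[m]n}(A)\subseteq\bI_{[m]n+1}(A)$. That, like (iii), is all that Theorem \ref{Stab_beta} later uses.
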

	
	\begin{proof}
		The first square in the above diagram commutes by the definition of $\phm$, while the last square commutes by Theorem \ref{phi*Ext}.
		
		By Corollary \ref{phm-pi-com}, the map
		$$
		\phm^* : \Hom(N^{[m-1]n}A,\hG)_\phi \longrightarrow \Hom(N^{[m]n}A,\hG)
		$$
		is injective as a morphism of $R$-modules with $\mathrm{Coker}_3$ as the cokernel. Consider the following morphism of exact sequences:
		\begin{equation}\label{maps_of_phm_3}
			\xymatrix{
				& \mathrm{Coker}_1 & \mathrm{Coker}_3 & \mathrm{Coker}_2 & \\
				0 \ar[r] &
				\frac{\bX_{m+n}(A)}{u_m^*\bX_m(A)} \ar[r]^-{\iota_m^*} \ar[u] &
				\Hom(N^{[m]n}A,\hG) \ar[r]^-{\partial} \ar[u] &
				\bI_{[m]n}(A) \ar[r] \ar[u] &
				0 \\
				0 \ar[r] &
				\frac{(\bX_{m-1+n}(A))_\phi}{(u_{m-1}^*\bX_{m-1}(A))_\phi}
				\ar[r]^-{\iota_{m-1}^*} \ar[u]_{\phi^*} &
				(\Hom(N^{[m-1]n}A,\hG))_\phi \ar[r]^-{\partial} \ar[u]_{\phm^*} &
				\bI_{[m-1]n}(A_\phi) \ar[r] \ar[u]_{\phi^*} &
				0 \\
				& \mathrm{Ker}_1 \ar[u] & 0 \ar[u] & \mathrm{Ker}_2 \ar[u] &
			}
		\end{equation}
		where $\mathrm{Ker}_1$ and $\mathrm{Coker}_1$ denote the kernel and cokernel of
		$$
		\phi^* :
		\frac{(\bX_{m-1+n}(A))_\phi}{(u_{m-1}^*\bX_{m-1}(A))_\phi}
		\longrightarrow
		\frac{\bX_{m+n}(A)}{u_m^*\bX_m(A)},
		$$
		and $\mathrm{Ker}_2$ and $\mathrm{Coker}_2$ denote the kernel and cokernel of
		$$
		\phi^* : \bI_{[m-1]n}(A_\phi) \longrightarrow \bI_{[m]n}(A).
		$$
		By the snake Lemma, we obtain $\mathrm{Ker}_1 = 0$. Consequently, the map
		$$
		\phi^* :
		\frac{(\bX_{m-1+n}(A))_\phi}{(u_{m-1}^*\bX_{m-1}(A))_\phi}
		\longrightarrow
		\frac{\bX_{m+n}(A)}{u_m^*\bX_m(A)}
		$$
		is injective.
		
		After tensoring with $K$, Corollary \ref{phm-pi-com} implies that $\phm^*$ becomes an isomorphism, and hence $\mathrm{Coker}_3 = 0$. Therefore,
		$$
		\phi^* : \bI_{[m-1]n}(A_\phi)_K \longrightarrow \bI_{[m]n}(A)_K
		$$
		is surjective, since $\mathrm{Coker}_2 = 0$.
		Finally, note that the map
		$$
		u^* : \Hom(N^{[m-1]n}A,\hG)
		\longrightarrow
		\Hom(N^{[m]n}A,\hG)
		$$
		is an isomorphism of $R$-modules. This forces the induced map
		$$
		u^* : \bI_{[m-1]n}(A) \longrightarrow \bI_{[m]n}(A)
		$$
		to be surjective.
		
	\end{proof}

	\textbf{Remark.}
	Note that for any $\pi$-formal abelian scheme $A$ over $S$, the $m$-th upper splitting number $\beta_m$ exists because from Corollary \ref{comm_phi*}, $\phi^*:\bI_{[m-1]}(A)_K\rightarrow \bI_{[m]}(A)_K$ is a surjection for all $m\geq 1$ and hence $\rk_K(\bI_{[m]}(A)_K)\leq \rk_K(\bI_{[0]}(A)_K)\leq g.$ Hence there exist $\beta_m\geq 1$ such that $$\bI_{[m]0}(A)\subseteq \bI_{[m]1}(A)\subseteq \cdots  \subseteq \bI_{[m]\beta_0-1}(A)=\bI_{[m]\beta_0}(A)= \bI_{[m]}(A).$$
	
	\color{black}

	\begin{lemma}\label{phi*_isom}
		For all $m>0$ and $n\geq 0$, the following diagram of $R$-modules commutes
		\begin{equation}
			\xymatrix{
				\frac{\bX_{m-1+n}(A)_\phi}{u_{m-1}^*\bX_{m-1}(A)_\phi}\ar[r]_{\sim}^{\phi^*}\ar[d]^{\iota_0^*}_{\rotatebox{90}{$\sim$}} & \frac{\bX_{m+n}(A)}{u_m^*\bX_m(A)}\ar[d]^{\iota_0^*}_{\rotatebox{90}{$\sim$}}\\
				\frac{\Hom(N^{[0]m-1+n}A,\hG)_\phi}{u_m^*\Hom(N^{[0]m-1}A,\hG)_\phi}\ar[r]^{\mff_0^*}_{\sim} & \frac{\Hom(N^{[0]m+n}A,\hG)}{u_m^*\Hom(N^{[0]m}A,\hG)}.
			}
		\end{equation}
		In particular, if $m\geq \beta_0$, all the maps are isomorphisms of $R$-modules. 
	\end{lemma}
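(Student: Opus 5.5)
The plan is to deduce the lemma from Proposition \ref{Char_n} with $m=0$, and from the exactness of \eqref{Hom_Ker_A} at level $0$, where $\bX_0(A)=0$.

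\textbf{Vertical maps.} For $m=0$ the sequence \eqref{Hom_Ker_A} reads $0\to\bX_{k}(A)\xrightarrow{\iota_0^*}\Hom(N^{[0]k}A,\hG)$, so $\iota_0^*$ is injective for every $k$. Moreover $\iota_0^*$ commutes with the truncations $u$. It therefore induces an injection
$$\bX_{m+n}(A)/u_m^*\bX_m(A)\longrightarrow \Hom(N^{[0]m+n}A,\hG)/u^*\Hom(N^{[0]m}A,\hG).$$
I read the vertical arrows of the diagram as these induced maps. The same argument applies to the $\phi$-twisted column on the left, because twisting by $\phi$ is exact ($\phi$ is an automorphism of $R$ by Lemma \ref{phi_bij}).

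\textbf{Commutativity.} Take a class $[\Theta]$ with $\Theta\in\bX_{m-1+n}(A)$. Proposition \ref{Char_n} with $m=0$ gives
$$\iota_0^*\phi^*\Theta=\mff_0^*\iota_0^*\Theta+\gamma\cdot\Psi_1^{[0]}.$$
The error term $\gamma\cdot\Psi_1^{[0]}$ lies in $u^*\Hom(N^{[0]1}A,\hG)\subseteq u^*\Hom(N^{[0]m}A,\hG)$ because $m\ge1$. Hence it vanishes in the lower right quotient, and the square commutes on representatives. Well-definedness also follows: $\phi^*$ sends $u_{m-1}^*\bX_{m-1}$ into $u_m^*\bX_m$, and $\mff_0^*$ sends $u^*\Hom(N^{[0]m-1}A,\hG)$ into $u^*\Hom(N^{[0]m}A,\hG)$, since $\mff_0^*\Psi_i^{[0]}=\Psi_{i+1}^{[0]}$.

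\textbf{The bottom map.} By Proposition \ref{Psi_ij}, the lower left quotient has basis $\Psi_m^{[0]},\dots,\Psi_{m-1+n}^{[0]}$ (the $\phi$-twisted version), and the lower right quotient has basis $\Psi_{m+1}^{[0]},\dots,\Psi_{m+n}^{[0]}$. Since $\mff_0^*\Psi_i^{[0]}=\Psi_{i+1}^{[0]}$, the linearized map $\mff_0^*$ sends the first basis bijectively onto the second. So the bottom row is an isomorphism of $R$-modules for all $m,n$. As the bottom map is an isomorphism and both verticals are injective, commutativity shows that $\phi^*$ is injective.

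\textbf{Main obstacle: surjectivity of the verticals and of $\phi^*$ when $m\ge\beta_0$.} First I show $\iota_0^*$ is surjective. Its cokernel on the right is $\bI_{[0]m+n}(A)/\bI_{[0]m}(A)$, and the left-hand cokernel is the analogous one. Since $m\ge\beta_0$, the defining property of $\beta_0$ (stabilization of $\bI_{[0]k}$ for $k\ge\beta_0$) makes this cokernel zero, and likewise on the left because $m-1+n\ge m-1\ge\beta_0-1$. This is the delicate point. At $m=\beta_0$ the left quotient starts at index $\beta_0-1$, so the equality $\bI_{[0]\beta_0-1}=\bI_{[0]\beta_0}$ is needed. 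If that fails, I would fall back on Theorem \ref{isom_phi}, which says $\phi^*$ on $\bX_k/u^*\bX_{k-1}$ is bijective for $k\ge\beta_0$, and use a filtration-by-order argument with the five lemma instead.

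Once both verticals are isomorphisms, commutativity forces $\phi^*=(\iota_0^*)^{-1}\circ\mff_0^*\circ\iota_0^*$ to be an isomorphism. As a consistency check, the rank count from Lemma \ref{Xm+n} gives $ng$ on both sides for $m\ge\beta_0$.
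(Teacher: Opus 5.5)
You follow the paper's route. Commutativity comes from Proposition~\ref{Char_n}, with the error term $\gamma\cdot\Psi_1^{[0]}$ vanishing in the quotient. Bijectivity of $\mff_0^*$ comes from the shift $\Psi_i^{[0]}\mapsto\Psi_{i+1}^{[0]}$. The verticals become isomorphisms once the cokernel $\bI_{[0]m+n}(A)/\bI_{[0]m}(A)$ vanishes.

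\textbf{The gap is in that last step.} In the paper, $\beta_0$ is defined by a \emph{rank} condition, $h_{[0]n}=0$ for $n\ge\beta_0$. This only says that $\rk_R\bI_{[0]k}(A)$ is constant for $k\ge\beta_0-1$. Nested free modules of equal rank need not coincide, so a priori $\bI_{[0]m+n}(A)/\bI_{[0]m}(A)$ is merely torsion. Then $\iota_0^*$ is injective with torsion cokernel, not bijective. Calling module-level stabilization ``the defining property of $\beta_0$'' assumes exactly what must be shown. The paper's proof explicitly notes that this quotient is only torsion and adds an argument to kill it; you need some such step.

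Two further points. First, the step you flag as delicate is not one: $n=\beta_0$ is included in the definition, so $\rk_R\bI_{[0]\beta_0-1}(A)=\rk_R\bI_{[0]\beta_0}(A)$ already holds. Second, your fallback (Theorem~\ref{isom_phi} plus the five lemma) gives bijectivity of the top arrow $\phi^*$, but not surjectivity of the verticals. With $\phi^*$ and $\mff_0^*$ both bijective, commutativity only shows that the two vertical cokernels are isomorphic up to a $\phi$-twist.

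\textbf{A fix using only results already in the paper, without constructing a splitting.} For $k\ge1$, apply the snake lemma to $u^*$ between the sequences $0\to\bX_j(A)\xrightarrow{\iota_0^*}\Hom(N^{[0]j}A,\hG)\xrightarrow{\partial}\bI_{[0]j}(A)\to0$ for $j=k-1,k$. Here $\bI_{[0]k-1}(A)\to\bI_{[0]k}(A)$ is an inclusion inside $\Ext(A,\hG)$. The lemma shows that $\iota_0^*\colon\bX_k(A)/u^*\bX_{k-1}(A)\to\Hom(N^{[0]k}A,\hG)/u^*\Hom(N^{[0]k-1}A,\hG)$ is injective with cokernel $Q_k:=\bI_{[0]k}(A)/\bI_{[0]k-1}(A)$.

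The same argument between levels $m$ and $m+n$ is what actually justifies injectivity of your vertical maps. Levelwise injectivity of $\iota_0^*$ together with compatibility with $u$ does not give it by itself.

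Now apply Theorem~\ref{isom_phi} with $m=0$. For $k\ge\beta_0$, $\phi^*\colon\bX_k(A)/u^*\bX_{k-1}(A)\to\bX_{k+1}(A)/u^*\bX_k(A)$ is bijective, and $\mff_0^*$ is bijective on the corresponding graded pieces of $\Hom$. Hence $\mff_0^*$ induces a $\phi$-semilinear bijection $Q_k\to Q_{k+1}$. The chain $\bI_{[0]k}(A)$ lies in $\Ext(A,\hG)\simeq R^g$ and $R$ is Noetherian, so $Q_k=0$ for $k\gg0$. Therefore $Q_k=0$ for all $k\ge\beta_0$, that is, $\bI_{[0]\beta_0-1}(A)=\bI_{[0]\beta_0}(A)=\cdots$ as modules. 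Both vertical cokernels then vanish for every $m\ge\beta_0$, including $m=\beta_0$, and the rest of your argument goes through unchanged.
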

	\begin{proof}
		For any $m> 0$ and $n\geq 0$ the commutativity of the diagram above follows directly from Proposition \ref{Char_n}. Let $\Theta\in \bX_{m-1+n}(A)_\phi$, then Proposition \ref{Char_n} implies that $$\iota_0^*\phi^*\Theta=\mff_0^*\iota_0^*\Theta+\gamma.\Psi_1^{[0]}$$
		for some $\gamma\in R^g$. Note that $\Psi_1^{[0]}\in u_m^*\Hom(N^{[0]m}A,\hG)$ and hence we have the required commutativity. 
		
		For any $m>\beta_0$, let us consider the map of exact sequences
		\begin{equation}
			\xymatrix{
				0\ar[r] & \bX_{m+n}(A)\ar[r]^-{\iota_0^*}& \Hom(N^{[0]m+n}A,\hG)\ar[r]^-{\partial} & \bI_{[0]m+n}(A)\ar[r] &0\\
				0\ar[r] & \bX_{m}(A)\ar[r]^-{\iota_0^*}\ar[u]^-{u_m^*}& \Hom(N^{[0]m}A,\hG)\ar[u]^{u_m^*}\ar[r]^-{\partial} & \bI_{[0]m}(A)\ar[u]\ar[r] &0.
			}
		\end{equation}
		Taking the quotient of both the exact sequences, we have 
		\begin{equation}\label{Exact_Xmn_Hmn}
			\xymatrix{
				0\ar[r] & \frac{\bX_{m+n}(A)}{u_m^*\bX_m(A)}\ar[r]^-{\iota_0^*}& \frac{\Hom(N^{[0]m+n}A,\hG)}{u_m^*\Hom(N^{[0]m}A,\hG)}\ar[r]^-{\partial} & \frac{\bI_{[0]m+n}(A)}{\bI_{[0]m}(A)}\ar[r] &0.
			}
		\end{equation}
		Since $\bI_{[0]m+n}(A)$ is free, there is a section $s_{m+n} : \bI_{[0]m+n}(A)\rightarrow \Hom(N^{[0]m+n}A,\hG)$. Hence this section gives us the following exact sequence of $R$-modules
		\begin{equation}
			\xymatrix{0\ar[r] & \frac{\bX_{m+n}(A)}{u_m^*\bX_m(A)}\ar[r]^-{\iota_0^*}& \frac{\Hom(N^{[0]m+n}A,\hG)}{u_m^*\Hom(N^{[0]m}A,\hG)}\ar[r]^-{\partial} & \frac{\bI_{[0]m+n}(A)}{\bI_{[0]m}(A)}\ar[r]\ar@/^1.8pc/[l]_{s_{m+n}} &0}
		\end{equation}
		to split. By Proposition \ref{Psi_ij},  $\Hom(N^{[0]m+n}A,\hG)/u_m^*\Hom(N^{[0]m}A,\hG)$ is a free $R$-module of rank $ng$. 
		As we assume $m>\beta_0$, for all $n\geq 0$, we have $\rk_R(\bI_{[0]m+n}(A))=\rk_R(\bI_{[0]m}(A))$ because $h_{[0]m+1}=\rk_R(\bI_{[0]m+1}(A))-\rk_R(\bI_{[0]m}(A))=0$ for all $m> \beta_0 $. But  both $\bI_{[0]m+n}(A)$ and $\bI_{[0]m}(A)$ are free $R$-modules and therefore ${\bI_{[0]m+n}(A)}/{\bI_{[0]m}(A)}$ has only torsion points, this implies $s_{m+n}:{\bI_{[0]m+n}(A)}/{\bI_{[0]m}(A)}\rightarrow{\Hom(N^{[0]m+n}A,\hG)}/{u_m^*\Hom(N^{[0]m}A,\hG)}$ is a zero map. Hence from (\ref{Exact_Xmn_Hmn}),  we have  $\iota_0^*$ to be an isomorphism between 
		\begin{equation}\label{sim_i0}
			\frac{\bX_{m+n}(A)}{u_m^*\bX_m(A)}\simeq \frac{\Hom(N^{[0]m+n}A,\hG)}{u_m^*\Hom(N^{[0]m}A,\hG)}.
		\end{equation}
		Note that $\mff_0^*$ is an isomorphism by Proposition \ref{Psi_ij}. From (\ref{sim_i0}), we have $\iota_0^*$ to be an isomorphism of $R$-modules. Hence $\phi^*$ is an $R$-module isomorphism and we are done.
	\end{proof}
	\begin{theorem}\label{Stab_beta}
		For any $\pi$-formal abelian scheme $A$ over $S$, 
		
		\begin{enumerate}
			
			\item The Frobenius map $$\phi^*:\bI_{[m-1]n}(A_\phi)_K\rightarrow\bI_{[m]n}(A)_K$$ is a bijection for all $m\geq\beta_0$ and $n\geq 0$ . \label{(1)}
			\item The sequence $\{\beta_m\}$ is a decreasing sequence. 
			
			\item For all $m\geq \beta_0$, the sequence $\{\beta_m\}$ stabilizes, in particular, for all $m> \beta_0$, $\beta_{m-1}= \beta_{m}.$

		\end{enumerate}
	\end{theorem}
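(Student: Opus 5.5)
We will prove all three parts from the morphism of short exact sequences in Corollary \ref{comm_phi*} (after $\otimes K$), the isomorphism $\phm^*$ of Corollary \ref{phm-pi-com}, and Lemma \ref{phi*_isom}. The idea is a dimension count with the snake lemma.

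For part (1), fix $m\geq\beta_0$ and $n\geq 0$. Tensor the diagram of Corollary \ref{comm_phi*} with $K$. The middle vertical map $\phm^*$ is an isomorphism of $K$-modules by Corollary \ref{phm-pi-com}. The left vertical map is
$$\phi^*:\frac{\bX_{m-1+n}(A)_{\phi,K}}{u_{m-1}^*\bX_{m-1}(A)_{\phi,K}}\longrightarrow\frac{\bX_{m+n}(A)_K}{u_m^*\bX_m(A)_K}.$$
Since $m\geq\beta_0$, Lemma \ref{phi*_isom} makes this an isomorphism. The snake lemma applied to the two rows, which are exact after tensoring since $K$ is flat, then gives $\mathrm{Ker}_2=\mathrm{Coker}_2=0$. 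Hence the right vertical map $\phi^*:\bI_{[m-1]n}(A_\phi)_K\to\bI_{[m]n}(A)_K$ is bijective.

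There is one point to check here. The proof of Lemma \ref{phi*_isom} argues the case $m>\beta_0$ and only asserts the isomorphism for $m\geq\beta_0$. For the boundary case $m=\beta_0$, I would instead use the left square of Lemma \ref{phi*_isom} together with Theorem \ref{isom_phi}, which gives that $\phi^*$ on graded pieces $\bX_k/u^*\bX_{k-1}$ is bijective for $k\geq\beta_0$. Filtering $\bX_{m-1+n}/u^*\bX_{m-1}$ by order, its graded pieces are $\bX_k/u^*\bX_{k-1}$ for $m\leq k\leq m-1+n$. All of these have $k\geq\beta_0$, so $\phi^*$ is bijective on each graded piece, and the five lemma gives the isomorphism on the whole quotient. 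This covers $m=\beta_0$.

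For part (2), recall that $\beta_m$ is the least index after which $\rk\bI_{[m]n}$ stops growing. Corollary \ref{comm_phi*}(iii) gives surjections $\phi^*:\bI_{[m-1]n}(A_\phi)_K\to\bI_{[m]n}(A)_K$ for every $n$, and these are compatible with $u^*$ in $n$. Moreover $\rk\bI_{[m-1]n}(A_\phi)=\rk\bI_{[m-1]n}(A)$, because twisting by $\phi$ preserves rank. The plan is to show $h_{[m]n}=0$ whenever $h_{[m-1]n}=0$. I would do this by exhibiting $\bI_{[m]n}/\bI_{[m]n-1}$ (after $\otimes K$) as a quotient of $\bI_{[m-1]n}/\bI_{[m-1]n-1}$. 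Surjectivity of $\phi^*$ at level $n$, compatible with the inclusions from level $n-1$, gives a surjection on these quotients. Hence $\beta_m\leq\beta_{m-1}$.

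For part (3), take $m>\beta_0$. Then both $m$ and $m-1$ are at least $\beta_0$, so part (1) applies, and the maps on quotients in part (2) are isomorphisms. Therefore $h_{[m]n}=h_{[m-1]n}$ for all $n$, which forces $\beta_m=\beta_{m-1}$.

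The main obstacle is part (1) at the boundary $m=\beta_0$, since the earlier lemma is only fully argued for $m>\beta_0$. The filtration-by-order argument sketched above is how I would handle it.
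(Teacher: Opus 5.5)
Your proof is correct. Its overall structure matches the paper's:

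\begin{itemize}
\item Part (1) is the same snake-lemma argument on the $K$-tensored diagram of Corollary \ref{comm_phi*}, with the middle map an isomorphism by Corollary \ref{phm-pi-com} and the left map an isomorphism by Lemma \ref{phi*_isom}.
\item Part (3) is the same comparison of the ranks of $\bI_{[m-1]n}(A)_K$ and $\bI_{[m]n}(A)_K$ for every $n$.
\end{itemize}

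Where you depart from the paper, your version is the more robust one.

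Your filtration-by-order argument uses the graded bijections $\phi^*:\bX_k(A)/u^*\bX_{k-1}(A)\to\bX_{k+1}(A)/u^*\bX_k(A)$ for $k\geq\beta_0$ from Theorem \ref{isom_phi}. It is valid, and it works verbatim for every $m\geq\beta_0$. So it can replace Lemma \ref{phi*_isom} altogether, rather than only patching the case $m=\beta_0$. The paper just cites that lemma, whose written proof treats only $m>\beta_0$, via a rank/torsion argument for $\iota_0^*$.

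In part (2) the paper uses the same mechanism: surjective vertical maps compatible with the inclusions $\bI_{[\cdot]n-1}\subseteq\bI_{[\cdot]n}$ force stabilization. But it uses the maps $u^*:\bI_{[m-1]n}(A)\to\bI_{[m]n}(A)$ of Corollary \ref{comm_phi*}(iv), whereas you use the surjections $\phi^*$ of Corollary \ref{comm_phi*}(iii) after tensoring with $K$. Your choice is the better-supported one, for two reasons:
\begin{itemize}
\item (iii) follows directly from Corollary \ref{phm-pi-com}, Theorem \ref{phi*Ext} and the snake lemma. By contrast, (iv) rests on the assertion that $u^*:\Hom(N^{[m-1]n}A,\hG)\to\Hom(N^{[m]n}A,\hG)$ is an isomorphism. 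That is hard to accept, since $u$ maps $N^{[m]n}A$ into the proper subgroup $N^{[m]n-1}A\subset N^{[m-1]n}A$.
\item Working over $K$ is exactly what the rank-based definition of $\beta_m$ requires.
\end{itemize}

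One small remark on (3): you only need part (1) at the index $m$, so you actually get $\beta_m=\beta_{m-1}$ for all $m\geq\beta_0$. The extra condition $m-1\geq\beta_0$ is unnecessary.
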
 
	\begin{proof}
		
		$(1)$ From Lemma \ref{phi*_isom}, $\phi^*$ is an isomorphism for all $m\geq\beta_0$.  Hence from (\ref{maps_of_phm_3}) by the snake lemma we have $\phi^*:\bI_{[m-1]n}(A_\phi)_K\rightarrow\bI_{[m]n}(A)_K$ is an isomorphism for all $n\geq 0$ and $m\geq\beta_0$ and this proves (1).
		
		$(2)$ Let us fix some $m\geq 1,$ then by definition for all $n\geq \beta_{m-1}-1$, we must have $\bI_{[m-1]n}(A))\hookrightarrow \bI_{[m-1]n+1}(A)$ and for all $n\geq \beta_{m-1}-1$, $\bI_{[m-1]n}(A)=\bI_{[m-1]n+1}(A).$ 
		Hence by Corollary \ref{comm_phi*} (iv), we must have the following commutative diagram
		$$
		\xymatrix{
			\bI_{[m]\beta_{m-1}-1}(A)\ar@{^{(}->}[r]&\bI_{[m]\beta_{m-1}}(A)\ar@{^{(}->}[r]&\cdots\\
			\bI_{[m-1]\beta_{m-1}-1}(A)\ar@{=}[r]\ar[u]^{u^*}& \bI_{[m-1]\beta_{m-1}}(A)\ar[u]^{u^*}\ar@{=}[r] &\cdots
		}
		$$	
		where the 	vertical maps are surjective. Hence that forces the inclusion $\bI_{[m]n}(A)\hookrightarrow \bI_{[m]n+1(A)}$ to be an equality for all $n\geq \beta_{m-1}-1$ and that implies $\beta_{m-1}\geq \beta_m$ for all $m\geq 1.$	
		
		$(3)$ Since $\bI_{[m]n}(A)$ is a free $R$-module, for all $n\geq 0$ and $m\geq\beta_0$, we must have 
		\begin{equation}\label{equality_Imn} 
			\rk_R(\bI_{[m-1]n}(A))=\rk_K(\bI_{[m-1]n}(A)_K)=\rk_K(\bI_{[m]n}(A)_K)=\rk_R(\bI_{[m]n}(A)).
		\end{equation} 
		Let $n\geq \beta_{m-1}$, then by definition $\rk_R(\bI_{[m-1]n}(A))=\rk_R(\bI_{[m-1]n+1}(A))$. But by \eqref{equality_Imn}, we have $\rk_R(\bI_{[m-1]n}(A))=\rk_R(\bI_{[m]n}(A))=\rk_R(\bI_{[m]n+1}(A))=\rk_R(\bI_{[m-1]n+1}(A))$ for all $n\geq \beta_{m-1}$. Hence $\beta_m\leq \beta_{m-1}.$ Similarly for all $n\ge \beta_m$,  we have $\rk_R(\bI_{[m]n}(A))=\rk_R(\bI_{[m]n+1}(A))$ and that implies $\rk_R(\bI_{[m]n}(A))=\rk_R(\bI_{[m-1]n}(A))=\rk_R(\bI_{[m-1]n+1}(A))=\rk_R(\bI_{[m]n+1}(A))$ for all $n\geq \beta_m$. Hence $\beta_m\geq \beta_{m-1}$ and we have the stability of the sequence.	
	\end{proof}

	\textbf{Notational Convention.} In what follows, to alleviate the typographical burden, we shall systematically suppress the explicit indication of $\phi$-twists. Consequently, for an arbitrary $R$-module $M$, we will make no notational distinction between $M$ and its Frobenius pullback $M_\phi$. This slight abuse of notation is entirely benign and has no bearing on the validity of the ensuing results.

	\textbf{Remark.}  By  Corollary \ref{comm_phi*} and Theorem \ref{Stab_beta}, we have the following commutative diagram of $K$-modules	
	$$
	\xymatrix{\bI_{[\beta_0-1]0}(A)_K\ar@{^{(}->}[r]&\bI_{[\beta_0-1]1}(A)_K\ar@{^{(}->}[r]&\cdots\ar@{=}[r] &\bI_{[\beta_0-1]\beta_{0}-1}(A)_K\ar@{=}[r]&\bI_{[\beta_0-1]\beta_{0}}(A)_K=\bI_{[\beta_0-1]}(A)_K\\
		\bI_{[\beta_0-2]0}(A)_K\ar[u]_-{\phi^*}^{u^*}\ar@{^{(}->}[r]&\bI_{[\beta_0-2]1}(A)_K\ar[u]^{u^*}_-{\phi^*}\ar@{^{(}->}[r]&\cdots\ar@{=}[r] &\bI_{[\beta_0-2]\beta_{0}-1}(A)_K\ar[u]^{u^*}_-{\phi^*}\ar@{=}[r]&\bI_{[\beta_0-2]\beta_{0}}(A)_K=\bI_{[\beta_0-2]}(A)_K\ar[u]^{u^*}_-{\phi^*}\\
		\vdots\ar[u] & \vdots\ar[u] &  & \vdots\ar[u] & \vdots\ar[u] \\
		\bI_{[1]0}(A)_K\ar[u]^{u^*}_{\phi^*}\ar@{^{(}->}[r]&\bI_{[1]1}(A)_K\ar[u]^{u^*}_{\phi^*}\ar@{^{(}->}[r]&\cdots\ar@{=}[r] &\bI_{[1]\beta_0-1}(A)_K\ar[u]^{u^*}_{\phi^*}\ar@{=}[r]&\bI_{[1]\beta_0}(A)_K=\bI_{[1]}(A)_K\ar[u]^{u^*}_{\phi^*}\\
		\bI_{[0]0}(A)_K\ar[u]^{u^*}_{\phi^*}\ar@{^{(}->}[r]&\bI_{[0]1}(A)_K\ar[u]^{u^*}_{\phi^*}\ar@{^{(}->}[r]&\cdots\ar@{^{(}->}[r] &\bI_{[0]\beta_0-1}(A)_K\ar[u]^{u^*}_{\phi^*}\ar@{=}[r]&\bI_{[0]\beta_0}(A)_K=\bI_{[0]}(A)_K.\ar[u]^{u^*}_{\phi^*}
	}
	$$	
	We will show in Theorem \ref{rk_H_delta} that all modules in the  topmost row of the above 
	diagram are zero.
	
	\begin{corollary}\label{comm_phi*1}
		For every $m\geq 1$, we have the following  commutative diagram of exact sequences
		\begin{equation}
			\xymatrix{
				0\ar[r]& \DD_{[m]}(A)\ar[r]^-{\iota_m^*}& \mH_{[m]}^\delta(A)\ar[r]^-{\partial} & \bI_{[m]}(A)\ar[r] & 0\\
				0\ar[r] &\DD_{[m-1]}(A)\ar[r]^-{\iota_{m-1}^*}\ar[u]_{\phi^*}& \mH_{[m-1]}^{\delta}(A)\ar[r]^-{\partial}\ar[u]_{\phm^*} & \bI_{[m-1]}(A)\ar[u]_{\phi^*}\ar[r] & 0.
			}
		\end{equation}
	\end{corollary}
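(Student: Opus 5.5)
The plan is to obtain the diagram by passing to the direct limit over $n$ in the finite-level diagram of Corollary \ref{comm_phi*}. The top and bottom rows there are short exact sequences of $R$-modules. Their left terms are $\bX_{m+n}(A)/u_m^*\bX_m(A)$ and $\bX_{m-1+n}(A)_\phi/(u_{m-1}^*\bX_{m-1}(A))_\phi$, and their middle terms are the full $\Hom(N^{[m]n}A,\hG)$ and $\Hom(N^{[m-1]n}A,\hG)_\phi$. Before passing to the limit, I will quotient both rows by the images of $\phi^{*(m+1)}\bX_{n-1}(A)$, respectively $\phi^{*m}\bX_{n-1}(A)$.

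\textbf{Step 1: the quotients.} The map $\iota_m^*$ carries the image of $\phi^{*(m+1)}\bX_{n-1}(A)$ in $\bX_{m+n}(A)/u_m^*\bX_m(A)$ isomorphically onto $\iota_m^*\phi^{*(m+1)}\bX_{n-1}(A)$. Injectivity holds because, as shown in the proof of Theorem \ref{map_of_seq}, $\bX_m(A)\cap\phi^{*(m+1)}\bX_{n-1}(A)=0$. Dividing the left and middle terms of the top row by these submodules yields the exact sequence
\[
0\to \frac{\bX_{m+n}(A)}{u_m^*\bX_m(A)+\phi^{*(m+1)}\bX_{n-1}(A)}\to \mH^\delta_{[m]n}(A)\xrightarrow{\partial}\bI_{[m]n}(A)\to 0 .
\]
The sequence stays exact by the third isomorphism theorem, and the right term is unchanged because $\partial$ kills $\iota_m^*\bX_{m+n}(A)$ (Proposition \ref{Upsilon_map}(3) and exactness). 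The same construction applies to the bottom row with $m-1$.

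\textbf{Step 2: the vertical maps.} The vertical maps must respect these submodules. For $\phi^*$ on characters this is clear, since $\phi^*\phi^{*m}\bX_{n-1}=\phi^{*(m+1)}\bX_{n-1}$ and $\phi^*u_{m-1}^*\bX_{m-1}\subseteq u_m^*\bX_m$. For $\phm^*$ it is exactly the containment established before \eqref{phm*H_delta_mn}, which uses $\phi\circ\iota_m=\iota_{m-1}\circ\phm$ from \eqref{JnNm}. Commutativity of the right square is Theorem \ref{phi*Ext}, and commutativity of the left square follows from \eqref{JnNm}. All of this holds at every finite level $n$.

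\textbf{Step 3: the limit.} These finite diagrams are compatible with the transition maps $u^*$ in $n$, because $u$ commutes with $\phi$, $\iota_m$ and $\phm$. Filtered colimits are exact, so the rows remain short exact in the limit. In the limit the left term becomes $\bXp^{[m]}(A)/u_m^*\bX_m(A)=\DD_{[m]}(A)$, the middle term is $\mH^\delta_{[m]}(A)$ by definition, and the right term is $\bI_{[m]}(A)$ as in \eqref{bI}. This gives the stated diagram with vertical maps $\phi^*$, $\phm^*$ and $\phi^*$.

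\textbf{Main obstacle.} The delicate point is the bookkeeping in Step 1: checking that the submodule removed from the middle term is exactly the image of the one removed from the left term, and that no part of it lies in $\iota_m^*$ of $u_m^*\bX_m(A)$. This is what makes the induced left map land in $\DD_{[m]}$. I would settle it using the order argument of Theorem \ref{map_of_seq}: every character in $\phi^{*(m+1)}\bX_{n-1}(A)$ has order at least $m+1$. The $\phi$-twists are suppressed as in the Notational Convention.
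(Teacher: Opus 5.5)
Your proposal is correct and follows the paper's own argument: the paper also passes to the direct limit over $n$ in Corollary \ref{comm_phi*} and combines it with the exact sequence \eqref{Exact}, and your Steps 1--3 spell out the finite-level quotienting by $\iota_m^*\phi^{*(m+1)}\bX_{n-1}(A)$ and the compatibility with $u^*$ that the paper leaves implicit (the same commutation of quotients appears in the proof of Proposition \ref{phi*X0}). One minor remark: the vanishing $\bX_m(A)\cap\phi^{*(m+1)}\bX_{n-1}(A)=0$ is not needed for the exactness in Step 1, since quotienting the left and middle terms by the same submodule of the left term always preserves exactness; that fact only enters through the injectivity of $u_m^*$ in \eqref{Exact}.
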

	
	\begin{proof}
		This follows from Corollary \ref{comm_phi*} and equation (\ref{Exact}) as we take direct limit over $n$.
	\end{proof}
	\begin{corollary}\label{Hodge_Seq 1}
		The map of exact sequences in Theorem  \ref{map_of_seq} can be realized as 
		\begin{equation}
			\xymatrix{
				0\ar[r] & \DD_{[m]}(A)_K\ar[r]^-{\iota_m^*}\ar[d]_{\Upsilon_{[m]}} & \mH_{[m]}^\delta(A)_K\ar[r]^-{\partial}\ar[d]_{\Lambda_{[m]}} & \Ext(J^mA,\hG)_k\ar@{=}[d] \\
				0\ar[r]  & \frac{\Lie(J^mA)^\vee_K}{\bX_m(A)_K}\ar[r] & \Ext^{\sharp}(J^mA,\hG)_K\ar[r] & \Ext(J^mA,\hG)_K.
			}  
		\end{equation}
	\end{corollary}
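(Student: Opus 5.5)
The plan is to deduce the statement formally from the map of exact sequences in Theorem \ref{map_of_seq}, by dividing out the common subobject $\bX_m(A)$ in the first two columns and then tensoring with $K$.

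First, the top row. The exact sequence \eqref{Exact},
$$0\rightarrow \bX_m(A)\xrightarrow{u_m^*}\bXp^{[m]}(A)\xrightarrow{\iota_m^*}\mH^\delta_{[m]}(A)\xrightarrow{\partial}\Ext(J^mA,\hG),$$
shows that $\iota_m^*$ kills $u_m^*\bX_m(A)$. Hence $\iota_m^*$ factors through $\DD_{[m]}(A)=\bXp^{[m]}(A)/u_m^*\bX_m(A)$, and the induced sequence $0\rightarrow\DD_{[m]}(A)\rightarrow\mH^\delta_{[m]}(A)\rightarrow\Ext(J^mA,\hG)$ is exact. This is the same content as \eqref{exact_sequence_Hm_X0}, with $\bI_{[m]}(A)$ replaced by its ambient module.

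Second, the bottom row. The sequence \eqref{Hodge} is exact:
$$0\rightarrow\bX_m(A)\rightarrow\Lie(J^mA)^\vee\rightarrow\Ext^\sharp(J^mA,\hG)\rightarrow\Ext(J^mA,\hG).$$
So the map $\Lie(J^mA)^\vee\rightarrow\Ext^\sharp(J^mA,\hG)$ descends to an injection of $\Lie(J^mA)^\vee/\bX_m(A)$, and the sequence
$$0\rightarrow\Lie(J^mA)^\vee/\bX_m(A)\rightarrow\Ext^\sharp(J^mA,\hG)\rightarrow\Ext(J^mA,\hG)$$
is exact.

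Third, the vertical maps. The leftmost square of Theorem \ref{map_of_seq} has the identity on $\bX_m(A)$. Its commutativity says that $\Upsilon_{[m]}\circ u_m^*$ is the Mazur--Messing map $\bX_m(A)\rightarrow\Lie(J^mA)^\vee$. Hence $\Upsilon_{[m]}$ carries $u_m^*\bX_m(A)$ onto the image of $\bX_m(A)$ and descends to a map $\Upsilon_{[m]}\colon\DD_{[m]}(A)\rightarrow\Lie(J^mA)^\vee/\bX_m(A)$. The middle square, with $\Lambda_{[m]}$, and the right square, with the identity on $\Ext$, are unchanged. They commute because the corresponding squares in Theorem \ref{map_of_seq} commute and the maps out of the quotients are induced.

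Finally, I would tensor everything with $K$. Since $K$ is flat over $R$, exactness and commutativity are preserved, and $(\Lie(J^mA)^\vee/\bX_m(A))_K=\Lie(J^mA)^\vee_K/\bX_m(A)_K$. This gives the displayed diagram.

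The only step needing real care is the descent of $\Upsilon_{[m]}$, that is, the identity $\Upsilon_{[m]}(u_m^*\Theta)$ equals the image of $\Theta$ in $\Lie(J^mA)^\vee$. I would check it directly from Proposition \ref{Upsilon_map}(1). For $\Theta\in\bX_m(A)$ one has $D(\Theta\circ u_m)\circ v_{m,n}=D\Theta\circ Du_m\circ v_{m,n}=D\Theta$, because $Du_m\circ v_{m,n}=\mathbf{1}$. The remaining point is to match the sign with the convention for the map $\Hom\rightarrow\Lie^\vee$ in \eqref{Hodge}. I would take that convention to be the one already used in the proof of Theorem \ref{map_of_seq}, where this square is asserted to commute.
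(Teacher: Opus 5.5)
Your proposal is correct and follows the paper's route: the paper simply says the corollary follows directly from Theorem \ref{map_of_seq}(3), and you have written out the formal steps it leaves implicit, namely factoring the first two columns through the quotients by $\bX_m(A)$ and then tensoring with the flat $R$-module $K$. The sign point you flag at the end does not affect the descent: $\Upsilon_{[m]}(u_m^*\Theta)=-D\Theta$ lies in the image of $\bX_m(A)$ in $\Lie(J^mA)^\vee$ under either sign convention, because that image is an $R$-submodule.
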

	\begin{proof}
		This follows directly from  Theorem \ref{map_of_seq} (3).
	\end{proof}
	
	\begin{proposition}\label{phi*X0}
		The Frobenius pull-back  $$\phi^*:\DD_{[m-1]}(A)\rightarrow\DD_{[m]}(A)$$ is an injection for all $m>0$ and is a bijection for all $m\geq \beta_0$.
	\end{proposition}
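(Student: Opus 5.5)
The plan is to reduce everything to the explicit $R$-bases supplied by Theorem \ref{X^m_prim} and Lemma \ref{Xm+n}. On those bases $\phi^*$ acts by shifting the Frobenius exponent by one.

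First I check that $\phi^*$ is well defined on the quotients. Since $\phi^*\bigl(\phi^{*m}\bX_\infty(A)\bigr)=\phi^{*(m+1)}\bX_\infty(A)$, Frobenius induces $\phi^*:\bXp^{[m-1]}(A)\to\bXp^{[m]}(A)$, as already noted before Theorem \ref{map_of_seq}. Since $\phi:J^{m}A\to J^{m-1}A$ is compatible with $u$, we have $\phi^*u_{m-1}^*\bX_{m-1}(A)\subseteq u_m^*\bX_m(A)$. Hence $\phi^*$ descends to $\DD_{[m-1]}(A)\to\DD_{[m]}(A)$. This agrees with the map in Corollary \ref{comm_phi*1}.

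Next I write down bases of $\DD_{[m]}(A)$ for every $m\geq 0$, not only for $m\geq\beta_0-1$.
\begin{itemize}
\item By Theorem \ref{X^m_prim}, $\bXp^{[m]}(A)$ is free on the classes $[\phi^{*k}\Theta_i]$ with $0\le k\le m$ and $1\le i\le g$.
\item By Lemma \ref{Xm+n} (equivalently Theorem \ref{X_inf}), $\bX_m(A)$ is free on the $\phi^{*k}\Theta_i$ with $k+o_i\le m$.
\item The basis of $\bX_m(A)$ is a subset of the basis of $\bXp^{[m]}(A)$. I would verify this exactly as in the proof of Corollary \ref{rk_D_m}, using that $\bX_m(A)\cap\phi^{*(m+1)}\bX_\infty(A)=0$.
\end{itemize}
It follows that $\DD_{[m]}(A)$ is free on the classes $[\phi^{*k}\Theta_i]$ with
\begin{equation}
\max(0,\,m-o_i+1)\le k\le m .
\end{equation}

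Then I compute $\phi^*$ on this basis. It sends $[\phi^{*k}\Theta_i]\in\DD_{[m-1]}(A)$, where $\max(0,m-o_i)\le k\le m-1$, to $[\phi^{*(k+1)}\Theta_i]\in\DD_{[m]}(A)$. The new exponent $k+1$ satisfies
\begin{equation}
\max(1,\,m-o_i+1)\le k+1\le m .
\end{equation}
So it again indexes a basis element of $\DD_{[m]}(A)$, and distinct basis vectors go to distinct basis vectors. Because $\phi$ is an automorphism of $R$ (Lemma \ref{phi_bij}), a $\phi$-semilinear map carrying a basis injectively into a basis is injective. This gives injectivity for all $m>0$, consistent with Corollary \ref{comm_phi*}(ii).

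The image is precisely the span of the basis elements with $k\ge 1$. Hence $\phi^*$ is surjective exactly when no basis element of $\DD_{[m]}(A)$ has $k=0$, that is, when $m-o_i+1\ge 1$ for every $i$. This holds when $m\ge\max_i o_i=\beta_0$, which gives bijectivity in that range.

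The main obstacle is the second step: justifying that $u_m^*\bX_m(A)$ is spanned by a subset of the chosen basis of $\bXp^{[m]}(A)$, so that the quotient basis is as described. This requires the freeness statement of Lemma \ref{Xm+n} and the splitting argument used in Corollary \ref{rk_D_m}, now extended to all $m$ rather than only $m\ge\beta_0-1$.
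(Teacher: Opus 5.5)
Your argument is correct, but it takes a different route from the paper's. The paper does not compute a basis of $\DD_{[m]}(A)$ here. It rewrites $\DD_{[m]}(A)\simeq\varinjlim_n\bigl(\bX_{m+n}(A)/u_m^*\bX_m(A)\bigr)/\phi^{*(m+1)}\bX_{n-1}(A)$, using $u_m^*\bX_m(A)\cap\phi^{*(m+1)}\bX_{n-1}(A)=0$. It then gets injectivity at each finite level $n$ from two inputs. The first is Corollary \ref{comm_phi*}(ii), which rests on $\Phi_{[m]}^*$ being multiplication by $\pi$ on $\Hom(N^{[m-1]n}A,\hG)$ (Corollary \ref{phm-pi-com}). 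The second is $\phi^*(\phi^{*m}\bX_{n-1}(A))=\phi^{*(m+1)}\bX_{n-1}(A)$. Bijectivity for $m\ge\beta_0$ comes from Lemma \ref{phi*_isom}, where $\beta_0$ enters as the index at which $\rk_R\bI_{[0]n}(A)$ stabilizes.

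You instead work entirely from the structure theorem (Theorem \ref{X_inf}, Lemma \ref{Xm+n}) with $\beta_0=\max_i o_i$. The step you flag as the main obstacle is in fact immediate. Since $\bX_\infty(A)$ is $R$-free on all $\phi^{*k}\Theta_i$ and $\phi$ is bijective on $R$, $\phi^{*(m+1)}\bX_\infty(A)$ is exactly the span of the $\phi^{*k}\Theta_i$ with $k\ge m+1$. Meanwhile $\bX_m(A)$ is the span of those with $k+o_i\le m$, all of which have $k\le m-1$, so no splitting argument is needed.

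Your route gives a sharper, fully explicit result. The cokernel of $\phi^*$ is free on the classes $[\Theta_i]$ with $o_i>m$, so $\phi^*$ is bijective if and only if $m\ge\beta_0$. You can also read off $\rk_R\DD_{[m]}(A)=\sum_i\min(o_i,m+1)$ for every $m$, which agrees with Corollary \ref{rk_D_m_k}. It bypasses Lemma \ref{phi*_isom} entirely. The paper's route, in exchange, obtains the injectivity half from the geometry of $\Phi_{[m]}$ and the kernel sequences, without the full freeness of $\bX_\infty(A)$ over $R\{\phi^*\}$. It also runs in parallel with the snake-lemma arguments used for $\mH^\delta_{[m]}(A)$ and $\bI_{[m]}(A)$ in Theorem \ref{phm_iso}.
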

	\begin{proof}
		By definition for any $m\geq 0$, we have $$\bXp^{[m]}(A)=\varinjlim\frac{\bX_{m+n}(A)}{\phi^{*(m+1)}\bX_{n-1}(A)}.$$ Hence 
		\begin{equation}\label{X_0_prim}
			\DD_{[m]}(A)=\frac{\bXp^{[m]}(A)}{u_m^*\bX_{m}(A)}=\frac{\varinjlim_n\frac{\bX_{m+n}(A)}{\phi^{*(m+1)}\bX_{n-1}(A)}}{u_m^*\bX_{m}(A)}\simeq \varinjlim_n \frac{\frac{\bX_{m+n}(A)}{\phi^{*(m+1)}\bX_{n-1}(A)}}{u_m^*\bX_{m}(A)},
		\end{equation}
the last isomorphism follows from the exactness of filtered colimits of $R$-modules. Note that for all $n\geq 0$, 
		\begin{equation}\label{third-isom-thm}
			\frac{\frac{\bX_{m+n}(A)}{\phi^{*(m+1)}\bX_{n-1}(A)}}{u_m^*\bX_{m}(A)}\simeq \frac{\frac{\bX_{m+n}(A)}{u_m^*\bX_{m}(A)}}{\phi^{*(m+1)}\bX_{n-1}(A)} 
		\end{equation} because  $u_m^*\bX_{m}(A)\cap\phi^{*(m+1)}\bX_{n-1}(A)=\{0\}$ for all $n\geq 0$. From Corollary \ref{comm_phi*}, we obtain $$\phi^*:\frac{\bX_{m-1+n}(A)}{u_{m-1}^*\bX_{m-1}(A)}\rightarrow \frac{\bX_{m+n}(A)}{u_m^*\bX_{m}(A)}$$ to be an injection for all $m\geq 1$ and $n\geq 0$ and $\phi^*:\phi^{*(m)}\bX_{n-1}(A)\rightarrow\phi^{*(m+1)}\bX_{n-1}(A)$ is also bijective. Hence
		$$\phi^*:\frac{\frac{\bX_{m-1+n}(A)}{u_{m-1}^*\bX_{m-1}(A)_K}}{\phi^{*(m)}\bX_{n-1}(A)}\rightarrow \frac{\frac{\bX_{m+n}(A)}{u_m^*\bX_{m}(A)}}{\phi^{*(m+1)}\bX_{n-1}(A)}$$ is an injection and taking the limit over $n$, from equation (\ref{X_0_prim}), we know $$\phi^*:\DD_{[m-1]}(A)\rightarrow\DD_{[m]}(A)$$ is an injection for all $m>0.$ 
		
		Note that from Lemma \ref{phi*_isom},  if  $m\geq\beta_0$ and $n\geq 0$, then $$\phi^*:\frac{\bX_{m-1+n}(A)}{u_{m-1}^*\bX_{m-1}(A)}\rightarrow \frac{\bX_{m+n}(A)}{u_m^*\bX_{m}(A)}$$ is an isomorphism of $R$-modules and $\phi^*:\phi^{*(m)}\bX_{n-1}(A)\rightarrow\phi^{*(m+1)}\bX_{n-1}(A)$ is also a bijection. Hence  we get an $R$-module isomorphism $$\phi^*:\frac{\frac{\bX_{m-1+n}(A)}{u_{m-1}^*\bX_{m-1}(A)}}{\phi^{*(m)}\bX_{n-1}(A)}\rightarrow \frac{\frac{\bX_{m+n}(A)}{u_m^*\bX_{m}(A)}}{\phi^{*(m+1)}\bX_{n-1}(A)}.  $$ From (\ref{third-isom-thm}), we know that the isomorphism is $$\phi^*:\frac{\frac{\bX_{m-1+n}(A)}{\phi^{*(m)}\bX_{n-1}(A)}}{u_{m-1}^*\bX_{m-1}(A)}\rightarrow \frac{\frac{\bX_{m+n}(A)}{\phi^{*(m+1)}\bX_{n-1}(A)}}{u_m^*\bX_{m}(A)}.$$ Finally taking the limit over $n$, from equation (\ref{X_0_prim}), $$\phi^*:\DD_{[m-1]}(A)\rightarrow\DD_{[m]}(A)$$ is a bijection for all $m\geq\beta_0$.
	\end{proof}
	
	\begin{theorem}\label{phm_iso}
		For every $m\geq 1$, we have the  following commutative diagram of exact sequences of 
		$K$-modules
		\begin{equation}
			\xymatrix{0\ar[r]&\DD_{[m]}(A)_K\ar[r]^-{\iota_m^*}& \mH^{\delta}_{[m]}(A)_K\ar[r]^-{\partial} & \bI_{[m]}(A)_K\ar[r] & 0\\
				0\ar[r]&\DD_{[m-1]}(A)_K\ar[r]^-{\iota_{m-1}^*}\ar[u]_{\phi^*}& \mH^{\delta}_{[m-1]}(A)_K\ar[r]^-{\partial}\ar[u]_{\phm^*} & \bI_{[m-1]}(A)_K\ar[r]\ar[u]_{\phi^*} & 0. }
		\end{equation}
		Moreover we have,
		\begin{enumerate}
			\item[(i)] $\phm^*: \mH^{\delta}_{[m-1]}(A)_K\rightarrow  \mH^{\delta}_{[m]}(A)_K$ is a surjection.
			\item[(ii)]  $\phi^*:\bI_{[m-1]}(A)_K\rightarrow \bI_{[m]}(A)_K$ is also a surjection.
			
			\item[(iii)]  $\phi^*:\DD_{[m-1]}(A)_K\rightarrow\DD_{[m]}(A)_K$ is an injection. 
			\item[(iv)] For all $m\geq\beta_0,$  all the 
			vertical maps in  above diagram are  isomorphisms of $K$-modules.   
		\end{enumerate}
	\end{theorem}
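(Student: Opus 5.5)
The diagram itself comes from Corollary \ref{comm_phi*1} after tensoring with $K$; since $-\otimes_R K$ is exact, both rows remain short exact sequences. This is where I will start, and the remaining work is to establish the four properties. The plan is to prove (iii) and (ii) directly from earlier results and then deduce (i) and (iv) from the snake lemma applied to the morphism of short exact sequences.

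For (iii), I will use Proposition \ref{phi*X0}. It gives injectivity of $\phi^*:\DD_{[m-1]}(A)\to\DD_{[m]}(A)$ over $R$, and injectivity survives tensoring with the flat module $K$. For (ii), I will apply Corollary \ref{comm_phi*}(iii) at each finite level $n$: the map $\phi^*:\bI_{[m-1]n}(A)_K\to\bI_{[m]n}(A)_K$ is surjective, and these maps are compatible with the inclusions $\bI_{[\cdot]n}\subseteq\bI_{[\cdot]n+1}$ induced by $u^*$. Passing to the direct limit over $n$, which is exact for filtered colimits, gives surjectivity on $\bI_{[m-1]}(A)_K\to\bI_{[m]}(A)_K$.

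For (i), I will argue at the level of $\Hom$ rather than through the snake lemma, because a surjection on the left term of the diagram is not available. By Corollary \ref{phm-pi-com}, the map $\phm^*:\Hom(N^{[m-1]n}A,\hG)_K\to\Hom(N^{[m]n}A,\hG)_K$ is an isomorphism. The quotient map $\Hom(N^{[m]n}A,\hG)_K\to\mH^\delta_{[m]n}(A)_K$ is surjective, and the map $\phm^*$ on $\mH^\delta$ is induced from $\phm^*$ on $\Hom$ by (\ref{phm*H_delta_mn}). Hence $\phm^*:\mH^\delta_{[m-1]n}(A)_K\to\mH^\delta_{[m]n}(A)_K$ is surjective for every $n$, and passing to the colimit gives (i). As a sanity check, the snake lemma together with (iii) yields $\mathrm{coker}(\phm^*)\cong\mathrm{coker}(\phi^*|_{\bI})$, which vanishes by (ii); this is consistent but is not needed.

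For (iv), assume $m\ge\beta_0$. Proposition \ref{phi*X0} gives that $\phi^*$ on $\DD$ is bijective over $R$, hence over $K$. Theorem \ref{Stab_beta}(1) gives that $\phi^*:\bI_{[m-1]n}(A)_K\to\bI_{[m]n}(A)_K$ is bijective for all $n$, and passing to the limit gives bijectivity on $\bI_{[\cdot]}(A)_K$. The five lemma, or equivalently the snake lemma, then makes $\phm^*$ on $\mH^\delta$ an isomorphism.

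The main obstacle I expect is bookkeeping rather than mathematics. One issue is the suppressed $\phi$-twists: since $\phi$ is an automorphism of $R$ by Lemma \ref{phi_bij}, the twist $M_\phi$ has the same rank as $M$, so the twists can be dropped harmlessly. The other issue is verifying that the finite-level maps are compatible with the transition maps $u^*$, which is needed to pass to the direct limit. This compatibility follows from $u$ commuting with $\phi$, with $\phm$, and with $\iota_m$, as recorded in Section \ref{Transition_maps}.
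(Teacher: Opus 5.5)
Your proposal is correct and follows essentially the same route as the paper. Commutativity comes from Corollary \ref{comm_phi*1}; (i) comes from the isomorphism $\phm^*$ on $\Hom(N^{[\cdot]n}A,\hG)_K$ pushed to the quotients, which the paper phrases as a snake-lemma argument on the same $\Hom$-level diagram; (ii) and (iii) come from Corollary \ref{comm_phi*} and Proposition \ref{phi*X0}; and (iv) comes from Theorem \ref{Stab_beta} and Proposition \ref{phi*X0} via the five lemma.

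One small slip is in your sanity check, which you say is not needed. The snake lemma together with (iii) does not give $\mathrm{coker}(\phm^*)\cong\mathrm{coker}(\phi^*|_{\bI})$. It only gives a surjection $\mathrm{coker}(\phm^*)\to\mathrm{coker}(\phi^*|_{\bI})$ whose kernel is the image of $\mathrm{coker}(\phi^*|_{\DD})$, and that cokernel is generally nonzero.
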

	\begin{proof}
		The commutativity of the above diagram follows from  Corollary \ref{comm_phi*1}. For every $m\geq 1$, let us consider the following commutative diagram of exact sequences
		\begin{equation}\label{H_delta_m_phm}
			\xymatrix{
				0\ar[r]&\iota_m^*\phi^{*(m+1)}\bX_{n-1}(A)_K\ar[r]&\Hom(N^{[m]n}A,\hG)_K\ar[r]& \mH_{[m]}^\delta(A)_K\ar[r]& 0\\
				0\ar[r]&\iota_{m+1}^*\phi^{*(m)}\bX_{n-1}(A)_K\ar[r]\ar[u]_{\phm^*}& \Hom(N^{[m-1]n}A,\hG)_K\ar[r]\ar[u]_{\phm^*=[\pi]}&\mH_{[m-1]}^\delta(A)_K\ar[r]\ar[u]_{\phm^*}& 0.
			}
		\end{equation}
		From Corollary \ref{comm_phi*}, we have $$\phm^*:\Hom(N^{[m-1]n}A,\hG)_K\rightarrow\Hom(N^{[m]n}A,\hG)_K$$ is an isomorphism and hence by the snake lemma, we obtain $\phm^*:\mH_{[m-1}^\delta(A)_K\rightarrow\mH_{[m]}^\delta(A)_K$ to be a surjection.
		
		Note that $\phi^*:\DD_{[m-1]}(A)_K\rightarrow\DD_{[m]}(A)_K$ is an  injection from Proposition \ref{phi*X0} and $\phi^*:\bI_{[m-1]}(A)_K\rightarrow\bI_{[m]}(A)_K$ is a surjection from Corollary \ref{comm_phi*}.
		
		If $m\geq \beta_0,$ from Theorem \ref{Stab_beta}, we have $\phi^*:\bI_{[m-1]}(A)_K\rightarrow \bI_{[m]}(A)_K$ is an isomorphism and from Corollary \ref{phi*X0}  $\phi^*:\DD_{[m-1]}(A)\rightarrow \DD_{[m]}(A)$ is an isomorphism. Hence this implies $\phm^*:\mH_{[m-1]}^\delta(A)_K\rightarrow\mH_{[m]}^\delta(A)_K$ is an isomorphism.
	\end{proof}
	\begin{corollary}\label{rk_H_del}
		For every $m\geq 1,$ we have
 $\rk_R(\mH_{[m-1]}^\delta(A))\geq\rk_R(\mH_{[m]}^\delta(A))$.\\
		In particular, for $m\geq\beta_0$, we obtain  $\rk_R(\mH_{[m-1]}^\delta(A))=\rk_R(\mH_{[m]}^\delta(A))$.
	\end{corollary}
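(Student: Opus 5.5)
The statement to prove is Corollary \ref{rk_H_del}: $\rk_R(\mH_{[m-1]}^\delta(A))\geq\rk_R(\mH_{[m]}^\delta(A))$ for $m\geq 1$, with equality when $m\geq\beta_0$. I will deduce it directly from Theorem \ref{phm_iso} by comparing dimensions after tensoring with $K$. The point is to relate $R$-ranks to $K$-dimensions. By Proposition \ref{H_m^delta}, the module $\mH_{[m]}^\delta(A)$ is the union of the increasing chain $\mH_{[m]n}^\delta(A)$, and this chain stabilizes for $n\geq\beta_0$. Each $\mH_{[m]n}^\delta(A)$ is a quotient of the finite free module $\Hom(N^{[m]n}A,\hG)$, so $\mH_{[m]}^\delta(A)$ is finitely generated over the discrete valuation ring $R$. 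Its rank is therefore well defined and equals $\dim_K\mH_{[m]}^\delta(A)_K$. This needs only localization, not freeness, so it avoids any circular use of Theorem \ref{filt_isocrys}, which itself cites this corollary.

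For the inequality, Theorem \ref{phm_iso}(i) says $\phm^*:\mH^\delta_{[m-1]}(A)_K\to\mH^\delta_{[m]}(A)_K$ is surjective. One subtlety is the $\phi$-twist on the source, which the paper suppresses by its notational convention. I will note explicitly that $M_\phi=M\otimes_{R,\phi}R$ has the same rank as $M$, because $\phi$ is an automorphism of $R$ by Lemma \ref{phi_bij}. With this in hand, a $\phi$-semilinear surjection of finite-dimensional $K$-spaces, viewed as a $K$-linear surjection from the twist, gives $\dim_K\mH^\delta_{[m-1]}(A)_K\geq\dim_K\mH^\delta_{[m]}(A)_K$. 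That is the claimed rank inequality.

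For $m\geq\beta_0$, Theorem \ref{phm_iso}(iv) says $\phm^*$ is an isomorphism of $K$-modules, again up to the twist. Hence the dimensions agree, and so do the $R$-ranks.

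I expect no real obstacle; the only care needed is the twist bookkeeping and the finiteness of $\mH_{[m]}^\delta(A)$. If stating the finiteness through Proposition \ref{H_m^delta} seems delicate, I will instead compute the rank through the exact sequence \eqref{exact_sequence_Hm_X0}. There, $\DD_{[m]}(A)$ is a quotient of the finite free module $\bXp^{[m]}(A)$ by Theorem \ref{X^m_prim}, and $\bI_{[m]}(A)\subset\Ext(J^mA,\hG)$ is finitely generated because it equals $\bI_{[m]\beta_m}(A)$. This gives finite rank directly.
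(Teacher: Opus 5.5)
Your proposal is correct and follows the paper's own route: both deduce the inequality from the surjectivity of $\phm^*$ on $\mH^\delta_{[m-1]}(A)_K\to\mH^\delta_{[m]}(A)_K$ in Theorem \ref{phm_iso}(i), and the equality for $m\geq\beta_0$ from the isomorphism in Theorem \ref{phm_iso}(iv). The one difference is how $K$-dimension is converted to $R$-rank: the paper invokes freeness of $\mH^\delta_{[m]}(A)$, which is asserted in Theorem \ref{filt_isocrys}, and that theorem's proof itself cites this corollary; your localization argument, together with your explicit treatment of the $\phi$-twist, avoids this circularity.
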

	\begin{proof}
		From Theorem \ref{phm_iso}, for every $m\geq 0$,  $\phm^*: \mH^{\delta}_{[m]}(A)_K\rightarrow  \mH^{\delta}_{[m+1]}(A)_K$ is a surjection, hence $\rk_K(\mH^{\delta}_{[m]}(A)_K)\geq \rk_K(\mH^{\delta}_{[m+1]}(A)_K)$. But as $\mH^{\delta}_{[m]}(A)$ is a free $R$-module, we have $\rk_K(\mH^{\delta}_{[m]}(A)_K)=\rk_R(\mH^{\delta}_{[m]}(A))$ and hence we obtain $$\rk_R(\mH^{\delta}_{[m]}(A))\geq \rk_R(\mH^{\delta}_{[m+1]}(A)).$$
		Again from Theorem \ref{phm_iso}, for every $m\geq\beta_0$,  $\phm^*:\mH_{[m]}^\delta(A)_K\rightarrow\mH_{[m+1]}^{\delta}(A)_K$ is an isomorphism and hence we have $\rk_K(\mH^{\delta}_{[m]}(A)_K)=\rk_R(\mH^{\delta}_{[m]}(A))=\rk_R(\mH^{\delta}_{[m+1]}(A))=\rk_K(\mH^{\delta}_{[m+1]}(A))_K.$
	\end{proof}
	
	\color{black}
	\color{magenta}
	
	\color{black}

	\section{Rank of the Filtered Isocrystals and Applications in $p$-adic Hodge Theory}\label{rk_H_m}
	
	Let us recall the following exact sequence of $R$-modules
	$$
	\xymatrix{0\ar[r] & \DD_{[0]}(A)\ar[r]& \mH_{[0]}^\delta(A)\ar[r]& \bI_{[0](A)}\ar[r] &0.}
	$$
	Set $\rk_R(\bI_{[0]}(A))=k$. Since $\bI_{[0]}(A)\subseteq \Ext(A,\hG)$. Hence, $k=\rk_R(\bI_{[0]}(A))\leq \rk_R(\Ext(A,\hG))=g$ as $A$ is of relative dimension $g$ over $S$ and $\beta_0$ denotes the $0$-th upper splitting number. Then we have the following increasing sequence of $R$-modules $$0=\bI_{[0]0}(A)\subseteq \bI_{[0]1}(A)\subseteq\cdots\subseteq \bI_{[0]\beta_0-1}(A)=\bI_{[0]\beta_0}(A)=\bI_{[0]}(A).$$ Let $k_i$ be the rank of $\bI_{[0]_i}(A)$. Then by Lemma 7.3 of \cite{BoSa1}, we have $0=k_1<k_2<\cdots <k_{\beta_0-2}<k_{\beta_0-1}=k.$ Let us recall the following system of exact sequences of $R$-modules
	
	\begin{equation}\label{sys_X_mn}
		\xymatrix{
			0\ar[r] & \bX_1(A)\ar[r]\ar[d]_{u_1^*} & \Hom(N^{[0]1}A,\hG)\ar[r]\ar[d]_{u_1^*} & \bI_{[0]1}(A)\ar[d]_{u_1^*}\ar[r] & 0\\
			0\ar[r] & \bX_2(A)\ar[r]\ar[d]_{u^*_2} & \Hom(N^{[0]2}A,\hG)\ar[r]\ar[d]_{u_2^*} & \bI_{[0]2}(A)\ar[d]_{u_2^*}\ar[r] & 0\\
			& \vdots\ar[d]& \vdots \ar[d] &\vdots \ar[d] &\\
			0\ar[r] & \bX_{\beta_0-2}(A)\ar[r]\ar[d]_{u_{\beta_0-2}^*} & \Hom(N^{[0]\beta_0-2}A,\hG)\ar[r]\ar[d] _{u_{\beta_0-2}^*}& \bI_{[0]\beta_0-2}(A)\ar[d]_{u_{\beta_0-2}^*}\ar[r] & 0\\
			0\ar[r] & \bX_{\beta_0-1}(A)\ar[r] & \Hom(N^{[0]\beta_0-1}A,\hG)\ar[r] & \bI_{[0]\beta_0-1}(A)\ar[r] & 0.\\
		}
	\end{equation}
	From Theorem \ref{X_inf} we have,  $$\DD_{[0]}(A)=\bXp^{[0]}(A)\simeq R\langle\Theta_1,\cdots,\Theta_g\rangle.$$ Let  $\BB=\{\Theta_1,\cdots,\Theta_g\}$ be a set of primitive characters and $o_i$ be the exact order of $\Theta_i$.   Let us recall from \cite{BoSa1}, Page 419, $l_i$ denote the number of  primitive characters in $\BB$ of exact order $i$ and $h_i$ denote the difference of $k_i$ and $k_{i-1}$  for all $i\geq 1$. Note that $l_i=\rk_K\left({\bX_i(A)_K}/{u^*\bX_{i-1}(A)_K+\phi^*\bX_{i-1}(A)_K}\right)$ and $k_i=\sum_{j=0}^ih_j$. Then for all $m\geq 0$, from \cite{BoSa1}  equation (7.5), we obtain
	\begin{equation}\label{rk_RX_m}
		\rk_R(\bX_m(A))=ml_1+(m-1)l_2+\cdots+2l_{m-1}+l_m.
	\end{equation}

	\begin{proposition} \label{l_ik_i}
		The following equalities are true.
		\begin{enumerate}
			\item[(i)] $l_1=g-k_1$.
			\item[(ii)] $l_2=2k_1-k_2$.
			\item[(iii)] $l_i=2k_{i-1}-k_{i-2}-k_i$ for all $i\geq 3$.
			In particular, $l_{\beta_0}=k-k_{\beta_0-2}$ and $l_i=0$ for all $i>\beta_0$.
		\end{enumerate}
	\end{proposition}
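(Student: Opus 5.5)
The plan is a rank count in the system \eqref{sys_X_mn}, using \eqref{rk_RX_m}. Each row of \eqref{sys_X_mn} is a short exact sequence of free $R$-modules. By Proposition \ref{Psi_ij} the middle term has rank $ig$, and the right term has rank $k_i$. Hence
\[
\rk_R(\bX_i(A))=ig-k_i \qquad\text{for all } i\geq 0,
\]
with the conventions $k_0=0$ and $\bX_0(A)=0$.

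Next I compare this with \eqref{rk_RX_m}, which gives
\[
\rk_R(\bX_i(A))=\sum_{j=1}^{i}(i-j+1)l_j .
\]
The first difference is $\rk_R\bX_i(A)-\rk_R\bX_{i-1}(A)=\sum_{j=1}^{i}l_j$. The second difference therefore isolates $l_i$:
\[
l_i=\rk_R\bX_i(A)-2\,\rk_R\bX_{i-1}(A)+\rk_R\bX_{i-2}(A).
\]
Substituting $\rk_R\bX_i(A)=ig-k_i$, the $g$-terms cancel, since $ig-2(i-1)g+(i-2)g=0$. What remains is $l_i=2k_{i-1}-k_{i-2}-k_i$ for $i\geq 2$, using $\bX_{-1}=\bX_0=0$ where needed.

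For $i=1$ this procedure gives $l_1=\rk_R\bX_1(A)=g-k_1$, which is (i). For $i=2$ it gives $l_2=(2g-k_2)-2(g-k_1)=2k_1-k_2$, which is (ii). For $i\geq 3$ it is exactly (iii).

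For the ``in particular'' I use the stabilization of the chain of $\bI_{[0]i}(A)$. By the definition of $\beta_0$ and the displayed chain, $k_i=k$ for all $i\geq \beta_0-1$. Then:
\begin{itemize}
\item for $i=\beta_0$, $l_{\beta_0}=2k-k_{\beta_0-2}-k=k-k_{\beta_0-2}$;
\item for $i>\beta_0$, all three of $k_i,k_{i-1},k_{i-2}$ equal $k$, so $l_i=0$.
\end{itemize}
If $\beta_0\le 2$, the small cases are read off from (i) and (ii) with the same conventions.

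The main obstacle is making sure \eqref{rk_RX_m} is valid for every $i$ in the range used, including $i\le 2$ and $i>\beta_0$. I would check it directly from Lemma \ref{Xm+n}: the basis $S_i(\BB_1^{[0]})\cup\cdots\cup S_i(\BB_i^{[0]})$ contains, for each primitive character of exact order $j\le i$, exactly the $i-j+1$ Frobenius twists $\phi^{*s}\Theta$ with $0\le s\le i-j$.

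A second point to confirm is that the rank of the middle term is exactly $ig$ and that the rows are exact on the right. Both follow from Proposition \ref{Psi_ij} and the definition $\bI_{[0]i}(A)=\mathrm{Im}(\partial)$. Finally, ranks are additive in these sequences because everything is a finitely generated module over the discrete valuation ring $R$, so one may compute ranks after tensoring with $K$.
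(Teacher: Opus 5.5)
Your proof is correct. Parts (i) and (ii) use the same rank count as the paper: the exact rows of \eqref{sys_X_mn} together with $\rk_R\bX_2(A)=2l_1+l_2$.

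For (iii) the routes differ. The paper cites Lemma 7.2 of \cite{BoSa1}, $l_i=h_{i-1}-h_i$ for $i\ge 2$, and rewrites this as $2k_{i-1}-k_{i-2}-k_i$ using $k_i=\sum_{j\le i}h_j$. You instead take second differences of \eqref{rk_RX_m} against $\rk_R\bX_i(A)=ig-k_i$, which needs nothing beyond the inputs already used for (i) and (ii). Your first-difference step gives $l_1+\cdots+l_i=g-h_i$, which in effect reproves that lemma of \cite{BoSa1}. So your argument is uniform in $i$ and self-contained, apart from \eqref{rk_RX_m}, which as you say is just a count of the basis in Lemma \ref{Xm+n}. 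The paper's version is shorter only because it outsources this step.

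One small point on the ``in particular'': for $\beta_0=1$ the identity $l_{\beta_0}=k-k_{\beta_0-2}$ involves $k_{-1}$. It holds only with the convention $k_{-1}=-g$, which is what $\rk_R\bX_{-1}(A)=0$ forces if you extend $\rk_R\bX_i(A)=ig-k_i$ to $i=-1$; in that case $k=0$ and $l_1=g$. Your phrase ``with the same conventions'' should say this explicitly. The paper's proof passes over this case as well, so it is a degenerate edge of the statement rather than a gap in your argument.
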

	\begin{proof}
		Note that from Proposition \ref{Psi_ij},  $\rk_R(\Hom(N^{[0]i}A,\hG)=ig$  for all $i\geq 0$.
		\begin{enumerate}
			
			\item[(i)] Since $\rk_R(\Hom(N^{[0]1}A,\hG)=g$, by  the exact sequence $$
			\xymatrix{
				0\ar[r]&\bX_1(A)\ar[r]&\Hom(N^{[0]1}A,\hG)\ar[r]&\bI_{[0]1}A\ar[r]&0
			}
			$$ we obtain, $$\rk_R(\bX_1(A))=l_1=g-k_1.$$ 
			\item[(ii)]From the following exact sequence
			$$
			\xymatrix{
				0\ar[r]&\bX_2(A)\ar[r]&\Hom(N^{[0]2}A,\hG)\ar[r]&\bI_{[0]2}A\ar[r]&0,
			}
			$$
			we have $\rk_R(\bX_2(A))=2l_1+l_2=2g-k_2.$ Hence
			it follows that
			\begin{align}
				l_2&=2g-k_2-2l_1\\
				&=2g-k_2-2(g-k_1)\\
				&=2k_1-k_2.
			\end{align}
			\item[(iii)] Recall from \cite{BoSa1}, Lemma 7.2, that for all $i\geq 2$, $l_i=h_{i-1}-h_i$. We know that $k_i=\sum_{j=0}^ih_i$, therefore  for all $i\geq 3$, we obtain
			\begin{align}
				2k_{i-1}-k_{i-2}-k_{i}&=2\sum_{j=0}^{i-1}h_j-\sum_{j=0}^{i-2}h_i-\sum_{j=0}^ih_i\\
				&=\left(\sum_{j=0}^{i-2}2h_j-h_i-h_j\right)+2h_{i-1}-h_{i-1}-h_i\\
				&=h_{i-1}-h_i=l_i \textrm{ (From Lemma 7.2 in \cite{BoSa1})}.
			\end{align}
			Hence for all $i\geq 3$, we conclude $$l_i=2k_{i-1}-k_{i-2}-k_i.$$
			For $i=\beta_0$, $$l_{\beta_0}=2k_{\beta_0-1}-k_{\beta_0-2}-k_{\beta_0}$$ and we have $k_i=k$ for all $i\geq \beta_{0}-1$, therefore we obtain $$l_{\beta_0}=k-k_{\beta_0-2}.$$ Finally for all $i\geq \beta_0+1$, we get $l_i=2k-k-k=0$ and we are done. 
		\end{enumerate}
	\end{proof}
	
	\begin{corollary}
		\label{rk_D_m_k}
		For every integer $m \geq 0$, we have
		$$
		\rk_R\bigl(\DD_{[m]}(A)\bigr) = g + k_m.
		$$
		In particular, for all $m \geq \beta_0 - 1$, we obtain
		$$
		g + k = o_1 + o_2 + \cdots + o_g.
		$$
	\end{corollary}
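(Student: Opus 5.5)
Since $\DD_{[m]}(A)=\bXp^{[m]}(A)/u_m^*\bX_m(A)$ and both modules are free, with $u_m^*$ injective, I will compute
$$\rk_R\DD_{[m]}(A)=\rk_R\bXp^{[m]}(A)-\rk_R\bX_m(A).$$
The first term is $(m+1)g$ by Theorem \ref{X^m_prim}. For the second, I use the exact sequence
$$0\to\bX_m(A)\to\Hom(N^{[0]m}A,\hG)\to\bI_{[0]m}(A)\to 0$$
from \eqref{sys_X_mn}, together with $\rk_R\Hom(N^{[0]m}A,\hG)=mg$ from Proposition \ref{Psi_ij}. This gives $\rk_R\bX_m(A)=mg-k_m$, where $k_m=\rk_R\bI_{[0]m}(A)$. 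Subtracting,
$$\rk_R\DD_{[m]}(A)=(m+1)g-mg+k_m=g+k_m.$$
The case $m=0$ should be handled separately as a sanity check: there $\bX_0(A)=0$ and $k_0=0$, so $\rk_R\DD_{[0]}(A)=g$.

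As a cross-check, and to make the argument independent of the sequence if needed, I would also derive $\rk_R\bX_m(A)=mg-k_m$ from \eqref{rk_RX_m}, namely $\rk_R\bX_m(A)=\sum_{i=1}^m(m-i+1)l_i$, together with Proposition \ref{l_ik_i}. Substituting $l_1=g-k_1$, $l_2=2k_1-k_2$ and $l_i=2k_{i-1}-k_{i-2}-k_i$, the sum telescopes. The coefficient of each $k_j$ with $j<m$ is $-(m-j+1)+2(m-j)-(m-j-1)=0$, and the only surviving term is $-k_m$, plus $mg$ from $l_1$. This bookkeeping with the edge coefficients at $j=m-1$ and $j=m$ is the only place that needs care; I expect it to be routine.

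For the second claim, take $m\ge\beta_0-1$. Then $k_m=k$, since the chain $\bI_{[0]n}(A)$ stabilizes from $n=\beta_0-1$ on, as recorded above. So $\rk_R\DD_{[m]}(A)=g+k$. On the other hand, Corollary \ref{rk_D_m}(2) gives $\rk_R\DD_{[m]}(A)=o_1+\cdots+o_g$ for the same range of $m$. Equating the two expressions yields $g+k=o_1+\cdots+o_g$.

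The main point to watch is that the stabilization index used for $k_m=k$ is exactly $\beta_0-1$, matching the range in Corollary \ref{rk_D_m}. If there is doubt at the endpoint $m=\beta_0-1$, I would use any $m\ge\beta_0$ instead, where both facts certainly hold.
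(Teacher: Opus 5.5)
Your proof is correct, but your main route is different from the paper's. The paper computes $\rk_R\bX_m(A)$ through \eqref{rk_RX_m}, as $\sum_{i=1}^m(m-i+1)l_i$. It then substitutes the expressions for the $l_i$ from Proposition \ref{l_ik_i}, which rest on Lemma 7.2 of \cite{BoSa1}, and carries out the telescoping by hand. Finally it invokes Proposition \ref{phi*X0} to carry the rank to all $m\ge\beta_0$ before comparing with Corollary \ref{rk_D_m}.

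You instead read off $\rk_R\bX_m(A)=mg-k_m$ directly from the short exact sequence $0\to\bX_m(A)\to\Hom(N^{[0]m}A,\hG)\to\bI_{[0]m}(A)\to 0$. This sequence is valid because $\bX_0(A)=0$, and rank is additive over the discrete valuation ring $R$. The paper itself uses exactly this sequence in Proposition \ref{l_ik_i}(i),(ii) for $m=1,2$; you simply apply it at every level. That removes the need for the $l_i$ bookkeeping and for Lemma 7.2 of \cite{BoSa1}.

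For the second claim, you note that $k_m=k$ for all $m\ge\beta_0-1$. This follows from the definition of $\beta_0$ via $h_{[0]n}=0$ for $n\ge\beta_0$. It handles the whole range uniformly and makes the appeal to Proposition \ref{phi*X0} redundant.

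What the paper's route gives in exchange is an explicit dictionary between the $l_i$ (the number of primitive characters of each order) and the $k_i$, which has some independent interest. Your telescoping cross-check, with the convention $k_0=0$ and the coefficient count you describe, correctly recovers $\sum_{i=1}^m(m-i+1)l_i=mg-k_m$. So the two computations agree.
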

	\begin{proof}
		Note that from Corollary \ref{rk_D_m}, we have for all $m\geq 0$ 
		\begin{equation}\label{rk_D_beta}
			\DD_{[m]}(A)=\frac{\bXp^{[m]}(A)}{u_m^*\bX_m(A)}.
		\end{equation}
		Hence using Theorem \ref{X^m_prim} and equation (\ref{rk_RX_m}), for all $m\geq 1$, we obtain $$\rk_R(\DD_{[m]}(A))=(m+1)g-ml_1-(m-1)l_2-\cdots-2l_{m-1}-l_m.$$
		For $m=1$ and $m=2$, 
		\begin{align}		
			\rk_R(\DD_{[1]}(A))&=2g-l_1=2g-(g-k_1)=g+k_1\\
			\rk_R(\DD_{[2]}(A))&=3g-2(g-k_1)-(2k_1-k_2)=g+k_2.
		\end{align}
		For $m\geq 3$, we have 
		\begin{align}
			\rk_R(\DD_{[m]}(A))&=(m+1)g-ml_1-(m-1)l_2-\cdots-2l_{m-1}-l_m\\
			&=(m+1)g-m(g-k_1)-(m-1)(2k_1-k_2)\\
			&-\left(\sum_{i=2}^{m-1}(m-i)(2k_i-k_{i-1}-k_{i+1})\right) \text{ (By Proposition \ref{l_ik_i})}\\
			&=g-(m-2)k_1+(m-1)k_2-\left(\sum_{i=2}^{m-1}(m-i)(2k_i-k_{i-1}-k_{i+1})\right)\\
			&=g-(m-2)k_1+(m-1)k_2\\
			&-\left(\sum_{i=2}^{m-1}2(m-i)k_i-\sum_{i=1}^{m-2}(m-(i+1))k_i-\sum_{i=3}^{m}(m-(i-1))k_i\right)\\
			&=g-(m-2)k_1+(m-1)k_2-2(m-2)k_2+(m-2)k_1+(m-3)k_2\\
			&-\left(\sum_{i=3}^{m-2}(2(m-i)-(m-(i+1))-(m-(i-1)))k_i\right)-2k_{m-1}+2k_{m-1}+k_m\\
			&=g+k_m.
		\end{align}
		Therefore, putting $m=\beta_0-1$, we get $\rk_R(\DD_{[\beta_0-1]}(A))=g+k_{\beta_0-1}=g+k$ and from Proposition \ref{phi*X0}, we conclude $\rk_R(\DD_{[m-1]}(A))=\rk_R(\DD_{[m]}(A))$ for all $m\geq \beta_0.$ Hence from Corollary \ref{rk_D_m}, we deduce $\rk_R(\DD_{[m]}(A))=g+k=o_1+\cdots+o_g$ for all $m\geq\beta_0-1.$
	\end{proof}
	
	Recall that we have a set of  primitive characters $\BB=\{\Theta_1,\cdots, \Theta_g\}$, such that $\bXp^{[0]}(A)\simeq R\langle \Theta_1,\cdots\Theta_g\rangle$
	and $o_i$ denotes the exact order of $\Theta_i$ for all $1\leq i\leq g.$
	
	\begin{theorem}\label{rk_H_delta}
		Let $A$ be an abelian scheme of relative dimension $g$ over $S$. For all $m\geq 0$,  we have 
		\begin{enumerate}
			\item
			$\mH^\delta_{[m]}(A)_K\simeq \mH^\delta_{[m+1]}(A)_K$ for all $m\geq 0$.
			\item
			$\rk_R(\mH^\delta_{[m]}(A))=o_1+\cdots+o_g.$
			\item 
			$\rk_R(\bI_{[m]}(A))=k-k_m$, where $k=\rk_{R}(\bI_{[0]}(A))$ and $k_m=\rk_R(\bI_{[0]m}(A))$ for all $m\geq 1$. 
		\end{enumerate}
		
		In particular, for all $m\geq \beta_0-1$, $\bI_{[m]}(A)=\{0\}$ and we have
		$$
		\xymatrix{
			\DD_{m+1}(A)\ar[r]^-\sim & \mH_{[m+1]}^\delta(A)\\
			\DD_{m}(A)\ar[r]^-\sim \ar[u]_-{\rotatebox{90}{$\sim$}}^-{\phi^*} & \mH_{[m]}^\delta(A)\ar[u]^-{\rotatebox{90}{$\sim$}}_-{\Phi_{[m+1]}^*}.
		}
		$$
	\end{theorem}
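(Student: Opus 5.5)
The plan is to prove (1) first, by showing that every transition map $\Phi_{[m+1]}^*$ is injective and not only surjective. Parts (2) and (3) then follow from the short exact sequence \eqref{exact_sequence_Hm_X0} together with Corollary \ref{rk_D_m_k}.

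By Theorem \ref{phm_iso}(i), $\Phi_{[m+1]}^*:\mH^\delta_{[m]}(A)_K\to\mH^\delta_{[m+1]}(A)_K$ is surjective for every $m\geq 0$. By Theorem \ref{phm_iso}(iv) it is an isomorphism for $m+1\geq \beta_0$. Hence
$$\rk_K\mH^\delta_{[0]}(A)_K\geq \rk_K\mH^\delta_{[1]}(A)_K\geq\cdots\geq \rk_K\mH^\delta_{[\beta_0-1]}(A)_K,$$
and the ranks are constant from $m=\beta_0-1$ onward. So (1) is equivalent to comparing the two ends of this chain.

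At the bottom end, Corollary \ref{rk_D_m_k} gives $\rk\DD_{[0]}=g$, and $\rk\bI_{[0]}=k$ by definition. The exact sequence \eqref{exact_sequence_Hm_X0} therefore yields $\rk\mH^\delta_{[0]}=g+k=o_1+\cdots+o_g$.

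At the top end, for $m=\beta_0-1$, Corollary \ref{rk_D_m_k} gives $\rk\DD_{[\beta_0-1]}=g+k$. The inequality $\rk\mH^\delta_{[\beta_0-1]}\ge \rk\DD_{[\beta_0-1]}$ then squeezes the chain: $g+k\geq\rk\mH^\delta_{[m]}\geq g+k$ for all $m\leq\beta_0-1$. All ranks are therefore equal to $o_1+\cdots+o_g$, which is (2). The surjections between $K$-vector spaces of equal finite dimension are isomorphisms, which is (1).

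For (3), I apply \eqref{exact_sequence_Hm_X0} at level $m$. It gives
$$\rk\bI_{[m]}=\rk\mH^\delta_{[m]}-\rk\DD_{[m]}=(g+k)-(g+k_m)=k-k_m,$$
again using Corollary \ref{rk_D_m_k}.

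For the final claim, take $m\geq\beta_0-1$. Then $k_m=k$, since the chain $\bI_{[0]n}$ stabilizes at $n=\beta_0-1$. So $\bI_{[m]}(A)$ has rank $0$. I need it to actually vanish, not merely be torsion. Here I would argue that $\bI_{[m]}(A)$ is a quotient of the free module $\mH^\delta_{[m]}$ by $\DD_{[m]}$, and that it sits inside $\Ext(J^mA,\hG)$. I would check that the latter is torsion-free; for example, the injectivity of multiplication by $\pi$ on $\hG$-extensions follows from the $\Ext^\sharp$ sequence \eqref{Hodge} with $\Lie^\vee$ free. If that check works, rank $0$ forces $\bI_{[m]}(A)=0$.

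Consequently $\iota_m^*:\DD_{[m]}\to\mH^\delta_{[m]}$ is an isomorphism. Commutativity of the square comes from Corollary \ref{comm_phi*1}. The vertical maps are isomorphisms by Proposition \ref{phi*X0} and by (1).

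The main obstacle is the consistency of ranks in (3) for $1\leq m<\beta_0-1$. That step needs Corollary \ref{rk_D_m_k} for all $m$, whose derivation depends on Proposition \ref{l_ik_i}. I would double-check the boundary indices there, namely that $k_{\beta_0-1}=k$ and that $l_i=0$ for $i>\beta_0$. The torsion-freeness needed to promote rank $0$ to the vanishing $\bI_{[m]}=0$ is a secondary point; over $K$ the statement holds regardless.
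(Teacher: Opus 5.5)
Your proof of (1)--(3) is the paper's squeeze argument. The surjections $\Phi_{[m]}^*$ of Theorem \ref{phm_iso}(i) make $\dim_K\mH^\delta_{[m]}(A)_K$ non-increasing, this dimension is $g+k$ at $m=0$, and the subspace $\DD_{[\beta_0-1]}(A)_K$ of dimension $g+k$ (Corollary \ref{rk_D_m_k}) forces equality throughout. Part (3) is then read off from \eqref{exact_sequence_Hm_X0}. You obtain $\bI_{[m]}(A)_K=0$ for each $m\geq\beta_0-1$ directly from (3), whereas the paper treats $m=\beta_0-1$ and then propagates upward by surjectivity of $\phi^*$. That difference is cosmetic.

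The step that would fail is your proposed repair of the torsion issue. $\Ext(J^mA,\hG)$ is not torsion-free for $m\geq 1$, and in fact the integral vanishing of $\bI_{[m]}(A)$ is false.
\begin{itemize}
\item By \eqref{JnNm}, $\iota_1^*\phi^*\Theta=\Phi_{[1]}^*\iota_0^*\Theta$.
\item By Corollary \ref{phm-pi-com}, the image of $\Phi_{[1]}^*$ is $\pi\Hom(N^{[1]n}A,\hG)$.
\item Take $A$ an elliptic curve with canonical lift. Then $\beta_0=1$, $\bX_{1+n}(A)=R\Theta_1\oplus\phi^*\bX_n(A)$ by Lemma \ref{Xm+n}, and $\iota_1^*\Theta_1=0$.
\item Hence $\iota_1^*\bX_{1+n}(A)\subseteq\pi\Hom(N^{[1]n}A,\hG)$.
\item So $\bI_{[1]n}(A)\cong\Hom(N^{[1]n}A,\hG)/\iota_1^*\bX_{1+n}(A)$ surjects onto $\Hom(N^{[1]n}A,\hG)\otimes_R R/\pi\neq 0$, using Proposition \ref{Psi_ij}.
\item Combined with $\bI_{[1]}(A)_K=0$, this makes $\bI_{[1]}(A)$ a nonzero torsion submodule of $\Ext(J^1A,\hG)$, even though $1\geq\beta_0-1$.
\end{itemize}

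The same divisibility holds for every $A$, since $\iota_1^*\phi^{*2}\bX_{n-1}(A)\subseteq\iota_1^*\phi^*\bX_n(A)\subseteq\pi\Hom(N^{[1]n}A,\hG)$. It gives $\mH^\delta_{[1]}(A)\otimes_R R/\pi\cong\varinjlim_n\Hom(N^{[1]n}A,\hG)\otimes_R R/\pi$, which is infinite-dimensional. So $\mH^\delta_{[1]}(A)$ is not even finitely generated, and ``$\rk_R$'' in (2) must be read as $\dim_K$ after $\otimes_R K$.

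The correct endpoint is your fallback: $\bI_{[m]}(A)_K=0$ for $m\geq\beta_0-1$, and the final square is a square of isomorphisms of $K$-modules. That is also all the paper's argument establishes. Its passage from rank $0$ to $\bI_{[m]}(A)=\{0\}$ tacitly assumes the same torsion-freeness you were trying to prove.
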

	\begin{proof}
		Let us recall the following diagram of $R$-modules:
		\begin{equation}\label{sys_isocrys}
			\xymatrix{
				0\ar[r]& \DD_{[\beta_0]}(A)\ar[r]^-{\iota_{\beta_0}^*}& \mH_{[\beta_0]}^\delta(A)\ar[r] & \bI_{[\beta_0]}(A)\ar[r]& 0\\
				0\ar[r]& \DD_{[\beta_0-1]}(A)\ar[r]^-{\iota_{\beta_0-1}^*}\ar[u]_{\phi^*}& \mH_{[\beta_0-1]}^\delta(A)\ar[r]\ar[u] _{\Phi_{[\beta_0]}^*}& \bI_{[\beta_0-1]}(A)\ar[r]\ar[u]_{\phi^*}& 0\\
				& \vdots\ar[u] & \vdots\ar[u] & \vdots\ar[u] &\\
				0\ar[r]& \DD_{[1]}(A)\ar[r]^-{\iota_1^*}\ar[u]_{\phi^*}& \mH_{[1]}^\delta(A)\ar[r]\ar[u] _{\Phi_{[2]}^*}& \bI_{[1]}(A)\ar[r]\ar[u]_{\phi^*}& 0\\
				0\ar[r]& \DD_{[0]}(A)\ar[r]^-{\iota_0^*}\ar[u]_{\phi^*}& \mH_{[0]}^\delta(A)\ar[r]\ar[u] _{\Phi_{[1]}^*}& \bI_{[0]}(A)\ar[r]\ar[u]_{\phi^*}& 0.	}
		\end{equation}
		From Corollary \ref{rk_D_m} and Corollary \ref{rk_D_m_k}, we obtain $\rk_R(\DD_{[\beta_0-1]}(A))=\rk_R(\DD_{[\beta_0]}(A))=o_1+\cdots+o_g=g+k$ and hence $\rk_R(\mH_{[\beta_0-1]}^\delta(A))\geq g+k$. We know that $\rk_R(\DD_{[0]}(A))=\rk_R(\bXp^{[0]}(A))=g$ and hence $\rk_R(\mH_{[0]}^\delta(A))=\rk_K(\mH_{[0]}^\delta(A)_K)=g+k$. From Theorem \ref{phm_iso} (i), $\phm^*$ is a surjection for any $m\geq 1.$ This implies $\rk_K(\mH_{[\beta_0-1]}^\delta(A)_K)\leq g+k$. Hence,
		$$\rk_K(\mH_{[\beta_0-1]}^\delta(A)_K)=g+k=o_1+\cdots+o_g=\rk_K(\mH_{[0]}^\delta(A)_K).$$ 
		Therefore by Theorem \ref{phm_iso} (i) and (iv), we obtain for all $m\geq 0$,
		$$\rk_K(\mH_{[m]}^\delta(A)_K)=g+k=o_1+\cdots+o_g.$$ 
		Since $\mH_{[m]}^\delta(A)$ is a free $R$-module, for all $m\geq 0$, we have
		$$\rk_R(\mH_{[m]}^\delta(A))=g+k=o_1+\cdots+o_g,$$
		and consequently for all $m\geq 0$, we obtain
		$$\mH_{[m]}^\delta(A)_K\simeq \mH_{[m+1]}^\delta(A)_K.$$
		Note that for all $m\geq 0$, one has \begin{align}
			\rk_R(\bI_{[m]}(A))&=\rk_R(\mH_{[m]}^\delta(A))-\rk_R(\DD_{[m]}(A))\\
			&=(g+k)-(g+k_m)=k-k_m.
		\end{align}
		In particular, for $m=\beta_0-1,$ we have $\rk_R(\bI_{[\beta_0-1)]}(A))=k-k_{\beta_0-1}=k-k=0.$ Therefore, $\bI_{[m]}(A)=\{0\}$ for all $m\geq \beta_0-1$ as $\phi^*:\bI_{[m-1]}(A)_K\rightarrow \bI_{[m]}(A)_K$ is a surjection by Theorem \ref{phm_iso} (ii) and this gives us the commutative diagram
		$$
		\xymatrix{
			\DD_{[m+1]}(A)\ar[r]^-\sim & \mH_{[m+1]}^\delta(A)\\
			\DD_{[m]}(A)\ar[r]^-\sim \ar[u]^-{\rotatebox{90}{$\sim$}}_-{\phi^*} & \mH_{[m]}^\delta(A)\ar[u]^-{\rotatebox{90}{$\sim$}}_-{\Phi_{[m+1]}^*}
		}
		$$
		for all $m\geq \beta_0-1.$ 
	\end{proof}

	\begin{corollary}
		For all $m\geq 0$, we have $$\mH_{[m]}^\delta(A)_K\simeq K\langle \phi^{*(\beta_0-o_1)}\Theta_1,\cdots,\phi^{*\beta_0}\Theta_1,\cdots,\Theta_g,\cdots,\phi^{*\beta_0-1}\Theta_g \rangle.$$
	\end{corollary}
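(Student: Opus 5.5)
The plan is to reduce everything to a single stable level. By Theorem \ref{rk_H_delta}(1) all the $\mH^\delta_{[m]}(A)_K$ are isomorphic through the maps $\Phi^*_{[m+1]}$, so it suffices to exhibit the stated basis in one fixed level. I would take $m=\beta_0-1$. There Theorem \ref{rk_H_delta} gives $\bI_{[\beta_0-1]}(A)=0$, hence $\iota^*_{\beta_0-1}\colon \DD_{[\beta_0-1]}(A)_K\to \mH^\delta_{[\beta_0-1]}(A)_K$ is an isomorphism. The problem therefore becomes writing down a $K$-basis of $\DD_{[\beta_0-1]}(A)_K$.

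For that basis I would use the explicit description from the proof of Corollary \ref{rk_D_m}. That computation shows that $\DD_{[\beta_0-1]}(A)$ is freely generated by the classes $[\phi^{*(k)}\Theta_j]$ with $\beta_0-o_j\le k\le \beta_0-1$. For $j=g$ we have $o_g=\beta_0$, so this range starts at $k=0$, which is the block $\Theta_g,\dots,\phi^{*(\beta_0-1)}\Theta_g$ in the statement. For $j=1$ the range starts at $\phi^{*(\beta_0-o_1)}\Theta_1$. The total count is $\sum_j o_j$, in agreement with the rank computed in Theorem \ref{rk_H_delta}(2).

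The statement's upper endpoint $\phi^{*\beta_0}\Theta_1$ for the first block is one step beyond this range, which suggests a possible index shift. I would reconcile it in one of two ways. If it is a typographical slip, the exponents for every $j$ should run from $\beta_0-o_j$ to $\beta_0-1$. Alternatively, I would transport the basis to level $m=\beta_0$ via the isomorphism $\phi^*\colon\DD_{[\beta_0-1]}(A)\to\DD_{[\beta_0]}(A)$ of Proposition \ref{phi*X0}, which raises every exponent by one. In either case the count $o_1+\cdots+o_g$ is unchanged.

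The main obstacle is verifying the basis claim inside Corollary \ref{rk_D_m}. This is the assertion that $u^*_{\beta_0-1}\bX_{\beta_0-1}(A)$ is spanned by the $\phi^{*(k)}\Theta_j$ with $k+o_j\le\beta_0-1$, and that these elements form part of the free basis of $\bXp^{[\beta_0-1]}(A)$ given in Theorem \ref{X^m_prim}. I would derive this from Lemma \ref{Xm+n}: $\bX_n(A)$ is freely generated by the iterates $\phi^{*(k)}\Theta_j$ of order at most $n$. Combined with Theorem \ref{X^m_prim}, this makes the quotient free on the complementary classes, and tensoring with $K$ completes the argument.
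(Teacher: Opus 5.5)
Your proposal is correct and is essentially the paper's own argument, which likewise combines Theorem \ref{rk_H_delta} (all levels are isomorphic, and since $\bI_{[\beta_0-1]}(A)_K=0$, the map $\iota^*_{\beta_0-1}$ identifies $\DD_{[\beta_0-1]}(A)_K$ with $\mH^\delta_{[\beta_0-1]}(A)_K$) with the explicit basis of Corollary \ref{rk_D_m}. Reading the index as a typo is right, since the paper's own later display of $\DD_{[\beta_0-1]}(A)_K$ ends the $\Theta_1$-block at $\phi^{*(\beta_0-1)}\Theta_1$; your alternative of transporting to level $\beta_0$ by $\phi^*$ would not reproduce the statement, because it also shifts the $\Theta_g$-block to start at $\phi^*\Theta_g$.
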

	\begin{proof}
		The proof follows directly from Theorem \ref{rk_H_delta} and Corollary \ref{rk_D_m}.
	\end{proof}
	
	
	\begin{proposition}\label{delta_prim}
		For any abelian scheme $A$ of relative dimension $g$ over $S$, we obtain 
		\begin{equation}
			\mH_{[\beta_0]}^\delta(A)\simeq \sum_{k=0}^{\beta_0}\phi^{* (k)}\bXp^{[0]}(A).
		\end{equation}
		Moreover, tensoring with $K$, for any $m\geq0$, we get $$\mH_{[m]}^\delta(A)_K\simeq  \sum_{k=0}^{\beta_0-1}\iota_k^*\phi^{* (k)}\bXp^{[0]}(A)_K.$$
	\end{proposition}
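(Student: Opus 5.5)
The plan is to combine the stabilization statement of Theorem \ref{rk_H_delta} with the explicit description of $\bXp^{[m]}(A)$ in Corollary \ref{phiXprim}.

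\textbf{Integral statement.} Since $\beta_0\geq \beta_0-1$, Theorem \ref{rk_H_delta} gives $\bI_{[\beta_0]}(A)=\{0\}$. The exact sequence (\ref{exact_sequence_Hm_X0}) then identifies $\mH_{[\beta_0]}^\delta(A)$ with $\DD_{[\beta_0]}(A)=\bXp^{[\beta_0]}(A)/u_{\beta_0}^*\bX_{\beta_0}(A)$, via $\iota_{\beta_0}^*$. By Corollary \ref{phiXprim}, $\bXp^{[\beta_0]}(A)=\sum_{k=0}^{\beta_0}\phi^{*(k)}\bXp^{[0]}(A)$. Hence $\mH_{[\beta_0]}^\delta(A)$ is the image of $\sum_{k=0}^{\beta_0}\phi^{*(k)}\bXp^{[0]}(A)$ under $\iota_{\beta_0}^*$, which is the asserted isomorphism, read as equality of images.

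\textbf{Rational statement: reduction to $m=\beta_0-1$.} By Theorem \ref{rk_H_delta}(1), all $\mH_{[m]}^\delta(A)_K$ are isomorphic through the maps $\Phi^*_{[m]}$, so it suffices to treat $m=\beta_0-1$. Here again $\bI_{[\beta_0-1]}(A)=0$, so
\[
\mH_{[\beta_0-1]}^\delta(A)_K\simeq \DD_{[\beta_0-1]}(A)_K,
\]
and this is spanned by the classes of $\phi^{*(k)}\Theta_j$ for $0\le k\le \beta_0-1$, by Corollary \ref{phiXprim}.

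\textbf{Matching the $\iota_k^*$ in the formula.} To see why each summand carries its own $\iota_k^*$, I would transport the level-$k$ piece through the chain of transition maps. By Corollary \ref{comm_phi*1}, $\Phi^*_{[k+1]}\circ\iota_k^*=\iota_{k+1}^*\circ\phi^*$ on $\DD$. Iterating this, the class $\iota_k^*\phi^{*(k)}\bXp^{[0]}(A)_K$ in $\mH_{[k]}^\delta(A)_K$ maps under $\Phi^*_{[\beta_0-1]}\circ\cdots\circ\Phi^*_{[k+1]}$ to $\iota_{\beta_0-1}^*\phi^{*(\beta_0-1)}\bXp^{[0]}(A)_K$. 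Using the isomorphisms of Theorem \ref{rk_H_delta}(1), every $\mH_{[k]}^\delta(A)_K$ is identified with the single space $\mH_{[m]}^\delta(A)_K$. Under this identification, $\sum_k\iota_k^*\phi^{*(k)}\bXp^{[0]}(A)_K$ corresponds to the image of $\sum_{k\le\beta_0-1}\phi^{*(k)}\bXp^{[0]}(A)_K$, which is all of $\DD_{[\beta_0-1]}(A)_K$.

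\textbf{Expected obstacle.} The hard part is the bookkeeping of the identifications: the summands $\iota_k^*\phi^{*(k)}\bXp^{[0]}(A)_K$ live a priori in different levels $\mH_{[k]}^\delta$. The key step is to check via Corollary \ref{comm_phi*1} that these identifications are compatible, so that the sum is well defined. Spanning then follows from Corollary \ref{phiXprim}. Directness of the sum is not claimed, so no rank count is needed; the rank $o_1+\cdots+o_g$ from Theorem \ref{rk_H_delta} is consistent with the sum being non-direct.
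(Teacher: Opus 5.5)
Your integral argument is the same as the paper's. Theorem~\ref{rk_H_delta} gives $\bI_{[\beta_0]}(A)=0$, so $\iota_{\beta_0}^*$ identifies $\DD_{[\beta_0]}(A)$ with $\mH^\delta_{[\beta_0]}(A)$, and Corollary~\ref{phiXprim} describes $\bXp^{[\beta_0]}(A)$. Reading the isomorphism as an equality of images under $\iota_{\beta_0}^*$ is the only possible reading: $\bXp^{[\beta_0]}(A)$ has rank $(\beta_0+1)g$, while $\mH^\delta_{[\beta_0]}(A)$ has rank $o_1+\cdots+o_g\le\beta_0 g$. Your reduction to $m=\beta_0-1$ is also correct, as is the description $\mH^\delta_{[\beta_0-1]}(A)_K=\sum_{k=0}^{\beta_0-1}\iota^*_{\beta_0-1}\phi^{*(k)}\bXp^{[0]}(A)_K$.

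The genuine gap is in your ``matching'' step. Your computation with Corollary~\ref{comm_phi*1} shows that $\Phi^*_{[\beta_0-1]}\circ\cdots\circ\Phi^*_{[k+1]}$ sends $\iota_k^*\phi^{*(k)}\bXp^{[0]}(A)_K$ to $\iota^*_{\beta_0-1}\phi^{*(\beta_0-1)}\bXp^{[0]}(A)_K$ for \emph{every} $k$. It does not send it to $\iota^*_{\beta_0-1}\phi^{*(k)}\bXp^{[0]}(A)_K$.

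So under this identification all the summands collapse onto one subspace. That subspace is spanned by the images under $\iota^*_{\beta_0-1}$ of the $g$ basis vectors $[\phi^{*(\beta_0-1)}\Theta_j]$ of $\DD_{[\beta_0-1]}(A)$ from Corollary~\ref{rk_D_m}, so it has dimension $g$. But $\dim_K\mH^\delta_{[\beta_0-1]}(A)_K=o_1+\cdots+o_g>g$ as soon as $\beta_0\ge 2$. Your next sentence, that the transported sum is all of $\DD_{[\beta_0-1]}(A)_K$, therefore does not follow and is false in general. For a non-CL elliptic curve with primitive character $\Theta$ of order $2$, you get the line $K\,\iota_1^*\phi^*\Theta$ inside the plane spanned by $\iota_1^*\Theta$ and $\iota_1^*\phi^*\Theta$. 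The rank count you set aside is exactly what detects this.

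What your steps actually establish is the single-level description with one $\iota^*_{\beta_0-1}$, transported as a whole to every level by the isomorphisms $\Phi^*_{[m]}$. The level-dependent $\iota_k^*$ cannot be produced by these identifications.

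The paper's own proof does not repair this either. It appeals only to the abstract isomorphisms $\mH^\delta_{[m]}(A)_K\simeq\mH^\delta_{[m+1]}(A)_K$. It then discards the $k=\beta_0$ summand using $\phi^*\colon\DD_{[\beta_0-1]}(A)_K\simeq\DD_{[\beta_0]}(A)_K$. That is precisely the transport which, applied consistently, identifies all consecutive summands.

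If you want a genuinely $k$-indexed spanning family inside one fixed level, the correct statement is Theorem~\ref{delta_prim_final}. There the $k$-dependence is carried by the lateral Frobenius $\mathfrak{f}_0^{*(k)}$ rather than by $\iota_k^*$.
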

	\begin{proof}
		
		Note that from Corollary \ref{phiXprim}, we obtain $$\bXp^{[\beta_0]}(A)= \sum_{k=0}^{\beta_0}\phi^{* (k)}\bXp^{[0]}(A)$$ and hence $$\DD_{[\beta_0]}(A)=\frac{\bXp^{[\beta_0]}(A)}{u_{\beta_0}^*X_{\beta_0(A)}}=\sum_{k=0}^{\beta_0}\frac{\phi^{* (k)}\bXp^{[0]}(A)}{u_{\beta_0}^*\bX_{\beta_0(A)}}.$$
		Theorem \ref{rk_H_delta} gives us that $\iota_{\beta_0}^*$ induces an isomorphism $$\DD_{[\beta_0]}(A)\simeq \mH_{[\beta_0]}^\delta(A)$$ and that implies $$\mH_{[\beta_0]}^\delta(A)= \sum_{k=0}^{\beta_0}\frac{\iota_{\beta_0}^*\phi^{*(k)}\bXp^{[0]}(A)}{\iota_{\beta_0}^*u_{\beta_0}^*X_{\beta_0(A)}}=\sum_{k=0}^{\beta_0}\iota_{\beta_0}^*\phi^{* (k)}\bXp^{[0]}(A).$$ Since $\iota_{\beta_0}^*$ is an isomorphism, this yields $$\mH_{[\beta_0]}^\delta(A)\simeq \sum_{k=0}^{\beta_0}\phi^{*(k) }\bXp^{[0]}(A).$$
		Tensoring both sides with $K$ and using Theorem \ref{rk_H_delta}, we conclude for any $m\geq 0$,
		$$\mH_{[m]}^\delta(A)_K\simeq \sum_{k=0}^{\beta_0}\phi^{* (k)}\bXp^{[0]}(A)_K.$$	
		Note that $\iota_k^*:\phi^{*(k)}\bXp^{[0]}(A)\subset \DD_{[k]}(A)_K\hookrightarrow \mH_{[k]}^\delta (A)_K$ is an injection and from Theorem \ref{rk_H_delta} we know that $\mH_{[m]}^\delta(A)_K\simeq \mH_{[m+1]}^\delta(A)_K$ for all $m\geq 0.$ Hence for all $m\geq 0$, it follows that 
		\begin{equation}\label{mH_phi*}
			\mH_{[m]}^\delta(A)_K\simeq \sum_{k=0}^{\beta_0}\iota_k^*\phi^{* (k)}\bXp^{[0]}(A)_K.
		\end{equation}
		Recall that from Theorem \ref{phm_iso}, we also know that $\phi^*:\DD_{[\beta_0-1]}(A)_K\rightarrow \DD_{[\beta_0]}(A)_K$ is an isomorphism and hence that implies $\phi^{*(\beta_0-1)}\bXp^{[0]}(A)_K\simeq \phi^{*(\beta_0)}\bXp^{[0]}(A)_K$, therefore (\ref{mH_phi*}) can be rewritten as 
		\begin{equation}\label{mH_phi*-1}
			\mH_{[m]}^\delta(A)_K\simeq \sum_{k=0}^{\beta_0-1}\iota_k^*\phi^{* (k)}\bXp^{[0]}(A)_K.
		\end{equation}
	\end{proof}
	
	Recall again that we have fixed a set $\BB=\{\Theta_1,\cdots,\Theta_g\}$ such that $\bXp^{[0]}(A)_K\simeq K\langle \Theta_1,\cdots,\Theta_g \rangle$ and $o_i$ denotes the exact order of $\Theta_i$. Note that for all $i\in\{1,\cdots,g\}$, if we consider $[\Theta_i]$ to be the class of $\Theta_i$ in $\bXp^{[0]}(A)$, then we can write
	$$\bXp^{[0]}(A)_K=K\langle [\Theta_1],\cdots,[\Theta_g] \rangle.$$ Hence from Corollary \ref{rk_D_m}, we have  $$\DD_{[\beta_0-1]}(A)_K=K\langle[\phi^{*(\beta_0-o_1)}\Theta_1],\cdots, [\phi^{*(\beta_0-1)}\Theta_1],\cdots,[\Theta_g],\cdots.[\phi^{*(\beta_0-1)}\Theta_g]\rangle,$$ where for all $1\leq j\leq g$ and $\beta_0-o_j\leq k_j\leq \beta_0-1$, we denote $[\phi^{*(k_j)}\Theta_j]$ the class of $\phi^{*(k_j)}\Theta_j$ in $\DD_{[\beta_0-1]}(A)_K$  and hence this implies  $$\mH^\delta_{[\beta_0-1]}(A)_K=K\langle\iota_{\beta_0-1}^*[\phi^{*(\beta_0-o_1)}\Theta_1],\cdots, \iota_{\beta_0-1}^*[\phi^{*(\beta_0-1)}\Theta_1],\cdots,\iota_{\beta_0-1}^*[\Theta_g],\cdots\iota_{\beta_0-1}^*[\phi^{*(\beta_0-1)}\Theta_g]\rangle.$$ 
	Let us denote for all $k\geq 1$, $$\Phi_{(k)}^*:=\Phi_{[k]}^*\circ\cdots\circ \Phi_{[1]}^*:\mH_{[0]}^\delta(A)_K\xrightarrow{\sim}\mH_{[k]}^\delta(A)_K.$$
	Recall form Proposition \ref{Psi_ij}, for all $1\leq i\le n$ and $1\leq j\leq g$, $\Hom(N^{[m]n}A,\hG)$ has $\{\Psi_{ij}^{[m]}\}$ as an $R$-basis, where $\Psi_{ij}^{[m]}$ is defined as in  (\ref{Psiijm}) and we denote $\Psi_i^{[m]}=(\Psi_{i1}^{[m]},\cdots,\Psi_{ig}^{[m]}).$

\begin{lemma}\label{Psi_Gamma}
Assume $R= W(\kappa)$.
		For all $k\geq 1,$ in $\mH_{[0]}^\delta(A)_K$, we have $$
		[\Psi_k^{[0]}]=\left(-{(\Gamma^{[0]})}^{\phi^{(k-1)}}\right)^{-1}\begin{pmatrix}
			\mff_0^{*(k)}\iota_0^*[\Theta_1]\\
			\vdots\\
			\mff_0^{*(k)}\iota_0^*[\Theta_g]
		\end{pmatrix}
		$$
		where the matrix $\Gamma^{[0]}$ is defined as in (\ref{gamma_mat}) and ${(\Gamma^{[0]})}^{\phi^{(k-1)}}$ denotes the matrix obtained by $\phi$-twisting each entry of $\Gamma^{[0]}$, $k-1$ times. In particular, for all $k\geq 1$, we have $$[\Psi_k^{[0]}]\in\mff_0^{*(k)}\iota_0^*\bXp^{[0]}(A)_K\subseteq \mH_{[0]}^\delta(A)_K.$$
	\end{lemma}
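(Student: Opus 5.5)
The plan is to derive the case $k=1$ directly from Proposition \ref{Char_n} with $m=0$, and then to obtain general $k$ by applying the semilinear operator $\mff_0^*$ repeatedly.

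\emph{Base case.} For each primitive character $\Theta_i$, with derivative $D\Theta_i=(A_{i0},\dots)$ at the identity, Proposition \ref{Char_n} with $m=0$ gives
$$\iota_0^*\phi^*\Theta_i=\mff_0^*\iota_0^*\Theta_i+\gamma_i^{[0]}.\Psi_1^{[0]},\qquad \gamma_i^{[0]}=\pi A_{i0}.$$
By definition, $\mH_{[0]}^\delta(A)$ is the limit of the quotients $\Hom(N^{[0]n}A,\hG)/\iota_0^*\phi^{*}\bX_{n-1}(A)_\phi$. Hence the left-hand side vanishes in $\mH_{[0]}^\delta(A)$. Moreover $\mff_0^*$ descends to $\mH_{[0]}^\delta(A)$ by the discussion in Section \ref{Transition_maps}. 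Stacking the $g$ resulting identities into a column gives, in $\mH^\delta_{[0]}(A)_K$,
$$\begin{pmatrix}\mff_0^{*}\iota_0^*[\Theta_1]\\ \vdots\\ \mff_0^{*}\iota_0^*[\Theta_g]\end{pmatrix}=-\Gamma^{[0]}\,[\Psi_1^{[0]}].$$
Here I take $\Gamma^{[0]}$ to be the matrix whose rows are the $\gamma_i^{[0]}$, exactly as in (\ref{gamma_mat}).

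\emph{Induction.} Apply $\mff_0^{*(k-1)}$ to this identity and use two facts. First, $\mff_0^*$ is $\phi$-semilinear, so $\mff_0^*(c.\Psi)=c^{\phi}.\mff_0^*\Psi$ for $c\in R^g$. Second, $\mff_0^*\Psi_{kj}^{[0]}=\Psi_{(k+1)j}^{[0]}$, which holds directly from the definition (\ref{Psiijm}) and was already noted in the proof of Theorem \ref{isom_phi}. Together these give
$$\begin{pmatrix}\mff_0^{*(k)}\iota_0^*[\Theta_1]\\ \vdots\\ \mff_0^{*(k)}\iota_0^*[\Theta_g]\end{pmatrix}=-(\Gamma^{[0]})^{\phi^{(k-1)}}[\Psi_k^{[0]}].$$

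\emph{Conclusion.} It remains to invert the matrix. The proof of Theorem \ref{non-deg} (via equation (6.10) of \cite{Delta-crys}, which uses $R=W(\kappa)$) shows that $\Gamma^{[0]}$ is invertible over $K$. Since $\phi$ is an automorphism of $R$ by Lemma \ref{phi_bij}, and hence of $K$, we get $\det\bigl((\Gamma^{[0]})^{\phi^{(k-1)}}\bigr)=\phi^{k-1}(\det\Gamma^{[0]})\neq0$. Multiplying by the inverse yields the displayed formula for $[\Psi_k^{[0]}]$. The final inclusion is then immediate: each $[\Psi_{kj}^{[0]}]$ is a $K$-linear combination of the elements $\mff_0^{*(k)}\iota_0^*[\Theta_i]$. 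These lie in $\mff_0^{*(k)}\iota_0^*\bXp^{[0]}(A)_K$, which is a $K$-subspace because $\mff_0^{*(k)}$ is $\phi^k$-semilinear and $\phi$ is bijective on $K$.

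The main obstacle is bookkeeping, namely ensuring that the $\Gamma^{[0]}$ used here coincides with the one in (\ref{gamma_mat}). That matrix was defined using $\tTheta_i^{[0]}=\phi^{*(\beta_0-o_i)}\Theta_i$ rather than $\Theta_i$. For $o_i<\beta_0$, the element $\tTheta_i^{[0]}$ lies in $\phi^*\bX_\infty(A)$, so its class in $\mH^\delta_{[0]}(A)$ vanishes and its $A_{i0}$-component is zero by Proposition \ref{Dphi_k}. I would therefore interpret $\Gamma^{[0]}$ with rows $\pi A_{i0}$ computed from the $\Theta_i$ themselves. I would then check that the invertibility statement imported from \cite{Delta-crys} applies to this matrix; it is the matrix of the connecting data on the primitive basis, and this is how Theorem \ref{non-deg} uses it. I would also make sure the computation is carried out at a level $n$ large enough, say $n\ge\beta_0+k$, so that all the $\Psi_k^{[0]}$ live in $\Hom(N^{[0]n}A,\hG)$ before passing to the limit.
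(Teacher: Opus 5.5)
Your proof is correct and follows the paper's argument: apply Proposition \ref{Char_n} with $m=0$ so that $\iota_0^*\phi^*\Theta_j$ dies in $\mH^\delta_{[0]}(A)$, iterate $\mff_0^*$ using semilinearity and $\mff_0^*\Psi_k^{[0]}=\Psi_{k+1}^{[0]}$, and invert $\Gamma^{[0]}$ via \cite{Delta-crys}. Your bookkeeping worry is justified. With $\tTheta_i^{[0]}=\phi^{*(\beta_0-o_i)}\Theta_i$, the matrix in (\ref{gamma_mat}) has zero rows whenever $o_i<\beta_0$, and the paper's proof tacitly uses the rows $\pi A_{j0}$ coming from $D\Theta_j$, exactly as you propose; for that matrix, invertibility over $K$ is precisely the statement that $\Upsilon$ is an isomorphism, since $\Upsilon_{[0]}([\Theta_j])=-D\Theta_j\circ v_{0,n}=-A_{j0}$ by Theorem \ref{map_of_seq}.
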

	\begin{proof}
		
		Note that from Proposition \ref{Char_n}, we obtain for all $j\in\{1,\cdots,g\}$ 
		$$
		\mff_0^*\iota_0^*\Theta_j=\iota_0^*\phi^*\Theta_j-\gamma_j^{[0]}.\Psi_1^{[0]}\in\Hom(N^{[0]o_j+1}A,\hG)
		$$
		where $\gamma_j^{[0]}$ is as defined in (\ref{i_mphi_m}). Therefore in $\mH_{[0]}^\delta(A)_K$ we obtain
		$$[\mff_0^*\iota_0^*\Theta_j]=\mff_0^*\iota_0^*[\Theta_j]=-\gamma_j^{[0]}.[\Psi_1^{[0]}].$$ Hence for all $k\geq 1$, we have $$
		\mff_0^{*(k)}\iota_0^*[\Theta_j]=\mff_0^{*(k-1)}(\mff_0^*\iota_0^*[\Theta_j])=-(\gamma_j^{[0]})^{\phi^{k-1}}[\Psi_{k}^{[0]}].
		$$ This implies 
		$$
		\begin{pmatrix}
			\mff_0^{*(k)}\iota_0^*[\Theta_1]\\
			\vdots\\
			\mff_0^{*(k)}\iota_0^*[\Theta_g]
		\end{pmatrix}=-\begin{pmatrix}
			\left(\gamma_1^{[0]}\right)^{\phi^{k-1}}\\
			\vdots \\
			\left(\gamma_g^{[0]}\right)^{\phi^{k-1}}
		\end{pmatrix}[\Psi_k^{[0]}]=-\begin{pmatrix}
			\left(\gamma_1^{[0]}\right)\\
			\vdots \\
			\left(\gamma_g^{[0]}\right)
		\end{pmatrix}^{\phi^{k-1}}[\Psi_k^{[0]}]=-{(\Gamma^{[0]})}^{\phi^{(k-1)}}[\Psi_k^{[0]}].
		$$ We have $\Gamma^{[0]}$ to be invertible over $K$ from Theorem 5.2 in \cite{Delta-crys}. Hence for all $k\geq 1$,    ${(\Gamma^{[0]})}^{\phi^{(k-1)}}$ is also invertible and consequently we have $$
		[\Psi_k^{[0]}]=\left(-{(\Gamma^{[0]})}^{\phi^{(k-1)}}\right)^{-1}\begin{pmatrix}
			\mff_0^{*(k)}\iota_0^*[\Theta_1]\\
			\vdots\\
			\mff_0^{*(k)}\iota_0^*[\Theta_g]
		\end{pmatrix}
		$$ and we are done.
	\end{proof}	
	
\begin{theorem}
\label{delta_prim_final} 
Assume $R = W(\kappa)$.
For any abelian scheme $A$ of relative dimension $g$ over $S$, we have
		$$\mH_{[0]}^\delta(A)_K=
		\sum_{k\geq 0}\mff_0^{*(k)}\iota_0^*\bXp^{[0]}(A)_K.$$
	\end{theorem}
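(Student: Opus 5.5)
The inclusion $\supseteq$ is immediate. Indeed, $\iota_0^*\bXp^{[0]}(A)_K=\DD_{[0]}(A)_K\subseteq \mH_{[0]}^\delta(A)_K$ by the exact sequence \eqref{exact_sequence_Hm_X0}. Moreover, $\mH_{[0]}^\delta(A)_K$ is stable under the semilinear operator $\mff_0^*$ by \eqref{Semilin_end}. So the content of the theorem is the reverse inclusion: every class in $\mH_{[0]}^\delta(A)_K$ is a $K$-linear combination of elements $\mff_0^{*(k)}\iota_0^*[\Theta_j]$, with $k\geq 0$ and $1\leq j\leq g$.

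For the reverse inclusion we describe $\mH_{[0]}^\delta(A)_K$ through generators coming from the $\Psi$'s. By Proposition \ref{H_m^delta}, $\mH_{[0]}^\delta(A)_K\simeq \mH^\delta_{[0]n}(A)_K$ for any $n\geq\beta_0$. By definition, $\mH^\delta_{[0]n}(A)_K$ is a quotient of $\Hom(N^{[0]n}A,\hG)_K$. By Proposition \ref{Psi_ij}, the latter has the $K$-basis $\{\Psi_{ij}^{[0]}\}_{1\leq i\leq n,\,1\leq j\leq g}$. Hence $\mH_{[0]}^\delta(A)_K$ is spanned over $K$ by the classes $[\Psi_{kj}^{[0]}]$ with $1\leq k\leq n$. (Alternatively, use the explicit matrix of $\mff_0^*$ in the proof of Theorem \ref{non-deg}: there $\{[\Psi_1^{[0]}],\dots,[\Psi_{\beta_0}^{[0]}]\}$ spans.)

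We then apply Lemma \ref{Psi_Gamma}. For every $k\geq 1$ it gives $[\Psi_k^{[0]}]=\bigl(-(\Gamma^{[0]})^{\phi^{(k-1)}}\bigr)^{-1}\bigl(\mff_0^{*(k)}\iota_0^*[\Theta_j]\bigr)_{j}$. Here the invertibility of $\Gamma^{[0]}$ over $K$ is the input from \cite{Delta-crys} already used in Theorem \ref{non-deg}. This is where the hypothesis $R=W(\kappa)$ enters, and $\phi$-twisting preserves invertibility. Each entry $[\Psi_{kj}^{[0]}]$ is therefore a $K$-linear combination of the $\mff_0^{*(k)}\iota_0^*[\Theta_i]$, so it lies in $\mff_0^{*(k)}\iota_0^*\bXp^{[0]}(A)_K$. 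Here $\mff_0^{*(k)}\iota_0^*\bXp^{[0]}(A)_K$ means the $K$-span of the image, since $\mff_0^*$ is only semilinear. Summing over $k=1,\dots,n$ puts every spanning class inside $\sum_{k\geq 0}\mff_0^{*(k)}\iota_0^*\bXp^{[0]}(A)_K$, which gives the equality.

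The main point of care is the well-definedness of the identities in Lemma \ref{Psi_Gamma} in the quotient. The relation $\mff_0^*\iota_0^*[\Theta_j]=-\gamma_j^{[0]}[\Psi_1^{[0]}]$ uses that $\iota_0^*\phi^*\Theta_j$ vanishes in $\mH_{[0]}^\delta(A)$, which holds by the definition of the quotient. The iterate $\mff_0^{*(k-1)}[\Psi_1^{[0]}]=[\Psi_k^{[0]}]$ holds by \eqref{Psiijm}. Both survive passage to the direct limit because the operators commute with $u^*$. Once these are checked, which is the real obstacle, the argument above is purely formal. As a consistency check, the resulting span agrees in dimension with $o_1+\cdots+o_g$ from Theorem \ref{rk_H_delta}.
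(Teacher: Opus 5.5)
Your proposal is correct and follows the paper's proof essentially step for step. Both arguments use the easy inclusion, then the observation that $\mH_{[0]}^\delta(A)_K$ is spanned by the classes $[\Psi_k^{[0]}]$, then Lemma~\ref{Psi_Gamma}, which inverts the twisted matrices $(\Gamma^{[0]})^{\phi^{(k-1)}}$ to place each $[\Psi_k^{[0]}]$ in $\mff_0^{*(k)}\iota_0^*\bXp^{[0]}(A)_K$. The only cosmetic difference is that you first truncate to a finite level $n\geq\beta_0$ via Proposition~\ref{H_m^delta}, whereas the paper reads the spanning set off directly from the direct limit.
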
	
	\begin{proof}
		Clearly we have $$\sum_{k\geq 0}\mff_0^{*(k)}\iota_0^*\bXp^{[0]}(A)_K\subseteq \mH_{[0]}^\delta(A)_K.$$ We need to show that
		$$
		\mH_{[0]}^\delta(A)_K\subseteq \sum_{k\geq 0}\mff_0^{*(k)}\iota_0^*\bXp^{[0]}(A)_K.
		$$
		By definition, we have  $$\mH_{[0]}^\delta(A)_K=\varinjlim_n\mH_{[0]n}^\delta(A)_K=\varinjlim_n\frac{\Hom(N^{[0]n}A,\hG)_K}{\iota_0^*\phi^*\bX_{n-1}(A)_K}=\varinjlim_{n}\frac{K\langle \Psi_1^{[0]},\cdots,\Psi_n^{[0]}\rangle}{\iota_0^*\phi^*\bX_{n-1}(A)_K}.$$
		Therefore $\mH_{[0]}^\delta(A)_K$ is generated by $\{[\Psi_k^{[0]}]\}_{k\geq 0}$ as an $R$-module, where $[\Psi_k^{[0]}]$ denotes the image of $\Psi_k^{[0]}\in \Hom(N^{[0]n}A,\hG)$ in $\mH_{[0]n}^\delta(A)_K$ for all $k\geq 1$. 
		But from Lemma \ref{Psi_Gamma}, we have $[\Psi_k^{[0]}]\in \mff_0^{*(k)}\iota_0^*\bXp^{[0]}(A)_K.$ Hence, this implies $$
		\mH_{[0]}^\delta(A)_K\subseteq \sum_{k\geq 0}\mff_0^{*(k)}\iota_0^*\bXp^{[0]}(A)_K
		$$ and we are done.
	\end{proof}
	\begin{corollary}
Assume $R = W(\kappa)$.
		For every $m\geq 0$, we have $$\mH_{[m]}^\delta(A)_K=\sum_{k\geq 0}\mff_m^{*(k)}\iota_m^*\phi^{*(m)}\bXp^{[0]}(A)_K.$$
	\end{corollary}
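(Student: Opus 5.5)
The plan is to transport Theorem \ref{delta_prim_final} from level $0$ to level $m$ along the composite transition map
$$\Phi_{(m)}^*=\Phi_{[m]}^*\circ\cdots\circ\Phi_{[1]}^*:\mH_{[0]}^\delta(A)_K\longrightarrow\mH_{[m]}^\delta(A)_K,$$
which is an isomorphism of $K$-modules: each $\Phi_{[j]}^*$ is surjective by Theorem \ref{phm_iso}(i), and all these spaces have the same rank $o_1+\cdots+o_g$ by Theorem \ref{rk_H_delta}. For $m=0$ the statement is exactly Theorem \ref{delta_prim_final}. So I would apply $\Phi_{(m)}^*$ to both sides of that equality and identify the image of each summand.

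There are two compatibilities to check. First, the intertwining relation \eqref{phmfra1}, $\mff_{j-1}\circ\Phi_{[j]}=\Phi_{[j]}\circ\mff_j$, dualizes to $\Phi_{[j]}^*\circ\mff_{j-1}^*=\mff_j^*\circ\Phi_{[j]}^*$ on $\Hom(N^{[j-1]n}A,\hG)$. This relation descends to the quotients $\mH^\delta$, because both $\Phi^*$ and $\mff^*$ are well defined there (Section \ref{Transition_maps}). Iterating over $j$ gives
$$\Phi_{(m)}^*\circ\mff_0^{*(k)}=\mff_m^{*(k)}\circ\Phi_{(m)}^*.$$
Second, the square \eqref{JnNm} says $\phi\circ\iota_j=\iota_{j-1}\circ\Phi_{[j]}$, hence $\Phi_{[j]}^*\circ\iota_{j-1}^*=\iota_j^*\circ\phi^*$ on $\bX_\infty(A)$. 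This is precisely the left square of Corollary \ref{comm_phi*1}. Iterating it gives
$$\Phi_{(m)}^*\circ\iota_0^*=\iota_m^*\circ\phi^{*(m)}$$
on $\bXp^{[0]}(A)$, viewed inside $\DD_{[0]}(A)$.

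Combining the two identities, I obtain
$$\mH_{[m]}^\delta(A)_K=\Phi_{(m)}^*\mH_{[0]}^\delta(A)_K=\sum_{k\ge0}\Phi_{(m)}^*\mff_0^{*(k)}\iota_0^*\bXp^{[0]}(A)_K=\sum_{k\ge0}\mff_m^{*(k)}\iota_m^*\phi^{*(m)}\bXp^{[0]}(A)_K.$$

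The main obstacle is the semilinear bookkeeping. Both $\mff^*$ and $\phi^*$ are $\phi$-semilinear, and the paper's notational convention suppresses the $\phi$-twists, so $\Phi_{[j]}^*$ really goes from a twisted module to an untwisted one. I would check that the images are nevertheless equal as $K$-subspaces. This holds because $\phi$ is an automorphism of $R$ and of $K$ (Lemma \ref{phi_bij}), so for a $\phi$-semilinear map the image of a $K$-span is again the $K$-span of the images. I would also make sure that $\phi^{*(m)}\bXp^{[0]}(A)$ is read as a subspace of $\DD_{[m]}(A)$ via Corollary \ref{phiXprim}, so that $\iota_m^*$ applies to it.
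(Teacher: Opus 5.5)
Your proposal is correct and follows the paper's proof essentially step for step. Both transport Theorem \ref{delta_prim_final} along $\Phi_{(m)}^*$, commuting it past $\mff_0^{*(k)}$ via the intertwining relation $\mff_{j-1}\circ\Phi_{[j]}=\Phi_{[j]}\circ\mff_j$ and past $\iota_0^*$ via $\phi\circ\iota_j=\iota_{j-1}\circ\Phi_{[j]}$; the paper cites these as Theorems 5.8 and 5.5 of \cite{Mi-Sa-1}. Only surjectivity of $\Phi_{(m)}^*$ is actually needed, and your remark on the semilinear bookkeeping makes explicit a point the paper leaves implicit.
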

	\begin{proof}
		The proof follows directly from Theorem \ref{delta_prim_final} and Theorem 5.8 and Theorem 5.5 in \cite{Mi-Sa-1}.
		
		From  Theorem \ref{delta_prim_final}, we have 
		$$
		\mH_{[0]}^\delta(A)_K=\sum_{k\geq 0}\mff_0^{*(k)}\iota_0^*\bXp^{[0]}(A)_K.
		$$
		This implies for all $m\geq 0$,
		\begin{align}
			\mH_{[m]}^\delta(A)_K &=\Phi_{(m)}^*\left(\mH_{[0]}^\delta(A)_K\right)=\Phi_{(m)}^*\left(\sum_{k\geq 0}\mff_0^{*(k)}\iota_0^*\bXp^{[0]}(A)_K\right)\\
			&=\Phi_{[m]}^*\circ\cdots\circ\Phi_{[1]}^*\left(\sum_{k\geq 0}\mff_0^{*(k)}\iota_0^*\bXp^{[0]}(A)_K\right)\\
			&=\sum_{k\geq 0}\mff_m^{*(k)}\circ \Phi_{[m]}^*\circ\cdots\circ\Phi_{[2]}^*\circ\Phi_{[1]}^*\circ\iota_0^*\bXp^{[0]}(A)_K \text{  (Using Theorem 5.8 in \cite{Mi-Sa-1})}\\
			&=\sum_{k\geq 0}\mff_m^{*(k)}\iota_m^*\phi^{*(m)}\bXp^{[0]}(A)_K \text{  (Using Theorem 5.5 in \cite{Mi-Sa-1})}
		\end{align}
	\end{proof}

Assume $R=W(\kappa)$.	
In Section 8 of \cite{BoSa1}, Borger and Saha constructed a canonical morphism of $K$-modules $\Phi: \bHd{0}(A)_K \map \Hcr^1(A)_K$, which fits into a natural commutative diagram of short exact sequences:
	\begin{equation}
		\label{mapisocrys}
		\xymatrix{
			0 \ar[r] & \bXp(A)_K \ar[d]^\Upsilon \ar[r] & \bH^\delta_{[0]}(A)_K \ar[d]^{\Phi} \ar[r] & \bI_{[0]}(A)_K \ar[r] \ainj{d} & 0  \\
			0 \ar[r] & H^0(A,\Omega_A)_K \ar[r] & \Hcr^1(A)_K \ar[r] & H^1(A,\Omega_A)_K \ar[r] & 0,
		}
	\end{equation}
	where the lower row constitutes the classical Hodge filtration sequence associated to $A$. 
	
	Let $\Fcr(\Omega_A)$ denote the minimal subisocrystal of $\Hcr^1(A)_K$ generated by the space of invariant differentials $H^0(A,\Omega_A)_K$. By virtue of Theorem 1.1 in \cite{Delta-crys}, the restriction $\Upsilon$ is an isomorphism of $K$-modules, which consequently forces the  comparison map $\Phi$ to be 
	injective.
	
	Pandit establishes in \cite{Pandit}, Theorem 1.2, that $\Phi$ in 
	(\ref{mapisocrys}) is in fact a  morphism of filtered isocrystals. 
	In this context, the target module $\Hcr^1(A)_K$ is naturally endowed with the crystalline Frobenius operator $\Fcr$ alongside the descending Hodge filtration, whose first step is given by $H^0(A,\Omega_A)_K$. As a direct consequence of this compatibility, $\Phi$ induces an exact isomorphism of isocrystals:

	\begin{corollary}
		\label{rkFcris}
		Let $A$ be an abelian scheme of relative dimension $g$ over $S$. Then $\Phi$
		induces the following isomorphism of isocrystals:
		$$
		\bHd{m}(A)_K \simeq  \bHd{0}(A)_K \stk{\Phi}{\longisomap}
		\Fcr(\Omega_A).
		$$
		In particular we have
		\begin{align}
			\dim_K(\Fcr(\Omega_A)) = o_1+\cdots+o_g.  
		\end{align} 
	\end{corollary}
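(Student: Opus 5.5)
The proof has three steps. First identify $\bHd{m}(A)_K$ with $\bHd{0}(A)_K$. Then show that the image of $\Phi$ is exactly $\Fcr(\Omega_A)$. Finally read off the dimension from Theorem \ref{rk_H_delta}.

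\emph{Step 1.} For the first isomorphism, compose the transition maps. By Theorem \ref{rk_H_delta}(1), each $\Phi_{[k]}^*:\bHd{k-1}(A)_K\to\bHd{k}(A)_K$ is an isomorphism. By \eqref{phmfra1}, each one intertwines $\mff_{k-1}^*$ with $\mff_k^*$. By Corollary \ref{comm_phi*1}, each one carries $\DD_{[k-1]}(A)_K$ into $\DD_{[k]}(A)_K$. So $\Phi_{(m)}^*$ is an isomorphism of isocrystals $\bHd{0}(A)_K\simeq\bHd{m}(A)_K$, and its inverse gives the first arrow.

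\emph{Step 2.} Injectivity of $\Phi$ is already recorded after \eqref{mapisocrys}: $\Upsilon$ is an isomorphism by Theorem 1.1 of \cite{Delta-crys}, and the right vertical map is injective. By Pandit's Theorem 1.2 in \cite{Pandit}, $\Phi\circ\mff_0^*=\Fcr\circ\Phi$. Together these make $\Phi$ an isomorphism of isocrystals onto its image, and that image is an $\Fcr$-stable subspace of $\Hcr^1(A)_K$.

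It remains to show that the image equals $\Fcr(\Omega_A)$, using Theorem \ref{delta_prim_final}, which gives
$$\bHd{0}(A)_K=\sum_{k\geq0}\mff_0^{*(k)}\iota_0^*\bXp(A)_K.$$
Apply $\Phi$ and use the intertwining relation:
$$\Phi(\bHd{0}(A)_K)=\sum_{k\ge0}\Fcr^{k}\,\Phi(\iota_0^*\bXp(A)_K)=\sum_{k\ge0}\Fcr^k\,\Upsilon(\bXp(A)_K)=\sum_{k\ge 0}\Fcr^k H^0(A,\Omega_A)_K.$$
The middle equality uses the left square of \eqref{mapisocrys}, and the last uses that $\Upsilon$ is onto $H^0(A,\Omega_A)_K$. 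The right-hand side is the smallest $\Fcr$-stable subspace containing $H^0(A,\Omega_A)_K$, which is $\Fcr(\Omega_A)$ by definition. Since we are working with finite-dimensional $K$-spaces, the sum stabilizes and no closure issue arises.

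\emph{Step 3.} The dimension formula now follows from Theorem \ref{rk_H_delta}(2). Since $\bHd{0}(A)$ is free over $R$, this gives
$$\dim_K\Fcr(\Omega_A)=\rk_R\bHd{0}(A)=o_1+\cdots+o_g.$$

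The main obstacle is Step 2, specifically making sure that Pandit's compatibility is stated for the same operator $\mff_0^*$ used here, and not for some twist of it by $\phi$. The semilinearity conventions must also match: the paper suppresses $\phi$-twists, so one should check that $\Fcr$ and $\mff_0^*$ are both $\phi$-semilinear for the same $\phi$ on $K$. If the two operators turn out to differ by a twist, the plan is to compare them on the generators $[\Psi_k^{[0]}]$ through Lemma \ref{Psi_Gamma}. Since $\Fcr$ is bijective on $\Hcr^1(A)_K$ and $\mff_0^*$ is bijective by Theorem \ref{non-deg}, the image would still be a non-degenerate subisocrystal.
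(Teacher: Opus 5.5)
Your proposal is correct and follows essentially the same route as the paper. The first isomorphism comes from Theorem \ref{rk_H_delta}; more precisely, it comes from surjectivity of $\Phi_{[k]}^*$ (Theorem \ref{phm_iso}(i)) together with the equality of ranks. The identification of the image with $\Fcr(\Omega_A)$ comes from applying Pandit's Frobenius compatibility to the decomposition in Theorem \ref{delta_prim_final}, using $\Upsilon(\bXp(A)_K)=H^0(A,\Omega_A)_K$. The dimension formula is Theorem \ref{rk_H_delta} at $m=0$.

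Your version spells out what the paper compresses into two lines. Your closing worry about twists is unnecessary, since the paper takes Pandit's Theorem 1.2 to give $\Phi\circ\mff_0^*=\Fcr\circ\Phi$ directly.
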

	\begin{proof}
		The first isomorphism follows from Theorem \ref{rk_H_delta}. For the second 
		isomorphism via $\Phi$ note that 
		$\Fcr(\Omega_A) = \sum_k \Fcr^{\circ k} (H^0(A,\Omega_A)_K)$.
		Since by Theorem $1.2$ in \cite{Pandit}, $\Phi$ is a map of isocrystals,
		the isomorphism follows from Theorem \ref{delta_prim_final}.
		
		The second result is true by Theorem \ref{rk_H_delta} applied to the case $m=0$.
	\end{proof}

	\color{black}

	
	\section{The Low-Dimensional Cases }\label{Examples}
Assume $R=W(\kappa)$.
	In this section, we illustrate our results through some examples of abelian schemes of dimensions \(1\) and \(2\), namely the cases \(g=1\) and \(g=2\).

	\subsection{Elliptic Curves ($g=1$)}
	We take $A$ to be an elliptic curve over $R.$ Let us consider the following exact sequence
	\begin{equation}
		0\rightarrow \bX_n(A)\xrightarrow{\iota_0^*}\Hom(N^{[0]n}A,\hG)\xrightarrow{\partial}\Ext(A,\hG).
	\end{equation}
	By Corollary 7.8 of \cite{BoSa1}, the $0$-th upper splitting number satisfies $\beta_0\leq 2$. Thus there are two possibilities: $\beta_0=1$ or $\beta_0=2$. Recalling from Section 9 of \cite{BoSa1} 
	we have $$\bX_\infty(A)=R\{\phi^*\}\langle\Theta_{\beta_0}\rangle,$$ we obtain the following description in each case:
	\begin{enumerate}
		\item[(i)] If $A$ admits a canonical lift (CL), then $\beta_0=1$ and
		$$
		\iota_0^*\Theta_1=\Psi_1^{[0]}.
		$$
		
		\item[(ii)] If $A$ does not admit a canonical lift (Non-CL), then $\beta_0=2$ and
		$$
		\iota_0^*\Theta_2=\Psi_2^{[0]}-\lambda\Psi_1^{[0]},
		$$
		for some $\lambda\in R$. In this case, the map $\partial$ is non-zero, and consequently $\partial\Psi_1^{[0]}\neq 0$. Since $\iota_0^*\Theta_2$ lies in the kernel of $\partial$, it follows that
		$
		\lambda=\frac{\partial\Psi_2^{[0]}}{\partial\Psi_1^{[0]}}.
		$
	\end{enumerate}
	Finally, Proposition 9.1 of \cite{BoSa1} yields
	\begin{equation}\label{H_E}
		\mH_{[0]}^\delta(A)_K \simeq \begin{cases}
			K\langle\Psi_1^{[0]}\rangle, & \text{if $E$ has CL}\\
			K\langle\Psi_1^{[0]},\Psi_2^{[0]}\rangle, & \text{if $E$ is Non-CL.}
		\end{cases}
	\end{equation}

	\begin{theorem}\label{H_delta_E}
		For every elliptic curve $A$ over $S$ and every $m\geq 0$,
		$
		\mH_{[m]}^\delta(A)_K\simeq \mH_{[m+1]}^\delta(A)_K.
		$
		If $A$ has a canonical lift, then
		$
		\mH_{[m]}^\delta(A)_K\simeq K\langle\Psi_1^{[0]}\rangle
		$
		for all $m\geq 0$, and
		$
		\Iso(\mH_{[m]}^\delta(A))
		\simeq
		\Iso(\mH_{[0]}^\delta(A))
		$
		for all $m\geq 1$.
		
		If $A$ is Non-CL, then
		$
		\mH_{[m]}^\delta(A)_K
		\simeq
		K\langle\Psi_1^{[0]},\Psi_2^{[0]}\rangle
		$
		for all $m\geq 0$, while
		\begin{align}
			\Iso(\mH_{[0]}^\delta(A))
			&\not\simeq
			\Iso(\mH_{[1]}^\delta(A)),\\
			\Iso(\mH_{[m]}^\delta(A))
			&\simeq
			\Iso(\mH_{[1]}^\delta(A))
		\end{align}
		for all $m\geq 2$.

	\end{theorem}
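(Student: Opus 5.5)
The plan splits the statement into the rank/isomorphism part, valid uniformly in $m$, and the comparison of filtered isocrystals, which needs a case split on $\beta_0\in\{1,2\}$.

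\emph{Uniform part.} The isomorphism $\mH_{[m]}^\delta(A)_K\simeq \mH_{[m+1]}^\delta(A)_K$ for all $m\ge 0$ is Theorem \ref{rk_H_delta}(1), specialised to $g=1$, where the map is $\Phi_{[m+1]}^*$. By Theorem \ref{rk_H_delta}(2) the rank is $o_1$, which equals $\beta_0$. Iterating the isomorphisms $\Phi_{(m)}^*$ gives $\mH_{[m]}^\delta(A)_K\simeq \mH_{[0]}^\delta(A)_K$. Combining this with \eqref{H_E}, we get $K\langle\Psi_1^{[0]}\rangle$ in the CL case ($\beta_0=1$) and $K\langle \Psi_1^{[0]},\Psi_2^{[0]}\rangle$ in the Non-CL case ($\beta_0=2$).

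\emph{CL case.} Here $\beta_0=1$, so Theorem \ref{phm_iso}(iv) applies for every $m\ge 1$. All vertical maps in the diagram of Corollary \ref{comm_phi*1} are therefore isomorphisms. By Theorem \ref{phiExt-intro-1} and \eqref{phmfra1}, $\Phi_{[m]}^*$ commutes with the lateral Frobenii and respects the filtrations $\DD\subset\mH^\delta$. Hence $\Phi_{[m]}^*$ is an isomorphism of filtered isocrystals $\Iso(\mH_{[m-1]}^\delta(A))\simeq\Iso(\mH_{[m]}^\delta(A))$, and composing these gives the claim for all $m\ge1$.

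\emph{Non-CL case, $m\ge2$.} Now $\beta_0=2$, and the same argument gives filtered-isocrystal isomorphisms $\Phi_{[m]}^*$ for $m\ge 2$. This yields $\Iso(\mH_{[m]}^\delta(A))\simeq\Iso(\mH_{[1]}^\delta(A))$ for all $m\ge2$.

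\emph{Non-CL case, non-isomorphism for $m=0,1$ (main obstacle).} I would compare the filtrations. By Corollary \ref{rk_D_m_k}, $\rk\DD_{[0]}=g+k_0=1$, while $\rk\DD_{[1]}=g+k_1=o_1=2$ because $m=1=\beta_0-1$. Thus in $\mH_{[1]}^\delta(A)_K$ the filtration step $\DD_{[1]}$ is the whole $2$-dimensional space, equivalently $\bI_{[1]}=0$ by Theorem \ref{rk_H_delta}. In $\mH_{[0]}^\delta(A)_K$, by contrast, $\DD_{[0]}$ is a line and $\bI_{[0]}$ is nonzero. Any isomorphism of filtered isocrystals must carry the filtration step onto the filtration step, so it cannot match a line with a plane; the dimensions of the filtration pieces already distinguish the two objects. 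The delicate point is to make sure the two filtrations are indexed compatibly, i.e.\ that $\DD$ sits in the same degree in both $\Iso$ objects. The definition of $\Iso(\mH_{[m]}^\delta(A))$ uses the same two-step filtration for every $m$, so this indexing should hold.
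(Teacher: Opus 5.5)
Your proposal is correct and takes the same route as the paper. The paper's proof just derives the statement from Theorem \ref{rk_H_delta}, with the isomorphisms $\Phi_{[m]}^*$ for $m\geq\beta_0$ coming from Theorem \ref{phm_iso}(iv). Your way of separating $\Iso(\mH_{[0]}^\delta(A))$ from $\Iso(\mH_{[1]}^\delta(A))$ in the Non-CL case is to compare filtration steps: $\dim\DD_{[0]}(A)_K=1$, while $\dim\DD_{[1]}(A)_K=2=\dim\mH^\delta_{[1]}(A)_K$. This is the argument the paper itself states explicitly in the abelian-surface cases.
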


	\begin{proof}
		This follows directly from Theorem \ref{rk_H_delta}.
	\end{proof} 
	
	\begin{corollary}
		If the elliptic curve $A$ does not admit a canonical lift (Non-CL), then
		$$
		\Ext(A,\hG)_K\simeq K\langle\Theta_2\rangle.
		$$
	\end{corollary}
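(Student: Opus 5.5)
The plan is to reduce the claim to a dimension count, using that for $g=1$ everything in sight is at most one-dimensional over $K$. First, recall that $\Ext(A,\hG)\simeq H^1(A,\mO_A)$ is free of rank $g=1$ over $R$, so $\Ext(A,\hG)_K$ is a one-dimensional $K$-vector space. The content of the statement is therefore twofold:
\begin{enumerate}
\item the connecting map $\partial$ hits all of $\Ext(A,\hG)_K$, i.e.\ $\bI_{[0]}(A)_K=\Ext(A,\hG)_K$;
\item this line is canonically labelled by the primitive character $\Theta_2$.
\end{enumerate}

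For surjectivity, I would use the Non-CL hypothesis. By the discussion preceding Theorem \ref{H_delta_E}, in this case $\beta_0=o_1=2$ and $\partial\Psi_1^{[0]}\neq 0$. Hence $\bI_{[0]}(A)_K$ is a nonzero subspace of the one-dimensional space $\Ext(A,\hG)_K$, and so $\bI_{[0]}(A)_K=\Ext(A,\hG)_K$. As a consistency check I would also recover this numerically. Theorem \ref{rk_H_delta} gives $\rk_K\mH^\delta_{[0]}(A)_K=o_1=2$. Moreover $\DD_{[0]}(A)_K=\bXp^{[0]}(A)_K=K\langle\Theta_2\rangle$ is one-dimensional. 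The exact sequence \eqref{exact_sequence_Hm_X0} then forces $\rk_K\bI_{[0]}(A)_K=2-1=1=g$, which equals $k$ in the notation of Section \ref{rk_H_m}.

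For the identification with $K\langle\Theta_2\rangle$, I would use the explicit description $\mH^\delta_{[0]}(A)_K\simeq K\langle\Psi_1^{[0]},\Psi_2^{[0]}\rangle$ from \eqref{H_E}. Inside it, $\DD_{[0]}(A)_K$ is the line spanned by $\iota_0^*\Theta_2=\Psi_2^{[0]}-\lambda\Psi_1^{[0]}$, where $\lambda=\partial\Psi_2^{[0]}/\partial\Psi_1^{[0]}$. The quotient $\Ext(A,\hG)_K$ is then the cokernel of the inclusion of this line. It is generated by $\partial\Psi_1^{[0]}$, and equivalently by $\partial\Psi_2^{[0]}=\lambda\,\partial\Psi_1^{[0]}$ when $\lambda\neq0$. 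The one-dimensional space $K\langle\Theta_2\rangle\simeq\bXp^{[0]}(A)_K$ is abstractly isomorphic to this cokernel. To make the isomorphism canonical, I would compose two maps. The first is the isomorphism $\Upsilon:\bXp^{[0]}(A)_K\simeq H^0(A,\Omega_A)_K$ from \cite{Delta-crys}. The second is the Serre-duality identification $H^0(A,\Omega_A)_K\simeq H^1(A,\mO_A)_K^\vee\simeq H^1(A,\mO_A)_K$ for elliptic curves, which is available after choosing the trivialization given by $\Theta_2$ itself.

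The main obstacle is the second step: making precise in what sense the isomorphism is with ``$K\langle\Theta_2\rangle$'' rather than merely with an abstract line. I would first settle for the abstract statement obtained from the dimension count above. I would then try to refine it, via the commutative diagram \eqref{mapisocrys}, by checking that $\Phi(\Psi_1^{[0]})$ projects to a generator of $H^1(A,\mO_A)_K$ that is dual to $\Upsilon(\Theta_2)$.
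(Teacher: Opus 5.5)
Your first step is correct and is also the paper's starting point: Non-CL gives $\partial\Psi_1^{[0]}\neq 0$, hence $\bI_{[0]}(A)_K=\Ext(A,\hG)_K$. The dimension count alone, however, only shows that two $K$-lines are abstractly isomorphic. That holds for every elliptic curve, CL or not, because $\Ext(A,\hG)\simeq H^1(A,\mO_A)\simeq R$. It says nothing about why the line should be $K\langle\Theta_2\rangle$, and your proposed refinement does not supply this.

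Serre duality identifies $H^0(A,\Omega_A)_K$ with the \emph{dual} of $H^1(A,\mO_A)_K$. Passing to $H^1(A,\mO_A)_K$ itself via ``the trivialization given by $\Theta_2$'' builds the desired labelling into a choice, so the argument is circular. The check through \eqref{mapisocrys} is left as an unverified plan. Note also that inside $\mH^\delta_{[0]}(A)_K$ the line $K\langle\iota_0^*\Theta_2\rangle=\DD_{[0]}(A)_K$ is exactly $\ker\partial$. So at level $0$, $\Theta_2$ is precisely what maps to zero in $\Ext(A,\hG)_K$; nothing internal to level $0$ can label $\Ext(A,\hG)_K$ by $\Theta_2$.

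The missing idea is to pass to level $m=1$ and apply the snake lemma to the diagram of Theorem \ref{phm_iso}. With $\beta_0=2$ the three vertical maps behave as follows:
\begin{itemize}
\item $\phi^*:\DD_{[0]}(A)_K\to\DD_{[1]}(A)_K$ is injective;
\item $\Phi^*_{[1]}:\mH^\delta_{[0]}(A)_K\to\mH^\delta_{[1]}(A)_K$ is an isomorphism (surjective by Theorem \ref{phm_iso}(i), with equal ranks by Theorem \ref{rk_H_delta});
\item $\bI_{[1]}(A)=0$, since $1\geq\beta_0-1$.
\end{itemize}
The snake lemma then gives
\[
\Ext(A,\hG)_K=\bI_{[0]}(A)_K=\ker\bigl(\phi^*:\bI_{[0]}(A)_K\to\bI_{[1]}(A)_K\bigr)\simeq\mathrm{coker}\bigl(\phi^*:\DD_{[0]}(A)_K\to\DD_{[1]}(A)_K\bigr).
\]
Since $\bX_1(A)=0$, Corollary \ref{rk_D_m} with $m=\beta_0-1=1$ gives $\DD_{[1]}(A)=R\langle\Theta_2,\phi^*\Theta_2\rangle$. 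Meanwhile $\phi^*\DD_{[0]}(A)=R\langle\phi^*\Theta_2\rangle$, so the cokernel is the line spanned by the class of $\Theta_2$.

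Concretely, take $e\in\Ext(A,\hG)_K$ and any lift $h\in\mH^\delta_{[0]}(A)_K$. Then $\partial\Phi^*_{[1]}h=\phi^*e=0$, so $\Phi^*_{[1]}h$ lies in $\DD_{[1]}(A)_K$. Its class modulo $\phi^*\Theta_2$ is the multiple of $[\Theta_2]$ attached to $e$. This connecting-map identification is what the corollary records. It is also where Non-CL is really used: in the CL case both ends of the snake isomorphism are zero, while $\Ext(A,\hG)_K$ is still a line.
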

	\begin{proof}
		If $A$ is Non-CL, $\bI_{[0]}(A)=\Ext(A,\hG).$  We have the following commutative diagram: 
		\begin{equation}\label{maps_of_phm_E_3}
			\xymatrix{&\text{Coker}_1&0&0&\\
				0\ar[r]&\DD_{[1]}(A)_K\ar[u]\ar[r]^-{\iota_1^*}& \mH^{\delta}_{[1]}(A)_K\ar[u]\ar[r]^-{\partial} & \bI_{[1]}(A)_K\ar[r]\ar[u] & 0\\
				0\ar[r]&\DD_{[0]}(A)_K\ar[r]^-{\iota_{0}^*}\ar[u]_{\phi^*}& \mH^{\delta}_{[0]}(A)_K\ar[r]^-{\partial}\ar[u]_{\phm^*} & \bI_{[0]}(A)_K\ar[r]\ar[u]_{\phi^*} & 0\\
				& 0\ar[u]& 0\ar[u]& \text{Ker}_3.\ar[u]& }
		\end{equation}
		By the snake lemma, we obtain $\text{Ker}_3\simeq \text{Coker}_1$. From Theorem \ref{H_delta_E} we have, $\bI_{[1]}(A)=\{0\}.$ Hence $\text{Ker}_3=\bI_{[0]}(A)_K=\Ext(A,\hG)_K.$ From corollary \ref{rk_D_m}, we conclude $\DD_{[0]}(A))\simeq R\langle \Theta_2\rangle$ and $\DD_{[1]}(A)\simeq R\langle \Theta_2,\phi^*\Theta_2\rangle.$ Hence the cokernel of the map $\phi^*:\DD_{[0]}(A)_K\rightarrow\DD_{[1]}(A)_K$ is isomorphic to $K\langle \Theta_2\rangle.$ Hence we have $\Ext(A,\hG)_K\simeq K\langle \Theta_2\rangle$.
	\end{proof}
	\subsection{Abelian Surfaces ($g=2$) }
	We consider $A$ to be an abelian scheme of relative dimension 2 over $S$. This implies $\rk_R\Ext(A,\hG)=2$ and we have three possible values for the rank of $\bI_{[0]}(A)$ namely $0,1$ and $2$. Note that $$\DD_{[0]}(A)=\bXp^{[0]}(A)\simeq R\langle \Theta_1,\Theta_2\rangle,$$ where $\Theta_1$ and $\Theta_2$ represent classes of primitive characters of $A$ in $\bXp^{[0]}(A)$. We denote $o_i$ to be the exact order of $\Theta_i $ for $i=1,2$ and assume $o_1\leq o_2.$ Let us recall the  following commutative diagram from Theorem \ref{phm_iso} for every $m\geq 1$.
	\begin{equation}\label{maps_of_phm_A_m}
		\xymatrix{0\ar[r]&\DD_{[m]}(A)_K\ar[r]^-{\iota_m^*}& \mH^{\delta}_{[m]}(A)_K\ar[r]^-{\partial} & \bI_{[m]}(A)_K\ar[r] & 0\\
			0\ar[r]&\DD_{[m-1]}(A)_K\ar[r]^-{\iota_{m-1}^*}\ar[u]_{\phi^*}& \mH^{\delta}_{[m-1]}(A)_K\ar[r]^-{\partial}\ar[u]_{\phm^*} & \bI_{[m-1]}(A)_K\ar[r]\ar[u]_{\phi^*} & 0. }
	\end{equation}
	From Theorem \ref{phm_iso}, we have the map $\phi^*:\bI_{[m-1]}(A)_K\rightarrow \bI_{[m]}(A)_K$ as a surjection for all $m\geq 1$, which implies $\{\rk_R(\bI_{[m]}(A))\}_{m\geq 0}$ is a decreasing sequence and from Theorem \ref{rk_H_delta}, we know it vanishes for all $m\geq \beta_0-1$. Let us explore each possible rank of $\bI_{[0]}(A)$ one by one.

	\underline{Case 1 ($\rk_R(\bI_{[0]}(A))=0)$}:  Note that in this case, by definition we have $\beta_0=1$, hence  $o_1=o_2=1$. Therefore by Theorem \ref{rk_H_delta} all the vertical maps in (\ref{maps_of_phm_A_m}) are isomorphisms and this gives us $$\Iso(\mH_{[m]}^\delta(A))\simeq \Iso(\mH_{[0]}^\delta(A))$$ for all $m\geq 0$. An example of this situation is given by
	$
	A=E_1\times E_2,
	$
	where $E_1$ and $E_2$ are elliptic curves with canonical lifts.
	
	\underline{Case 2 ($\rk_R(\bI_{[0]}(A))=1)$}: We have $\rk_K(\DD_{[0]}(A)_K)=2$, therefore, we obtain $\rk_K(\mH_{[0]}^\delta(A)_K)=3$ and by Theorem \ref{rk_H_delta}, we conclude $\rk_K(\mH_{[m]}(A)_K)=3$ for all $m\geq 0$.
	
	Note that $\bI_{[0]}(A)=\varinjlim_n\bI_{0{[n]}}(A)$ and hence we have $\rk_R(\bI_{[0]1}(A))\leq 1$. Therefore if $\rk_R(\bI_{[0]1}(A))=0 $, by Lemma 7.3 of \cite{BoSa1} we have $\rk_R(\bI_{[0]}(A))=0$ which takes us back to our first case. This implies we must have $\rk_R(\bI_{[0]1}(A))=\rk_R(\bI_{[0]2}(A))=\rk_R(\bI_{[0]}(A))=1$ and that gives us  $\beta_0=2$. 
	
	Therefore by Proposition 7.2 of \cite{BoSa1}, we have $o_2=\beta_{0}=2$. Hence by Theorem \ref{rk_H_delta} we obtain $o_1=1$  and  $\DD_{[1]}(A)_K\simeq K\langle\phi^*\Theta_1,\Theta_2,\phi^*\Theta_2\rangle,$ that is, $\rk_K(\DD_{[1]}(A)_K)=3$.
	Moreover by Theorem \ref{rk_H_delta}, we have 
	$$\DD_{[m]}(A)_K\simeq \mH_{[m]}^\delta(A)_K\simeq \mH_{[0]}(A)_K.$$ Therefore we conclude $$\Iso(\mH_{[0]}^\delta(A))\not\simeq \Iso(\mH_{[1]}^\delta(A))$$ as both the isocrystals have distinct filtrations and for all $m\geq 1$, we have $$\Iso(\mH_{[m]}^\delta(A))\simeq \Iso(\mH_{[1]}^\delta(A)).$$  
	
	An example illustrating this case is given by the product surface of elliptic curves, $A=E_1\times E_2$, where the elliptic curve $E_1$ has a canonical lift whereas $E_2$ does not.

	\underline{Case 3 ($\rk_R(\bI_{[0]}(A))=2)$}:
	We have $\rk_K(\DD_{[0]}(A)_K)=2$, which implies $\rk_K(\mH_{[0]}^\delta(A)_K)=4.$ Hence from Theorem \ref{rk_H_delta}, we obtain  $\rk_{K}(\mH_{[m]}(A)_K)=4$ for all $m\geq 0$.	Note that in this case, it is not possible to have $\beta_0=1$, because that would imply $\bI_{[0]}(A)=\bI_{[0]1}(A)=\bI_{[0]0}(A)=\{0\}$, giving us $\rk_R(\bI_{[0]}(A))=\{0\}$ which contradicts our assumption. Hence we are left with two subcases, either $\beta_0=2$ or $3$. 
	
	\underline{Case 3.1 $(\beta_0=2)$}: If $\beta_0=2$, from Proposition 7.2 of \cite{BoSa1}, we have $o_2=2$. Hence by Theorem \ref{rk_H_delta} we obtain $o_1=4-2=2$ and as isocrystals,  $$\Iso(\mH_{[0]}(A))\not\simeq\Iso(\mH_{[1]}(A))$$ and
	$$\Iso(\mH_{[m]}(A))\simeq\Iso(\mH_{[1]}(A)).$$
	Note that in this subcase, as we have $\bX_1(A)=\{0\}$, therefore we obtain, $$\DD_{[1]}(A)_K=\bXp^{[1]}(A)_K/u_1^*\bX_1(A)_K= \bXp^{[1]}(A)_K\simeq K\langle\Theta_1,\Theta_2,\phi^*\Theta_1.\phi^*\Theta_2\rangle\simeq \mH_{[m]}(A)_K,$$ for all $m\geq 0$. 	This subcase can be illustrated by the product abelian surface $A=E_1\times E_2$, where neither $E_1$ nor $E_2$ admits a canonical lift. 
	
	\underline{Case 3.2 $(\beta_0=3)$}: If $\beta_0=3$, then by Proposition 7.2 of \cite{BoSa1}, we obtain $o_2=3$ and hence by Theorem \ref{rk_H_delta} we have $o_1=1$.
	Therefore this implies $$\DD_{[1]}(A)_K=\bXp^{[1]}(A)_K/u_1^*(\bX_1(A)_K)=K\langle\phi^*\Theta_1,\Theta_2,\phi^*\Theta_2\rangle$$ and  $\rk_K(\DD_{[1]}(A)_K)=3$. Similarly we have $$\DD_{[2]}(A)_K=\bXp^{[2]}(A)_K/u_2^*(\bX_1(A)_K)=K\langle \phi^{*2}\Theta_1,\Theta_2,\phi^*\Theta_2,\phi^{*2}\Theta_2\rangle$$ and $\rk_K(\DD_{[2]}(A)_K)=4$. We have $\rk_K(\mH_{[1]}^\delta(A)_K)= 4$ and hence $\rk_K(\bI_{[1]}(A))= 1.$  Hence for $m=1$, we obtain a proper filtration $$\mH_{[1]}^\delta(A)_K^\bullet=\{\mH_{[1]}^\delta(A)_K\supsetneq\DD_{[1]}(A)_K\supsetneq\{0\}\}.$$
	For $m\geq 2,$ by Theorem \ref{rk_H_delta}, we obtain 
	$$\DD_{[m]}(A)_K\simeq \mH_{[m]}^\delta(A)_K$$ for all $m\geq 2$ and $\DD_{[m]}(A)_K\simeq \DD_{[m+1]}(A)_K$, giving us, $$\Iso(\mH_{[m]}(A))\simeq\Iso(\mH_{[2]}(A))$$ and $\Iso(\mH_{[0]}(A)),\Iso(\mH_{[1]}(A)) \text{ and }\Iso(\mH_{[2]}(A))$ give us three distinct isocrystals connected by the Frobenius map.	
	Combining the cases above we have the following theorem:
	
	\begin{theorem}
		For any abelian surface $A$ over $S$, we have two classes of primitive characters $\Theta_1,\Theta_2\in \bXp^{[0]}(A)$ such that $\bXp^{[0]}(A)\cong R\langle \Theta_1,\Theta_2\rangle.$  Let $o_1$ and  $o_2$ be the exact orders of $\Theta_1$ and $\Theta_2$ respectively and assume $o_1\leq o_2$. Then we have the following cases:
		\begin{center}
			\begin{tabular}{|l|l|l|l|}
				\hline
				$\rk_K \bI_{[0]}(A)_K$ & $0$  & $1$   & $2$  \\
				\hline
				Value of $\beta_0$ & $1$ & $2$ & $\leq 3$ \\
				\hline
				Values of $o_1$ and $o_2$ & $o_1 =1,~ o_2 = 1$ & $o_1 = 1, ~ o_2 = 2$ & $\beta_0 = 1$ is not possible  \\
				& & & If $\beta_0 = 2 $, then $o_1 = o_2 = 2$ \\
				& & & If $\beta_0 = 3 $, then $o_1 = 1, ~ o_2 = 3$ \\
				\hline
				$\rk_K\Fcr H^0(A,\Omega_A)_K=$ & 2 & 3  & 4 \\
				$\rk_K\mH^\delta_{[m]}(A) = o_1 + o_2$ & & & \\
				\hline
			\end{tabular}
		\end{center}
	\end{theorem}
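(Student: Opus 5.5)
The plan is to assemble the final classification theorem for abelian surfaces from the three case analyses just carried out, organizing everything by $k=\rk_R\bI_{[0]}(A)\in\{0,1,2\}$. This is exhaustive, since $\bI_{[0]}(A)\subseteq\Ext(A,\hG)\simeq R^2$. Two facts are used throughout. First, $\bXp^{[0]}(A)\cong R\langle\Theta_1,\Theta_2\rangle$ is free of rank $2$, by Theorem \ref{X_inf} and (\ref{Xprimiso}). Second, Corollary \ref{rk_D_m_k} gives the identity $o_1+o_2=g+k=2+k$. Combined with Theorem \ref{rk_H_delta}, this yields $\rk_K\mH^\delta_{[m]}(A)_K=o_1+o_2=2+k$ for all $m$. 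Corollary \ref{rkFcris} then gives $\rk_K\Fcr(\Omega_A)=2+k$, which fills in the last row of the table immediately.

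It remains to pin down $\beta_0$ and the pair $(o_1,o_2)$ in each case. I would use that $o_2=\beta_0$ (Proposition 7.2 of \cite{BoSa1}), that $\beta_0\le g+1=3$ (Theorem \ref{X_inf}), and that the ranks $k_i=\rk_R\bI_{[0]i}(A)$ are strictly increasing until they stabilize at $k$ at index $\beta_0-1$ (Lemma 7.3 of \cite{BoSa1}).

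\textbf{Case $k=0$.} We have $o_1+o_2=2$ with each $o_i\ge1$. This forces $o_1=o_2=1$, and hence $\beta_0=1$.

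\textbf{Case $k=1$.} Here $k_1\le1$. If $k_1=0$, then by Lemma 7.3 of \cite{BoSa1} the chain is constant at $0$, so $k=0$, a contradiction. Hence $k_1=1$, so the chain stabilizes at index $1$ and $\beta_0=2$. Then $o_2=2$, and $o_1=3-2=1$.

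\textbf{Case $k=2$.} We cannot have $\beta_0=1$, since then $\bI_{[0]}(A)=\bI_{[0]0}(A)=0$. So $\beta_0\in\{2,3\}$, and $o_2=\beta_0$ with $o_1=4-o_2$. This gives $(2,2)$ when $\beta_0=2$ and $(1,3)$ when $\beta_0=3$.

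The main point needing care is the justification that $\beta_0=2$ when $k=1$: one must rule out the chain jumping late. This is exactly what the strict-increase statement of Lemma 7.3 in \cite{BoSa1} provides, combined with $k_1\le k$. I would also double-check the compatibility $o_2=\beta_0$ against the normalization $o_1\le o_2$. Everything else is bookkeeping from the case discussion above.
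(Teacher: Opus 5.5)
Your proposal is correct and follows essentially the same route as the paper. You split on $k=\rk_R\bI_{[0]}(A)\in\{0,1,2\}$, pin down $\beta_0=o_2$ using Lemma 7.3 and Proposition 7.2 of \cite{BoSa1} together with $\beta_0\le 3$, read off $o_1$ from the rank identity $o_1+o_2=2+k$ (Theorem \ref{rk_H_delta} and Corollary \ref{rk_D_m_k}), and get the $\Fcr$ row from Corollary \ref{rkFcris}. The only difference is cosmetic: in the case $k=0$ you derive $o_1=o_2=1$ from the rank identity before concluding $\beta_0=1$, whereas the paper reads $\beta_0=1$ off the definition first.
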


	\textbf{Acknowledgments.} The first author would like to thank CSIR for  financial support under the scheme 09/1031(0011)/2021-EMR-I.
	The first author was also partially supported by the Postdoctoral fellowship at the Tata Institute of Fundamental Research (TIFR). 
The second author would like to thank the Lodha Mathematical Sciences Institute (LMSI) for their hospitality and support. He was partially supported by the MATRICS grant ANRF/ARGM/2025/001294/MTR. He would also like to thank Netan Dogra for many insightful discussions.

	\color{black}
	

\begin{thebibliography}{B}
		\bibitem{Bar}M.~A. Barcau, Isogeny covariant differential modular forms and the space of elliptic curves up to isogeny, Compositio Math. {\bf 137} (2003), no.~3, 237--273.
		
		\bibitem{BePrSa} A. Bertapelle, E. Previato and A. Saha, Arithmetic jet spaces, J. Algebra {\bf 623} (2023), 127--153.
		
		\bibitem{BeOg} P. Berthelot and A. Ogus, $F$-isocrystals and de Rham cohomology. I, Invent. Math. {\bf 72} (1983), no.~2, 159--199.
		

		\bibitem{Bo1}J. Borger, The basic geometry of Witt vectors, I: The affine case, Algebra Number Theory {\bf 5} (2011), no.~2, 231--285.		
		
		\bibitem{Bo2}J. Borger, The basic geometry of Witt vectors. II: Spaces, Math. Ann. {\bf 351} (2011), no.~4, 877--933; 
		
		\bibitem{BoGu} J. Borger and L. Gurney, Canonical lifts and $\delta$-structures, Selecta Math. (N.S.) {\bf 26} (2020), no.~5, Paper No. 67, 48 pp.
		
		\bibitem{BoSa1} J. Borger and A. Saha, Isocrystals associated to arithmetic jet spaces of abelian schemes, Adv. Math. {\bf 351} (2019), 388--428.
		
		\bibitem{BoSa2} J. Borger and A. Saha, Differential characters of Drinfeld modules and de Rham cohomology, Algebra Number Theory {\bf 13} (2019), no.~4, 797--837.
		
		\bibitem{BriCon}O. Brinon, B. Conrad. CMI Summer school notes on p-adic Hodge theory, http://math .stanford .edu /~conrad /papers /notes .pdf.
		
		\bibitem{Bu2}
		A. Buium, Differential characters of abelian varieties over $p$-adic fields, Invent. Math. {\bf 122} (1995), no.~2, 309--340.		
		
		\bibitem{Bu3}
		A. Buium, Differential modular forms, J. Reine Angew. Math. {\bf 520} (2000), 95--167.
		
		\bibitem{Bu1} A. Buium, Geometry of $p$-jets, Duke Math. J. {\bf 82} (1996), no.~2, 349--367.
		
		\bibitem{Bu4} A. Buium. Arithmetic Differential equations. AMS, Volume 118, 2005.
		
		\bibitem{Bu-Mi-1}
		A. Buium and L.~E. Miller, Perfectoid spaces arising from arithmetic jet spaces, Amer. J. Math. {\bf 145} (2023), no.~1, 287--334.
		
		\bibitem{Bu-Mi-3}
		A. Buium and L.~E. Miller, {\it Purely arithmetic PDEs over a $p$-adic field: $\delta$-characters and $\delta$-modular forms}, Memoirs of the European Mathematical Society, 6, EMS Press, Berlin, 2023; 
		
		\bibitem{Bu-Mi-2}A. Buium and L.~E. Miller, Solutions to arithmetic differential equations in algebraically closed fields, Adv. Math. {\bf 375} (2020), 107342, 47.		
		
		\bibitem{Bu-Po} A. Buium and B. Poonen, Independence of points on elliptic curves arising from special points on modular and Shimura curves. I. Global results, Duke Math. J. {\bf 147} (2009), no.~1, 181--191;		
		
		\bibitem{BuSa1} A. Buium and A. Saha, Hecke operators on differential modular forms mod $p$, J. Number Theory {\bf 132} (2012), no.~5, 966--997.
		
		\bibitem{BuSa2} A. Buium and A. Saha, The ring of differential Fourier expansions, J. Number Theory {\bf 132} (2012), no.~5, 896--937.
		

\bibitem{DP25}
N.~Dogra and S.~Pandit.
\newblock A Buium--Coleman bound for the Mordell--Lang conjecture, 
\newblock arXiv:2504.10155. 

\bibitem{DP26}
N.~Dogra and S.~Pandit.
\newblock A Buium--Coleman bound for the Zilber--Pink conjecture for curves inside abelian varieties, 
\newblock arXiv:2608.24474.

\bibitem{DS26}
N.~Dogra and A.~Saha.
\newblock Arithmetic jet spaces and the unramified Zilber--Pink conjecture for subvarieties of abelian varieties,
\newblock arXiv:2610.17325.

		\bibitem{Drn} V.~G. Drinfeld, Coverings of $p$-adic symmetric domains, Funkcional. Anal. i Prilo\v zen. {\bf 10} (1976), no.~2, 29--40.
		
		
		\bibitem{Hurlburt} C. Hurlburt, Isogeny covariant differential modular forms modulo $p$, Compositio Math. {\bf 128} (2001), no.~1, 17--34.
		
		\bibitem{Joyal} A. Joyal, $\delta$-anneaux et vecteurs de Witt, C. R. Math. Rep. Acad. Sci. Canada {\bf 7} (1985), no.~3, 177--182.
		
		\bibitem{Mi-Sa-1}R.~K. Mishra and A.Saha,  Kernels of Arithmetic Jet Spaces and Frobenius Morphism, arXiv preprint arXiv.2601.22591
		
		\bibitem{Mi-Sa-2}R.~K. Mishra and A. Saha, Differential characters and $D$-group schemes, Int. Math. Res. Not. IMRN {\bf 2025}, no.~9, Paper No. rnaf109, 18 pp.
		
		\bibitem{Delta-crys} L.Gurney, S. Pandit, A. Saha. Delta characters and Filtered Isocrystals , arXiv preprint	arXiv:2609.28365
		
		\bibitem{Mazur}
		B. Mazur, W. Messing. Universal extensions and one dimensional crystalline cohomology. Springer,volume=370, 2006
		
\bibitem{MM25}
L. Miller and J. Morrow.
\newblock Higher dimensional geometry of $p$-jets.
\newblock arXiv:2510.00336, 2025.

\bibitem{Mum-66}
\newblock D. Mumford.
\newblock Lectures on curves on an algebraic surface,
\newblock Ann. of Math. Stud., No. 59
Princeton University Press, Princeton, NJ, 1966, xi+200 pp.

\bibitem{Pandit} S. Pandit, Delta Isocrystal and Crystalline Cohomology of Abelian Schemes, arXiv preprint, arXiv:2605.00713
		
		\bibitem{PaSa} S. Pandit and A. Saha, Delta characters and crystalline cohomology, Camb. J. Math. {\bf 13} (2025), no.~2, 301--358.
		
		\bibitem{PaSa2} S. Pandit and A. Saha, On de Rham cohomology of Drinfeld modules of rank 2, Int. J. Number Theory {\bf 21} (2025), no.~7, 1437--1457.
		
		
		\bibitem{Se1} Jean-Pierre Serre. Algebraic groups and class fields. Springer Science \& Business Media, volume=117(2012)
		
	\end{thebibliography}
\end{document}